\definecolor{darkblue}{rgb}{0,0,0.6}
\theoremstyle{plain}
\newtheorem{theorem}{Theorem}[section]
\newtheorem{corollary}[theorem]{Corollary}
\newtheorem{lemma}[theorem]{Lemma}
\newtheorem{proposition}[theorem]{Proposition}
\newtheorem*{theorema}{Theorem A}
\newtheorem*{theoremb}{Theorem B}
\newtheorem*{theoremc}{Theorem C}
\theoremstyle{definition}
\newtheorem{definition}[theorem]{Definition}
\theoremstyle{remark}
\newtheorem*{claim*}{Claim}
\newtheorem{remark}[theorem]{Remark}
\numberwithin{equation}{section}
\newcommand{\thmref}[1]{Theorem~\ref{#1}}
\newcommand{\secref}[1]{\S~\ref{#1}}
\newcommand{\lemref}[1]{Lemma~\ref{#1}}
\newcommand{\propref}[1]{Proposition~\ref{#1}}
\newcommand{\rmkref}[1]{Remark~\ref{#1}}
\newcommand{\appdref}[1]{Appendix~\ref{#1}}
\newcommand{\defref}[1]{Definition~\ref{#1}}
\newcommand{\mres}{\mathbin{\vrule height 1.6ex depth 0pt width 0.13ex\vrule height 0.13ex depth 0pt width 1.3ex}}
\title[Stokes conjecture for compressible water waves]{Proof of the Stokes conjecture for compressible gravity water waves}
\author{Lili Du$^{1,2}$}
\address{$^1$School of Mathematical Sciences,Shenzhen University,China}
\email{dulili@scu.edu.cn}
\author{Chunlei Yang$^{1,2}$}
\address{$^2$College of Mathematics, Sichuan University, Sichuan,China}
\email{cl\_yang@hotmail.com}
\subjclass[2020]{Primary 35Q35, 76B15; Secondary 35R35, 35B44}
\keywords{Compressible gravity water waves; Euler equations; Stagnation points; Free boundary problem; Monotonicity formula; Frequency formula}
\begin{document}
\begin{abstract}
    In 1880, Stokes [Mathematical and Physical Papers, Vol I, 1880] examined an incompressible irrotational periodic traveling water wave under the influence of gravity and  conjectured the existence of an extreme wave with a corner of $\ang{120}$ at the crest. The first rigorous proof of the conjecture was given by Amick, Fraenkel and Toland~[Acta Math., 148, 1982], as well as by Plotnikov independently~[Dinamika Sploshn. Sredy., 57, 1982] via the Nekrasov integral equation. In the early 2010s, Weiss and V\v{a}rv\v{a}ruc\v{a}~[Acta Math., 206(2), 2011] revisited the conjecture by applying a new geometric method, which provided an affirmative answer to the conjecture without requiring structural assumptions such as the isolation of the stagnation points, the symmetry and the monotonicity of the free surface that were necessary in the previous works.

    The main purpose of this paper is to establish the validity of the Stokes conjecture in the context of compressible gravity water waves. More precisely, we prove that a sharp crest forms near each stagnation point of a compressible gravity water wave with an included angle of $\ang{120}$, which gives a first proof to the compressible counterpart of the classical conjecture by Stokes in 1880.
        
    The central aspect of our approach is the discovery of a new monotonicity formula for quasilinear free boundary problems of the Bernoulli--type. Another observation is the introduction of a new nonlinear frequency formula, along with a  compensated compactness argument for the compressible Euler system. The developed monotonicity formula enables us to do blow--up analysis at each stagnation point and helps us obtain the singular profile of the free surface near each stagnation point. The use of these two new mathematical tools, along with a compensated compactness argument for compressible Euler system, helps us establish the validity of the Stokes conjecture for compressible gravity water wave.
\end{abstract}
\maketitle
\tableofcontents
\section{Introduction and setups}
In 1880, G.~G.~Stokes studied the free interface of an incompressible inviscid fluid in two dimensions under the influence of gravity, traveling in permanent form with a constant velocity. He was uncertain about shape profile of the wave at the crest and made the following conjecture in his paper~\cite[pp.225--228]{MR2858161}.
\begin{quotation}
	\dots After careful consideration I feel satisfied that there is no such earlier limit, but that we may actually approach as near as we please to the form in which the curvature at the vertex becomes infinite, and the vertex becomes a multiple point where the two branches with which alone we are concerned enclose an angle of $\ang{120}$\dots
\end{quotation}
The formation of a $\ang{120}$ corner at the crests of the interface between air and water has famously been referred to as the \emph{Stokes conjecture}. The incompressible case of the conjecture was proven in~\cite{MR666110} and~\cite{MR752600,MR1883094} when assuming the isolation of stagnation points, the symmetry and the monotonicity of the free surface, and later proven in~\cite{MR2810856} without assuming the above structural assumptions. The aim of this paper is to give an affirmative answer to the following question:
\begin{center}
	\textbf{Q}: Does the Stokes conjecture hold for \textbf{compressible} gravity water waves?
\end{center}
We will first formulate the problem and present the main results of the paper. The related results in the literature will be discussed in detail later.

\subsection{Two dimensional compressible gravity water waves}
The compressible water wave problem in $\mathbb{R} ^{2}$ is described as follows (see~\cite[Sect.1]{MR3887218,MR4439376} for models in dimensions $n \geqslant 2$). Let $\mathcal{D}(t):=\{(x_{1},x_{2})\in \mathbb{R} ^{2}: x_{2} \leqslant S(t,x_{1})\}$ represent the domain occupied by the fluid at the fixed time $t$, where $S$ denotes the interface between the fluid and the air. The boundary $\partial\mathcal{D}(t):=\{(x_{1},x_{2}):x_{2}=S(t,x_{1})\}$ of $\mathcal{D}(t)$ moves with the velocity of the fluid. The motion of the fluid is then described by the Euler equations
\begin{align}\label{ww1}
    \left\{
        \begin{alignedat}{2}
            &\partial_{t}v+(v\cdot\nabla)v=-\frac{1}{\rho}\nabla p-g\mathbf{e}_{2}\qquad&&\text{ in }\mathcal{D},\\
            &\partial_{t}\rho+(v\cdot\nabla)\rho+\rho \operatorname{div}v=0\qquad&&\text{ in }\mathcal{D},
        \end{alignedat}
    \right.
\end{align}
where $v\in \mathbb{R} ^{2}$ represents the velocity, $p$ the pressure, $\mathbf{e}_{2}=(0,1)^{\top}$,  $\rho$ the fluid density and $g>0$ the gravitation constant. Here in~\eqref{ww1}, $\mathcal{D}:=\cup_{t\in[0,T]}\{t\}\times\mathcal{D}(t)$. The equation of the state is given by
\begin{align}\label{eqst}
    p(\bar{\rho}_{0})=0,\qquad p=p(\rho),\qquad p'(\rho)>0\text{ for }\rho\geqslant\bar{\rho}_{0},
\end{align}
where $\bar{\rho}_{0}:=\rho|_{\partial\mathcal{D}}$ is a nonnegative constant, which is in the case of liquid (as opposed to the gas case). It should be noted that water satisfies~\eqref{eqst} (cf.~\cite[pp.283]{MR1981993}).

The sound speed $c(\rho)$ of the flow and the Mach number are given by
\[
    c(\rho) = \sqrt{p'(\rho )}\qquad\text{ and }\qquad M=\frac{q}{c(\rho)},
\]
respectively. Here $q=\sqrt{u_{1}^{2}+u_{2}^{2}}$ denotes the flow speed. We further assume that the flow is irrotational,
\[
    \operatorname{curl}\,v=0.
\]
In this paper, we confine ourselves to traveling--wave solutions of~\eqref{ww1}, where there exist $D\subset \mathbb{R} ^{2}$, $c_{0}\in \mathbb{R} $, $\tilde{v}$ and $\tilde{\rho}$ such that 
\[
    \mathcal{D}=D + c_{0}t(1,0)\qquad\text{ for all }t\in \mathbb{R} ,
\]
and 
\[
    v(t,x_{1},x_{2})=\tilde{v}(x_{1}-c_{0}t,x_{2})+c_{0}(1,0),
\]
and 
\[
    \rho(t,x_{1},x_{2})=\tilde{\rho}(x_{1}-c_{0}t,x_{2}).
\]
for all $t\in \mathbb{R} $ and $(x_{1},x_{2})\in \mathcal{D}(t)$. By taking away the tildes above the variables $(\tilde{v},\tilde{\rho})$ for the sake of the notation, we obtain the corresponding stationary system
\begin{align}\label{ww2}
    \left\{
        \begin{alignedat}{2}
            &(v\cdot\nabla)v=-\frac{1}{\rho}\nabla p-g\mathbf{e}_{2}\qquad&&\text{ in }D,\\
            &\rho \operatorname{div}v+(v\cdot\nabla)\rho=0\qquad&&\text{ in }D,\\
            &\operatorname{curl}v=0\qquad&&\text{ in }D.
        \end{alignedat}
    \right.
\end{align}
We denote the boundary that is free and in contact with the air by $\partial_{a}D$. Then   
\begin{align}\label{bds}
    \rho = \bar{\rho}_{0}\qquad\text{ on }\partial_{a}D.
\end{align}
Besides, the following slip boundary condition is also achieved on $\partial_{a}D$. That is,
\[
    v\cdot\nu = 0\qquad\text{ on }\partial_{a}D,
\]
where $\nu$ is the outer normal vector on $\partial_{a}D$. It follows from the first equation in~\eqref{ww2} that the following so called Bernoulli law holds.
\begin{align}\label{bl}
    \frac{q^{2}}{2}+h(\rho)+gx_{2}=\frac{p'(\bar{\rho}_{0})}{2}\qquad\text{ in }D,
\end{align}
where $h(\rho):=\int_{\bar{\rho}_{0}}^{\rho}\frac{p'(s)}{s}\,ds$ is the enthalpy of the flow. It can be seen that $h'(\rho)>0$ and $h|_{\partial_{a}D}=0$ thanks to~\eqref{bds}. Therefore, equation~\eqref{bl} on the free surface is expressed as   
\begin{equation}\label{blfb}
    q^{2}=p'(\bar{\rho}_{0})-2gx_{2}\qquad\text{ on }\partial_{a}D.
\end{equation}
Furthermore, we impose the following natural conditions on the pressure $p(\rho)$, 
\begin{align}\label{ass1}
    2p'(\rho)+\rho p''(\rho)>0\qquad\text{ for all }\rho \geqslant \bar{\rho}_{0}.
\end{align}
Note that~\eqref{ass1} implies that the function $s\mapsto\left(\frac{p'(s)}{2}+h(s) \right)$ is nonnegative and nondecreasing. Therefore, it follows from~\eqref{bl} that for every fixed $x=(x_{1},x_{2})\in D$  there exists a unique $\rho_{\mathrm{cr},x_{2}}$ and $\rho_{\mathrm{max},x_{2}}$ so that,  
\begin{align}\label{cr}
    \frac{p'(\rho_{\mathrm{cr},x_{2}})}{2}+h(\rho_{\mathrm{cr},x_{2}})+gx_{2}=\frac{p'(\bar{\rho}_{0})}{2},
\end{align}
and 
\begin{align}\label{max}
    h(\rho_{\mathrm{max},x_{2}})+gx_{2}=\frac{p'(\bar{\rho}_{0})}{2}.
\end{align} 
By introducing $\rho_{\mathrm{cr},x_{2}}$ into~\eqref{bl} gives a unique local critical speed $q_{\mathrm{cr},x_{2}}$ at the point $x\in D$, and we say that the fluid is subsonic locally at the point $x=(x_{1},x_{2})\in D$ if and only if the speed $q(x)<q_{\mathrm{cr},x_{2}}$. Additionally, it is easy to deduce from~\eqref{cr} and~\eqref{max} that $\rho_{\mathrm{cr},x_{2}}<\rho_{\mathrm{max},x_{2}}$ for every $x\in D$.

Let us note that a significant distinction between critical variables defined above and that in the classical compressible hydrodynamics (cf.~\cite[pp.7]{MR96477}) is that, in the absence of external body forces (e.g. the gravity), the critical (maximal) variables are independent of the vertical coordinates and remain a constant value throughout the entire flow region $D$. In contrast, the critical momentum $\rho_{\mathrm{cr},x_{2}}q_{\mathrm{cr},x_{2}}$ varies at each specific point $(x_{1},x_{2})\in D$ within the water phase.

\subsection{A quasilinear free boundary problem}
In this subsection, we will apply the derived quantities $\rho_{\mathrm{cr},x_{2}}$ and $\rho_{\mathrm{max},x_{2}}$ to formulate the problem as a one--phase quasilinear free boundary problem. Based on the continuity equation, the so-called Stokes stream function $\psi(x_{1},x_{2})$ can be introduced so that $\nabla\psi=(-\rho v_{2},\rho v_{1})$. Suppose now that $\psi>0$ in $D$ and we extend $\psi$ by the value of $0$ to the region so that the fluid domain $D$ can be identified with the set $\{(x_{1},x_{2})\colon\psi(x_{1},x_{2})>0\}$, denoted as $\{\psi>0\}$. In terms of $\psi$,~\eqref{bl} and~\eqref{blfb} give
\begin{align}\label{bl1}
    \begin{alignedat}{2}
        &\frac{|\nabla\psi|^{2}}{2\rho^{2}}+h(\rho)+gx_{2}=\frac{p'(\bar{\rho}_{0})}{2}\qquad&&\text{ in }\{\psi>0\},\\
        &|\nabla\psi|^{2}=\bar{\rho}_{0}^{2}\left( p'(\bar{\rho}_{0})-2gx_{2} \right)\qquad&&\text{ on }\partial\{\psi>0\}.
    \end{alignedat}
\end{align}
Regarding the first equation in~\eqref{bl1} as
\[
    \mathscr{F}(\rho;x_{2})-t=0,
\]
where $t:=|\nabla\psi|^{2}$ and
\[
  \mathscr{F}(\rho;x_{2}):=2\rho^{2}\left(\frac{p'(\bar{\rho}_{0})}{2}-gx_{2}-h(\rho) \right).
\]
It follows from~\eqref{cr} and a direct calculation that 
\[
  \partial_{\rho}\mathscr{F}(\rho;x_{2})=2\rho\left[ p'(\rho_{\mathrm{cr},x_{2}})+2h(\rho_{\mathrm{cr},x_{2}})-(p'(\rho)+2h(\rho)) \right].
\]
Thus, $\partial_{\rho}\mathscr{F}<0$ whenever $\rho>\rho_{\mathrm{cr},x_{2}}$. By the inverse function theorem, the density $\rho$ can be expressed as a  continuously differentiable function with respect to $|\nabla\psi|^{2}$ and $x_{2}$, so that 
\begin{align}\label{der1}
	\begin{split}
		\pd{\rho(|\nabla\psi|^{2};x_{2})}{(|\nabla\psi|^{2})}=\frac{1}{2\rho\left[ p'(\rho_{\mathrm{cr},x_{2}})+2h(\rho_{\mathrm{cr},x_{2}})-(p'(\rho)+2h(\rho)) \right]}<0,
	\end{split}
\end{align}
and 
\begin{align}\label{der2}
	\begin{split}
		\pd{\rho(|\nabla\psi|^{2};x_{2})}{x_{2}}=\frac{\rho g}{\left[ p'(\rho_{\mathrm{cr},x_{2}})+2h(\rho_{\mathrm{cr},x_{2}})-(p'(\rho)+2h(\rho)) \right]}<0,
	\end{split}
\end{align}
whenever $\rho>\rho_{\mathrm{cr},x_{2}}$. Recalling the last equation in~\eqref{ww2} and the definition of the $\psi$, we obtain 
\begin{align*}
	\mathop{\mathrm{div}}\left(\frac{\nabla\psi}{\rho(|\nabla\psi|^{2};x_{2})}\right)=0\qquad\text{ in }\{\psi>0\}.
\end{align*} 
This, together with the second equation in~\eqref{bl1}, gives the following free boundary problem:
\begin{align}\label{mp}
    \left\{
        \begin{alignedat}{2}
            &\operatorname{div}\left( \frac{\nabla\psi}{\rho(|\nabla\psi|^{2};x_{2})} \right)=0\qquad&&\text{ in }\Omega\cap\{\psi>0\},\\
            &|\nabla\psi|^{2}=2\bar{\rho}_{0}^{2}g(x_{2}^{\mathrm{st}}-x_{2})\qquad&&\text{ on }\Omega\cap\partial\{\psi>0\},
        \end{alignedat}
    \right.
\end{align}
where $x_{2}^{\mathrm{st}}:=\frac{p'(\bar{\rho}_{0})}{2g}$ and $\Omega\subset \mathbb{R} ^{2}$ is some bounded domain which satisfies $\Omega\cap\{x_{2}=x_{2}^{\mathrm{st}}\}\neq\varnothing$.

\subsection{Subsonic state near the stagnation points}\label{Subsec:SS}
In this subsection, we analyze the state of the flow near the stagnation points of a compressible gravity water wave. A stagnation point is a point at which the relative velocity vector is the zero vector. In other words, if we denote the stagnation point as $x^{\mathrm{st}}=(x_{1}^{\mathrm{st}},x_{2}^{\mathrm{st}})$, then $|\nabla\psi(x^{\mathrm{st}})|=0$. In this paper, we focus on the stagnation points located on the free boundary. Thanks to~\eqref{bds}, we have that $h|_{\partial\{\psi>0\}}=0$. Thus we infer from~\eqref{cr} that   
\begin{equation}\label{crfb}
    \frac{p'(\rho_{\mathrm{cr},x_{2}})}{2}+gx_{2}=\frac{p'(\bar{\rho}_{0})}{2}\qquad\hbox{ on }\partial\{\psi>0\}.
\end{equation}
If now $x^{\circ}=(x_{1}^{\circ},x_{2}^{\circ})\in \partial\{\psi>0\}$ is a sonic free boundary point, we can deduce from $\rho=\bar{\rho}_{0}$ on $\partial\{\psi>0\}$, equation~\eqref{blfb}, as well as $q^{2}(x^{\circ})=c^{2}(\bar{\rho}_{0})=p'(\bar{\rho}_{0})$ that $x_{2}^{\circ}=0$. This indicates that the sonic free boundary points are necessarily located on the line $\{x_{2}=0\}$. Besides, equation~\eqref{crfb} implies that  $\rho_{\mathrm{cr},x_{2}^{\circ}}<\bar{\rho}_{0}$ for $x_{2}^{\circ}>0$. Since $x_{2}^{\mathrm{st}}=\frac{p'(\bar{\rho}_{0})}{2g}>0$, we expect that the flow is subsonic at the stagnation point. However, it should be noted that the equation~\eqref{crfb} does not define $\rho_{\mathrm{cr},x_{2}^{\mathrm{st}}}$, so we need first to define the number $\rho_{\mathrm{cr},x_{2}^{\mathrm{st}}}$. Differentiating~\eqref{crfb} with respect to $x_{2}$ gives 
\begin{equation}\label{cr1fb}
    \frac{p''(\rho_{\mathrm{cr},x_{2}})}{2}\pd{\rho_{\mathrm{cr},x_{2}}}{x_{2}}+g=0\qquad\text{ on }\partial\{\psi>0\}.
\end{equation}
The assumption~\eqref{ass1} gives $p''(\rho_{\mathrm{cr},x_{2}}) \geqslant 0$, and this together with~\eqref{cr1fb}, gives $\pd{!}{x_{2}}\rho_{\mathrm{cr},x_{2}}<0$. We see that the function $x_{2}\mapsto\rho_{\mathrm{cr},x_{2}}$ satisfies the following two properties: 
\begin{itemize}
    \item [(1)] $\rho_{\mathrm{cr},0}=\bar{\rho}_{0}$ for $x^{\circ}=(x_{1}^{\circ},0)\in \partial\{\psi>0\}$.
    \item [(2)] $\pd{\rho_{\mathrm{cr},x_{2}}}{x_{2}}<0$ for all $x=(x_{1},x_{2})\in \partial\{\psi>0\}$.
\end{itemize}
The second property implies that the number $\rho_{\mathrm{cr},x_{2}^{\mathrm{st}}}=\inf_{0 \leqslant x_{2} \leqslant  x_{2}^{\mathrm{st}}}\rho_{\mathrm{cr},x_{2}}$ is well--defined and achieves its minimum at $x_{2}=x_{2}^{\mathrm{st}}$. Therefore,  
\begin{equation}\label{subs}
    \rho(x^{\mathrm{st}})=\bar{\rho}_{0}>\rho_{\mathrm{cr},x_{2}^{\mathrm{st}}}.
\end{equation}
It should be noted that~\eqref{subs} implies that the flow is subsonic at the stagnation point. We next claim that if the flow is subsonic at a free boundary point, then there exists a small neighborhood so that the flow remains uniformly subsonic in that neighborhood.
\begin{lemma}\label{Property: uniform subsonic near stagnation point}
	Let $x^{\circ}=(x_{1}^{\circ},x_{2}^{\circ})\in \partial\{\psi>0\}$ be a subsonic free boundary point, then there exists a small ball $B_{r}(x^{\circ})$ so that $\rho(|\nabla\psi|^{2};x_{2})>\rho_{\mathrm{cr},x_{2}}$ for all $x=(x_{1},x_{2})\in B_{r}(x^{\circ})$.
\end{lemma}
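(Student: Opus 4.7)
The plan is a soft continuity argument. On the free boundary one has $\rho=\bar{\rho}_{0}$, so the subsonic hypothesis at $x^{\circ}$ reduces to the strict inequality $\bar{\rho}_{0}>\rho_{\mathrm{cr},x_{2}^{\circ}}$; the goal is to propagate this to a full neighborhood of $x^{\circ}$. The proof rests on three continuity statements, applied in turn to $\rho_{\mathrm{cr},x_{2}}$, to the subsonic branch of the implicitly defined density $\rho(|\nabla\psi|^{2};x_{2})$, and to $|\nabla\psi|^{2}$ itself at the free boundary point $x^{\circ}$.

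The first two are immediate from the implicit function theorem. Differentiating~\eqref{crfb} as in~\eqref{cr1fb} and using that the coefficient $\tfrac{1}{2}p''(\rho_{\mathrm{cr}})+h'(\rho_{\mathrm{cr}})=\tfrac{1}{2\rho_{\mathrm{cr}}}\bigl(\rho_{\mathrm{cr}}p''(\rho_{\mathrm{cr}})+2p'(\rho_{\mathrm{cr}})\bigr)$ is strictly positive by~\eqref{ass1}, one sees that $x_{2}\mapsto\rho_{\mathrm{cr},x_{2}}$ is $C^{1}$. For the density branch, at the triple $(\rho,t,x_{2})=\bigl(\bar{\rho}_{0},|\nabla\psi(x^{\circ})|^{2},x_{2}^{\circ}\bigr)$ the identity $\mathscr{F}(\bar{\rho}_{0};x_{2}^{\circ})=2\bar{\rho}_{0}^{2}g(x_{2}^{\mathrm{st}}-x_{2}^{\circ})=|\nabla\psi(x^{\circ})|^{2}$ holds (using $h(\bar{\rho}_{0})=0$ together with the Bernoulli boundary condition in~\eqref{mp}), while~\eqref{der1} combined with the subsonic hypothesis gives $\partial_{\rho}\mathscr{F}(\bar{\rho}_{0};x_{2}^{\circ})<0$. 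Hence the implicit function theorem produces a $C^{1}$ branch $\rho(t,x_{2})$ on a neighborhood $U$ of $\bigl(|\nabla\psi(x^{\circ})|^{2},x_{2}^{\circ}\bigr)$ with $\rho\bigl(|\nabla\psi(x^{\circ})|^{2},x_{2}^{\circ}\bigr)=\bar{\rho}_{0}$, and after shrinking $U$ one has $\rho(t,x_{2})>\rho_{\mathrm{cr},x_{2}}$ throughout $U$.

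To conclude I need $(|\nabla\psi(x)|^{2},x_{2})\in U$ for every $x$ in some ball $B_{r}(x^{\circ})$. The $x_{2}$-component is trivially continuous, so the substantive step—and the main technical obstacle—is to control $|\nabla\psi|^{2}$ up to the free boundary at $x^{\circ}$. I would supply this from the (local) Lipschitz regularity of weak solutions to the one-phase quasilinear Bernoulli-type problem~\eqref{mp}, together with the explicit boundary value $|\nabla\psi(x^{\circ})|^{2}=2\bar{\rho}_{0}^{2}g(x_{2}^{\mathrm{st}}-x_{2}^{\circ})$, either by citing such a regularity result proved elsewhere in the paper or by adapting the classical Alt--Caffarelli Lipschitz theory to the density-dependent divergence operator appearing in~\eqref{mp}. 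Once this estimate is in hand, picking $r$ sufficiently small places $(|\nabla\psi(x)|^{2},x_{2})$ inside $U$ for all $x\in B_{r}(x^{\circ})$, and the desired inequality $\rho(|\nabla\psi|^{2};x_{2})>\rho_{\mathrm{cr},x_{2}}$ follows throughout $B_{r}(x^{\circ})$.
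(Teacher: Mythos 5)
Your proposal is correct and takes essentially the same route as the paper's Appendix~A proof: continuity of $x_{2}\mapsto\rho_{\mathrm{cr},x_{2}}$, continuity of the subsonic density branch, the strict gap $\bar{\rho}_{0}>\rho_{\mathrm{cr},x_{2}^{\circ}}$ at $x^{\circ}$, and an $\varepsilon$-argument to propagate it to a small ball. The one step you defer to regularity theory---that $|\nabla\psi(x)|^{2}\to|\nabla\psi(x^{\circ})|^{2}$ as $x\to x^{\circ}$ in $\{\psi>0\}$---is exactly the limit the paper asserts without justification (in the form $\rho(|\nabla\psi|^{2};x_{2})\to\bar{\rho}_{0}$); just note that a Lipschitz bound alone does not give convergence of the gradient at $x^{\circ}$, so one really needs $C^{1}$-up-to-the-boundary information near $x^{\circ}$ or, at stagnation points, the growth condition $|\nabla\psi|^{2}\leqslant 2\bar{\rho}_{0}^{2}g(x_{2}^{\mathrm{st}}-x_{2})$ imposed in the main theorems, which supplies it directly.
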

The proof will be given in Appendix~\ref{Appendix: property}.
\section{Main results and overview of the approach}
Our main purpose in this paper is to investigate the singular shape of the free surface near the stagnation points of a compressible gravity water wave. Our first result focuses on the case when $\{\psi=0\}$ consists of only a finite number of connected components.
\begin{theorema}
  Let $\psi$ be a subsonic  weak solution of 
    \begin{align*}
		\left\{
        \begin{alignedat}{2}
            &\operatorname{div}\left( \frac{\nabla\psi}{\rho(|\nabla\psi|^{2};x_{2})} \right)  =0\qquad&&\text{ in }\Omega\cap\{ \psi>0\},\\
            &|\nabla \psi|^{2} =2\bar{\rho}_{0}^{2}g(x_{2}^{\mathrm{st}}-x_{2})\qquad&&\text{ on }\Omega\cap\partial\{\psi>0\},
        \end{alignedat}
		\right.
    \end{align*}
    where $x_{2}^{\mathrm{st}}:=\frac{p'(\bar{\rho}_{0})}{2g}$ and $\Omega\subset \mathbb{R} ^{2}$ is a bounded domain so that $\Omega\cap\{x_{2}=x_{2}^{\mathrm{st}}\}\neq\varnothing$. Suppose that 
    \begin{align*}
        |\nabla\psi|^{2}\leqslant 2\bar{\rho}_{0}^{2}g(x_{2}^{\mathrm{st}}-x_{2})\qquad\text{ in }\Omega\cap\{\psi>0\},
    \end{align*}
    and assume additionally that $\{\psi =0\}$ has locally only finite many connected components. Then the set $S$ of stagnation points is locally in $\Omega $ a finite set. At each stagnation point $x^{\mathrm{st}}=(x_{1}^{\mathrm{st}},x_{2}^{\mathrm{st}})$ the scaled solution $\psi_{r}(x):=\frac{\psi(x^{\mathrm{st}}+rx)}{r^{3/2}}$ converges strongly in $W_{\mathrm{loc}}^{1,2}(\mathbb{R} ^{2})$ and locally uniformly on  $\mathbb{R} ^{2}$ to the \emph{Stokes corner asymptotics}. That is,
    \begin{align*}
        \psi_{r}(x)&\to\psi_{0}(R,\theta)\\
		&\equiv\frac{\sqrt{2}}{3}\bar{\rho}_{0}\sqrt{2g}R^{3/2}\cos\left(\frac{3}{2}\left(\min\left\lbrace\max\left\lbrace\theta,-\frac{5\pi}{6}\right\rbrace,-\frac{\pi}{6}\right\rbrace+\frac{\pi}{2}\right)\right),
    \end{align*} 
    as $r\to 0^{+}$, where $x=(x_{1},x_{2})=(R\cos\theta,R\sin\theta)$. Moreover, in an open neighborhood of $x^{\mathrm{st}}$ the topological free boundary $\partial\{\psi>0\}$ is the union of two $C^{1}$--graphs with right and left tangents at $x^{\mathrm{st}}$.
\end{theorema}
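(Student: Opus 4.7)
The plan is to adapt the Varvaruca--Weiss blow--up strategy to the quasilinear density--dependent problem~\eqref{mp}, combining (i) a Weiss--type monotonicity formula compatible with the operator $\operatorname{div}(\nabla\psi/\rho(|\nabla\psi|^{2};x_{2}))$, (ii) a compensated compactness argument that lets us pass to a constant--density limit at each stagnation point, and (iii) the Varvaruca--Weiss classification of $3/2$--homogeneous global solutions of the incompressible Stokes problem. Fix $x^{\mathrm{st}}\in S$. The scaling $\psi_{r}(x)=r^{-3/2}\psi(x^{\mathrm{st}}+rx)$ is dictated by the Bernoulli condition: since $x_{2}^{\mathrm{st}}-(x_{2}^{\mathrm{st}}+rx_{2})=-rx_{2}$, the hypothesis $|\nabla\psi|^{2}\leqslant 2\bar{\rho}_{0}^{2}g(x_{2}^{\mathrm{st}}-x_{2})$ rescales to $|\nabla\psi_{r}|^{2}\leqslant -2\bar{\rho}_{0}^{2}gx_{2}$ uniformly in $r$. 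Combined with \lemref{Property: uniform subsonic near stagnation point}, this upper bound gives $r|\nabla\psi_{r}|^{2}\to 0$ locally uniformly, so the rescaled density $\rho_{r}(x):=\rho(r|\nabla\psi_{r}(x)|^{2};x_{2}^{\mathrm{st}}+rx_{2})$ converges to the constant $\bar{\rho}_{0}$ locally uniformly as $r\to 0^{+}$.

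The heart of the argument is a new Weiss--type monotonicity formula, expected to be of the form
\[
W(r)=\frac{1}{r^{3}}\int_{B_{r}(x^{\mathrm{st}})}\Bigl(\tfrac{|\nabla\psi|^{2}}{\rho(|\nabla\psi|^{2};x_{2})}+2\bar{\rho}_{0}g(x_{2}^{\mathrm{st}}-x_{2})\chi_{\{\psi>0\}}\Bigr)\,dx-\frac{3}{2r^{4}}\int_{\partial B_{r}(x^{\mathrm{st}})}\frac{\psi^{2}}{\rho}\,d\mathcal{H}^{1}.
\]
Via a Pohozaev--type identity for the quasilinear Euler--Lagrange equation, I expect $\frac{d}{dr}W(r)$ to decompose into a non--negative radial--derivative term of the shape $\frac{C}{r^{4}}\int_{\partial B_{r}}\bigl(x\cdot\nabla\psi-\tfrac{3}{2}\psi\bigr)^{2}/\rho\,d\mathcal{H}^{1}$ plus error terms produced by the $(|\nabla\psi|^{2},x_{2})$--dependence of $\rho$; these errors must be absorbed using the derivative bounds~\eqref{der1}--\eqref{der2} together with $(x_{2}^{\mathrm{st}}-x_{2})=O(r)$ near $x^{\mathrm{st}}$, so that they are $o(1)$ as $r\to 0^{+}$. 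An almost--monotone $W$ yields uniform $L^{2}_{\mathrm{loc}}$--bounds on $\nabla\psi_{r}$, existence of blow--up limits $\psi_{0}$ along subsequences, and $3/2$--homogeneity of every such $\psi_{0}$; a complementary nonlinear frequency formula rules out the trivial limit $\psi_{0}\equiv 0$ and fixes the homogeneity from below.

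To pass the equation and free boundary condition to the limit, I would apply a div--curl compensated compactness argument to the pair $(\nabla\psi_{r}/\rho_{r},\nabla^{\perp}\psi_{r})$: the first factor is divergence--free and the second is curl--free, and together with the uniform convergence $\rho_{r}\to\bar{\rho}_{0}$ and the Weiss monotonicity this upgrades weak $W^{1,2}_{\mathrm{loc}}$--convergence of $\psi_{r}$ to strong $L^{2}_{\mathrm{loc}}$--convergence of the gradients. The limit $\psi_{0}$ therefore satisfies $\Delta\psi_{0}=0$ in $\{\psi_{0}>0\}$ and $|\nabla\psi_{0}|^{2}=-2\bar{\rho}_{0}^{2}gx_{2}$ on $\partial\{\psi_{0}>0\}$---exactly the incompressible Stokes problem. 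The Varvaruca--Weiss classification of $3/2$--homogeneous global solutions of this problem identifies $\psi_{0}$ with the explicit corner profile in the theorem. The assumption that $\{\psi=0\}$ has locally only finitely many connected components, together with nondegeneracy consequences of the monotonicity formula, excludes accumulation of stagnation points (hence $S$ is locally finite), forces uniqueness of the blow--up at every $x^{\mathrm{st}}$ (so convergence is along every sequence $r\to 0^{+}$, not merely subsequentially), and yields the two $C^{1}$ graphs meeting at $120^{\circ}$ via a standard free boundary regularity argument based on the explicit form of $\psi_{0}$ and the Hausdorff convergence of $\partial\{\psi_{r}>0\}$.

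The main obstacle I expect is the construction of the monotonicity formula itself: the $x_{2}$-- and $|\nabla\psi|^{2}$--dependence of $\rho$ generates first--order error terms in $\frac{d}{dr}W(r)$ that are not obviously integrable near $r=0$ and must be tamed using the stagnation identity $\frac{p'(\bar{\rho}_{0})}{2}=\frac{p'(\rho_{\mathrm{cr},x_{2}^{\mathrm{st}}})}{2}+h(\rho_{\mathrm{cr},x_{2}^{\mathrm{st}}})+gx_{2}^{\mathrm{st}}$ combined with the upper bound $|\nabla\psi|^{2}\leqslant 2\bar{\rho}_{0}^{2}g(x_{2}^{\mathrm{st}}-x_{2})$; the compensated compactness step is the second delicate point, because the coefficient $\rho_{r}$ degenerates precisely on the free boundary, which is the set where strong convergence of $\nabla\psi_{r}$ is most needed to identify the limit Bernoulli condition.
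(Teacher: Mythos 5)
Your plan follows the same architecture as the paper: rescale at the $3/2$ rate, establish a Weiss--type monotonicity formula for the quasilinear energy, classify the homogeneous blow--ups via the incompressible V\v{a}rv\v{a}ruc\v{a}--Weiss theory, and dispose of the degenerate profiles using a frequency formula, compensated compactness and the finite--components hypothesis. However, three points in the plan as written would not go through. First, your candidate monotonicity formula is not the one that works. The paper's central discovery is that the volume term must be the primitive $F(|\nabla\psi|^{2};x_{2})=\int_{0}^{|\nabla\psi|^{2}}\rho^{-1}(\tau;x_{2})\,d\tau$ plus $\lambda(x_{2})\chi_{\{\psi>0\}}$, while the boundary--adjusted $L^{2}$ term must carry the \emph{constant} weight $\bar{\rho}_{0}^{-1}$, not the variable weight $1/\rho$ you propose: differentiating $\int_{\partial B_{r}}\psi^{2}/\rho$ in $r$ produces derivatives of $\rho(|\nabla\psi|^{2};x_{2})$, hence second derivatives of $\psi$ on $\partial B_{r}$, which are not controlled. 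The mismatch between $1/H$ and $1/H_{0}$ is precisely what generates the terms $K_{3}$ and $K_{4}$ of \propref{prop:wsm}; these come from neither $\partial_{r}U$ nor $\partial_{r}W$ and are integrable only thanks to the growth condition, which is where naive candidates fail (the relevant requirement is integrability of $r^{-n-2}\sum_{i}K_{i}(r)$ on $(0,\delta_{0})$, not merely that the errors are $o(1)$).

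Second, the frequency formula does not ``rule out the trivial limit'': the two degenerate densities are handled by different tools. The cusp case $\Phi(x^{\circ},u;0^{+})=0$ is excluded by the comparison argument of \lemref{Lemma: cusp}, which requires the \emph{sharp} constant in $|\nabla\psi|^{2}\leqslant 2\bar{\rho}_{0}^{2}g(x_{2}^{\mathrm{st}}-x_{2})$ in order to get $H_{m}(|\nabla u_{m}|^{2};x_{2})\geqslant H_{0}$ on the relevant boundary portions; the frequency formula and the compensated compactness argument enter only for the horizontal--flat case $\Phi=\tfrac{2}{3H_{0}}$, and even there they do not by themselves exclude it --- they identify an integer decay rate $N\geqslant 2$, after which the finite--components hypothesis yields the contradiction through the perimeter bound of \lemref{Lemma: weak v.s variational} (this is the proof of \thmref{Theorem: second}). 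Third, compensated compactness is needed at the frequency--normalized blow--up $v_{m}$ of~\eqref{Formula: vm}, where no other route to strong convergence is available; at the $3/2$--rescaling the strong $W^{1,2}_{\mathrm{loc}}$ convergence follows more directly by testing the equation against $u_{m}\eta$ once $H_{m}\to H_{0}$ strongly in $L^{2}$, as in \lemref{Lemma: blow-up limits}. With these corrections your outline coincides with the paper's proof.
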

\begin{remark}
    The limit function $\psi_{0}(R,\theta)$ is a piecewise function given by (see Figure~\ref{Fig: psi0}) 
    \begin{figure}[!ht]
    \centering
    \tikzset{every picture/.style={line width=0.75pt}} 
  
    \begin{tikzpicture}[x=1pt,y=1pt,yscale=-1,xscale=1]
    
    \draw [color={rgb, 255:red, 128; green, 128; blue, 128 }  ,draw opacity=1 ] [dash pattern={on 0.84pt off 2.51pt}]  (239.67,169.5) -- (470.33,169.5) ;
    \draw [shift={(472.33,169.5)}, rotate = 180] [color={rgb, 255:red, 128; green, 128; blue, 128 }  ,draw opacity=1 ][line width=0.75]    (10.93,-3.29) .. controls (6.95,-1.4) and (3.31,-0.3) .. (0,0) .. controls (3.31,0.3) and (6.95,1.4) .. (10.93,3.29)   ;
    \draw    (350.35,170.05) -- (455.33,211.83) ;
    \draw    (350.35,170.05) -- (258.67,209.5) ;
    \draw    (350.35,170.05) -- (350.35,170.05) ;
    \draw [shift={(350.35,170.05)}, rotate = 90] [color={rgb, 255:red, 0; green, 0; blue, 0 }  ][fill={rgb, 255:red, 0; green, 0; blue, 0 }  ][line width=0.75]      (0, 0) circle [x radius= 1.34, y radius= 1.34]   ;
    
    \draw (330.7,144.4) node [anchor=north west][inner sep=0.75pt]  [font=\footnotesize]  {$\psi _{0} =0$};
    \draw (332.7,195.3) node [anchor=north west][inner sep=0.75pt]  [font=\footnotesize]  {$\psi _{0}  >0$};
    \draw (236.3,203) node [anchor=north west][inner sep=0.75pt]  [font=\footnotesize]  {$-\frac{5}{6} \pi $};
    \draw (458.77,200.33) node [anchor=north west][inner sep=0.75pt]  [font=\footnotesize]  {$-\frac{1}{6} \pi $};
    \draw (412.9,177.2) node [anchor=north west][inner sep=0.75pt]  [font=\footnotesize]  {$\psi _{0} =0$};
    \draw (253.9,174.2) node [anchor=north west][inner sep=0.75pt]  [font=\footnotesize]  {$\psi _{0} =0$};
    \draw (472.6,166.33) node [anchor=north west][inner sep=0.75pt]    {$x_{1}$};

    \end{tikzpicture} 
  \caption{The limit function $\psi_{0}(R,\theta)$}
    \label{Fig: psi0}
    \end{figure}
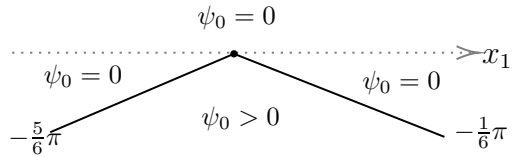
  \begin{align*}
    \psi_{0}(R,\theta)=\left\{
      \begin{alignedat}{2}
        &0\qquad&&\theta\leqslant-\frac{5}{6}\pi,\\
        &\frac{\sqrt{2}}{3}\bar{\rho}_{0}\sqrt{2g}R^{\frac{3}{2}}\cos\left[\frac{3}{2}\left(\theta+\frac{\pi}{2}\right)\right]\qquad&&-\frac{5}{6}\pi<\theta<-\frac{\pi}{6},\\
        &0\qquad&&\theta\geqslant-\frac{\pi}{6}.
      \end{alignedat}
    \right.
  \end{align*}
\end{remark}
The definition of subsonic weak solution can be found in~\defref{Definition: subsonic weak solution}. Weak solutions of the quasilinear elliptic free boundary problems were first investigated by Alt, Caffarelli and Friedman~\cite{MR752578}, who studied non--negative solutions $\psi\in C^{0}(\Omega)$ that solve a quasilinear elliptic equation in $\Omega \cap \{\psi>0\}$, as well as exhibit a non--degeneracy property at each point on $\Omega\cap\partial\{\psi>0\}$. They proved that the free boundary has locally finite $\mathcal{H}^{1}$ measure and that the free boundary is smooth outside stagnation points. Prior to this work, studies have been conducted on weak solutions of quasilinear free boundary problems involving various operators, including the $p$--Laplacian~\cite{MR2133664} , and an interesting non--homogeneous operator which behaves differently when the gradient is close to zero or infinite~\cite{MR2431665}. These results conclude that the free boundary is smooth when $|\nabla\psi|\geqslant c_{0}>0$ on $\Omega\cap\partial\{\psi>0\}$. However, there is a certain loss of regularity in the free boundary when the gradient is zero on it. In fact, according to Hopf's boundary point lemma, at any regular free boundary point, one has $|\nabla\psi|=-\psi_{\nu}>0$, where $\nu$ is the outward normal. Therefore, if the free boundary touches a point $x^{\mathrm{st}}$ where the gradient vanishes, then $x^{\mathrm{st}}$ cannot be a regular point. To the best of the authors' knowledge, this is the first study that focuses on the free boundary near degenerate points (where gradients vanish) for quasilinear free boundary problems.

Let us remark that the growth assumption
\begin{align}\label{gc}
	|\nabla\psi|^{2}\leqslant 2\bar{\rho}_{0}^{2}g(x_{2}^{\mathrm{st}}-x_{2})\quad\text{ in }\Omega\cap\{\psi>0\}
\end{align}
is imposed (not same but a similar version) on the study of Stokes conjecture for incompressible flows~\cite{MR2810856}, and it is also assumed when investigating stagnation points for incompressible rotational water waves~\cite{MR2995099}. For compressible flows,~\eqref{gc} states that the density $\rho$ of the fluid reaches its minimum value near the stagnation points on the free surface. Indeed, let us consider a small neighborhood $B$ of $x^{\mathrm{st}}$, and let $x\in B\cap\{\psi>0\}$ be a point that is close to $x^{\mathrm{st}}$. Since the flow is uniformly subsonic in $B$, we can infer that $t\mapsto\rho(t;\cdot)$ strictly decreases in $\tilde{B}\cap\{\psi>0\}$. This, combined with~\eqref{gc} gives that
\begin{align*}
	\rho(|\nabla\psi|^{2};x_{2})\geqslant\rho(2\bar{\rho}_{0}^{2}g(x_{2}^{\mathrm{st}}-x_{2});x_{2})=\bar{\rho}_{0}\qquad\text{ in }\tilde{B}\cap\{\psi>0\}.
\end{align*}
In this way, we see that 
\begin{align}\label{gc1}
	\min_{\tilde{B}\cap\{\psi>0\}}\rho(|\nabla\psi|^{2};x_{2})=\bar{\rho}_{0}.
\end{align}
We note that~\eqref{gc1} has been validated in the context of compressible flows under certain physical boundary conditions~\cite{MR772122,MR3842050, MR3814594}.

In this paper, we also consider the case when $\{\psi=0\}$ consists of an  infinite number of connected components.
\begin{theoremb}
	Let $\psi$ be a subsonic weak solution of 
	\begin{align*}
		\left\{
        \begin{alignedat}{2}
            &\operatorname{div}\left( \frac{\nabla\psi}{\rho(|\nabla\psi|^{2};x_{2})} \right)  =0\qquad&&\text{ in }\Omega\cap\{ \psi>0\},\\
            &|\nabla \psi|^{2} =2\bar{\rho}_{0}^{2}g(x_{2}^{\mathrm{st}}-x_{2})\qquad&&\text{ on }\Omega\cap\partial\{\psi>0\},
        \end{alignedat}
		\right.
    \end{align*}
    where $x_{2}^{\mathrm{st}}:=\frac{p'(\bar{\rho}_{0})}{2g}$ and $\Omega\subset \mathbb{R} ^{2}$ is a bounded domain so that $\Omega\cap\{x_{2}=x_{2}^{\mathrm{st}}\}\neq\varnothing$. Suppose that 
    \begin{align*}
        |\nabla\psi|^{2}\leqslant 2\bar{\rho}_{0}^{2}g(x_{2}^{\mathrm{st}}-x_{2})\qquad\text{ in }\Omega\cap\{\psi>0\},
    \end{align*}
	Then the set $S$ of stagnation points is a finite or countable set. Each accumulation point of $S$ is a point of the locally finite set $\Sigma$. Moreover, 
	At each point $x^{\mathrm{st}}=(x_{1}^{\mathrm{st}},x_{2}^{\mathrm{st}})$ of $S\setminus\Sigma$, the rescaled function $\psi_{r}(x):=\frac{\psi(x^{\mathrm{st}}+rx)}{r^{3/2}}$ converges strongly in $W_{\mathrm{loc}}^{1,2}(\mathbb{R} ^{2})$ and locally uniformly on  $\mathbb{R} ^{2}$ to the \emph{Stokes corner asymptotics}. That is, 
	\begin{align*}
		\psi_{r}(x)&\to\psi_{0}(R,\theta)\\
		&\equiv\frac{\sqrt{2}}{3}\bar{\rho}_{0}\sqrt{2g}R^{3/2}\cos\left(\frac{3}{2}\left(\min\left\lbrace\max\left\lbrace\theta,-\frac{5\pi}{6}\right\rbrace,-\frac{\pi}{6}\right\rbrace+\frac{\pi}{2}\right)\right),
	\end{align*}
	as $r\to 0^{+}$, where $x=(x_{1},x_{2})=(R\cos\theta,R\sin\theta)$. The scaled free surface converges to the \emph{Stokes corner flow} in the sense that, 
	\small{\begin{align*}
		\mathcal{L}^{2}\left(B_{1}\cap\left(\{(x_{1},x_{2})\colon\psi(x^{\mathrm{st}}+rx)>0\}\triangle\left\lbrace(x_{1},x_{2})\colon-\frac{5\pi}{6}<\theta<-\frac{\pi}{6}\right\rbrace\right)\right)\to0,
	\end{align*}}\normalsize
	as $r\to 0^{+}$, where $A\triangle B:=(A\setminus B)\cup(B\setminus A)$ is the symmetric difference of two sets $A$ and $B$. At each point $x^{\mathrm{st}}=(x_{1}^{\mathrm{st}},x_{2}^{\mathrm{st}})$ of $\Sigma$ there exists an integer $N=N(x^{\mathrm{st}})\geqslant 2$ such that
	\begin{align*}
		\frac{\psi(x^{\mathrm{st}}+rx)}{r^{\alpha}}\to0\quad\text{ as }r\to 0^{+},
	\end{align*}
	strongly in $L_{\mathrm{loc}}^{2}(\mathbb{R}^{2})$ for each $\alpha\in[0,N)$, and 
	\begin{align*}
		\frac{\psi(x^{\mathrm{st}}+rx)}{\sqrt{r^{-1}\int_{\partial B_{r}(x^{\mathrm{st}})}\psi^{2}dS}}&\to\psi_{0}(R,\theta)\\
        &\equiv\frac{\bar{\rho}_{0}\sqrt{2g}R^{N}|\sin(N\min\{\max\{\theta,-\pi\},0\})|}{\sqrt{\int_{-\pi}^{0}\sin^{2}(N\theta)d\theta}},
	\end{align*}
	strongly in $W_{\mathrm{loc}}^{1,2}(B_{1}\setminus\{0\})$ and weakly in $W^{1,2}(B_{1})$.
\end{theoremb}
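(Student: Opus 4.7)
The plan is to extend the approach of Theorem~A by replacing the assumption of finitely many connected components with a fine classification of blow-up profiles at each stagnation point via the nonlinear frequency formula highlighted in the abstract. First I would attach to each stagnation point $x^{\mathrm{st}}\in S$ a frequency $N(r,x^{\mathrm{st}})$, nondecreasing in $r$, whose pointwise limit $N_{0}(x^{\mathrm{st}}):=\lim_{r\to 0^{+}}N(r,x^{\mathrm{st}})$ exists and satisfies $N_{0}\geqslant 3/2$ by the growth assumption~\eqref{gc} and the Bernoulli condition. The compactness furnished by the frequency bound lets me extract blow-up limits either from $\psi_{r}(x)=\psi(x^{\mathrm{st}}+rx)/r^{3/2}$ when $N_{0}=3/2$, or, in the degenerate case, from the $L^{2}$-normalization $\psi(x^{\mathrm{st}}+rx)/\sqrt{r^{-1}\int_{\partial B_{r}(x^{\mathrm{st}})}\psi^{2}\,dS}$ featured in the statement.

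The second step is to classify the admissible blow-up limits. Since~\eqref{gc1} forces $\rho\to\bar{\rho}_{0}$ near $x^{\mathrm{st}}$, the rescaled quasilinear equation linearizes in the limit to $\Delta\psi_{0}=0$ on $\{\psi_{0}>0\}$; the Bernoulli condition then becomes either $|\nabla\psi_{0}|^{2}=-2\bar{\rho}_{0}^{2}g x_{2}$ on $\partial\{\psi_{0}>0\}$ (for $N_{0}=3/2$) or $\nabla\psi_{0}=0$ on the zero set (for the degenerate normalization). A standard homogeneous-ODE analysis on $\partial B_{1}$ of degree-$N_{0}$ solutions then forces the dichotomy: either $N_{0}=3/2$ with $\psi_{0}$ the Stokes corner, or $N_{0}=N\in\{2,3,4,\ldots\}$ with $\psi_{0}$ the $R^{N}|\sin(N\theta)|$ profile supported in $\{x_{2}\leqslant x_{2}^{\mathrm{st}}\}$.

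Set $\Sigma:=\{x^{\mathrm{st}}\in S:N_{0}(x^{\mathrm{st}})\geqslant 2\}$. Local finiteness of $\Sigma$ follows from a density drop: at each such point the Lebesgue density of $\{\psi>0\}$ is strictly below the Stokes value $2/3$, and upper semicontinuity of the frequency together with a covering argument prevents accumulation in $\Omega$. At each $x^{\mathrm{st}}\in S\setminus\Sigma$, one has $N_{0}=3/2$ and the blow-up analysis of Theorem~A applies verbatim: the monotonicity formula identifies the subsequential limit as the Stokes profile, the compensated compactness argument for the compressible Euler system upgrades weak to strong $W^{1,2}_{\mathrm{loc}}$ convergence so that the Bernoulli condition survives passage to the limit, and nondegeneracy yields the symmetric-difference convergence of the free surface. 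Any accumulation point of $S$ cannot lie in $S\setminus\Sigma$ because Theorem~A's two-graph local structure at such points gives isolation in $S$; hence accumulation points lie in $\Sigma$, and, combined with the local finiteness of $\Sigma$, this shows $S$ is at most countable. At each $x^{\mathrm{st}}\in\Sigma$ with frequency $N\geqslant 2$, the vanishing $\psi(x^{\mathrm{st}}+rx)/r^{\alpha}\to 0$ for $\alpha<N$ is a direct consequence of the monotonicity of $N(r,\cdot)$, and strong convergence of the $L^{2}$-normalized rescalings to the $R^{N}|\sin(N\theta)|$ profile follows from the frequency bound and the classification of Step~2.

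The main obstacle I anticipate is the rigorous justification of the linearization at the blow-up limit. Although~\eqref{gc1} suggests $\rho\to\bar{\rho}_{0}$, one must control the oscillation of $\rho(|\nabla\psi_{r}|^{2};x_{2})$ uniformly in $r$ while $|\nabla\psi_{r}|^{2}$ itself carries a nontrivial limiting profile; this is precisely where the compensated compactness argument for the compressible Euler system, coupled with the quasilinear monotonicity formula, has to do the heavy lifting to deliver strong $W^{1,2}_{\mathrm{loc}}$ convergence and to transfer both the interior equation and the Bernoulli boundary condition to the limit. A secondary difficulty is the density-drop argument underlying the local finiteness of $\Sigma$: bounding the Lebesgue density of $\{\psi>0\}$ from above near a frequency-$N$ point without any a priori isolation requires combining the nondegeneracy of $\psi$ with the sector structure of the $|\sin(N\theta)|$ profile, and this is the step where the absence of the finite-components hypothesis of Theorem~A bites hardest.
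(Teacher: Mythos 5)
Your overall architecture (a Weiss-type monotonicity formula for the $r^{-3/2}$ rescaling, a frequency formula at degenerate points, compensated compactness to upgrade weak to strong convergence of the normalized blow-ups) matches the paper's, but there are two genuine gaps. First, your dichotomy ``$N_{0}=3/2$ or $N_{0}=N\geqslant 2$'' misses a third alternative. The blow-up classification (\propref{Proposition: 2-dimensional case}) yields \emph{three} possible weighted densities, $\Phi(x^{\circ},u;0^{+})\in\{0,\tfrac{\sqrt{3}}{3H_{0}},\tfrac{2}{3H_{0}}\}$: besides the Stokes corner and the horizontal-flat profile there is the cusp case $\Phi=0$, in which the blow-up vanishes \emph{and} $\chi_{\{u_{m}>0\}}\to 0$. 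At such points the frequency machinery you invoke is simply not available: the key inequality $D-\tfrac{3}{2}\geqslant V\geqslant 0$ (\lemref{Lemma: mean frequency}) is derived from $\widetilde{\Phi}(u;r)\geqslant\widetilde{\Phi}(u;0^{+})=r^{-n-1}H_{0}^{-1}\int_{B_{r}}x_{n}^{+}\,dx$, which uses precisely that the density equals $\tfrac{2}{3H_{0}}$; for a cusp point the right-hand side degenerates to $0$ and no lower frequency bound follows. The paper therefore excludes cusps by a separate argument (\lemref{Lemma: cusp}), exploiting the sharp constant in $|\nabla u|^{2}\leqslant x_{2}^{+}$ to get $H(|\nabla u|^{2};x_{2})\geqslant H_{0}$ on the relevant boundaries and comparing with the minimizer of $\int\sqrt{x_{2}}\,dS$. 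Without this step your set $S\setminus\Sigma$ could still contain cusp points, at which neither the Stokes asymptotics nor the $R^{N}|\sin(N\theta)|$ asymptotics hold, so the statement would not be proved.

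Second, your ``density drop'' proof of local finiteness of $\Sigma$ rests on a false premise: at a horizontal-flat point the blow-up of $\chi_{\{\psi>0\}}$ is the \emph{full} half-disk $\chi_{\{0<\theta<\pi\}}$, so the Lebesgue density of $\{\psi>0\}$ there lies \emph{above}, not below, the Stokes value. The paper's actual argument (\thmref{Theorem: degenerate points-1}) is a frequency-gap contradiction: if points $x^{m}\in\Sigma^{u}$ accumulated at $x^{\circ}\in\Sigma^{u}$, the rescaled points $z$ would satisfy $D(z,u_{m};r)\geqslant\tfrac{3}{2}$, whereas the limit profile $R^{N}|\sin(N\theta)|$ has frequency exactly $1$ at any point of $\{x_{2}=0\}\setminus\{0\}$, and the strong convergence furnished by compensated compactness transfers the lower bound to the limit, a contradiction. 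You would need to replace your density argument by something of this type. A smaller inaccuracy: isolation of the Stokes points is obtained from upper semicontinuity of the weighted density (\propref{Proposition: iso-stp}), not from a two-graph local structure of the free boundary, which is only established under the finite-components hypothesis of Theorem A and is not available here.
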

Although we have excluded the new dynamics suggested by Theorem B when $\{\psi=0\}$ has a finite number of air components, it is still not obvious to preclude the scenario that there are an infinite number of air components,~\cite[cf.~Figure 1]{MR2810856}.

Note that Theorem A and Theorem B not only recover the results in~\cite[Theorem A and Theorem B]{MR2810856}, but also extend the scenario from incompressible irrotational gravity flows to compressible irrotational ones. It is worth noting that when the free boundary is assumed to have a simple topological structure (for instance, being a continuous injective curve), its asymptotic behavior at the stagnation points can be explicitly graphed.
\begin{theoremc}
	Let $\psi$ be a subsonic weak solution of 
	\begin{align*}
		\left\{
        \begin{alignedat}{2}
            &\operatorname{div}\left(  \frac{\nabla\psi}{\rho(|\nabla\psi|^{2};x_{2})}\right)  =0\qquad&&\text{ in }\Omega\cap\{ \psi>0\},\\
            &|\nabla \psi|^{2} =2\bar{\rho}_{0}^{2}g(x_{2}^{\mathrm{st}}-x_{2})\qquad&&\text{ on }\Omega\cap\partial\{\psi>0\},
        \end{alignedat}
		\right.
    \end{align*}
  where $x_{2}^{\mathrm{st}}:=\frac{p'(\bar{\rho}_{0})}{2g}$ and $\Omega\subset \mathbb{R} ^{2}$ is a bounded domain so that $\Omega\cap\{x_{2}=x_{2}^{\mathrm{st}}\}\neq\varnothing$. Then 

  (1). Suppose that 
  \begin{align}\label{gro1}
    |\nabla\psi|^{2}\leqslant C_{0}(x_{2}^{\mathrm{st}}-x_{2})\qquad\text{ in }\Omega\cap\{\psi>0\},
  \end{align}
  for some $C_{0}$ depending only on $\bar{\rho}_{0}$. Assume that the free boundary $\partial\{\psi>0\}$ is a continuous injective curve $\sigma:(-t_{0},t_{0})\setminus\{0\}\to \mathbb{R} ^{2}$ such that $\sigma=(\sigma_{1},\sigma_{2})$ and $\sigma(0)=x^{\mathrm{st}}$. Then there are only three possible cases:

  (1A.) Stokes corner (please see Figure~\ref{Fig: stokes}). 
  Note that in this case $\sigma_{1}(t)\neq x_{1}^{\mathrm{st}}$ in $(-t_{0},t_{0})\setminus\{0\}$ and, depending on the parametrization, either
  \begin{align*}
    \lim_{t\to 0^{+}}\frac{\sigma_{2}(t)-x_{2}^{\mathrm{st}}}{\sigma_{1}(t)-x_{1}^{\mathrm{st}}}=\frac{1}{\sqrt{3}}\qquad\text{ and }\qquad\lim_{t\to 0^{-}}\frac{\sigma_{2}(t)-x_{2}^{\mathrm{st}}}{\sigma_{1}(t)-x_{1}^{\mathrm{st}}}=-\frac{1}{\sqrt{3}},
  \end{align*}
  or
  \begin{align*}
    \lim_{t\to 0^{+}}\frac{\sigma_{2}(t)-x_{2}^{\mathrm{st}}}{\sigma_{1}(t)-x_{1}^{\mathrm{st}}}=-\frac{1}{\sqrt{3}}\qquad\text{ and }\qquad\lim_{t\to 0^{-}}\frac{\sigma_{2}(t)-x_{2}^{\mathrm{st}}}{\sigma_{1}(t)-x_{1}^{\mathrm{st}}}=\frac{1}{\sqrt{3}}.
  \end{align*}
  \begin{figure}[!ht]
    \centering
    \tikzset{every picture/.style={line width=0.75pt}} 

    \begin{tikzpicture}[x=0.75pt,y=0.75pt,yscale=-1,xscale=1]
    
    \draw [color={rgb, 255:red, 128; green, 128; blue, 128 }  ,draw opacity=1 ] [dash pattern={on 0.84pt off 2.51pt}]  (219.4,100.4) -- (429,99.6) ;
    \draw    (328.19,100.35) ;
    \draw [shift={(328.19,100.35)}, rotate = 0] [color={rgb, 255:red, 0; green, 0; blue, 0 }  ][fill={rgb, 255:red, 0; green, 0; blue, 0 }  ][line width=0.75]      (0, 0) circle [x radius= 2.01, y radius= 2.01]   ;
    \draw  [color={rgb, 255:red, 0; green, 0; blue, 255 }  ,draw opacity=1 ] (241,131.03) .. controls (289.58,120.06) and (318.68,109.38) .. (328.32,99.01) .. controls (338.16,109.08) and (367.5,118.86) .. (416.31,128.34) ;
    \draw  [dash pattern={on 0.84pt off 2.51pt}]  (328.19,100.35) -- (420.67,151.83) ;
    \draw  [dash pattern={on 0.84pt off 2.51pt}]  (328.19,100.35) -- (241.33,152.17) ;
    
    \draw (324.39,83.42) node [anchor=north west][inner sep=0.75pt]  [font=\scriptsize]  {$x^{\mathrm{st}}$};
    \draw (318.45,108.06) node [anchor=north west][inner sep=0.75pt]  [font=\scriptsize]  {$120^{\circ }$};
    \draw (316.54,132.55) node [anchor=north west][inner sep=0.75pt]  [font=\scriptsize]  {$\psi  >0$};
    \draw (248.87,106.01) node [anchor=north west][inner sep=0.75pt]  [font=\scriptsize]  {$\psi =0$};
    \draw (383.85,104.78) node [anchor=north west][inner sep=0.75pt]  [font=\scriptsize]  {$\psi =0$};
    \draw (432.33,91.93) node [anchor=north west][inner sep=0.75pt]  [font=\scriptsize]  {$x_{2} =x_{2}^{\mathrm{st}}$};

    \end{tikzpicture}
    \caption{Stokes corner}
    \label{Fig: stokes}
  \end{figure}
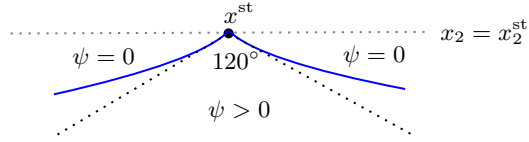

  (1B.) Left (Right) cusp (please see Figure~\ref{Fig: cusp}).
  Note that in this case $\sigma_{1}(t)\neq x_{1}^{\mathrm{st}}$ in $(-t_{0},t_{0})\setminus\{0\}$, $\sigma_{1}-x_{1}^{\mathrm{st}}$ does not change sign at $t=0$, and 
  \begin{align*}
    \lim_{t\to 0}\frac{\sigma_{2}(t)-x_{2}^{\mathrm{st}}}{\sigma_{1}(t)-x_{1}^{\mathrm{st}}}=0.
  \end{align*}
  \begin{figure}[!ht]
    \centering
    \tikzset{every picture/.style={line width=0.75pt}} 

    \begin{tikzpicture}[x=0.75pt,y=0.75pt,yscale=-1,xscale=1]
    
    \draw [color={rgb, 255:red, 128; green, 128; blue, 128 }  ,draw opacity=1 ] [dash pattern={on 0.84pt off 2.51pt}]  (206.92,158.75) -- (321.67,157.73) ;
    \draw    (289.31,158.55) ;
    \draw [shift={(289.31,158.55)}, rotate = 0] [color={rgb, 255:red, 0; green, 0; blue, 0 }  ][fill={rgb, 255:red, 0; green, 0; blue, 0 }  ][line width=0.75]      (0, 0) circle [x radius= 2.01, y radius= 2.01]   ;
    \draw [color={rgb, 255:red, 0; green, 0; blue, 255 }  ,draw opacity=1 ][line width=0.75]    (289.31,158.55) .. controls (252.92,158.11) and (218.96,177.12) .. (203.14,189.15) ;
    \draw [color={rgb, 255:red, 0; green, 0; blue, 255 }  ,draw opacity=1 ][line width=0.75]    (289.31,158.55) .. controls (240.85,158) and (209.71,166) .. (196,174.86) ;
    \draw [color={rgb, 255:red, 128; green, 128; blue, 128 }  ,draw opacity=1 ] [dash pattern={on 0.84pt off 2.51pt}]  (407.04,156.99) -- (578.54,157.24) ;
    \draw    (449.6,157.62) ;
    \draw [shift={(449.6,157.62)}, rotate = 0] [color={rgb, 255:red, 0; green, 0; blue, 0 }  ][fill={rgb, 255:red, 0; green, 0; blue, 0 }  ][line width=0.75]      (0, 0) circle [x radius= 2.01, y radius= 2.01]   ;
    \draw [color={rgb, 255:red, 0; green, 0; blue, 255 }  ,draw opacity=1 ][line width=0.75]    (449.6,157.62) .. controls (501.33,157.04) and (549.97,179.1) .. (563.17,193.5) ;
    \draw [color={rgb, 255:red, 0; green, 0; blue, 255 }  ,draw opacity=1 ][line width=0.75]    (449.6,157.62) .. controls (506.08,157.29) and (537.5,159.83) .. (572.83,177.83) ;
    
    \draw (286.42,140.16) node [anchor=north west][inner sep=0.75pt]  [font=\scriptsize]  {$x^{\mathrm{st}}$};
    \draw (181.08,181.31) node [anchor=north west][inner sep=0.75pt]  [font=\scriptsize,rotate=-339.31]  {$\psi  >0$};
    \draw (235.94,181.53) node [anchor=north west][inner sep=0.75pt]  [font=\scriptsize]  {$\psi =0$};
    \draw (174.06,158.92) node [anchor=north west][inner sep=0.75pt]  [font=\scriptsize]  {$\psi =0$};
    \draw (551.8,170.5) node [anchor=north west][inner sep=0.75pt]  [font=\scriptsize,rotate=-22.91]  {$\psi  >0$};
    \draw (568.16,160.53) node [anchor=north west][inner sep=0.75pt]  [font=\scriptsize]  {$\psi =0$};
    \draw (478.13,179.26) node [anchor=north west][inner sep=0.75pt]  [font=\scriptsize]  {$\psi =0$};
    \draw (333.2,149.13) node [anchor=north west][inner sep=0.75pt]  [font=\scriptsize]  {$x_{2} =x_{2}^{\mathrm{st}}$};
    \draw (445.55,139.63) node [anchor=north west][inner sep=0.75pt]  [font=\scriptsize]  {$x^{\mathrm{st}}$};
    \draw (586.33,146.73) node [anchor=north west][inner sep=0.75pt]  [font=\scriptsize]  {$x_{2} =x_{2}^{\mathrm{st}}$};

    \end{tikzpicture}
    \caption{Cusps}
    \label{Fig: cusp}
  \end{figure}

  (1C.) Horizontal flatness (please see Figure~\ref{Fig: hf}).
  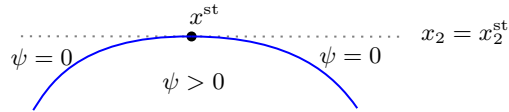
\begin{figure}[!ht]
    \centering
    \tikzset{every picture/.style={line width=0.75pt}} 

    \begin{tikzpicture}[x=0.75pt,y=0.75pt,yscale=-1,xscale=1]
    
    \draw [color={rgb, 255:red, 128; green, 128; blue, 128 }  ,draw opacity=1 ] [dash pattern={on 0.84pt off 2.51pt}]  (199.86,154.99) -- (389.4,155.2) ;
    \draw    (285.01,155.09) ;
    \draw [shift={(285.01,155.09)}, rotate = 0] [color={rgb, 255:red, 0; green, 0; blue, 0 }  ][fill={rgb, 255:red, 0; green, 0; blue, 0 }  ][line width=0.75]      (0, 0) circle [x radius= 2.01, y radius= 2.01]   ;
    \draw [color={rgb, 255:red, 0; green, 0; blue, 255 }  ,draw opacity=1 ][line width=0.75]    (285.01,155.09) .. controls (336,155.17) and (356.33,174.83) .. (367.67,191.17) ;
    \draw [color={rgb, 255:red, 0; green, 0; blue, 255 }  ,draw opacity=1 ][line width=0.75]    (285.01,155.09) .. controls (226.33,154.5) and (212.78,181.83) .. (206,192.17) ;
    
    \draw (282.45,137.43) node [anchor=north west][inner sep=0.75pt]  [font=\scriptsize]  {$x^{\mathrm{st}}$};
    \draw (268.98,171.47) node [anchor=north west][inner sep=0.75pt]  [font=\scriptsize]  {$\psi  >0$};
    \draw (193.3,159.79) node [anchor=north west][inner sep=0.75pt]  [font=\scriptsize]  {$\psi =0$};
    \draw (347.27,158.42) node [anchor=north west][inner sep=0.75pt]  [font=\scriptsize]  {$\psi =0$};
    \draw (398,145.4) node [anchor=north west][inner sep=0.75pt]  [font=\scriptsize]  {$x_{2} =x_{2}^{\mathrm{st}}$};

    \end{tikzpicture}
    \caption{Horizontal flat singularity}
    \label{Fig: hf}
  \end{figure}
  Note that in this case $\sigma_{1}(t)\neq x_{1}^{\mathrm{st}}$ in $(-t_{0},t_{0})\setminus\{0\}$, $\sigma_{1}-x_{1}^{\mathrm{st}}$ changes sign at $t=0$, and 
  \begin{align*}
    \lim_{t\to 0}\frac{\sigma_{2}(t)-x_{2}^{\mathrm{st}}}{\sigma_{1}(t)-x_{1}^{\mathrm{st}}}=0.
  \end{align*}

  (2). Suppose that $C_{0}=2\bar{\rho}_{0}^{2}g$ in~\eqref{gro1}, then the case (1B) is excluded.

  (3). Suppose additionally that $\{\psi=0\}$ has locally finite many connected components, then the case (1C) is excluded. And the set of stagnation points is locally in $\Omega$ a finite set, and at each stagnation point $x^{\mathrm{st}}$ \emph{only} the case (1A) holds.
\end{theoremc}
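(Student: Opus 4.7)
The strategy is to combine the blow-up analysis already carried out for Theorems A and B with the topological constraint supplied by the injective parametrisation of the free boundary near $x^{\mathrm{st}}$. The possible blow-up limits at $x^{\mathrm{st}}$ are severely restricted by the monotonicity formula and the nonlinear frequency formula, and each admissible limit will correspond to exactly one of the three geometric configurations (1A)--(1C).

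First I would apply the blow-up analysis from the proof of Theorem B to the stagnation point $x^{\mathrm{st}}$, working under the weaker growth assumption $|\nabla\psi|^{2}\leqslant C_{0}(x_{2}^{\mathrm{st}}-x_{2})$. The frequency/monotonicity machinery only uses an upper bound on the growth, so, along a subsequence $r_{k}\to 0^{+}$, one obtains a non-trivial homogeneous blow-up limit $\psi_{0}$ of some degree $\alpha\in\{3/2\}\cup\{N\in\mathbb{N}:N\geqslant 2\}$. Because the rescaling freezes the vertical coordinate and forces the density to the boundary value $\bar{\rho}_{0}$, the limit equation is the Laplace equation, and the classification of homogeneous solutions of the Bernoulli-type free boundary problem on a cone gives only two possibilities: either $\psi_{0}$ is the Stokes corner profile $\frac{\sqrt{2}}{3}\bar{\rho}_{0}\sqrt{2g}\,R^{3/2}\cos\bigl(\tfrac{3}{2}(\theta+\pi/2)\bigr)$ supported in the sector $-\tfrac{5\pi}{6}<\theta<-\tfrac{\pi}{6}$, or $\psi_{0}$ is of the form $C R^{N}\lvert\sin(N\theta)\rvert$ supported in the half-plane $\{x_{2}<0\}$ for some integer $N\geqslant 2$.

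Next I would translate each alternative into a statement about the curve $\sigma$. The rescaled curves $\sigma_{r_{k}}(t):=r_{k}^{-1}(\sigma(r_{k}t)-x^{\mathrm{st}})$ Hausdorff-converge, as subsets of $\mathbb{R}^{2}$, to the topological free boundary of $\psi_{0}$. In the first alternative the limiting free boundary consists of the two rays at angles $-\tfrac{5\pi}{6}$ and $-\tfrac{\pi}{6}$, forcing the two branches of $\sigma$ to emanate in these two directions and yielding case (1A) with slope limits $\pm 1/\sqrt{3}$ (the sign pattern determined by which branch is parametrised by $t>0$). In the second alternative the limiting free boundary is contained in the horizontal line $\{x_{2}=0\}$, so both one-sided limits of $(\sigma_{2}(t)-x_{2}^{\mathrm{st}})/(\sigma_{1}(t)-x_{1}^{\mathrm{st}})$ vanish; whether $\sigma_{1}-x_{1}^{\mathrm{st}}$ retains or changes sign at $t=0$ is precisely the topological invariant that distinguishes the cusp case (1B) from the horizontal flat case (1C).

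For part (2) the plan is to use the sharp matching $C_{0}=2\bar{\rho}_{0}^{2}g$ to exclude the cusp alternative (1B). In that case both branches of $\sigma$ would sit on the same side of the vertical through $x^{\mathrm{st}}$ while being asymptotically horizontal, so the fluid region $\{\psi>0\}$ would be squeezed into a thin cusp touching the line $\{x_{2}=x_{2}^{\mathrm{st}}\}$ only at $x^{\mathrm{st}}$. I would compare $\psi$ with the one-dimensional Bernoulli profile $\bar{\rho}_{0}\sqrt{2g(x_{2}^{\mathrm{st}}-x_{2})}\,\mathrm{dist}(x,\partial\{\psi>0\})$ inside the cusp and apply the Hopf boundary point lemma to the auxiliary function $\lvert\nabla\psi\rvert^{2}-2\bar{\rho}_{0}^{2}g(x_{2}^{\mathrm{st}}-x_{2})$, which is $\leqslant 0$ by assumption and solves a linear elliptic equation in $\{\psi>0\}$; the cusp geometry then forces the Hopf gradient bound in the wrong direction, contradicting the non-degeneracy $\psi(x)\geqslant c\,\mathrm{dist}(x,\partial\{\psi>0\})$ that follows from the sharp Bernoulli matching. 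I expect this barrier-and-Hopf step to be the main technical obstacle, since the cusp is precisely the borderline case where the natural comparison functions degenerate and the alignment of the barrier with the horizontal tangent of $\sigma$ must be carried out delicately.

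Finally, part (3) is an immediate consequence of Theorem A: once $\{\psi=0\}$ has locally only finitely many connected components, Theorem A states that $S$ is locally finite in $\Omega$ and that the blow-up at every stagnation point is the Stokes corner profile of degree $3/2$. Hence the degree-$N$ alternative with $N\geqslant 2$, which produced the cases (1B) and (1C), cannot occur, leaving only case (1A) and completing the proof.
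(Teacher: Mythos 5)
Your overall scheme for part (1) --- blow-up classification plus a translation into statements about the injective curve $\sigma$ --- is the right one, and part (3) is handled the same way the paper handles it (reduce to the already--established Theorem A / Theorem~\ref{Theorem: second}; the paper itself only sketches Theorem C by reference to the rotational--wave argument, with $Lu_{m}\to\Delta u_{0}/H_{0}$ replacing the incompressible measure convergence). But there are two genuine problems.

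First, your classification of blow-ups is not what the machinery delivers. Under the $3/2$-homogeneous rescaling the admissible limits are \emph{three}, indexed by the weighted density $\Phi(x^{\mathrm{st}},u;0^{+})\in\{0,\tfrac{\sqrt{3}}{3H_{0}},\tfrac{2}{3H_{0}}\}$ (Proposition~\ref{Proposition: 2-dimensional case}): the Stokes profile, and two distinct cases in which $u_{0}\equiv 0$, distinguished only by the $L^{1}$-limit $\chi_{0}$ of $\chi_{\{u_{r}>0\}}$ ($\chi_{0}=0$ for the cusp, $\chi_{0}=\chi_{\text{half-plane}}$ for horizontal flatness). The nontrivial degree-$N$ profile $R^{N}|\sin(N\theta)|$ is \emph{not} available at cusp points: it is produced only at points of $\Sigma^{u}$ (density $\tfrac{2}{3H_{0}}$), under the different normalization $v_{r}=u(x^{\mathrm{st}}+rx)/\sqrt{r^{-1}\int_{\partial B_{r}}u^{2}}$, because the frequency formula of Theorem~\ref{Theorem: FF} and Lemma~\ref{Lemma: mean frequency} are established only for $x^{\circ}\in\Sigma^{u}$. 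So your plan to read off both (1B) and (1C) from the free boundary of a nontrivial degree-$N$ blow-up collapses in the cusp case, where the blow-up is identically zero and has no free boundary to converge to; there the geometric conclusion must instead be extracted from $\chi_{\{u_{r}>0\}}\to 0$ in $L^{1}$ together with the injectivity of $\sigma$.

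Second, your argument for part (2) is not carried out and, as proposed, would fail. Hopf's boundary point lemma requires an interior tangent ball, which is exactly what a cusp lacks, and for the quasilinear compressible operator it is not established that $|\nabla\psi|^{2}-2\bar{\rho}_{0}^{2}g(x_{2}^{\mathrm{st}}-x_{2})$ satisfies an elliptic equation or maximum principle; nor does the paper ever prove a nondegeneracy bound $\psi\geqslant c\,\mathrm{dist}(x,\partial\{\psi>0\})$ (it explicitly warns that nondegeneracy is not expected for variational solutions). The paper's actual exclusion of the cusp (Lemma~\ref{Lemma: cusp}) is measure-theoretic: $Lu_{m}(B_{2})\to 0$ while $Lu_{m}\geqslant\sqrt{x_{2}}\,\mathcal{H}^{1}\mres\partial_{\mathrm{red}}\{u_{m}>0\}$, and a connected component of $\{u_{m}>0\}$ touching the origin must reach $\partial A$; the divergence-theorem comparison then contradicts the strict minimality of $\int\sqrt{x_{2}}\,dS$. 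The hypothesis $C_{0}=2\bar{\rho}_{0}^{2}g$ enters precisely to give the sharp inequality $H(|\nabla u|^{2};x_{n})\geqslant H(x_{n};x_{n})=H_{0}$, which makes that comparison close; with a larger $C_{0}$ the inequality goes the wrong way, which is why (1B) cannot be excluded in part (1). You should replace the Hopf-lemma step with this argument (or an equivalent one) rather than flagging it as an expected obstacle.
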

Let us remark that the cusp asymptotics (please see Figure~\ref{Fig: cusp}) have two possibilities since it is unknown beforehand whether it points in the $x_{1}$ or $-x_{1}$ direction. The profiles of Stokes corner flow and horizontal flatness are symmetric. The cusp singularity and the horizontal flat singularity, which are not suggested by physical intuition, should also be noted. For incompressible gravity water waves, the cusp singularity is excluded when the flow is irrotational, regardless of whether it is two or three--dimensional axisymmetric flows~\cite{MR2810856,MR3225630}. The exclusion of it relies on the strong Bernstein estimates (see~\cite[Lemma 4.4]{MR2810856} and \cite[Remark 3.5]{MR3225630}) for gravity water waves. In the context of compressible flows, the assumption~\eqref{gro1} for $C_{0}=2\bar{\rho}_{0}^{2}g$ is somewhat equivalent to the strong Bernstein estimates. 

The analysis of the horizontal flat singularity is also not an easy task, relying on both our monotonicity formula and a new frequency formula for the compressible problems. The study of the frequency formula was dated back to the analysis of $Q$--valued harmonic functions~\cite{MR1777737}. Some recent developments have extended the classical formula from the Laplacian equations to include elliptic equations with more general semilinearities~\cite{MR2810856,MR2995099,MR3225630,MR4595616,MR4808256}. In this paper, we extend it to the quasilinear problem. With the aid of the new frequency formula and by employing the compensated compactness argument for compressible Euler flows, we will demonstrate that a horizontal flat singularity is impossible in the case of a finite number of air components.

\subsection{State of art}
In this subsection, we provide an overview of the state of the art of the approach taken in this paper. Firstly, it is noteworthy that the quasilinear problem~\eqref{mp} has a variational structure, see~\cite{MR752578,MR772122}. This means that the solutions of~\eqref{mp} can be found as minimizers of the corresponding energy functional. However, we are interested in solutions that are not minimizers of the energy functional. This is because both absolute minimizers and viscosity solutions are unsuitable for investigating singularities in Bernoulli problems since they would lead to the flat solution, e.g.~\cite{MR2915865,MR4385587}.  We also address that for solutions that are not minimizers, common properties such as non--degeneracy are not expected at all. In this paper, we focus on a broader class of solutions known as~\emph{subsonic variational solutions}, which are suitable for studying singularities in water wave problems~\cite{MR4238496}. They are defined in our context to solve the equation in terms of the first domain variation (see~\defref{Definition: subsonic weak solution}), while also incorporating additional information such as nonnegativity and the uniform subsonic condition. In order to study the singular asymptotics at the stagnation points, our intention is to classify each subsonic variational solutions that remains invariant under the scaling of the equation. Here, we encounter our first obstacle, which is the absence of a Weiss--type monotonicity formula for the quasilinear elliptic equations. The Weiss--type monotonicity formula, first developed by Weiss in~\cite{MR1620644,MR1759450}, gave the following deep observation: for two--dimensional irrotational water wave problem, if $\psi$ is a variational solution of
\begin{align*}
	J(\psi;\Omega):=\int_{\Omega}|\nabla\psi|^{2}+x_{2}\chi_{\{\psi>0\}}dx,
\end{align*}
where $\chi_{\left\{ \psi>0 \right\} }=1$ when $x\in\{\psi>0\}$ and $0$ otherwise. Then at each stagnation point $x^{\mathrm{st}}\in \partial\{\psi>0\}$ (in the sense that $|\nabla\psi(x^{\mathrm{st}})|=0$) the function 
\begin{align*}
	\Phi(x^{\mathrm{st}},\psi;r):=r^{-3}J(\psi;B_{r}(x^{\mathrm{st}}))-r^{-4}\int_{\partial B_{r}(x^{\mathrm{st}})}u^{2}dS,
\end{align*}
is increasing in $r$, see~\cite[Theorem 3.5, Boundary case]{MR2810856}. Moreover, we have that $\Phi(x^{\mathrm{st}},\psi;r)$ is constant if and only if $\psi$ is a homogeneous solution of degree $3/2$. Indeed, let us consider the following rescaling at any stagnation point $x^{\mathrm{st}}$
\begin{align}\label{scal}
	\psi_{m}(x)=\frac{\psi(x^{\mathrm{st}}+r_{m}x)}{r_{m}^{3/2}}\quad\text{ for }r_{m}>0.
\end{align}
It is easy to check that
\begin{align*}
	\Phi(0,\psi_{m};r)=\Phi(x^{\mathrm{st}},\psi;rr_{m}).
\end{align*} 
Thus, if $\psi_{m}\to\psi_{0}$ strongly in $W_{\mathrm{loc}}^{1,2}(\mathbb{R} ^{2})$ as $m\to+\infty $, we obtain
\begin{align}\label{3/2}
	\Phi(0,\psi_{0};r)=\Phi(x^{\mathrm{st}};\psi;0),\qquad\text{ for any }r>0.
\end{align}
On the one hand, the existence of $\Phi(0,\psi_{0};r)$ is due to the monotonicity of $\Phi(\cdot,\cdot;r)$ with respect to $r$. On the other hand,~\eqref{3/2} allows us to classify variational solutions that are homogeneous functions of degree $3/2$. In our context, however, the governing equation for  compressible irrotational flows is a quasilinear equation and the energy $J(\psi;\Omega)$ takes the form (please refer to~\secref{Secsub} for the derivation)
\begin{align}\label{eng}
    \begin{split}
        J(\psi;\Omega)&=\int_{\Omega}\int_{0}^{\tfrac{|\nabla\psi|^{2}}{2\bar{\rho}_{0}^{2}g}}\frac{d\tau}{\rho(2\bar{\rho}_{0}^{2}g\tau;x_{2}^{\mathrm{st}}-x_{2})}\,dx\\
		&+\int_{\Omega}\int_{0}^{\tfrac{|\nabla\psi|^{2}}{2\bar{\rho}_{0}^{2}g}}\left[ \frac{x_{2}}{\bar{\rho}_{0}}+\int_{0}^{x_{2}}\pd{!}{\tau}\left( \frac{1}{\rho(2\bar{\rho}_{0}^{2}g\tau;x_{2}^{\mathrm{st}}-x_{2})} \right)\tau\,d\tau\right] \chi_{\left\{ \psi>0 \right\} }dx,
    \end{split}
\end{align}
where $x_{2}^{\mathrm{st}}:=\frac{p'(\bar{\rho}_{0})}{2g}$. The variational structure in this setting is not invariant under the $3/2$-homogeneous rescaling~\eqref{scal} due to the non--homogeneity of the governing operator. This lack of rescaling invariance makes it difficult to obtain any information from compactness arguments, such as~\eqref{3/2}. However, in this paper, we observe that 
\begin{itemize}
    \item [(1)] The energy given in \eqref{eng} consists of a homogeneous part $J_{\mathrm{Hom}}(\psi;\Omega)$ (the precise definition can be found in~\rmkref{rmk:tildeJ}, see~\eqref{JH}), which possesses a similar rescaling invariant property as the incompressible case. 
    
    \item [(2)] The difference between~\eqref{eng} and $J_{\mathrm{Hom}}(\psi;\Omega)$ is small near the stagnation points. In particular, if we introduce the blow-up sequence~\eqref{scal} into this difference, we prove that this difference tends to zero as $r_{m}\to 0^{+}$. 
\end{itemize}
These two observations are the highlight of this work. Another Gordian knot is that, unlike the incompressible problems, we currently do not know what kind of energy should be subtracted from~\eqref{eng}, so that $\Phi(x^{\mathrm{st}},u;r)$, which is defined as their difference, becomes monotone. Various energies tried by the authors turned out not to work due to the presence of non--integrable additional terms in the monotone quantity $\pd{!}{r}\Phi(x^{\mathrm{st}},u;r)$. The most significant and original contribution in this work is that 
\begin{itemize}
    \item The energy $r^{-3}J(\psi;B_{r}(x^{\mathrm{st}}))$ where $J$ is defined in~\eqref{eng} becomes monotone when subtracting the Weiss--type boundary adjusted $L^{2}$ energy $r^{-4}\bar{\rho}_{0}^{-1}\int_{\partial B_{r}(x^{\mathrm{st}})}\psi^{2}\,dS$.
\end{itemize}
This is because that we have encountered a difference in density between the flow region and that on the free boundary. This difference provides an additional decay that helps us overcome the non--integrable terms in $\Phi(x^{\mathrm{st}},\psi;r)$. Based on these new findings, we successfully establish our first new result: a Weiss--type monotonicity formula for $J(\psi;\Omega)$ near the stagnation points (presented in~\propref{prop:wsm}).

Let us remark that the robust structure underlying the Euler system notably facilitates our establishment of the first monotonicity formula for quasilinear free boundary problems of the Bernoulli--type. To the authors' current knowledge, there is no general method for constructing monotonicity formulae for nonlinear free boundary problems, see~\cite{MR4556790}. However, we believe that our method can be developed to construct monotonicity formulae for general quasilinear problems of the Bernoulli-type and we leave it to the future work.

It should also be noted that unlike the incompressible problem, the derivative of $\Phi(x^{\mathrm{st}},u;r)$ with respect to $r$ is non--negative. Our formula for compressible flows has some additional terms that naturally account for the density variation within the fluid region, see for instance~\eqref{K3r} and~\eqref{K4r} in~\propref{prop:wsm}. These additional terms introduces new difficulties in obtaining the limit $\Phi(x^{\mathrm{st}},u;0^{+})$ as well as the homogeneity of blow-up $u_{0}$. We have conducted a thorough analysis of these additional terms and demonstrated that they are integrable under the growth condition of solutions (presented in~\lemref{Lemma: blow-up limits}). Let us also remark that the developed monotonicity formula opens up the toolbox of blow-up techniques, with which to pry the geometry of the singular profile near the stagnation points. In this way, we classify each subsonic variational solution of the form~\eqref{scal}
(presented in~\lemref{Lemma: blow-up limits}) using our new monotonicity formula. Furthermore, we prove that our subsonic variational solutions converge to the variational solutions defined in~\cite[Definition 3.1]{MR2810856} (presented in~\lemref{Lemma: densities at the stagnation points}). Lastly, we calculate the corresponding PDE for the blow-up $\psi_{0}(x)$ to derive all possible singular asymptotics near the stagnation points in two dimensions (presented in~\propref{Proposition: 2-dimensional case}).

At this stage, we classify each stagnation point (see~\defref{Definition: stgnation points}) as either non-degenerate stagnation points or degenerate stagnation points. This classification mainly depends on whether the blow-up $\psi_{0}$ is trivial and degenerate or not, as defined in~\secref{Sect: Deg poi} (cf. equation~\eqref{Formula: property N}). At each non-degenerate stagnation point, we prove in~\lemref{Lemma:measure1} that the only possible asymptotic profile is the Stokes corner flow. Moreover, we prove in~\propref{Proposition: iso-stp} that the non-degenerate stagnation point is isolated. 

In the penultimate section (\secref{Sect: Deg poi}) of the paper, our objective is to exclude singular asymptotics that are not suggested by physical intuition. These asymptotics occur at degenerate stagnation points, and examples of such singular asymptotics include the “cusp” and “horizontal flatness”, as shown in Figure~\ref{Fig: cusp} and Figure~\ref{Fig: hf} in Theorem C. Note that in these cases, we all have that the blow-up $\psi_{0}\equiv 0$, which implies that the decay rate of the solutions at the degenerate stagnation points is strictly higher than $|x-x^{\mathrm{st}}|^{3/2}$. The only difference between two types of degenerate stagnation points is highlighted by their corresponding “weighted densities”. More precisely, the cusp singularity corresponds to a “weighted density” of value $0$, while the horizontal flat singularity occurs when the weighted density takes on the value $\frac{2}{3\bar{\rho}_{0}}$ (presented in~\propref{Proposition: 2-dimensional case}).

Inspired by~\cite[Lemma 4.4]{MR2810856}, we exclude the cusp singularity by imposing a strong Bernstein estimate in the fluid region (as presented in~\lemref{Lemma: cusp}).

The exclusion of the horizontal flat singularity remains to be addressed. In this case, the situation is more akin to analyze those degenerate free boundary points with the highest density, see~\cite{MR2748622} for a similar case.

In particular, we study the rescaled solutions of the new form
\begin{align*}
	\psi_{r}(x):=\frac{\psi(x^{\mathrm{st}}+rx)}{\sqrt{r^{-3}\bar{\rho}_{0}^{-1}\int_{\partial B_{r}(x^{\mathrm{st}})}\psi^{2} dS}}.
\end{align*}
We aim to study the limit of $\psi_{r}$ as $r\to 0^{+}$. To do so, we first construct a new frequency formula (presented in~\thmref{Theorem: FF}) to examine the compactness of $\psi_{r}$. We apply our frequency formula to prove that $\psi_{r}$ is bounded in $W^{1,2}(B_{1})$ (presented in~\propref{Proposition: frequency v0}). This allows $\psi_{r}$ to have weak a limit $\psi_{0}$ in the space $W^{1,2}(B_{1})$. However, in order to pass to the limit in the domain variation formula for $\psi_{r}$, we need the strong convergence. Here we encounter our another obstacle: how to improve the weak convergence of $\psi_{r}$ to strong convergence? In the case of incompressible flows, the concentration compactness method is applied~\cite{MR1220787}, which relies on the structure of incompressible Euler equations and achieves convergence of the non--linear quadratic terms $\operatorname{div}((u_{1}^{m},u_{2}^{m})\otimes(u_{1}^{m},u_{2}^{m}))$ in the sense of distributions as $m\to+\infty$. To preserve such property in compressible flows, we adopt the ideas of compensated compactness for the compressible Euler system~\cite{MR2291790}. The elegant mathematical theory of compensated compactness method was developed by Murat~\cite{MR506997} and Tartar~\cite{MR584398} in 1970s to solve nonlinear partial differential equations. This compactness argument helps us to avoid possible concentration issues that can lead to the failure of strong convergence. The strong convergence of $\psi_{r}$ to $\psi_{0}$ in $W^{1,2}$ gives us the governing equation of $\psi_{0}$. Thus, in turn, suggests that if the air region consists of only a finite number of components, then the horizontal flatness is impossible (as presented in~\thmref{Theorem: second}).

\subsection{History and background}
Euler equations involving a free boundary have been studied by many authors. The first breakthrough in the global well--posedness for the incompressible and irrotational problem with  prescribed general data is due to Wu~\cite{MR2507638,MR2782254}. The local well--posedness for the free boundary compressible gas model was obtained in~\cite{MR2608125,MR2980528,MR2547977,MR3280249,MR3218831}. The local well--posedness for the free boundary compressible liquid model with a bounded fluid domain is obtained in~\cite{MR3139610,MR4097326,MR4072680,MR1981993,MR3812074}. We also refer to~\cite{MR2560044} for an existence result when the fluid domain in unbounded.

The research on subsonic flows also has a long history. The global well-posedness of steady subsonic potential flows past a given two-dimensional body was established by Bers \cite{MR96477} in 1954. Later in 1957, Finn and Gilbarg \cite{MR86556} obtained the far fields behavior for a subsonic flow past a profile. The studies related to the three-dimensional flows were due to Finn and Gilbarg~\cite{MR92912} and Dong~\cite{MR1134129}. Concerning the supersonic flow past bodies, Chen, Xin and Yin the global existence for supersonic flow past a perturbed cone  in~\cite{MR1911248}. Xin et al established the existence and nonexistence of transonic shock in~\cite{MR2533922}. Chen and his co--authors~\cite{MR2291790} further extended the study to subsonic--soinc flows using the compensated compactness method. The approach was later generalized to higher dimensional subsonic--sonic flows~\cite{MR3437861}. The well--posedness of compressible flow in general nozzles has been established in a series of works by Xin and his collaborators~\cite{MR2375709,MR2607929,MR2644144,MR2533922,MR2216450}. The existence and uniqueness of subsonic irrotational flows in a finitely long nozzle with variable end pressure were studied in~\cite{MR3178073}, we also refer to~\cite{MR3048597} for subsonic--sonic flows in a convergent nozzle. If the readers are interested on this topic, we recommend referring to the research on subsonic--sonic potential flows in higher dimensions~\cite{MR2824469}, and also~\cite{MR3196988} when the vorticity is taken into consideration.

On the other hand, we focus on the singular profile of the one--phase Bernoulli problem from the point view of Calculus of Variations. The one--phase Bernoulli problems were first investigated by Alt and Caffarelli in their seminal paper~\cite{MR618549}. They studied the minimization problem $\int|\nabla\psi|^{2}dx+|\{\psi>0\}|\to\min$ and proved the existence of weak solutions in the sense of distributions. They also established the regularity of the free boundary $\partial\{\psi>0\}$ up to a set of vanishing $\mathcal{H}^{1}$-dimensional. The method was further developed to the two-phase case~\cite{MR732100}, and additional research on Bernoulli problems in higher dimensions can be found in~\cite{MR2082392}. The class of solutions closest to minimizers is viscosity solutions, as they possess inherit stability. The viscosity approach to classical one--phase Bernoulli problems was later developed in a series of seminal works~\cite{MR990856,MR1029856,MR973745}. The approach was later generalized by De Silva~\cite{MR2813524} through the introduction of a new partial boundary Harnack. Neither absolute minimizers nor viscosity solutions are suitable for application to water waves as they would result in trivial flat waves. A better option would be variational solutions. The investigation of variational solutions was initiated by Weiss~\cite{MR1620644}, who studied the size and the structure of the free boundary in  Bernoulli problems at which no outer normal exists.

To conclude this section, we briefly introduce the Stokes conjecture for incompressible extreme waves (i.e., the water wave with stagnation points). The starting point of the proof of the conjecture was in 1992, where Nekrasov applied a hodograph transformation to formulate the problem as an integral equation for a function $\theta_{\nu}(s)$, which represents the angle between the tangent to the free boundary and the horizontal line. 
\begin{align*}
	\theta_{\nu}(s)=\frac{1}{3\pi}\int_{0}^{\pi}\frac{\sin\theta_{\nu}(t)}{\nu^{-1}+\int_{0}^{t}\sin\theta_{\nu}(u)du}\log\left|\frac{\sin\frac{1}{2}(s+t)}{\sin\frac{1}{2}(s-t)}\right|dt,\quad s\in[0,\pi].
\end{align*} 
Here, $\nu$ is a parameter that depends on the period of the wave and the wave speed. The first existence result for solutions of Nekrasov's integral equation with $\nu>0$ is due to Krasovski~\cite{MR138284}. Later, Toland~\cite{MR513927} and McLeod~\cite{MR1446239} proved that if the limit of $\lim_{s\to 0^{+}}\theta_{0}(s)$ exists, it must be $\pi/6$. The existence of the limit was independently proven by Amick, Fraenkel and Toland~\cite{MR666110}, as well as Plotnikov~\cite{MR752600}. The convexity of the Stokes waves was proved by Plotnikov and Toland in their work~\cite{MR2038344}.

We emphasize that the proofs mentioned above rely on certain structural assumptions, including isolated stagnation points, as well as symmetry and monotonicity of the free surface. However, in a notable breakthrough over the past decade, V\v{a}rv\v{a}ruc\v{a} and Weiss revisited this conjecture from a geometric point of view and successfully established its validity without requiring any structural assumptions regarding isolated singularities, symmetry, or monotonicity of the free surface. The new geometric approach developed by V\v{a}rv\v{a}ruc\v{a} and Weiss employs a purely variational method to analyze singularities in one-phase Bernoulli problems. One of the key aspects of their theory is the use of a monotonicity formula and a frequency formula at stagnation points. By imposing suitable growth conditions on solutions, they applied the monotonicity formula to demonstrate that the sequence of approximating solutions uniformly converges to a homogeneous harmonic function of degree $3/2$. They work on original variables and their approach is very close to Stokes origin idea: approximating the stream function by homogeneous functions. However, it should be noted that the absence of structure gives rise to other types of singular profiles at the stagnation points, such as a “cusp” and a “horizontal flatness”. The novelty in their work lies in the introduction of  powerful tools such as  monotonicity formula and frequency formula to properly deal with these singularities. Their work presents a general approach that can be applied in various contexts, including rotational incompressible waves~\cite{MR2995099}, axisymmetric gravity flows without vorticity~\cite{MR3225630}, as well as axisymmetric inviscid flows with vorticity~\cite{MR4595616,MR4808256}.
\section{Notations and preliminaries}
Throughout the rest of the paper we work with an $n$--dimensional ($n\geqslant 2$) generalization of the problem described in the introduction. More precisely, the quasilinear free boundary problem
\begin{align}\label{p1}
	\left\{
    \begin{alignedat}{2}
        &\operatorname{div}\left( \frac{\nabla\psi(y)}{\rho(|\nabla\psi(y)|^{2};y_{n})}\right) = 0\qquad&&\text{ in }\Omega\cap\{\psi>0\},\\
        &|\nabla\psi(y)|^{2} =2\bar{\rho}_{0}^{2}g\left( y_{n}^{\mathrm{st}}-y_{n} \right) \qquad&&\text{ on }\Omega\cap \partial\{\psi>0\},
    \end{alignedat}
	\right.
\end{align}
where we denote a point $ y\in \mathbb{R} ^{n}$ as $y=(y',y_{n})=(y_{1},y_{2},\dots,y_{n})$ and the possible stagnation point $y^{\mathrm{st}}=(y_{1}^{\mathrm{st}},\dots,y_{n}^{\mathrm{st}})$. Here $\Omega$ is a bounded domain in $\mathbb{R} ^{n}$ which has a non--empty intersection with the hyperplane $\{y_{n}=y_{n}^{\mathrm{st}}\}$. $\bar{\rho}_{0}$ is the constant density on the free boundary $\Omega\cap\partial\{\psi>0\}$ and 
\begin{align}\label{stp}
    y_{n}^{\mathrm{st}}:=\frac{p'(\bar{\rho}_{0})}{2g}>0.
\end{align}
Introduce a set of new coordinates as $x'=y'$ and $x_{n}=y_{n}^{\mathrm{st}}-y_{n}$. For $y_{n} \leqslant y_{n}^{\mathrm{st}}$ we consider the scaled functions
\begin{align}\label{uH}
    u(x',x_{n})=\frac{\psi(x',y_{n}^{\mathrm{st}}-x_{n})}{\bar{\rho}_{0}(2g)^{1/2}}\qquad\text{ and }\qquad H(t;x_{n})=\rho(2\bar{\rho}_{0}^{2}gt;y_{n}^{\mathrm{st}}-x_{n}).
\end{align}
In the new frame $x=(x',x_{n})$, we have by a direct calculation that~\eqref{p1} becomes 
\begin{align}\label{p2}
    \left\{
        \begin{alignedat}{2}
            &\operatorname{div}\left( \frac{\nabla u(x)}{H(|\nabla u(x)|^{2};x_{n})} \right)=0\qquad&&\text{ in }\tilde{\Omega}\cap\{u>0\},\\
            &|\nabla u(x)|^{2}=x_{n}\qquad&&\text{ on }\tilde{\Omega}\cap\partial\{u>0\}.
        \end{alignedat}
    \right.
\end{align} 
It should be noted that the stagnation points are now located on the line $\{x_{n}=0\}$ and $\tilde{\Omega}$ is a bounded domain so that $\tilde{\Omega}\cap\{x_{n}=0\}\neq\varnothing$.
We then summarize some fundamental properties that will be frequently used throughout the rest of the paper.
\begin{lemma}\label{lem1}
    Let $x^{\circ}\in\partial\{u>0\}$ be a point with $x_{n}^{\circ}=0$, let $\delta_{0}:=\operatorname*{dist}(x^{\circ},\partial\Omega)/2$, and let $H_{0}:=H(0;0)=\bar{\rho}_{0}>0$. Then  
    \begin{enumerate}
        \item The first equation in~\eqref{p2} is uniformly elliptic in $B_{\delta_{0}}(x^{\circ})\cap\{u>0\}$. In other words,
		\begin{align*}
			c(n,g,H_{0})|\xi|^{2}\leqslant a_{ij}\xi_{i}\xi_{j}\leqslant C(n,g,H_{0})|\xi|^{2}\qquad\text{ for }\xi\in \mathbb{R} ^{n}\setminus\{0\},
		\end{align*}
        Here $(a_{ij})_{n\times n}$ is a symmetric matrix defined by
		\begin{align*}
			a_{ij}=a_{ij}(|\nabla u|^{2};x_{n}):=\left( \frac{\delta_{ij}}{H(|\nabla u|^{2};x_{n})}-\frac{2\partial_{1}H(|\nabla u|^{2};x_{n})\partial_{i}u\partial_{j}u}{H^{2}(|\nabla u|^{2};x_{n})} \right).
		\end{align*}
        \item The functions $x\mapsto H(|\nabla u(x)|^{2};x_{n})$, $x\mapsto\partial_{i}H(|\nabla u(x)|^{2};x_{n})$ for $i=1$, $2$ satisfy 
        \begin{align*}
            \partial_{1}H(|\nabla u|^{2};x_{n})=\frac{H_{0}^{2}g}{\rho[p'(\rho_{\mathrm{cr},x_{2}})+2h(\rho_{\mathrm{cr},x_{2}})-(p'(\rho)+2h(\rho))]}<0,
        \end{align*}
        and 
        \begin{align*}
            \partial_{2}H(|\nabla u|^{2};x_{n})=-\frac{\rho g}{[p'(\rho_{\mathrm{cr},x_{2}})+2h(\rho_{\mathrm{cr},x_{2}})-(p'(\rho)+2h(\rho))]}>0.
        \end{align*}
        Moreover, they enjoy the following bounds in $B_{\delta_{0}/2}(x^{\circ})$,
        \begin{align}
        c(n,g,H_{0})&\leqslant H(|\nabla u|^{2};x_{n})\leqslant C(n,g,H_{0}),\label{b1}\\
        -c(n,g,H_{0})&\leqslant \partial_{1}H(|\nabla u|^{2};x_{n})\leqslant-C(n,g,H_{0}),\label{b2}
    \end{align}
    and 
    \begin{align}\label{b3}
        c(n,g,H_{0})&\leqslant \partial_{2}H(|\nabla u|^{2};x_{n})\leqslant C(n,g,H_{0}).
    \end{align}
    \end{enumerate}
\end{lemma}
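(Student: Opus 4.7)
The plan is to read off part (2) from the explicit formulas \eqref{der1}--\eqref{der2} combined with the uniform subsonic property from \lemref{Property: uniform subsonic near stagnation point}, and then deduce part (1) from a rank-one perturbation analysis of $(a_{ij})$.

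\emph{Step 1 (uniform subsonic gap and bounded gradient).} The point $x^{\circ}$ corresponds in the original frame to a free boundary point at height $y_n=y_n^{\mathrm{st}}$, which is subsonic by \eqref{subs}. Invoking \lemref{Property: uniform subsonic near stagnation point} on a ball of radius at most $\delta_0$ around $x^{\circ}$ yields a uniform gap $\rho-\rho_{\mathrm{cr},y_n^{\mathrm{st}}-x_n}\geqslant c(n,g,H_0)>0$. Since $s\mapsto p'(s)+2h(s)$ is strictly increasing by \eqref{ass1}, this turns into a uniform lower bound
\begin{align*}
p'(\rho)+2h(\rho)-\bigl(p'(\rho_{\mathrm{cr}})+2h(\rho_{\mathrm{cr}})\bigr)\geqslant c(n,g,H_0)>0,
\end{align*}
which controls the denominators in \eqref{der1}--\eqref{der2} from below. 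At the same time, the Bernoulli law \eqref{bl} bounds the physical speed by $\sqrt{p'(\bar{\rho}_{0})}$, and combined with the pointwise two-sided bound on $\rho$ this bounds $|\nabla u|$ uniformly on $B_{\delta_0}(x^{\circ})$ in terms of $H_0$.

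\emph{Step 2 (formulas and bounds for $H$).} Differentiating $H(t;x_n)=\rho(2\bar{\rho}_{0}^{2}gt;\,y_n^{\mathrm{st}}-x_n)$ via the chain rule and inserting \eqref{der1}--\eqref{der2} reproduces the stated formulas in (2); the positivity $\partial_2 H>0$ arises from the extra minus sign generated by differentiating the shifted argument $y_n^{\mathrm{st}}-x_n$. The continuity of $\rho,\rho_{\mathrm{cr}},p',h$ on the compact range of values traversed in $B_{\delta_0/2}(x^{\circ})\cap\overline{\{u>0\}}$, combined with the uniform gap of Step 1, furnishes the two-sided bounds \eqref{b1}--\eqref{b3}, with constants depending only on $n$, $g$, $H_0$.

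\emph{Step 3 (rank-one structure of $(a_{ij})$).} A direct expansion of the divergence in \eqref{p2} identifies $(a_{ij})$ as the leading symbol and reveals the structure
\begin{align*}
(a_{ij})=H^{-1}I-2H^{-2}(\partial_1 H)\,\nabla u\otimes\nabla u,
\end{align*}
a rank-one perturbation of $H^{-1}I$. Its eigenvalues are $H^{-1}$ with multiplicity $n-1$ and $H^{-1}-2H^{-2}(\partial_1 H)|\nabla u|^{2}$; the latter is $\geqslant H^{-1}$ because $\partial_1 H<0$. Substituting the bounds from Step 2 together with the uniform bound on $|\nabla u|$ from Step 1 then pins both eigenvalues between positive constants depending only on $n$, $g$, $H_0$, which is precisely the uniform ellipticity claimed in (1).

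The main --- and quite mild --- obstacle is making the subsonic radius from \lemref{Property: uniform subsonic near stagnation point} compatible with the fixed $\delta_0/2$ appearing in the statement; this is handled by absorbing any further shrinking into the universal constants in \eqref{b1}--\eqref{b3}. The rest is the chain rule plus the one-line linear algebra of a rank-one perturbation.
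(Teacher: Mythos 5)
Your proposal is correct and follows exactly the route the paper indicates (and then omits): uniform subsonicity from Lemma~\ref{Property: uniform subsonic near stagnation point} to control the denominators in \eqref{der1}--\eqref{der2}, the chain rule on $H(t;x_n)=\rho(2\bar{\rho}_0^2 gt;y_n^{\mathrm{st}}-x_n)$ for part (2), and the rank-one eigenvalue computation for the ellipticity in part (1). The eigenvalue analysis and the gradient bound via the Bernoulli law are exactly the details the paper leaves to the reader, and you have supplied them correctly.
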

The proof follows immediately the fact that the flow is uniformly subsonic in $B_{\delta_{0}}(x^{\circ})$, and the bounds in equations~\eqref{b1}, \eqref{b2} and~\eqref{b3} follow from~\eqref{der1} and~\eqref{der2}, so we omit it.
Let us now fix some notations that are used throughout the paper. Given any set $A$ we denote by $\chi_{A}$ the characteristic function of $A$. For any sets $A$ and $B$, we denote by $A\triangle B$ the set $(A\setminus B)\cup(B\setminus A)$. We denote by $a^{\pm}=\max\{\pm a,0\}$ for any real number $a\in \mathbb{R} $. Therefore $a=a^{+}-a^{-}$ and $|a|=a^{+}+a^{-}$. The Euclidean inner product in $\mathbb{R} ^{n}\times \mathbb{R} ^{n}$ is denoted by $x\cdot y$, and the ball of center $x^{\circ}$ and radius of $r$ is denoted by $B_{r}(x^{\circ})$. We use the notation $B_{r}:=B_{r}(0)$ for simplicity. We use the notation $Lu$ to represent
\[
    Lu:=\operatorname{div}\left( \frac{\nabla u}{H(|\nabla u|^{2};x_{n})} \right).
\]
Additionally, we define the positive number
\begin{align}\label{d0}
	\delta_{0}:=\frac{\operatorname*{dist}(x^{\circ},\partial\Omega)}{2}>0,\quad\text{ for } x^{\circ}\in \Omega.
\end{align}
Constants denoted by $\mathscr{c}$ or $\mathscr{C}$ are called universal if they depend on the dimensions $n$, $g$, $H_{0}$. For $H(t;s)$ defined in~\eqref{uH}, we define 
\begin{align*}
    F(t;s):= \int_{0}^{t}\frac{1}{H(\tau;s)}\,d\tau,\qquad\text{ for }t,s\geqslant 0.
\end{align*} 
To denote partial derivatives, we employ the notation 
\[
    \partial_{1}F(t;s):=\pd{F(t;s)}{t}=\frac{1}{H(t;s)},
\]
and 
\[
	\partial_{2}F(t;s):=\pd{F(t;s)}{s}=-\int_{0}^{t}\frac{\partial_{2}H(\tau;s)}{H^{2}(\tau;s)}\,d\tau.
\]
Define $\Lambda(t;s)=2t\partial_{1}F(t;s)-F(t;s)$ and set $\lambda(x_{n}):=\Lambda(x_{n};x_{n})$. In this fashion, we have by a simple calculation that
\begin{align}\label{l1}
    \lambda(x_{n})=\frac{x_{n}}{H_{0}}+\int_{0}^{x_{n}}\pd{!}{\tau}\left(\frac{1}{H(\tau;x_{n})} \right)\tau\,d\tau.
\end{align}
Here we used the fact that $H(x_{n};x_{n})=H_{0}$. It can be easily checked from~\eqref{l1} that $\lambda(x_{n})$ is a non--negative function that satisfies $\lambda(0)=0$.
We denote by $\mathcal{L}^{n}$ the $n$ dimensional Lebesgue measure and by $\mathcal{H}^{k}$ the $k$ dimensional Hausdorff measure for $k\geqslant 1$. Specifically, we use the notation $dS=d\mathcal{H}^{1}$. By $\nu$ we will refer to the outer normal on a given surface.  Functions of bounded variations are denoted by $\mathrm{BV}(\Omega)$, $\Omega\subset \mathbb{R} ^{n}$. The total variation measure is represented by $|\nabla f|$, which can be found in~\cite{MR3409135}. Note that for a smooth open set $E\subset \mathbb{R} ^{n}$, $|\nabla\chi_{E}|$ coincides with the surface measure on $\partial E$. Lastly, we will say that $f(r)=o(g(r))$ as $r\to 0^{+}$ if $\lim_{r\to 0^{+}}\frac{|f(r)|}{|g(r)|}=0$.

\section{Variational solutions and Weiss-type monotonicity formula}\label{Secsub}
In this section, we  consider solutions $u$, in a sense to be specified later, of the problem~\eqref{p2}. It should be noted that in the incompressible case, $H(|\nabla u|^{2};x_{n})\equiv H_{0}$, and the first equation in~\eqref{p2} reduces to the Laplace equation. Let us now introduce our notion of a subsonic variational solution of~\eqref{p2}.
\begin{definition}[Subsonic variational solution]\label{Definition: variational solution}
	Let $\Omega$ be a bounded domain in $\mathbb{R} ^{n}$ such that  $\Omega\cap\{x_{n}=0\}\neq\varnothing$. The function $u\in W_{\mathrm{loc}}^{1,2}(\Omega)$ is called a~\emph{subsonic variational  solution} of~\eqref{p2}, provided that 
	\begin{enumerate}
		\item [(1)] (Non-negativity) $u\geqslant0$ in $\Omega$, and $u\equiv0$ in $\Omega\cap\{x_{n}\leqslant0\}$.
		\item [(2)] (Regularity) $u\in C^{0}(\Omega)\cap C^{2}(\Omega\cap\{u>0\})$.
		\item [(3)] (Subsonic condition). There exists a uniform $\varepsilon_{0}>0$,
		\begin{align}\label{subcond}
			H\left(\tfrac{|\nabla u|^{2}}{2H_{0}^{2}g};y_{n}^{\mathrm{st}}-x_{n}\right)-\rho_{\mathrm{cr},x_{n}}\geqslant\varepsilon_{0}\quad\text{ for any }x=(x',x_{n})\in\Omega,
		\end{align}
		where $y_{n}^{\mathrm{st}}$ is defined in~\eqref{stp} and $\rho_{\mathrm{cr},x_{n}}$ is defined in~\secref{Subsec:SS} with $x_{2}$ replaced by $x_{n}$.
		\item [(4)] (First domain variation formula). The first variation with respect to domain variations of the functional
		\begin{align}\label{energy}
			J_{F}(v;\Omega):=\int_{\Omega}\left( F(|\nabla v|^{2};x_{n})+\lambda(x_{n})\chi_{\{v>0\}}\right)\,dx
		\end{align}
		vanishes at $v = u $, i.e.,
		\begin{align}\label{fdv}
			\begin{split}
				0&=-\frac{d}{d\varepsilon}J_{F}(u(x+\varepsilon\phi(x));\Omega)\bigg|_{\varepsilon=0}\\
				&=\int_{\Omega}\left(F(|\nabla u|^{2};x_{n})+\lambda(x_{n})\chi_{\{u>0\}}\right)\operatorname{div}\phi\,dx\\
                &-2\int_{\Omega}\partial_{1}F(|\nabla u|^{2};x_{n})\nabla u D\phi\nabla u\,dx\\
				&+\int_{\Omega}\left(\lambda'(x_{n})\chi_{\{u>0\}}+\partial_{2}F(|\nabla u|^{2};x_{n})\right)\phi_{n}\,dx,
			\end{split}
		\end{align}
    for any $\phi=(\phi_{1},\dots,\phi_{n})\in C_{0}^{1}(\Omega;\mathbb{R} ^{n})$.
	\end{enumerate}
\end{definition}
There are several comments on subsonic variational solutions.
\begin{remark}
	The regularity assumption $u\in C^{0}(\Omega)\cap C^{2}(\Omega\cap\{u>0\})$ can not be deduced from other assumptions stated in Definition~\ref{Definition: variational solution} using regularity theory. Moreover, additional regularity results concerning the free boundary, such as finite perimeter, are also not required in our definition. 
\end{remark}
\begin{remark}
	Let $x^{\circ}\in\Omega\cap\partial\{u>0\}$ with $x_{n}^{\circ}=0$, let $B_{\delta_{0}}(x^{\circ})$ be a ball with its center at $x^{\circ}$ and let $u$ be a subsonic variational solution in $B_{\delta_{0}}(x^{\circ})$. Then we may infer from ~\lemref{Property: uniform subsonic near stagnation point} that there exists $B_{\delta_{0}/2}(x^{\circ})\subset B_{\delta_{0}}(x^{\circ})$ so that the third requirement~\eqref{subcond} is automatically  satisfied in $B_{\delta_{0}}(x^{\circ})$. 
\end{remark}
\begin{remark}
	The first domain variation formula~\eqref{fdv} is motivated by the standard Noether equation in the calculus of variation, see~\cite[Chapter 3.1]{MR1368401}. When $\Omega\cap\partial\{u>0\}\in C^{2,\alpha}$ and $u\in C^{2,\alpha}$, $u$ is a classical solution to~\eqref{p1}. Integrating by parts in $\Omega\cap\{u>0\}$ gives 
	\begin{align}\label{v1}
		\begin{split}
			0&=\int_{\Omega\cap\{u>0\}}\operatorname{div}\left( \frac{\nabla u}{H(|\nabla u|^{2};x_{n})} \right)(\nabla u\cdot\phi)\,dx\\
      &+\int_{\Omega\cap\partial\{u>0\}}\left(\lambda(x_{n})-\varLambda(|\nabla u|^{2};x_{n})\right)(\phi\cdot\nu)\,d\mathcal{H}^{n-1},
		\end{split}
	\end{align}
	where $\nu$ is the normal vector to $\Omega\cap\partial\{u>0\}$. The proof of the formulae~\eqref{fdv} and~\eqref{v1} can be found in~\appdref{Proof of FDV}. It follows from~\eqref{v1} that  
	\begin{align*}
		\operatorname*{div}\left( \frac{\nabla u}{H(|\nabla u|^{2};x_{n})} \right)=0\quad\text{ in }\Omega\cap\{u>0\},
	\end{align*}
  and 
  \begin{align*}
    \varLambda(|\nabla u|^{2};x_{n})=\varLambda(x_{n};x_{n})\quad\text{ on }\Omega\cap\partial\{u>0\}
  \end{align*}
	Given that $\varLambda(0;x_{n})=0$ and $\partial_{1}\varLambda\geqslant c>0$ for each fixed $x_{n}$, we can deduce that $t\mapsto\varLambda(t;\cdot)$ is a strictly increasing function. Consequently, one has $|\nabla u|^{2}=x_{n}$ on $\Omega\cap\partial\{u>0\}$.
\end{remark}
\begin{remark}
	The fact that $u$ is non-negative and continuous in $\Omega$, as well as $Lu=0$ in $\Omega\cap\{u>0\}$, implies that $Lu$ is a non-negative Radon measure supported on $\Omega\cap\partial\{u>0\}$. Indeed, for any non-negative functions $\xi(x)\in C_{0}^{\infty}(\Omega)$, let 
	\begin{align*}
		\varphi_{k}(x):=\xi(x)(1-h(ku)),
	\end{align*}
	where $h(s):=\max(\min(2-s,1),0)$. Then 
	\begin{align*}
		0=-\int_{\Omega}\frac{\nabla u\cdot\nabla\varphi_{k}}{H(|\nabla u|^{2};x_{n})}\,dx\leqslant-\int_{\Omega}\frac{\nabla u\nabla\xi(1-h(ku))}{H(|\nabla u|^{2};x_{n})}\,dx.
	\end{align*}
	Then letting $k\to+\infty$, we obtain that $Lu$ is a non--negative Radon measure supported on $\Omega\cap\partial\{u>0\}$.
\end{remark}
We will also introduce subsonic weak solutions of~\eqref{p1}.
\begin{definition}[Subsonic weak solutions]\label{Definition: subsonic weak solution}
	We say that $u\in W_{\mathrm{loc}}^{1,2}(\Omega)$ is a \emph{subsonic weak solution} of~\eqref{p2} if the following are satisfied: 
	\begin{enumerate}
		\item [(1)] $u$ is a subsonic variational solution of~\eqref{p2}.
		\item [(2)] The topological free boundary $\partial\{ u > 0\}\cap\Omega\cap\{ x_{n} > \tau\}$ can be locally decomposed into an $(n-1)$--dimensional $C^{2,\alpha}$ surface, relative open to $\partial\{u>0\}$ and denoted by $\partial_{\mathrm{red}}\{u>0\}$, and a singular set of vanishing $(n-1)$--Hausdorff dimension.
	\end{enumerate}
\end{definition}
\begin{remark}
	For any subsonic weak solution of~\eqref{p2}, for each $x^{\circ}\in\Omega\cap\{x_{n}>\tau\}$ of $\partial_{\mathrm{red}}\{u>0\}$, there exists an open neighborhood $V$ of $x^{\circ}$ so that $u\in C^{1}(V\cap\overline{\{u>0\}})$ satisfies
	\begin{align}\label{bdyw}
		|\nabla u(x)|^{2}=x_{n}\quad\text{ on }V\cap\partial_{\mathrm{red}}\{u>0\}.
	\end{align}
\end{remark}
\begin{lemma}\label{Lemma: weak v.s variational}
	Let $u$ be a subsonic weak solution of~\eqref{p2} which satisfies
	\begin{align*}
		|\nabla u|^{2}\leqslant\mathscr{C}x_{n}^{+}\quad\text{ locally in }\Omega.
	\end{align*}
	Then $u$ is a subsonic variational solution of~\eqref{p2}. Moreover, $\chi_{\{u>0\}}$ is locally a function of bounded variation in $\{u>0\}$, and the total variation measure $|\nabla\chi_{\left\{ u>0 \right\} }|$ satisfies
	\begin{align*}
		r^{1/2-n}\int_{B_{r}(y)}\sqrt{x_{n}}|\nabla\chi_{\{u>0\}}|dx\leqslant\mathscr{C}_{0},
	\end{align*}
	for all $B_{r}(y)\subset\subset\Omega$ such that $y_{n}=0$.
\end{lemma}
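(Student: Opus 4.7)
Since $u$ is already a subsonic weak solution, properties (1)–(3) of Definition~\ref{Definition: variational solution} are built in, and only the first domain variation identity (4) must be verified. The two conclusions are best proved in parallel: the weighted perimeter bound provides the regularity needed for the integration by parts, while the integration by parts in turn implies both the $\mathcal{H}^{n-1}$-control of $\partial_{\mathrm{red}}\{u>0\}$ and the vanishing of the domain variation. The key structural inputs are: $Lu=0$ pointwise in $\{u>0\}$, the boundary condition $|\nabla u|^2=x_n$ on $\partial_{\mathrm{red}}\{u>0\}$ from~\eqref{bdyw}, the identity $H(x_n;x_n)=H_0$, and the vanishing of the $(n-1)$-Hausdorff measure of the singular set $\partial\{u>0\}\setminus\partial_{\mathrm{red}}\{u>0\}$.

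\textbf{Weighted perimeter estimate.} Fix $y$ with $y_n=0$ and a radius $r$ with $B_{2r}(y)\subset\subset\Omega$, and pick $\xi\in C_0^\infty(B_{2r}(y))$ with $\xi\equiv 1$ on $B_r(y)$, $|\nabla\xi|\leqslant C/r$. The vector field $\xi\nabla u/H(|\nabla u|^2;x_n)$ is $C^1$ in $\{u>0\}$. Applying the divergence theorem on the smooth approximating sub-level sets $\{u>\delta\}$ (smooth for a.e.\ $\delta$ by Sard's theorem, since $u\in C^2(\{u>0\})$), using $Lu=0$, and letting $\delta\to 0^+$, one arrives at
\begin{align*}
\int_{\partial_{\mathrm{red}}\{u>0\}\cap B_{2r}(y)}\xi\,\frac{|\nabla u|}{H(|\nabla u|^2;x_n)}\,d\mathcal{H}^{n-1}=-\int_{\{u>0\}}\nabla\xi\cdot\frac{\nabla u}{H(|\nabla u|^2;x_n)}\,dx,
\end{align*}
since the singular part contributes nothing to the limit, having vanishing $(n-1)$-Hausdorff measure. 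On $\partial_{\mathrm{red}}\{u>0\}$, $|\nabla u|^2=x_n$ forces $H(|\nabla u|^2;x_n)=H_0$ and $|\nabla u|=\sqrt{x_n}$, so the left-hand side equals $H_0^{-1}\int_{\partial_{\mathrm{red}}\{u>0\}\cap B_{2r}(y)}\xi\sqrt{x_n}\,d\mathcal{H}^{n-1}$. Combining this with the hypothesis $|\nabla u|\leqslant\sqrt{\mathscr{C}\,x_n^+}$, the volume estimate $\int_{B_{2r}(y)}\sqrt{x_n^+}\,dx\leqslant Cr^{n+1/2}$, and the lower bound $H\geqslant c$ from \lemref{lem1} gives
\begin{align*}
\int_{B_r(y)}\sqrt{x_n}\,d|\nabla\chi_{\{u>0\}}|\leqslant\frac{CH_0}{r}\int_{B_{2r}(y)}|\nabla u|\,dx\leqslant\mathscr{C}_0\,r^{n-1/2},
\end{align*}
which simultaneously proves local BV regularity of $\chi_{\{u>0\}}$ and the desired weighted bound.

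\textbf{Verification of (4).} A direct change of variables in $J_F(u(x+\varepsilon\phi(x));\Omega)$, using that $\lambda(x_n)\chi_{\{u>0\}}$ transforms through $\psi_\varepsilon^{-1}(\{u>0\})$, reproduces the right-hand side of~\eqref{fdv} at $\varepsilon=0$. To show that this expression vanishes, rewrite the $\chi_{\{u>0\}}$-piece via $\operatorname{div}(\lambda(x_n)\phi)=\lambda\operatorname{div}\phi+\lambda'\phi_n$; by the BV divergence theorem (justified by Step~2), it becomes a surface integral over $\partial_{\mathrm{red}}\{u>0\}$ equal to $\int\lambda(x_n)(\phi\cdot\nu)\,d\mathcal{H}^{n-1}$. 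The remaining Lebesgue integrand vanishes on $\{u=0\}$ (there $\nabla u=0$ a.e., $F(0;\cdot)=0$, $\partial_2F(0;\cdot)=0$), so integrating by parts on $\{u>\delta\}$, using $Lu=0$ and letting $\delta\to 0^+$, yields the boundary integrand $-\Lambda(|\nabla u|^2;x_n)(\phi\cdot\nu)$. Thus one recovers exactly formula~\eqref{v1},
\begin{align*}
-\frac{d}{d\varepsilon}J_F(u_\varepsilon;\Omega)\bigg|_{\varepsilon=0}=\int_{\partial_{\mathrm{red}}\{u>0\}}\bigl[\lambda(x_n)-\Lambda(|\nabla u|^2;x_n)\bigr](\phi\cdot\nu)\,d\mathcal{H}^{n-1}.
\end{align*}
On $\partial_{\mathrm{red}}\{u>0\}$ we have $|\nabla u|^2=x_n$, so $\Lambda(|\nabla u|^2;x_n)=\Lambda(x_n;x_n)=\lambda(x_n)$ by the very definition of $\lambda$, making the integrand vanish. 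The singular set contributes nothing by its vanishing $(n-1)$-Hausdorff measure and the weighted bound of Step~2.

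\textbf{Main obstacle.} The principal difficulty is executing both integrations by parts despite the potentially irregular behavior of $\partial\{u>0\}$ along its singular portion. My remedy is the uniform approximation $\{u>0\}\approx\{u>\delta\}$: continuity plus $C^2$-regularity in $\{u>0\}$ and Sard's theorem make $\{u=\delta\}$ smooth for a.e.\ $\delta$, the growth bound $|\nabla u|^2\leqslant\mathscr{C}\,x_n^+$ gives the $L^1$ control $\int|\nabla u|\leqslant Cr^{n+1/2}$ preventing concentration as $\delta\to 0^+$, and the hypothesis that the singular set is $\mathcal{H}^{n-1}$-null ensures it is invisible to both the perimeter measure $|\nabla\chi_{\{u>0\}}|$ and the boundary integrals appearing in~\eqref{v1}. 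In this way every limiting surface integral reduces to one over $\partial_{\mathrm{red}}\{u>0\}$, where the classical boundary condition $|\nabla u|^2=x_n$ applies and closes the argument.
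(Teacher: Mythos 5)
Your overall strategy --- treat properties (1)--(3) as given, verify only the domain-variation identity by integrating by parts against $Lu=0$, use $\Lambda(|\nabla u|^{2};x_{n})=\Lambda(x_{n};x_{n})=\lambda(x_{n})$ on $\partial_{\mathrm{red}}\{u>0\}$ to kill the surface term, and obtain the weighted perimeter bound by testing the equation with a cutoff --- is the right one and coincides with the paper's in outline. But there is a genuine gap. Definition~\ref{Definition: subsonic weak solution} only provides the decomposition of $\partial\{u>0\}$ into a $C^{2,\alpha}$ reduced part and an $\mathcal{H}^{n-1}$-null singular set in $\Omega\cap\{x_{n}>\tau\}$, and the boundary condition~\eqref{bdyw} is likewise only available there. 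On and near the stagnation level $\{x_{n}=0\}$ --- precisely where the whole difficulty of the paper sits --- no structure of the free boundary is assumed, so neither your integration by parts, nor the BV divergence theorem applied to the $\lambda(x_{n})\chi_{\{u>0\}}$ term (you only control the \emph{weighted} perimeter, with a weight $\sqrt{x_{n}}$ that degenerates exactly there), nor the identification of the limiting surface integrals is justified in any neighborhood of $\{x_{n}=0\}$. The paper closes this with a second step your proposal omits entirely: first establish the identity for $\phi$ supported in $\{x_{n}>\tau\}$, then for general $\phi$ insert the cutoff $\eta=\min\{1,x_{n}/\tau\}$ and use $|\nabla u|^{2}\leqslant\mathscr{C}x_{n}^{+}$ (hence $F$, $\lambda$, $\partial_{2}F$ are $O(x_{n})$) to show that the extra term $\tau^{-1}\int_{\Omega\cap\{0<x_{n}<\tau\}}(\cdots)\,dx$ is $O(\tau)$ and vanishes as $\tau\to0$. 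This is the essential use of the growth hypothesis; in your write-up it enters only through a volume bound.

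A second, more technical point: your replacement of the paper's covering argument (balls $B_{r_{i}}(x^{i})$ around the singular set with $\sum_{i}r_{i}^{n-1}\leqslant\delta$) by the level sets $\{u>\delta\}$ leaves unproved the convergence of the boundary integrals over $\{u=\delta\}$ to the corresponding integrals over $\partial_{\mathrm{red}}\{u>0\}$ as $\delta\to0^{+}$; the coarea formula controls such integrals only along subsequences, and near the singular set you still need the quantitative covering --- together with the boundedness of $F$, $|\nabla u|^{2}$ and $\lambda$ on $\operatorname{supp}\phi$, so that the error is $O(\sum_{i}r_{i}^{n-1})$ --- to conclude that the singular part contributes nothing. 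The covering argument is therefore not avoided, merely hidden in the assertion that the singular set is ``invisible.''
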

The proof of this Lemma will be given in~\appdref{Appendix: variational solution v.s. weak solution}.
\subsection{Two preparatory identities}
In this subsection, we introduce two identities that will serve as preliminary tools for the monotonicity formula. The first one is a Poho\v{z}aev--type identity for quasilinear problem, while the second one is an energy identity. We present them in the following lemma.
\begin{lemma}\label{Lemma: twoidentities}
	Let $u$ be a subsonic variational solution of~\eqref{p2}, and let $B_{r}(x^{\circ})\subset\subset\Omega$ be a ball. Then the following identities hold for a.e. $r\in(0,\delta_{0})$, where $\delta_{0}$ is defined in~\eqref{d0}.
	\begin{enumerate}
		\item Poho\v{z}aev--type identity:
		\begin{align}\label{pid}
			\begin{split}
				&nJ_{F}(u;B_{r}(x^{\circ}))-rJ_{F}(u;\partial B_{r}(x^{\circ}))\\
                &=2\int_{B_{r}(x^{\circ})}\frac{|\nabla u|^{2}}{H(|\nabla u|^{2};x_{n})}\,dx-2r\int_{\partial B_{r}(x^{\circ})}\frac{(\nabla u\cdot\nu)^{2}}{H(|\nabla u|^{2};x_{n})}\,d\mathcal{H}^{n-1}\\
				&-\int_{B_{r}(x^{\circ})}\Big(\partial_{2}F(|\nabla u|^{2};x_{n})+\lambda'(x_{n})\chi_{\{u>0\}}\Big)(x_{n}-x_{n}^{\circ})\,dx.
			\end{split}
		\end{align}
		Here, $J_{F}(u;\partial B_{r}(x^{\circ}))$ is defined by 
		\begin{align*}
			J_{F}(u;\partial B_{r}(x^{\circ})):=\int_{\partial B_{r}(x^{\circ})}\Big(F(|\nabla u|^{2};x_{n})+\lambda(x_{n})\chi_{\left\{ u>0 \right\} }\Big)\,d\mathcal{H}^{n-1}.
		\end{align*}
		\item Energy identity:
		\begin{align}\label{eid}
			\int_{B_{r}(x^{\circ})}\frac{|\nabla u|^{2}}{H(|\nabla u|^{2};x_{n})}\,dx=\int_{\partial B_{r}(x^{\circ})}\frac{u\nabla u\cdot\nu}{H(|\nabla u|^{2};x_{n})}\, d\mathcal{H}^{n-1}.
		\end{align}
	\end{enumerate}
	Introducing~\eqref{eid} into~\eqref{pid} gives the following identity for a.e. $r\in(0,\delta_{0})$.
	\begin{align}\label{keyid}
		\begin{split}
			&nJ_{F}(u;B_{r}(x^{\circ}))-rJ_{F}(u;\partial B_{r}(x^{\circ}))\\
            &=2\int_{\partial B_{r}(x^{\circ})}\frac{u\nabla u\cdot\nu}{H(|\nabla u|^{2};x_{n})}\,d\mathcal{H}^{n-1}-2r\int_{\partial B_{r}(x^{\circ})}\frac{(\nabla u\cdot\nu)^{2}}{H(|\nabla u|^{2};x_{n})}\,d\mathcal{H}^{n-1}\\
			&-\int_{B_{r}(x^{\circ})}\Big(\partial_{2}F(|\nabla u|^{2};x_{n})+\lambda'(x_{n})\chi_{\{u>0\}}\Big)(x_{n}-x_{n}^{\circ})\,dx.
		\end{split}
	\end{align}
\end{lemma}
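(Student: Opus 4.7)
The plan is to derive both identities from the first domain variation formula~\eqref{fdv} of~\defref{Definition: variational solution}, together with the fact (justified in the remark following \defref{Definition: variational solution}) that $Lu$ is a non-negative Radon measure supported on $\Omega\cap\partial\{u>0\}$. The strategy is a standard Noether/Rellich approach adapted to the quasilinear, density-dependent setting: for the Poho\v{z}aev--type identity I would plug a radial cutoff vector field into~\eqref{fdv}, while for the energy identity I would test the weak form of $Lu=0$ against $u$ times a scalar radial cutoff.

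For~\eqref{pid}, let $\eta_{\varepsilon}\in C_{c}^{1}([0,\infty))$ satisfy $\eta_{\varepsilon}\equiv 1$ on $[0,r-\varepsilon]$, $\eta_{\varepsilon}\equiv 0$ on $[r,\infty)$, and be a smooth monotone interpolation in between. Substitute into~\eqref{fdv} the vector field $\phi_{\varepsilon}(x):=(x-x^{\circ})\eta_{\varepsilon}(|x-x^{\circ}|)$. Writing $y:=x-x^{\circ}$, direct calculation gives $\operatorname{div}\phi_{\varepsilon}=n\eta_{\varepsilon}(|y|)+|y|\eta_{\varepsilon}'(|y|)$, $(D\phi_{\varepsilon})_{ij}=\delta_{ij}\eta_{\varepsilon}(|y|)+\eta_{\varepsilon}'(|y|)y_{i}y_{j}/|y|$, and $(\phi_{\varepsilon})_{n}=(x_{n}-x_{n}^{\circ})\eta_{\varepsilon}(|y|)$, so that $\nabla u\,D\phi_{\varepsilon}\,\nabla u=|\nabla u|^{2}\eta_{\varepsilon}(|y|)+\eta_{\varepsilon}'(|y|)(\nabla u\cdot y)^{2}/|y|$. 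Using $\partial_{1}F=1/H$ and letting $\varepsilon\to 0^{+}$ along a sequence of radii in $(0,\delta_{0})$ of full measure (so that Lebesgue differentiation applies to the radial slices of $F(|\nabla u|^{2};x_{n})$, $\chi_{\{u>0\}}$, $|\nabla u|^{2}/H$ and $(\nabla u\cdot\nu)^{2}/H$, where on $\partial B_{r}(x^{\circ})$ one has $y/|y|=\nu$ and $(\nabla u\cdot y)^{2}/|y|=r(\nabla u\cdot\nu)^{2}$), each of the three integrals in~\eqref{fdv} converges to the corresponding term of~\eqref{pid}, yielding~\eqref{pid} for a.e.\ $r\in(0,\delta_{0})$. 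The subsonic condition~\eqref{subcond} together with \lemref{lem1} makes all integrands uniformly integrable and gives classical $W^{1,2}$ traces on $\partial B_{r}(x^{\circ})$ for a.e.\ $r$.

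For~\eqref{eid}, I would use the same radial cutoff $\eta_{\varepsilon}$ and test the distributional form of $Lu$ against the scalar function $\zeta_{\varepsilon}(x):=u(x)\eta_{\varepsilon}(|x-x^{\circ}|)$. By item (2) of \defref{Definition: variational solution}, $u$ is continuous in $\Omega$ with $u\equiv 0$ on $\partial\{u>0\}$, so $\zeta_{\varepsilon}\in W_{0}^{1,2}$ of a neighborhood of $\overline{B_{r}(x^{\circ})}$, and the remark guarantees $\int\zeta_{\varepsilon}\,dLu=0$ because $Lu$ is supported on $\partial\{u>0\}$ while $\zeta_{\varepsilon}$ vanishes there. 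Expanding $-\int_{\Omega}\nabla u\cdot\nabla\zeta_{\varepsilon}/H\,dx=\int\zeta_{\varepsilon}\,dLu=0$ gives
\begin{equation*}
\int_{\Omega}\eta_{\varepsilon}\frac{|\nabla u|^{2}}{H}\,dx=-\int_{\Omega}\frac{u\,\nabla u\cdot\nabla\eta_{\varepsilon}}{H}\,dx.
\end{equation*}
Since $\nabla\eta_{\varepsilon}(|y|)=\eta_{\varepsilon}'(|y|)y/|y|$, and $-\eta_{\varepsilon}'$ acts as an approximation of the unit radial surface measure on $\partial B_{r}(x^{\circ})$, sending $\varepsilon\to 0^{+}$ along a.e.\ $r$ gives exactly~\eqref{eid}. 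Finally, inserting~\eqref{eid} into the bulk term $2\int_{B_{r}(x^{\circ})}|\nabla u|^{2}/H\,dx$ in~\eqref{pid} produces~\eqref{keyid} immediately.

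The only non-routine point I foresee is justifying the limits as $\varepsilon\to 0^{+}$: one must check that the radial derivatives $-\eta_{\varepsilon}'(|x-x^{\circ}|)$ generate the surface integrals on $\partial B_{r}(x^{\circ})$ when paired with traces of $u$, $|\nabla u|^{2}/H$, $(\nabla u\cdot\nu)^{2}/H$, $F(|\nabla u|^{2};x_{n})$ and $\chi_{\{u>0\}}$. For a.e.\ $r\in(0,\delta_{0})$ this follows from the coarea formula, Lebesgue differentiation, and Fubini on radial slices; the uniform bounds on $H$, $\partial_{i}H$ from \lemref{lem1} guarantee the integrability needed to pass to the limit, and the $C^{2}$ regularity of $u$ on $\{u>0\}$ in \defref{Definition: variational solution}(2) ensures the classical interpretation of $(\nabla u\cdot\nu)^{2}$ on the part of $\partial B_{r}(x^{\circ})$ lying in $\{u>0\}$. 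No structural regularity on the free boundary is needed because the energy identity is obtained purely from $u=0$ on $\partial\{u>0\}$ and the Radon-measure character of $Lu$.
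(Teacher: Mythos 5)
Your proposal is correct and follows essentially the same route as the paper: the Poho\v{z}aev identity is obtained there by inserting the radial field $\phi(x)=x\eta(|x|)$ with a piecewise-linear cutoff into~\eqref{fdv} and passing to the limit for a.e.\ $r$, exactly as you do. The only cosmetic difference is in the energy identity, where the paper tests with the truncation $(u-\varepsilon)_{+}$ and integrates by parts classically in $\{u>\varepsilon\}\subset\subset\{u>0\}$, which sidesteps the (minor) issue of pairing the Radon measure $Lu$ with the merely continuous $W^{1,2}$ function $u\eta_{\varepsilon}$ that your version implicitly requires.
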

\begin{proof}
    (1). Assume without loss of generality $x^{\circ}=0$. Let us use $\phi(x):=x\eta(|x|)$ in~\eqref{fdv}, where $\eta\in C([0,r])\cap C^{1}([0,r))$, $\eta(r)=0$ is a cutoff function. We have \begin{align*}
        0&=\int_{\Omega}\Big(F(|\nabla u|^{2};x_{n})+\lambda(x_{n})\chi_{\left\{ u>0 \right\} }\Big)(n\eta(|x|)+|x|\eta'(|x|))\,dx\\
        &-2\int_{\Omega}\frac{|\nabla u|^{2}}{H(|\nabla u|^{2};x_{n})}\eta(|x|)\,dx\\
        &-2\int_{\Omega}\frac{\nabla u}{H(|\nabla u|^{2};x_{n})}\frac{x}{|x|}\eta'(|x|)x\cdot\nabla u\,dx\\
        &+\int_{\Omega}\Big( \lambda'(x_{n})\chi_{\left\{ u>0 \right\} }+\partial_{2}F(|\nabla u|^{2};x_{n}) \Big)x_{n}\eta(|x|)dx.
    \end{align*}
    Choose a sequence of $\eta_{m}$ with $\eta_{m}(s)=1$ for $s\in[0,r]$, $\eta_{m}(s)=0$ for $s>r+\frac{1}{m}$, and $\eta_{m}(s)$ is  linear between $(r,r+\tfrac{1}{m})$. Then 
    \begin{align*}
        0&=n\int_{B_{r}}\Big(F(|\nabla u|^{2};x_{n})+\lambda(x_{n})\chi_{\left\{ u>0 \right\} }\Big)\,dx\\
        &-m\int_{r}^{r+1/m}\int_{\partial B_{s}}s(F(|\nabla u|^{2};x_{n})+\lambda(x_{n})\chi_{\left\{ u>0 \right\} })\,d\mathcal{H}^{n-1}ds\\
        &-2\int_{B_{r}}\frac{|\nabla u|^{2}}{H(|\nabla u|^{2};x_{n})}\,dx\\
        &+2m\int_{r}^{r+1/m}\int_{\partial B_{s}}s\frac{\left( \nabla u\cdot\frac{x}{|x|} \right)^{2}}{H(|\nabla u|^{2};x_{n})}\,d\mathcal{H}^{n-1}ds\\
        &+\int_{B_{r}}\Big(\partial_{2}F(|\nabla u|^{2};x_{n})+\lambda'(x_{n})\chi_{\left\{ u>0 \right\} }\Big)x_{n}\,dx.
    \end{align*}
    Passing to the limit as $m\to+\infty$, we obtain for a.e. $r\in(0,\delta_{0})$
    \begin{align*}
        0&=n\int_{B_{r}}(F(|\nabla u|^{2};x_{n})+\lambda(x_{n})\chi_{\left\{ u>0 \right\} })\,dx\\
        &-r\int_{\partial B_{r}}(F(|\nabla u|^{2};x_{n})+\lambda(x_{n})\chi_{\left\{ u>0 \right\} })\,d\mathcal{H}^{n-1}\\
        &-2\int_{B_{r}}\frac{|\nabla u|^{2}}{H(|\nabla u|^{2};x_{n})}\,dx\\
        &+2r\int_{\partial B_{r}}\frac{\left( \nabla u\cdot\nu \right)^{2}}{H(|\nabla u|^{2};x_{n})}\,d\mathcal{H}^{n-1}\\
        &+\int_{B_{r}}\Big(\partial_{2}F(|\nabla u|^{2};x_{n})+\lambda'(x_{n})\chi_{\left\{ u>0 \right\} }\Big)x_{n}\,dx,
    \end{align*}
    and this gives~\eqref{pid}.

    (2). Note that for any $\varepsilon>0$, we have that 
    \begin{align*}
        &\int_{B_{r}}\frac{|\nabla u|^{2}}{H(|\nabla u|^{2};x_{n})}\,dx\\
        &=\lim_{\varepsilon\to0^{+}}\int_{B_{r}}\frac{\nabla u\cdot\nabla(u-\varepsilon)_{+}}{H(|\nabla u|^{2};x_{n})}\,dx\\
        &=\lim_{\varepsilon\to0^{+}}\int_{\partial B_{r}}\frac{(u-\varepsilon)_{+}\nabla u\cdot\nu}{H(|\nabla u|^{2};x_{n})}\,d\mathcal{H}^{n-1},
    \end{align*}
    using the dominated convergence theorem and integrating by parts, we have that the limit leads to~\eqref{eid}.
\end{proof}
\subsection{A Weiss-type monotonicity formula for quasilinear problem}
In this subsection, we prove a new monotonicity formula for quasilinear equation~\eqref{p1} near the stagnation points, which is the first highlight of our work.
\begin{proposition}[Monotonicity formula for quasilinear problem]\label{prop:wsm}
    Let $u$ be a subsonic variational solution of~\eqref{p2}, let $x^{\circ}\in\Omega$ and let $x_{n}^{\circ}=0$. Define for a.e. $r\in(0,\delta_{0})$, where $\delta_{0}$ is defined in~\eqref{d0}, the functions
    \begin{equation}\label{Ur}
        U(x^{\circ},u;r):=r^{-n-1}\int_{B_{r}(x^{\circ})}F(|\nabla u|^{2};x_{n})+\lambda(x_{n})\chi_{\{u>0\}}\,dx,
    \end{equation}
    \begin{equation}\label{Wr}
        W(x^{\circ},u;r):=r^{-n-2}\int_{\partial B_{r}(x^{\circ})}\frac{u^{2}}{H_{0}}\,d\mathcal{H}^{n-1},
    \end{equation}
    and 
    \begin{equation}\label{Phir}
        \Phi(x^{\circ},u;r):=U(x^{\circ},u;r)-\frac{3}{2}W(x^{\circ},u;r).
    \end{equation}
    Then for any $0<\sigma_{1}<\sigma_{2}<\delta_{0}$,  $\Phi(x^{\circ},u;r)$ satisfies the formula
    \begin{align}\label{Phir1}
        \begin{split}
            &\Phi(x^{\circ},u;\sigma_{2})-\Phi(x^{\circ},u;\sigma_{1})\\
			&=\int_{\sigma_{1}}^{\sigma_{2}}r^{-n-1}\int_{\partial B_{r}(x^{\circ})}\frac{2}{H(|\nabla u|^{2};x_{n})}\left(\nabla u\cdot\nu-\frac{3}{2}\frac{u}{r}\right)^{2}\,d\mathcal{H}^{n-1}dr\\
            &+\sum_{i=1}^{4}\int_{\sigma_{1}}^{\sigma_{2}}r^{-n-2}K_{i}(x^{\circ},u;r)dr, 
        \end{split}
    \end{align}
    where
    \begin{align}
        \label{K1r}
        \begin{split}
            K_{1}(x^{\circ},u;r):=\int_{B_{r}(x^{\circ})}\Bigg[&\frac{|\nabla u|^{2}}{H(|\nabla u|^{2};x_{n})}-F(|\nabla u|^{2};x_{n})\\
            &+\left( \frac{x_{n}}{H_{0}}-\lambda(x_{n}) \right)\chi_{\left\{ u>0 \right\} }\Bigg]dx,
        \end{split}	
    \end{align}
    \begin{align}\label{K2r}
        \begin{split}
            K_{2}(x^{\circ},u;r):=\int_{B_{r}(x^{\circ})}\Bigg[&\partial_{2}F(|\nabla u|^{2};x_{n})\\
            &+\left(\lambda'(x_{n})-\frac{1}{H_{0}}\right)\chi_{\{u>0\}}\Bigg]x_{n}\,dx,
        \end{split}
    \end{align}
    \begin{align}\label{K3r}
        K_{3}(x^{\circ},u;r):=3\int_{\partial B_{r}(x^{\circ})}\left(\frac{1}{H(|\nabla u|^{2};x_{n})}-\frac{1}{H_{0}}\right)u\nabla u\cdot\nu\, d\mathcal{H}^{n-1},
    \end{align}
    and
    \begin{align}\label{K4r}
		K_{4}(x^{\circ},u;r):=\frac{9}{2r}\int_{\partial B_{r}(x^{\circ})}\left( \frac{1}{H_{0}}-\frac{1}{H(|\nabla u|^{2};x_{n})} \right)u^{2}\, d\mathcal{H}^{n-1}.
	\end{align}
\end{proposition}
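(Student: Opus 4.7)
My plan is to establish the identity~\eqref{Phir1} by directly differentiating $U(x^{\circ},u;\cdot)$ and $W(x^{\circ},u;\cdot)$ in $r$, invoking both the Poho\v{z}aev--type identity~\eqref{keyid} and the energy identity~\eqref{eid} from \lemref{Lemma: twoidentities}, and then reassembling the resulting pieces into the boundary square plus the four correction integrals $K_{1},\dots,K_{4}$. Without loss of generality I take $x^{\circ}=0$; recall $x_{n}^{\circ}=0$ by hypothesis, so that $\lambda(0)=0$, $H(0;0)=H_{0}$, and the bulk terms in~\eqref{keyid} carry $x_{n}$ rather than $x_{n}-x_{n}^{\circ}$.

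The first step is a direct computation of the a.e.\ weak derivatives. By the coarea formula,
\begin{align*}
U'(r)=-(n+1)r^{-n-2}J_{F}(u;B_{r})+r^{-n-1}J_{F}(u;\partial B_{r}),
\end{align*}
and by writing $\int_{\partial B_{r}}u^{2}\,d\mathcal{H}^{n-1}=r^{n-1}\int_{S^{n-1}}u^{2}(r\xi)\,d\mathcal{H}^{n-1}(\xi)$ and differentiating in $r$,
\begin{align*}
W'(r)=-3r^{-n-3}\int_{\partial B_{r}}\frac{u^{2}}{H_{0}}\,d\mathcal{H}^{n-1}+2r^{-n-2}\int_{\partial B_{r}}\frac{u\nabla u\cdot\nu}{H_{0}}\,d\mathcal{H}^{n-1}.
\end{align*}
Both hold for a.e.\ $r\in(0,\delta_{0})$, and $\Phi'(r)=U'(r)-\tfrac{3}{2}W'(r)$. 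I then substitute~\eqref{keyid} into $r^{-n-1}J_{F}(u;\partial B_{r})=r^{-n-2}(rJ_{F}(u;\partial B_{r}))$, which produces on the sphere the terms $2r^{-n-1}\int(\nabla u\cdot\nu)^{2}/H$ and $-2r^{-n-2}\int u(\nabla u\cdot\nu)/H$, in the bulk the term $r^{-n-2}\int_{B_{r}}(\partial_{2}F+\lambda'\chi_{\{u>0\}})x_{n}$, and leaves a surviving $-r^{-n-2}J_{F}(u;B_{r})$. Combining with the $-\tfrac{3}{2}W'$ contributions $+\tfrac{9}{2}r^{-n-3}\int u^{2}/H_{0}$ and $-3r^{-n-2}\int u(\nabla u\cdot\nu)/H_{0}$ gives an explicit a.e.\ expression for $\Phi'(r)$.

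The decisive step is the algebraic rearrangement that isolates the boundary square
\begin{align*}
r^{-n-1}\int_{\partial B_{r}}\frac{2}{H(|\nabla u|^{2};x_{n})}\left(\nabla u\cdot\nu-\frac{3u}{2r}\right)^{2}d\mathcal{H}^{n-1},
\end{align*}
whose expansion absorbs exactly $2r^{-n-1}\int(\nabla u\cdot\nu)^{2}/H$, $-6r^{-n-2}\int u(\nabla u\cdot\nu)/H$, and $\tfrac{9}{2}r^{-n-3}\int u^{2}/H$. Subtracting this square from $\Phi'(r)$ leaves a mixed boundary remainder $4r^{-n-2}\int u(\nabla u\cdot\nu)/H-3r^{-n-2}\int u(\nabla u\cdot\nu)/H_{0}$ and an $L^{2}$ discrepancy $\tfrac{9}{2}r^{-n-3}\int u^{2}(1/H_{0}-1/H)$, which by inspection is $r^{-n-2}K_{4}$. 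Splitting the mixed remainder via $4/H-3/H_{0}=1/H+3(1/H-1/H_{0})$ identifies $r^{-n-2}K_{3}$ and leaves one pure term $r^{-n-2}\int u(\nabla u\cdot\nu)/H$; applying the energy identity~\eqref{eid} converts this into $r^{-n-2}\int_{B_{r}}|\nabla u|^{2}/H$. Together with the surviving $-r^{-n-2}J_{F}(u;B_{r})$ and $r^{-n-2}\int_{B_{r}}(\partial_{2}F+\lambda'\chi_{\{u>0\}})x_{n}$, unpacking the definition $J_{F}=\int F+\int\lambda\chi_{\{u>0\}}$ shows that the sum equals exactly $r^{-n-2}(K_{1}+K_{2})$; the compensating $\chi_{\{u>0\}}x_{n}/H_{0}$ term appears with opposite signs in $K_{1}$ and $K_{2}$ and cancels by design. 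Integrating from $\sigma_{1}$ to $\sigma_{2}$, justified by the uniform bounds of \lemref{lem1}, yields \eqref{Phir1}.

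The main obstacle I anticipate is precisely this final bookkeeping. In the incompressible limit $H\equiv H_{0}$, $\lambda(x_{n})=x_{n}/H_{0}$, and all four $K_{i}$ vanish, collapsing the identity to the classical Weiss formula; the quasilinear Euler structure of the compressible system forces every one of them to appear. The only grouping that reassembles the residual terms into the four integrals stated in the proposition is the splitting $4/H-3/H_{0}=1/H+3(1/H-1/H_{0})$ on the sphere, combined with the cancellation of $\chi_{\{u>0\}}x_{n}/H_{0}$ between $K_{1}$ and $K_{2}$ in the bulk. Once this decomposition is recognized, the identity \eqref{Phir1} follows from the sequence of computations outlined above.
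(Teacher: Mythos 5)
Your proposal is correct and follows essentially the same route as the paper: differentiate $U$ and $W$ in $r$, substitute the Poho\v{z}aev--type identity~\eqref{keyid} and the energy identity~\eqref{eid}, complete the square on the sphere, and identify the residual boundary terms with $K_{3},K_{4}$ and the residual bulk terms with $K_{1}+K_{2}$ (the paper merely folds $K_{1},K_{2}$ into the expression for $\partial_{r}U$ in~\eqref{U1r} before completing the square, whereas you do the bookkeeping at the end — the computations are identical). Your splitting $4/H-3/H_{0}=1/H+3(1/H-1/H_{0})$ and the cancellation of $x_{n}\chi_{\{u>0\}}/H_{0}$ between $K_{1}$ and $K_{2}$ are exactly the algebra underlying the paper's \eqref{Phi1r}.
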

\begin{remark}
    It follows from~\eqref{energy} and~\eqref{Ur} that 
    \begin{align*}
		U(x^{\circ},u;r)=r^{-n-1}J_{F}(u;B_{r}(x^{\circ})).
	\end{align*}
    Formula~\eqref{Wr} is the notable Weiss boundary--adjusted $L^{2}$ energy for the quasilinear problem~\eqref{p1}. The idea of subtracting the adjusted $L^{2}$ energy from the total energy originated from~\cite{MR1759450}, who was the first to investigate the   one-homogeneous stable cones for the classical Alt-Caffarelli functional via the monotonicity formula. Our developed formula is inspired by Weiss's idea and can be seen as variations of monotonicity formulas.
\end{remark}
\begin{remark}\label{rmk:tildeJ}
    It follows from~\eqref{K1r} and a direct calculation and~\eqref{l1} that 
    \begin{align}\label{K11}
		\begin{split}
			K_{1}(x^{\circ},u;r)&=\int_{B_{r}(x^{\circ})}\int_{0}^{|\nabla u|^{2}}\frac{\partial}{\partial\tau}\left( \frac{1}{H(\tau;x_{n})} \right)\tau\,d\tau\,dx\\
            &-\int_{B_{r}(x^{\circ})}\Bigg[  \int_{0}^{x_{n}}\pd{!}{\tau}\left( \frac{1}{H(\tau;x_{n})} \right)\tau\,d\tau\Bigg]\chi_{\left\{ u>0 \right\} }\,dx.
		\end{split}
	\end{align}
    In fact, $K_{1}(x^{\circ},u;r)$ is the difference between two energies, $J_{H}(u;B_{r}(x^{\circ}))$ and $J_{F}(u;B_{r}(x^{\circ}))$, where $J_{H}(u;B_{r}(x^{\circ}))$ is given by 
	\begin{align}\label{JH}
		J_{H}(u;B_{r}(x^{\circ})):=\int_{B_{r}(x^{\circ})}\frac{|\nabla u|^{2}}{H(|\nabla u|^{2};x_{n})}+\frac{x_{n}}{H(x_{n};x_{n})}\chi_{\left\{ u>0 \right\} }\,dx.
	\end{align}
    Here, we used the fact that $H(x_{n};x_{n})=H_{0}$. Moreover, let us remark that $J_{H}$ can be viewed as the part of $J_{F}$ that possesses the homogeneity property, as we have already mentioned in the introduction. To illustrate this, let $u\in W^{1,2}(B_{r})$ and let us consider the rescaling 
	\begin{align}\label{uR}
		u_{R}(x):=\frac{u(Rx)}{R^{3/2}}\quad\text{ in }B_{r/R},\quad R>0.
	\end{align}
	In this fashion, we see that 
	\begin{align}\label{JH1}
		J_{H}(u;B_{r})
		= R^{n+1}\int_{B_{r/R}}\frac{|\nabla u_{R}|^{2}}{H_{R}(|\nabla u_{R}|^{2};x_{n})}+\frac{x_{n}}{H_{R}(x_{n};x_{n})}\chi_{\left\{ u_{R}>0 \right\} }\,dx,
	\end{align} 
	where $H_{R}(t;s):=H(Rt;Rs)$. Define 
	\begin{align*}
		J_{H_{R}}(v;B_{r/R}):=\int_{B_{r/R}}\frac{|\nabla v|^{2}}{H_{R}(|\nabla v|^{2};x_{n})}+\frac{x_{n}}{H_{R}(x_{n};x_{n})}\chi_{\left\{ v>0 \right\} }\,dx,
	\end{align*}
	and in this way, we see that
	\begin{align}\label{JH2}
		r^{-n-1}J_{H}(u;B_{r})=(r/R)^{-n-1}J_{H_{R}}(u_{R};B_{r/R})\qquad\text{ for any }R>0.
	\end{align}
	Thus, if $u$ is a subsonic variational solution with respect to $J_{H}$ on $B_{r}$ then $u_{R}$ is a subsonic variational solution with respect to $J_{H_{R}}$ on $B_{r/R}$. However, the energy $J_{F}$ in~\eqref{energy} does not satisfy the scaling property~\eqref{JH2}.
\end{remark}
\begin{remark}
	We explain in this remark the reason why the scaling property~\eqref{JH2} is crucial in this context. Let $u_{R}(x):=\frac{u(Rx)}{R^{3/2}}$ be the rescaling defined in~\eqref{uR} and assume that $u_{R}$ converges strongly in $W_{\mathrm{loc}}^{1,2}(\mathbb{R} ^{n})$ to a function $u_{0}$, using the fact that $(t;s)\mapsto H(t;s)$ is continuous, we see by passing to the limit as $R\to0$ in~\eqref{JH1} that  
	\begin{align*}
		J_{H_{0}}(v):=\frac{1}{H_{0}}\left(  \int_{\mathbb{R} ^{2}}|\nabla u_{0}|^{2}+x_{n}\chi_{\left\{ u_{0}>0 \right\} }dx\right).
	\end{align*}
	Therefore, it is expected that $u_{0}$ is a variational solution with respect to $J_{H_{0}}$. In fact, thanks to the decomposition of $J_{F}$ introduced in the previous Remark, we  prove in~\lemref{Lemma: blow-up limits} that the blow-up limit $u_{0}$ is a variational solution in the sense of~\cite[Definition 3.1]{MR2810856}.
\end{remark}
\begin{remark}
    It follows from \lemref{lem1} that
    \[
        \partial_{1}H(x_{n};x_{n})=-\partial_{2}H(x_{n};x_{n}).
    \]
    Thus, a direct calculation gives that 
    \begin{align*}
        \lambda'(x_{n})=\frac{1}{H_{0}}-\int_{0}^{x_{n}}\pd{!}{x_{n}}\left( \frac{1}{H(\tau;x_{n})} \right)\,d\tau.
    \end{align*}
    This, together with~\eqref{K2r}, gives 
    \begin{align}\label{K22}
      \begin{split}
          K_{2}(x^{\circ},u;r)&=\int_{B_{r}(x^{\circ})}\int_{0}^{|\nabla u|^{2}}\frac{\partial}{\partial\tau}\left( \frac{1}{H(\tau;x_{n})} \right)x_{n}\,d\tau dx\\
          &-\int_{B_{r}(x^{\circ})}\Bigg[ \int_{0}^{x_{n}}\pd{!}{x_{n}}\left( \frac{1}{H(\tau;x_{n})}\right)x_{n}\,d\tau\Bigg]\chi_{\left\{ u>0 \right\} }\, dx.
      \end{split}
    \end{align}
\end{remark}
\begin{remark}
    It should be noted that for incompressible water wave, due to the nature of incompressibility, one has  
    \begin{align*}
        F(|\nabla u|^{2};x_{n})\equiv \frac{|\nabla u|^{2}}{H_{0}},\qquad\lambda(x_{n})\equiv\frac{x_{n}}{H_{0}}.
    \end{align*}
    This implies that all of additional terms $K_{i}(x^{\circ},u;r)$ defined in~\eqref{K1r}, \eqref{K2r}, \eqref{K3r}, and~\eqref{K4r} for $i=1$, $\dots$, $4$ vanish in the incompressible case. In this case~\propref{prop:wsm} reduces to Theorem 3.5 in~\cite[Boundary case]{MR2810856}, for $H_{0}=1$.
\end{remark}
\begin{proof}
    Our starting point is an application of the identity~\eqref{keyid} we derived in~\lemref{Lemma: twoidentities} for subsonic variational solutions of~\eqref{p1}. By setting $x_{n}^{\circ}=0$ in the identity~\eqref{pid}, applying~\eqref{eid} and using~\eqref{K1r} and~\eqref{K2r}, we obtain
    \begin{align*}
        &(n+1)J_{F}(u;B_{r}(x^{\circ}))-rJ_{F}(u;\partial B_{r}(x^{\circ}))\\
        &=3\int_{\partial B_{r}(x^{\circ})}\frac{u\nabla u\cdot\nu}{H(|\nabla u|^{2};x_{n})}\,d\mathcal{H}^{n-1}\\
        &-2r\int_{\partial B_{r}(x^{\circ})}\frac{(\nabla u\cdot\nu)^{2}}{H(|\nabla u|^{2};x_{n})}\,d\mathcal{H}^{n-1}\\
        &-K_{1}(x^{\circ},u;r)-K_{2}(x^{\circ},u;r).
    \end{align*}
    A direct calculation gives that for a.e. $r\in(0,\delta_{0})$,
    \begin{align*}
        \frac{\partial}{\partial r}U(x^{\circ},u;r)&=-(n+1)r^{-n-2}J_{F}(u;B_{r}(x^{\circ}))\\
        &+r^{-n-1}J_{F}(u;\partial B_{r}(x^{\circ})).
    \end{align*}
    Therefore, we have 
    \begin{align}\label{U1r}
        \begin{split}
            \frac{\partial}{\partial r}U(x^{\circ},u;r)&=2r^{-n-1}\int_{\partial B_{r}(x^{\circ})}\frac{(\nabla u\cdot\nu )^{2}}{H(|\nabla u|^{2};x_{n})}\,d\mathcal{H}^{n-1}\\
            &-3r^{-n-2}\int_{\partial B_{r}(x^{\circ})}\frac{u\nabla u\cdot\nu}{H(|\nabla u|^{2};x_{n})}\,d\mathcal{H}^{n-1}\\
            &+r^{-n-2}K_{1}(x^{\circ},u;r)+r^{-n-2}K_{2}(x^{\circ},u;r).
        \end{split}
    \end{align}
    On the other hand, since $H_{0}$ is a constant, we can proceed a similar calculation as in~\cite[equation~(3.9)]{MR2810856} and we obtain
    \begin{align}\label{W1r}
        \begin{split}
            \pd{!}{r}W(x^{\circ},u;r)=\frac{r^{-n-3}}{H_{0}}\Bigg[& 2r\int_{\partial B_{r}(x^{\circ})}u\nabla u\cdot\nu\, d\mathcal{H}^{n-1}\\
            &-3\int_{\partial B_{r}(x^{\circ})}u^{2}\, d\mathcal{H}^{n-1}\Bigg].
        \end{split}
    \end{align}
    It follows from~\eqref{U1r} and~\eqref{W1r} that 
    \begin{align}\label{Phi1r}
        \begin{split}
            \pd{!}{r}\Phi(x^{\circ},u;r)&=2r^{-n-1}\int_{\partial B_{r}(x^{\circ})}\frac{\left(\nabla u\cdot\nu-\frac{3}{2}\frac{u}{r}\right)^{2}}{H(|\nabla u|^{2};x_{n})}\,d\mathcal{H}^{n-1}\\
            &+r^{-n-2}\sum_{i=1}^{4}K_{i}(x^{\circ},u;r).
        \end{split}
    \end{align}
    Then~\eqref{Phir1} follows immediately after integrating~\eqref{Phi1r} from $\sigma_{1}$ to $\sigma_{2}$ with respect to $r$.
\end{proof}
\begin{remark}
    It follows from~\eqref{U1r} that the additional terms $K_{1}(r)$ and $K_{2}(r)$ arise completely from calculating the derivatives of $\pd{!}{r}U(r)$. However, $K_{3}(r)$ and $K_{4}(r)$ are not, they are neither from derivatives of $\pd{!}{r}U(r)$ nor $\pd{!}{r}W(r)$, see~\eqref{U1r} and~\eqref{W1r}. This is different from the monotonicity formula in the previous works (cf.~\cite{MR4595616,MR4739787,MR4808256,MR2995099,MR3225630}) and introduces new difficulties. In these mentioned works, any additional terms (if exist) in the monotonicity formula come entirely from the derivatives of $U(r)$ or $W(r)$. In the upcoming~\secref{Section: Sing asy}, we will present some results based on our monotonicity formula developed in~\propref{prop:wsm}. These results  include the existence of the limit $\Phi(x^{\circ},u;0^{+})$ and the blow-up $u_{0}$ of the sequence $\frac{u(x^{\circ}+rx)}{r^{3/2}}$ as $r\to 0^{+}$.
\end{remark}
\section{Stokes corner asymptotics at the stagnation points}\label{Section: Sing asy}
In this section, we investigate the possible shape profile of the free surface near the stagnation points by applying the monotonicity formula we derived in~\propref{prop:wsm}.
\begin{lemma}\label{Lemma: blow-up limits}
    Let $u$ be a subsonic variational solution of~\eqref{p2}, let $x^{\circ}\in\Omega\cap\partial\{u>0\}$ be such that $x_{n}^{\circ}=0$ and assume that
    \begin{align}\label{ga}
        |\nabla u|^{2}\leqslant\mathscr{C}x_{n}^{+}\quad\hbox{ locally in }\Omega.
    \end{align}
    Then
    \begin{enumerate}
        \item [(1)] The limit $\Phi(x^{\circ},u;0^{+}):=\lim_{r\to 0^{+}}\Phi(x^{\circ},u;r)$ exists and is finite.
        \item [(2)] Let $r_{m}\to 0^{+}$ as $m\to+\infty$ be a vanishing sequence such that the blow-up sequence
        \begin{align}
            \label{bls}
            u_{m}(x):=\frac{u(x^{\circ}+r_{m}x)}{r_{m}^{3/2}}
        \end{align}
        converges weakly in $W_{\mathrm{loc}}^{1,2}(\mathbb{R}^{n})$ to a blow-up $u_{0}$, then $u_{0}$ is a homogeneous function of degree $\tfrac{3}{2}$, i.e., $u_{0}(\lambda x)=\lambda^{3/2}u_{0}(x)$ for all $\lambda>0$.
        \item [(3)] Let $u_{m}$ be a converging sequence in (2). Then $u_{m}$ converges to $u_{0}$ strongly in $W_{\mathrm{loc}}^{1,2}(\mathbb{R} ^{n})$.
    \end{enumerate}
\end{lemma}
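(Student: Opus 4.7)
The approach rests on the monotonicity formula~\eqref{Phir1} from \propref{prop:wsm}, with the growth hypothesis~\eqref{ga} used to control the non-homogeneous correction terms $K_{1},\dots,K_{4}$. As a preliminary step I would upgrade~\eqref{ga} to the pointwise bounds $|\nabla u(x)| \leqslant Cr^{1/2}$ and $|u(x)| \leqslant Cr^{3/2}$ on $B_{r}(x^{\circ})$, the second following by the mean value theorem from $u(x^{\circ})=0$ and the gradient bound. Combined with first-order Taylor expansions of $H$, $F$ and $\lambda$ about $(0,0)$ that yield in particular $|F(t;s)-t/H_{0}|+|\lambda(s)-s/H_{0}| \leqslant C(t^{2}+s^{2})$ and $|1/H(t;s) - 1/H_{0}| \leqslant C(t+s)$, a direct inspection of the defining formulae~\eqref{K1r}--\eqref{K4r} then gives $|K_{i}(x^{\circ},u;r)| \leqslant Cr^{n+2}$ for each $i$, where the boundary expressions $K_{3}$ and $K_{4}$ additionally invoke the pointwise decay of $u$ and $\nabla u$ on $\partial B_{r}(x^{\circ})$.

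For part (1), these estimates show that $r\mapsto r^{-n-2}K_{i}(x^{\circ},u;r)$ is bounded on $(0,\delta_{0})$; hence by~\eqref{Phir1}, the adjusted function
\begin{align*}
r \;\longmapsto\; \Phi(x^{\circ},u;r) - \int_{0}^{r} s^{-n-2}\sum_{i=1}^{4} K_{i}(x^{\circ},u;s)\,ds
\end{align*}
is monotone non-decreasing. Together with the a priori bound $|\Phi(x^{\circ},u;r)| \leqslant C$, itself a direct consequence of the growth condition via $|U(x^{\circ},u;r)|+|W(x^{\circ},u;r)| \leqslant C$, this forces the limit $\Phi(x^{\circ},u;0^{+})$ to exist and be finite.

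For part (2), the existence of this limit together with the absolute convergence of the $K_{i}$-integrals implies that the nonnegative main term in~\eqref{Phir1} has a finite integral on $(0,\delta_{0})$, and so
\begin{align*}
\int_{0}^{r_{m}L} r^{-n-1}\!\!\int_{\partial B_{r}(x^{\circ})} \frac{2}{H(|\nabla u|^{2};x_{n})}\left(\nabla u\cdot\nu - \frac{3u}{2r}\right)^{2} d\mathcal{H}^{n-1}\,dr \;\longrightarrow\; 0
\end{align*}
as $r_{m}\to 0^{+}$ for each fixed $L>0$. The change of variables $y=x^{\circ}+r_{m}x$ converts this into the same quantity written for $u_{m}$ on $(0,L)$, with $H$ replaced by $\tilde{H}_{m}(t;s):=H(r_{m}t;r_{m}s)$; since $\tilde{H}_{m}$ is uniformly bounded above and below, and since the functional
\begin{align*}
v \;\longmapsto\; \int_{B_{L}} |x|^{-n-3}\bigl(x\cdot\nabla v - \tfrac{3}{2}v\bigr)^{2}\,dx
\end{align*}
is weakly $W^{1,2}$-lower semicontinuous on each annulus $B_{L}\setminus B_{\epsilon}$, the corresponding integral for $u_{0}$ vanishes. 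Hence $x\cdot\nabla u_{0}=\tfrac{3}{2}u_{0}$ a.e.\ on $\mathbb{R}^{n}\setminus\{0\}$, which is precisely the $3/2$-homogeneity.

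For part (3), I would pass to the limit in the scaling identities. Setting $F_{m}(t;s):=r_{m}^{-1}F(r_{m}t;r_{m}s)$ and $\lambda_{m}(s):=r_{m}^{-1}\lambda(r_{m}s)$, one verifies $\Phi_{m}(R):=\Phi(0,u_{m};R)=\Phi(x^{\circ},u;r_{m}R)\to\Phi(x^{\circ},u;0^{+})$, and using the Taylor bounds from part (1),
\begin{align*}
U(x^{\circ},u;r_{m}R) = \frac{R^{-n-1}}{H_{0}}\int_{B_{R}}\bigl(|\nabla u_{m}|^{2}+x_{n}\chi_{\{u_{m}>0\}}\bigr)\,dx + O(r_{m}R).
\end{align*}
Strong $L^{2}(\partial B_{R})$-convergence of $u_{m}\to u_{0}$ for a.e.\ $R$ (via Rellich compactness and Fubini) and the homogeneity from part (2) give $W(0,u_{m};R)\to W(0,u_{0};R)$, constant in $R$. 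Combined with the previous display, this pins down the limit of $\int_{B_{R}}(|\nabla u_{m}|^{2}+x_{n}\chi_{\{u_{m}>0\}})\,dx$. Weak lower semicontinuity of the Dirichlet integral and Fatou's lemma for $\chi_{\{u_{m}>0\}}$ supply matching lower bounds in terms of $u_{0}$, and the equality case of both inequalities forces $\int_{B_{R}}|\nabla u_{m}|^{2}\,dx\to\int_{B_{R}}|\nabla u_{0}|^{2}\,dx$, which combined with weak convergence yields the desired strong $W^{1,2}_{\mathrm{loc}}$-convergence. The main obstacle is precisely the equality case in part (3): closing it requires identifying $u_{0}$ as a variational solution of the incompressible limit problem, so that its incompressible Weiss functional at every $R$ matches $\Phi(x^{\circ},u;0^{+})$. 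This necessitates a careful passage to the limit in~\eqref{fdv} for $u_{m}$ despite the nonlinear dependence of $\partial_{1}F(|\nabla u_{m}|^{2};x_{n})$ on $\nabla u_{m}$; here the Taylor bounds from part (1) are decisive, since they quantify the collapse of the compressible structure onto the framework of~\cite{MR2810856} at rate $O(r_{m})$.
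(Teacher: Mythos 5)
Parts (1) and (2) of your proposal follow the paper's own argument essentially verbatim: the $\mathscr{C}r^{n+2}$ bounds on $K_{1},\dots,K_{4}$ via the pointwise decay $|\nabla u|\leqslant Cr^{1/2}$, $|u|\leqslant Cr^{3/2}$ and the Lipschitz bounds on $1/H$, the resulting monotonicity of the adjusted functional, and the lower-semicontinuity argument for the homogeneity are all exactly what the paper does. The issue is part (3).

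Your route for (3) — compute the limit of the \emph{total} rescaled energy $\int_{B_{R}}(|\nabla u_{m}|^{2}+x_{n}\chi_{\{u_{m}>0\}})\,dx$ from $\Phi(x^{\circ},u;0^{+})$ and $W$, then invoke the equality case of weak lower semicontinuity — is circular as you have set it up. To split the limit of the sum into the limits of its two pieces you must know that there is no Dirichlet energy defect, i.e.\ that the limit of the gradient part equals $\int_{B_{R}}|\nabla u_{0}|^{2}\,dx$ exactly (note that at a horizontal-flatness point $\chi_{0}>\chi_{\{u_{0}>0\}}$ on a set of positive measure, so the naive ``both lower bounds are saturated'' bookkeeping with $\chi_{\{u_{0}>0\}}$ already fails; one must work with the weak-$*$ limit $\chi_{0}$, and then the entire content of the claim is precisely the absence of a gradient defect). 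You propose to close this by identifying $u_{0}$ as a variational solution of the incompressible problem via passage to the limit in~\eqref{fdv}. But~\eqref{fdv} contains the quadratic term $\int\partial_{1}F(|\nabla u_{m}|^{2};x_{n})\,\nabla u_{m}D\phi\nabla u_{m}\,dx$, which cannot be passed to the limit under weak $W^{1,2}$ convergence alone — that passage is exactly what strong convergence is needed for (and indeed the paper only performs it in \lemref{Lemma: densities at the stagnation points}(3), \emph{after} strong convergence is in hand). So your argument assumes what it is trying to prove.

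The missing idea is to extract the energy convergence from the PDE rather than from the variational structure: since $Lu_{m}$ is a nonnegative Radon measure supported on $\{u_{m}=0\}$, testing against $u_{m}\eta$ gives the localized energy identity
\begin{align*}
\int\frac{|\nabla u_{m}|^{2}}{H_{m}}\,\eta\,dx=-\int\frac{u_{m}\nabla u_{m}\cdot\nabla\eta}{H_{m}}\,dx ,
\end{align*}
whose right-hand side is \emph{linear} in $\nabla u_{m}$ and therefore converges under the strong $L^{2}_{\mathrm{loc}}$ convergence of $u_{m}$ (Rellich), the weak convergence of $\nabla u_{m}$, and the strong $L^{2}$ convergence $1/H_{m}\to 1/H_{0}$ (which you must also verify, using~\eqref{ga}; the paper does this as a separate step). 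Applying the same identity to the harmonic limit $u_{0}$ identifies the limit as $\int\frac{|\nabla u_{0}|^{2}}{H_{0}}\eta\,dx$, and norm convergence plus weak convergence yields strong $W^{1,2}_{\mathrm{loc}}$ convergence. Without this (or an equivalent device), part (3) of your proposal does not close.
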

\begin{remark}
    Note that the growth assumption~\eqref{ga} implies that 
    \begin{align*}
        u(x) \leqslant \mathscr{C}(x_{n}^{+})^{3/2}
    \end{align*}
    in the case $x_{n}^{\circ}=0$ and for some universal constant $\mathscr{C}$.
\end{remark}
\begin{proof}
    We assume $x^{\circ}=0$ and write $\Phi(u;r):=\Phi(0,u;r)$ when there is no ambiguity.

    (1). To prove existence and the finiteness of the limit $\Phi(u;0^{+})$, we begin by estimating the additional terms $K_{i}(u;r):=K_{i}(0,u;r)$ defined in equations~\eqref{K1r},~\eqref{K2r},~\eqref{K3r} and~\eqref{K4r}. It follows from the bounds~\eqref{b1},~\eqref{b2} and~\eqref{b3} derived in~\lemref{lem1} and a direct calculation that
    \begin{align}\label{|H12|}
        \left|\pd{!}{\tau}\left( \frac{1}{H(\tau;x_{n})} \right)\right| \leqslant \mathscr{C},\quad\text{ and }\quad\left|\pd{!}{x_{n}}\left( \frac{1}{H(\tau;x_{n})} \right)\right| \leqslant \mathscr{C}\quad\text{ in }B_{r}.
    \end{align}
    This, together with~\eqref{K11} and the growth condition~\eqref{ga}, gives  
    \begin{align}\label{|K1r|}
        \begin{split}
            |K_{1}(u;r)| &\leqslant \mathscr{C}\int_{B_{r}}\left( \int_{0}^{|\nabla u|^{2}}\tau\,d\tau\right)+\left( \int_{0}^{x_{n}}\tau\,d\tau \right)\, dx\\
            &\leqslant \mathscr{C}r^{n+2}.
        \end{split}
    \end{align}
    A similar argument for~\eqref{K22} gives 
    \begin{align}\label{|K2r|}
        \begin{split}
            |K_{2}(u;r)| &\leqslant \int_{B_{r}}\left(  \int_{0}^{|\nabla u|^{2}}x_{n}\,d\tau \right)+\left( \int_{0}^{x_{n}}x_{n}\,d\tau \right)\,dx\\
            &\leqslant \mathscr{C}r^{n+2}.
        \end{split}
    \end{align}
    Here we used the following fact: $|x_{n}| \leqslant r$ in $B_{r}$ since $x_{n}^{\circ}=0$. We next estimate $K_{3}(u;r)$ as well as $K_{4}(u;r)$. Note that
    \begin{align}\label{HH0}
        \begin{split}
            &\frac{1}{H(|\nabla u|^{2};x_{n})}-\frac{1}{H_{0}}\\
            &=\frac{1}{H(|\nabla u|^{2};x_{n})}-\frac{1}{H(0;0)}\\
            &=\frac{1}{H(|\nabla u|^{2};x_{n})}-\frac{1}{H(0;x_{n})}+\frac{1}{H(0;x_{n})}-\frac{1}{H(0;0)}\\
            &=\int_{0}^{1}\frac{d}{d\theta}\Bigg(\frac{1}{H(\theta|\nabla u|^{2};x_{n})}\,\Bigg)d\theta+\int_{0}^{1}\frac{d}{ds}\Bigg( \frac{1}{H(0;sx_{n})} \Bigg)\,ds\\
            &=-\int_{0}^{1}\frac{\partial_{1}H(\theta|\nabla u|^{2};x_{n})}{H^{2}(\theta|\nabla u|^{2};x_{n})}d\theta\cdot|\nabla u|^{2}-\int_{0}^{1}\frac{\partial_{2}H(0;sx_{n})}{H^{2}(0;sx_{n})}\,ds\cdot x_{n}.
        \end{split}
    \end{align}
    Applying~\eqref{b1},~\eqref{b2}, ~\eqref{b3} and~\eqref{ga} once again, one has
    \begin{align*}
        \left|\frac{1}{H(|\nabla u|^{2};x_{n})}-\frac{1}{H_{0}}\right| \leqslant \mathscr{C}r\qquad\text{ in }B_{r}.
    \end{align*}
    This, together with~\eqref{K3r} and~\eqref{K4r}, gives 
    \begin{align}\label{|K3r|}
        |K_{3}(u;r)| \leqslant \mathscr{C}r\int_{\partial B_{r}}|u||\nabla u|\,d\mathcal{H}^{n-1} \leqslant \mathscr{C}r^{n+2},
    \end{align}
    and 
    \begin{align}\label{|K4r|}
        |K_{4}(u;r)| \leqslant \mathscr{C}\int_{\partial B_{r}}u^{2}\,d\mathcal{H}^{n-1}\leqslant\mathscr{C}r^{n+2}.
    \end{align}
    Collecting estimates in~\eqref{|K1r|},~\eqref{|K2r|},~\eqref{|K3r|} and~\eqref{|K4r|} together, we obtain that
    \begin{equation}\label{bound}
        \left|r^{-n-2}\sum_{i=1}^{4}K_{i}(u;r)\right| \leqslant\mathscr{C}.
    \end{equation}
    This implies that the function $r\mapsto\Phi(u;r)$ has a right limit $\Phi(u;0^{+})$. We now claim that the limit is finite. It follows from the definition of $F(|\nabla u|^{2};x_{n})$ and~\eqref{ga} that
    \begin{equation*}
        F(|\nabla u|^{2};x_{n}) \leqslant \mathscr{C}r,
    \end{equation*} 
    and 
    \begin{equation*}
        \lambda(x_{n})\leqslant \left( \frac{1}{H_{0}}+\mathscr{C} \right)r \leqslant \mathscr{C}r.
    \end{equation*}
    Thus, we obtain the boundedness of $U(u;r)$ in terms of $H_{0}$ and $\mathscr{C}$.
    \begin{align*}
        |U(u;r)| \leqslant\mathcal{L}^{n}(B_{1})\left( \mathscr{C}+\frac{1}{H_{0}}\right).
    \end{align*}
    Then the finiteness of $\Phi(u;0^{+}) $ follows immediately from
    \begin{equation*}
        W(u;r) \leqslant \mathscr{C}\mathcal{H}^{n-1}(\partial B_{1}),
    \end{equation*}
    and $u(0)=0$ and 
    \begin{equation*}
        H_{0}r^{3}W(u;r)\to u(0)\quad\text{ as }r\to 0^{+}.
    \end{equation*}

    (2). For any $0<\sigma_{1}<\sigma_{2}<+\infty$ and any vanishing sequence $r_{m}\to 0^{+}$, we integrate $\pd{!}{r}\Phi(u;r)$ from $r_{m}\sigma_{1}$ to $r_{m}\sigma_{2}$ and we obtain by rescaling that 
    \begin{align}\label{hom1}
        \begin{split}
            &2\int_{B_{\sigma_{2}}\setminus B_{\sigma_{1}}}|x|^{-n-3}\frac{1}{H_{m}(|\nabla u_{m}|^{2};x_{n})}\Bigg(\nabla u_{m}\cdot x-\frac{3}{2}u_{m}\Bigg)^{2}\,dx\\
            &=\Phi(u;r_{m}\sigma_{2})-\Phi(u;r_{m}\sigma_{1})\\
            &-\int_{r_{m}\sigma_{1}}^{r_{m}\sigma_{2}}r^{-n-2}\sum_{i=1}^{4}K_{i}(x^{\circ},u;r)\,dr,
        \end{split}
    \end{align}
    where $H_{m}(|\nabla u_{m}|^{2};x_{n}):=H(r_{m}|\nabla u_{m}|^{2};r_{m}x_{n})$. Thus, we have that 
    \begin{align*}
        &2\int_{B_{\sigma_{2}}\setminus B_{\sigma_{1}}}|x|^{-n-3}\frac{1}{H_{m}(|\nabla u_{m}|^{2};x_{n})}\Bigg(\nabla u_{m}\cdot x-\frac{3}{2}u_{m}\Bigg)^{2}\,dx\\
        & \leqslant \Phi(u;r_{m}\sigma_{2})-\Phi(u;r_{m}\sigma_{1})\\
        &+\int_{r_{m}\sigma_{1}}^{r_{m}\sigma_{2}}r^{-n-2}\Bigg|\sum_{i=1}^{4}K_{i}(x^{\circ},u;r)\Bigg|\,dr.
    \end{align*}
    Passing to the limit as $m\to+\infty$, we infer from the existence and finiteness of $\Phi(u;0^{+})$ and~\eqref{bound} that the right hand side of equation~\eqref{hom1} tends to $0$. Moreover, it follows from the fact $\lim_{m}H_{m}=H_{0}$ that $H_{m} \geqslant \mathscr{c}H_{0}$ for $m$ large enough and $\mathscr{c}>0$ universal. Thus,~\eqref{hom1} implies that
    \begin{align*}
        \lim_{m\to+\infty}\int_{B_{\sigma_{2}}\setminus B_{\sigma_{1}}}|x|^{-n-3}\Bigg(\nabla u_{m}\cdot x-\frac{3}{2}u_{m}\Bigg)^{2}\,dx=0.
    \end{align*}
    This, together with the convex of the function $v\mapsto \int|x|^{-n-3}(\nabla v\cdot x-\frac{3}{2}v)^{2}\,dx$ and the lower semicontinuity under weak convergence in $W^{1,2}$, implies $\nabla u_{0}-\frac{3}{2}u_{0}=0$ a.e. in $B_{\sigma_{2}}\setminus B_{\sigma_{1}}$. This is equivalent to say that $u_{0}$ is a homogeneous function of degree $3/2$.
    
    (3). Let $u_{m}$ be defined in~\eqref{bls}.  We first claim that $\frac{1}{H_{m}(|\nabla u|^{2};x_{n})}:=\frac{1}{H(r_{m}|\nabla u_{m}|^{2};r_{m}x_{n})}$ converges to $\frac{1}{H_{0}}$ strongly in $L_{\mathrm{loc}}^{2}$. To this end, it suffices to prove that 
    \begin{equation}\label{Hst}
        I_{m}:=\int_{B_{1}}\left|\frac{1}{H_{m}}-\frac{1}{H_{0}}\right|^{2}\,dx\to 0\qquad\text{ as }m\to+\infty.
    \end{equation}
    Since $\lim_{m}H_{m}=H(0;0)$, we have that $\|H_{m}-H_{0}\|_{L^{\infty}(B_{1})} \leqslant \mathscr{C}$ uniformly for $m$ large enough. Thanks to~\eqref{HH0}, one has 
    \begin{align*}
        I_{m} & \leqslant \mathscr{C}\int_{B_{1}}\left|\frac{1}{H_{m}(|\nabla u|^{2};x_{n})}-\frac{1}{H_{0}}\right|\,dx\\
        & \leqslant \mathscr{C}r_{m}\int_{B_{1}}\Bigg(\int_{0}^{1}\frac{|\partial_{1}H_{m}(|\theta\nabla u|^{2};x_{n})|}{H_{m}^{2}(|\theta\nabla u|^{2};x_{n})}\,d\theta\Bigg)\cdot|\nabla u_{m}|^{2}dx\\
        &+\mathscr{C}r_{m}\int_{B_{1}}\Bigg(\int_{0}^{1}\frac{|\partial_{2}H_{m}(0;sx_{n})|}{H_{m}^{2}(0;sx_{n})}\,ds\Bigg)\cdot x_{n}dx,
    \end{align*}
    where 
    \[
        \partial_{1}H_{m}(\theta|\nabla u|^{2};x_{n}):=\partial_{1}H(\theta r_{m}|\nabla u|^{2};r_{m}x_{n})
    \]
    and 
    \[
        \partial_{2}H_{m}(0;sx_{n}):=\partial_{2}H(0;sr_{m}x_{n}).
    \]
    It then follows from~\lemref{lem1} that $\partial_{1}H_{m}$ and $\partial_{2}H_{m}$ are bounded by $\|H_{m}\|_{L^{\infty}}$, and thus bounded by a uniform constant $\mathscr{C}$ that is independent of $m$. Therefore, we show that 
    \begin{equation*}
        I_{m} \leqslant \mathscr{C}r_{m}\int_{B_{1}}(|\nabla u_{m}|^{2}+x_{n})\,dx \leqslant \mathscr{C}r_{m},
    \end{equation*}
    where we applied the growth condition~\eqref{ga} in the last inequality. Passing to the limit as $m\to+\infty$ gives~\eqref{Hst}. We now infer from the weak $L^{2}$ convergence of $\nabla u_{m}$ and the strong $L^{2}$ convergence of $H_{m}$ that
    \begin{align*}
        0=\int_{\mathbb{R} ^{n}}\frac{\nabla u_{m}\nabla\eta}{H_{m}(|\nabla u_{m}|^{2};x_{n})}\,dx\to\int_{\mathbb{R} ^{n}}\frac{\nabla u_{0}\nabla\eta}{H_{0}}\,dx,
    \end{align*}
    for each $\eta\in C_{0}^{\infty}(\mathbb{R}^{n})$. This gives that $u_{0}$ is harmonic in $\{u_{0}>0\}$ in the distributional sense. Additionally, since $u_{m}$ converges to $u_{0}$ strongly in $L_{\mathrm{loc}}^{2}(\mathbb{R} ^{n})$, it is easy to show that
    \begin{equation*}
        \lim_{m\to+\infty}\int_{\mathbb{R} ^{n}}\frac{u_{m}\nabla u_{m}\nabla\eta}{H_{m}(|\nabla u_{m}|^{2};x_{n})}\,dx=\int_{\mathbb{R} ^{n}}\frac{u_{0}\nabla u_{0}\nabla\eta}{H_{0}}\,dx.
    \end{equation*}
    Consequently, for every test function $\eta\in C_{0}^{\infty}(\mathbb{R} ^{n})$ we have that 
    \begin{align*}
        &o(1)+\int_{\mathbb{R} ^{n}}\frac{|\nabla u_{m}|^{2}}{H_{0}}\eta\,dx\\
        &=\int_{\mathbb{R} ^{n}}\frac{|\nabla u_{m}|^{2}}{H_{m}(|\nabla u|^{2};x_{n})}\eta\,dx\\
        &=-\int_{\mathbb{R} ^{n}}\frac{u_{m}\nabla u_{m}\nabla\eta}{H_{m}(|\nabla u_{m}|^{2};x_{n})}\,dx\\
        &\to-\int_{\mathbb{R} ^{n}}\frac{u_{0}\nabla u_{0}\nabla\eta}{H_{0}}\,dx\\
        &=\int_{\mathbb{R} ^{n}}\frac{|\nabla u_{0}|^{2}}{H_{0}}\eta\,dx,
    \end{align*}
    This gives the strong $W^{1,2}$ convergence of $u_{m}$.
\end{proof}
\begin{remark}\label{Remark: perturbation}
    If we define 
	\begin{align}\label{Phirv1}
		\begin{split}
			\widetilde{\Phi}(x^{\circ},u;r)&:=\Phi(x^{\circ},u;r)-\sum_{i=1}^{4}\int_{0}^{r}t^{-n-2}K_{i}(x^{\circ},u;t)\,dt,
		\end{split}
	\end{align}
	where $\Phi(x^{\circ},u;r)$ and $K_{i}(x^{\circ},u;r)$, $i=1$, $\dots$, $4$ are defined in \eqref{Phir}, \eqref{K1r}, \eqref{K2r} and~\eqref{K4r}, respectively. Moreover, since the limit $\Phi(x^{\circ},u;0^{+})$ exists and $K_{i}(x^{\circ},u;r)$ are integrable functions defined in $(0,\delta_{0})$, we can deduce from~\eqref{Phirv1} that 
	\begin{align}\label{Phirv10}
		\pd{!}{r}\widetilde{\Phi}(x^{\circ},u;r)\geqslant 0\qquad\text{ and }\qquad\widetilde{\Phi}(x^{\circ},u;0^{+})=\Phi(x^{\circ},u;0^{+}).
	\end{align}
\end{remark}
\subsection{Weighted densities}
The first statement in~\lemref{Lemma: blow-up limits} indicates that the  limit $\Phi(x^{\circ},u;0^{+})$ exists and is finite for $x^{\circ}\in \partial\{u>0\}$ with $x_{n}^{\circ}=0$. Moreover, thanks to the second statement of~\lemref{Lemma: blow-up limits}, every blow-up $u_{0}$ is a homogeneous function of degree $3/2$. In what follows, we compute explicit values of $\Phi(x^{\circ},u;0^{+})$ for each stagnation point $x^{\circ}\in \Omega$ with $x_{n}^{\circ}=0$. It should be noted that these values provide a explicit geometric description of the solutions at the stagnation points. We now state the main result of this subsection.
\begin{lemma}\label{Lemma: densities at the stagnation points}
    Let $u$ be a subsonic variational solution of~\eqref{p2} and suppose that $u$ satisfies~\eqref{ga}. Then the following holds.
    \begin{enumerate}
        \item [(1)] Let $x^{\circ}\in\Omega$ be such that $x_{n}^{\circ}=0$. Then 
        \begin{align*}
            \Phi(x^{\circ},u;0^{+})=\frac{1}{H_{0}}\lim_{r\to 0^{+}}r^{-n-1}\int_{B_{r}(x^{\circ})}x_{n}^{+}\chi_{\{u>0\}}dx,
        \end{align*}
        and in particular $\Phi(x^{\circ},u;0^{+})\in[0,\infty)$. Moreover, $\Phi(x^{\circ},u;0^{+})=0$ implies that $u_{0}=0$ in $\mathbb{R}^{n}$ for each blow-up $u_{0}$ of~\lemref{Lemma: blow-up limits}. 
        \item [(2)] The function $x\mapsto\Phi(x^{\circ},u;0^{+})$ is upper semi-continuous in $\{x_{n}=0\}$.
        \item [(3)] Assume that $u_{m}$ given in~\eqref{bls} is a sequence of variational solutions of~\eqref{p2} in a domain $\Omega_{m}$, where
		\begin{align*}
			\Omega_{1}\subset\Omega_{2}\subset\dots\subset\Omega_{m}\subset\Omega_{m+1}\subset\dots\quad\text{ and }\cup_{m=1}^{\infty}\Omega_{m}=\mathbb{R}^{n},
		\end{align*}
		such that $u_{m}$ converges strongly to $u_{0}$ in $W_{\mathrm{loc}}^{1,2}(\mathbb{R}^{n}) $ and that  $\chi_{m}:=\chi_{\{u_{m}>0\}}$ converges weakly in $L_{\mathrm{loc}}^{2}(\mathbb{R}^{n})$ to $\chi_{0}$. Then $u_{0}\in C_{\mathrm{loc}}^{0}(\mathbb{R}^{n})\cap C_{\mathrm{loc}}^{2}(\mathbb{R}^{n}\cap\{u_{0}>0\})$, $u_{0}\geqslant 0$ in $\mathbb{R}^{n}$, $u_{0}\equiv 0$ in $\mathbb{R}^{n}\cap\{x_{n}\leqslant 0\}$, and $u_{0}$ satisfies
        \begin{align}\label{pu0}
			\begin{split}
				0&=\int_{\mathbb{R}^{n}}\left(|\nabla u_{0}|^{2}\operatorname{div}\phi-2\nabla u_{0}D\phi\nabla u_{0}\right)\,dx\\
                &+\int_{\mathbb{R}^{n}}(x_{n}\chi_{0}\operatorname{div}\phi+\chi_{0}\phi_{n})\,dx,
			\end{split}
		\end{align}
		for each $\phi=(\phi_{1},\dots,\phi_{n})\in C_{0}^{1}(\Omega;\mathbb{R}^{n})$. Furthermore, we have that $u_{0}$ satisfies the monotonicity formula~\eqref{Phir}, but with $F(|\nabla u|^{2};x_{n})\equiv \frac{1}{H_{0}}|\nabla u|^{2}$, $\lambda(x_{n})\equiv \frac{1}{H_{0}}x_{n}$, and $\chi_{\{u_{0}>0\}}$ replaced by $\chi_{0}$.

        Finally, for each $x^{\circ}\in\Omega$, and all instances of $\chi_{\{u_{0}>0\}}$ replaced by $\chi_{0}$,
		\begin{align*}
			\Phi(x^{\circ},u_{0};0^{+})\geqslant\lim_{m\to+\infty}\Phi(x^{\circ},u_{m};0^{+}).
		\end{align*}
    \end{enumerate}
\end{lemma}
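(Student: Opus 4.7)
The three parts rest on a common blow-up strategy together with the monotone regularization $\widetilde\Phi$ of Remark~\ref{Remark: perturbation}. For part (1), assume $x^\circ = 0$ for notational simplicity. The bounds~\eqref{|H12|} combined with the growth hypothesis $|\nabla u|^2 \leq \mathscr{C}x_n^+$ yield the pointwise expansions $F(|\nabla u|^2;x_n) = |\nabla u|^2/H_0 + O(x_n^2)$ and $\lambda(x_n) = x_n/H_0 + O(x_n^2)$ on $B_r$, hence
\begin{align*}
U(0,u;r) = \frac{1}{H_0}r^{-n-1}\int_{B_r}\bigl(|\nabla u|^2 + x_n\chi_{\{u>0\}}\bigr)\,dx + O(r).
\end{align*}
Fix $r_m\to 0^+$ along which $u_m(x):=u(r_m x)/r_m^{3/2}$ converges strongly in $W^{1,2}_{\mathrm{loc}}$ to the $3/2$--homogeneous blow-up $u_0$ from Lemma~\ref{Lemma: blow-up limits}, and pass to a further subsequence so that $\chi_{\{u_m>0\}}\rightharpoonup\chi_0$ weakly in $L^2_{\mathrm{loc}}$. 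Rescaling and sending $m\to\infty$ gives
\begin{align*}
\Phi(0,u;0^+) = \frac{1}{H_0}\int_{B_1}\bigl(|\nabla u_0|^2 + x_n\chi_0\bigr)\,dx - \frac{3}{2H_0}\int_{\partial B_1} u_0^2\,d\mathcal{H}^{n-1}.
\end{align*}
Harmonicity of $u_0$ in $\{u_0>0\}$ combined with the Euler identity $\nabla u_0\cdot x = \tfrac{3}{2}u_0$ gives $\int_{B_1}|\nabla u_0|^2 = \tfrac{3}{2}\int_{\partial B_1}u_0^2$, so the two terms involving $u_0$ cancel and the stated formula for $\Phi(0,u;0^+)$ follows upon rescaling back. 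Nonnegativity and finiteness are immediate; if the limit vanishes then $\chi_0=0$ a.e.~on $\{x_n>0\}\cap B_1$, and a.e.~convergence $u_m\to u_0$ along a subsequence combined with Fatou yields $\chi_{\{u_0>0\}}\leq\chi_0$ a.e., forcing $u_0\equiv 0$.

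For part (2), Remark~\ref{Remark: perturbation} gives that $r\mapsto\widetilde\Phi(x,u;r)$ is non-decreasing with $\widetilde\Phi(x,u;0^+)=\Phi(x,u;0^+)$, so $\Phi(x,u;0^+) = \inf_{r>0}\widetilde\Phi(x,u;r)$. For each fixed $r>0$, $x\mapsto\widetilde\Phi(x,u;r)$ depends on the centre only through the translated ball and is continuous on $\{x_n=0\}$ by dominated convergence, with the integrands uniformly bounded on compact subsets thanks to~\lemref{lem1}. An infimum of a family of continuous functions is upper semi-continuous, which is exactly the claim.

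For part (3), the rescaled solutions $u_m$ satisfy~\eqref{fdv} with $F$ and $\lambda$ replaced by $F_m(t;s):=r_m^{-1}F(r_m t;r_m s)$ and $\lambda_m(s):=r_m^{-1}\lambda(r_m s)$. The bounds of~\lemref{lem1} yield $F_m\to t/H_0$, $\partial_1 F_m\to 1/H_0$, $\lambda_m\to s/H_0$, $\lambda_m'\to 1/H_0$, and $\partial_2 F_m\to 0$ uniformly on bounded sets. Combining these with strong $W^{1,2}_{\mathrm{loc}}$ convergence $u_m\to u_0$ and weak $L^2_{\mathrm{loc}}$ convergence $\chi_{\{u_m>0\}}\rightharpoonup\chi_0$, the first variation formula passes to the limit term-by-term and gives~\eqref{pu0} after clearing $H_0$. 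Uniform De Giorgi--Nash estimates (allowed by the uniform ellipticity of~\lemref{lem1}) give $u_0\in C^{0,\alpha}_{\mathrm{loc}}$, and the same limit passage on the interior PDE yields $\Delta u_0 = 0$ in $\{u_0>0\}$, providing $C^2$ regularity there along with the stated sign and support properties. Since $H\equiv H_0$ in the limit, every $K_i$ vanishes identically for $u_0$, so the monotonicity formula~\eqref{Phir1} collapses to the classical Weiss formula with $\chi_0$ in place of $\chi_{\{u_0>0\}}$. For the upper semi-continuity estimate, fix $r>0$: then $\Phi(x^\circ,u_m;0^+)=\widetilde\Phi(x^\circ,u_m;0^+)\leq\widetilde\Phi(x^\circ,u_m;r)$, the right-hand side converges to $\Phi(x^\circ,u_0;r)$ as $m\to\infty$ (the $K_i$-integrals for $u_m$ vanish in the limit), and sending $r\to 0^+$ with monotonicity of $r\mapsto\Phi(x^\circ,u_0;r)$ finishes the argument. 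The main technical delicacy throughout is that the weak limit $\chi_0$ is generally strictly larger than $\chi_{\{u_0>0\}}$; the limit identity~\eqref{pu0}, the Weiss formula for $u_0$, and the upper semi-continuity bound must all be formulated with $\chi_0$ in place of the indicator of $\{u_0>0\}$, and tracking this distinction carefully is the key technical point.
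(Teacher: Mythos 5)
Your proposal is correct and follows essentially the same route as the paper: the same blow-up decomposition of $\Phi$ into its homogeneous part plus $O(r)$ error terms together with the Euler/energy identity $\int_{B_1}|\nabla u_0|^2=\tfrac32\int_{\partial B_1}u_0^2$ for part (1), the monotone regularization $\widetilde\Phi$ of Remark~\ref{Remark: perturbation} for both semicontinuity statements, and passage to the limit in the rescaled domain-variation formula with vanishing $\mathcal{I}_i$-type errors for part (3). The only minor imprecision is invoking De Giorgi--Nash for the continuity of $u_0$ (the equation holds only in $\{u_0>0\}$, not across the free boundary); the paper instead obtains continuity from the locally uniform convergence of the $u_m$, which your uniform gradient bound $|\nabla u_m|^2\leqslant\mathscr{C}x_n^{+}$ also supplies via Arzel\`a--Ascoli.
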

\begin{proof}
    (1) Without loss of generality we assume that $x^{\circ}=0$, let $r_{m}\to 0^{+}$ be a vanishing sequence as $m\to +\infty$ and let $u_{m}$ be the blow up sequence defined in~\eqref{bls}. We denote $\Phi(u;r):=\Phi(0,u;r)$ for simplicity. It follows from~\eqref{Phir} and a direct calculation that for any $r>0$,
    \begin{align}\label{Phirm}
        \begin{split}
            \Phi(u;rr_{m})&=r^{-n-1}J_{H_{m}}(u_{m};B_{r})-\frac{3}{2}W(u_{m};r)\\
            &-(rr_{m})^{-n-1}K_{1}(u;rr_{m}).
        \end{split}
    \end{align}
    Here, as in~\eqref{JH1}, $J_{H_{m}}(u_{m};B_{r})$ is defined by 
    \begin{align*}
        J_{H_{m}}(u_{m};B_{r}):=\int_{B_{r}}\frac{|\nabla u_{m}|^{2}}{H_{m}(|\nabla u_{m}|^{2};x_{n})}+\frac{x_{n}}{H_{0}}\chi_{\left\{ u_{m}>0 \right\} }\,dx,
    \end{align*}
    where $H_{m}(|\nabla u_{m}|^{2};x_{n}):=H(r_{m}|\nabla u_{m}|^{2};r_{m}x_{n})$. Thanks to~\eqref{|K1r|}, we have that $|K_{1}(u;rr_{m})| \leqslant \mathscr{C}(rr_{m})^{n+2}$. Therefore, 
    \begin{equation*}
        \lim_{m\to+\infty}(rr_{m})^{-n-1}K_{1}(u;rr_{m})=0,\qquad\forall\,r>0.
    \end{equation*}
    On the other hand, since $u_{m}$, $H_{m}$ converges to $u_{0}$, $H_{0}$ strongly in $W^{1,2}$ and $L^{2}$, respectively, we infer from~\eqref{Phirm}, $u_{0}$ is harmonic in $\{u_{0}>0\}$, and $u_{0}\equiv 0$ on $\{x_{n} \leqslant 0\}$ that 
    \begin{align*}
        0&=\lim_{m\to+\infty}\Bigg[  r^{-n-1}J_{H_{m}}(u_{m};B_{r})-\frac{3}{2}W(u_{m};r)-\Phi(u;rr_{m})\Bigg]\\
        &=r^{-n-1}\frac{1}{H_{0}}\int_{\partial B_{r}}u_{0}\left( \nabla u_{0}\cdot\nu-\frac{3}{2}\frac{u_{0}}{r} \right)\,d\mathcal{H}^{n-1}\\
        &+\lim_{m\to+\infty}\Bigg[\frac{1}{H_{0}}  r^{-n-1}\int_{B_{r}}x_{n}\chi_{\left\{ u_{m}>0 \right\} }\,dx-\Phi(u;rr_{m})\Bigg]\\
        &=\lim_{m\to+\infty}\frac{1}{H_{0}}r^{-n-1}\int_{B_{r}}x_{n}\chi_{\left\{ u_{m}>0 \right\} }\,dx-\Phi(u;0^{+}),
    \end{align*}
    where we used the fact that $u_{0}\in W_{\mathrm{loc}}^{1,2}(\mathbb{R} ^{n})$ is a homogeneous function of degree $3/2$ and that $\Phi(u;0^{+})$ exists and is finite in the last equality. This proves (1). 

    (2). For each $\delta>0$ we obtain from~\eqref{Phirv1} and~\eqref{Phirv10}, as well as the fact that $\lim_{x\to x^{\circ}}\Phi(x,u;r)=\Phi(x^{\circ},u;r)$, that
    \begin{align*}
        \Phi(x,u;0^{+})\leqslant \Phi(x,u;r)+\mathscr{C}r  \leqslant \Phi(x^{\circ},u;r)+\frac{\delta}{2} \leqslant \Phi(x^{\circ},u;0^{+})+\delta.
    \end{align*}
    This proves (2).

    (3). Let $u_{m}$ be the blow-up sequence defined in equation~\eqref{bls}, then~\eqref{ga} implies for each vanishing sequence $r_{m}\to 0^{+}$ we can find a subsequence (still denoted $r_{m}$) along which $u_{m}\to u_{0}$ locally uniformly and weakly in $W_{\mathrm{loc}}^{1,2}(\mathbb{R} ^{n})$ to some function $u_{0}: \mathbb{R} ^{n}\to[0,+\infty)$, $u_{0}\equiv 0$ on $\{x_{n} \leqslant 0\}$. It follows from~\lemref{Lemma: blow-up limits} that $u_{m}$ converges to $u_{0}$ strongly in $W_{\mathrm{loc}}^{1,2}(\mathbb{R} ^{n})$ and that $u_{0}$ is harmonic in $\{u_{0}>0\}$. Thus $u_{0}$ is continuous in $\Omega$ and $C^{2}$ in the positive set $\{u_{0}>0\}$. Let now $\phi\in C_{0}^{1}(\mathbb{R} ^{n};\mathbb{R} ^{n})$ and set $\phi_{m}(x):=\phi(\frac{x}{r_{m}})$. It follows from~\eqref{fdv} that 
    \begin{align}\label{fdv1}
      \begin{split}
        0&=\int_{\Omega_{m}}\Bigg[  \frac{|\nabla u_{m}|^{2}}{H_{m}(|\nabla u_{m}|^{2};x_{n})}+\frac{x_{n}}{H_{0}}\chi_{\left\{ u_{m}>0 \right\} }\Bigg]\operatorname{div}\phi\,dx\\
        &-\int_{\Omega_{m}}\frac{2\nabla u_{m}D\phi\nabla u_{m}}{H_{m}(|\nabla u_{m}|^{2};x_{n})}+\frac{\phi_{n}(x)}{H_{0}}\chi_{\left\{ u_{m}>0 \right\} }\,dx\\
        &+\sum_{i=1}^{2}\mathcal{I}_{i}(u_{m};r_{m}),
      \end{split}
    \end{align}
    where $\Omega_{m}:=\{x:r_{m}x\in\Omega\}$, $H_{m}(|\nabla u_{m}|^{2};x_{n}):=H(r_{m}|\nabla u_{m}|^{2};r_{m}x_{n})$, and
    \begin{align*}
        \mathcal{I}_{1}(u_{m};r_{m})&=r_{m}^{-1}\int_{\Omega_{m}}\Bigg[\int_{0}^{r_{m}x_{n}}\tfrac{\partial}{\partial\tau}\left( \tfrac{1}{H(\tau;r_{m}x_{n})} \right)\tau\,d\tau\Bigg] \operatorname{div}\phi\chi_{\left\{ u_{m}>0 \right\} }\,dx\\
        &-r_{m}^{-1}\int_{\Omega_{m}}\Bigg[  \int_{0}^{r_{m}|\nabla u_{m}|^{2}}\tfrac{\partial}{\partial\tau}\left( \tfrac{1}{H(\tau;r_{m}x_{n})} \right)\tau\,d\tau\Bigg]\operatorname{div}\phi\,dx,
    \end{align*}
    and 
    \begin{align*}
      \mathcal{I}_{2}(u_{m};r_{m})&=\int_{\Omega_{m}}\Bigg[\int_{0}^{r_{m}x_{n}}\tfrac{\partial}{\partial x_{n}}\left( \tfrac{1}{H(\tau;r_{m}x_{n})} \right)\,d\tau\Bigg]\chi_{\left\{ u_{m}>0 \right\} }\phi_{n}\,dx\\
      &-\int_{\Omega_{m}}\Bigg[\int_{0}^{r_{m}|\nabla u_{m}|^{2}}\tfrac{\partial}{\partial\tau}\left( \tfrac{1}{H(\tau;r_{m}x_{n})} \right)\,d\tau\Bigg]\phi_{n}\,dx.
    \end{align*}
    It follows from a similar argument as in~\eqref{|K1r|} and~\eqref{|K2r|} that
    \begin{equation*}
        |\mathcal{I}_{i}(u_{m};r_{m})| \leqslant \mathscr{C}r_{m},
    \end{equation*}
    for $i=1$, $2$. Thanks to the strong convergence of $u_{m}$ and $H_{m}$, we infer from~\eqref{fdv1} that  
    \begin{align*}
        0=\frac{1}{H_{0}}\int_{\mathbb{R} ^{n}}\left( (|\nabla u_{0}|^{2}+x_{n}\chi_{0})\operatorname{div}\phi-2\nabla u_{0}D\phi\nabla u_{0}+\chi_{0}\phi_{n} \right)\,dx.
    \end{align*}
    This proves~\eqref{pu0}. Let us now define 
    \begin{align*}
        \Phi(x^{\circ},u_{0};r)&=\frac{r^{-n-1}}{H_{0}}\int_{B_{r}(x^{\circ})}(|\nabla u_{0}|^{2}+x_{n}\chi_{0})\,dx\\
        &-\frac{3}{2}\frac{r^{-n-2}}{H_{0}}\int_{\partial B_{r}(x^{\circ})}u_{0}^{2}\,d\mathcal{H}^{n-1}.
    \end{align*}
    It follows from our argument that $u_{0}$ is a variational solution in the sense of Definition 3.1 in~\cite{MR2810856}. The rest of the proof follows along a similar argument as in~\cite[Lemma 4.2(v)]{MR2810856}, so we omit it. 
\end{proof}
\subsection{Possible singular profiles in two dimensions}
We can now compute all the possible profiles of the free surface at the stagnation points with the aid of~\lemref{Lemma: blow-up limits} and~\lemref{Lemma: densities at the stagnation points}.
\begin{proposition}[two-dimensional case]\label{Proposition: 2-dimensional case}
	Let $n=2$ and let $u$ be a subsonic variational solution of~\eqref{p2}, and suppose that
	\begin{align*}
		|\nabla u|^{2}\leqslant\mathscr{C}x_{2}^{+}\quad\text{ locally in }\Omega.
	\end{align*}
	Let $x^{\circ}\in\Omega$ be such that $u(x^{\circ})=0$ and $x_{2}^{\circ}=0$, and suppose that
	\begin{align*}
		r^{-3/2}\int_{B_{r}(x^{\circ})}\sqrt{x_{2}}|\nabla\chi_{\{u>0\}}|dx \leqslant\mathscr{C}_{0},
	\end{align*}
	for all $r>0$ such that $B_{r}(x^{\circ})\subset\subset\Omega$. Then the following statements hold:
	\begin{enumerate}
		\item $\Phi(x^{\circ},u;0^{+})\in\left\lbrace 0, \frac{\sqrt{3}}{3H_{0}} ,\frac{2}{3H_{0}}\right\rbrace$.
		\item If $\Phi(x^{\circ},u;0^{+})=\frac{\sqrt{3}}{3H_{0}}$, then
		\begin{align*}
			\frac{u(x^{\circ}+rx)}{r^{3/2}}\to \frac{\sqrt{2}}{3}R^{3/2}\cos\left(\frac{3}{2}\left(\min\left\lbrace\max\left\lbrace\theta,\frac{\pi}{6}\right\rbrace,\frac{5\pi}{6}\right\rbrace-\frac{\pi}{2}\right)\right),
		\end{align*}
		as $r\to 0^{+}$, strongly in $W_{\mathrm{loc}}^{1,2}(\mathbb{R}^{2})$ and locally uniformly in $\mathbb{R}^{2}$.
		\item If $\Phi(x^{\circ},u;0^{+})=0$, then
		\begin{align*}
			\frac{u(x^{\circ}+rx)}{r^{3/2}}\to 0\text{ as }r\to0^{+},
		\end{align*}
		strongly in $W_{\mathrm{loc}}^{1,2}(\mathbb{R}^{2})$ and locally uniformly in $\mathbb{R}^{2}$.
	\end{enumerate}
\end{proposition}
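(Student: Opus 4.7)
The plan is to couple the blow-up machinery of \lemref{Lemma: blow-up limits} and \lemref{Lemma: densities at the stagnation points} with an ODE classification of $3/2$-homogeneous solutions of the limiting incompressible problem in the plane. The key observation is that after blow-up the compressible structure disappears and the problem reduces, by \lemref{Lemma: densities at the stagnation points}(3), to a V\v{a}rv\v{a}ruc\v{a}--Weiss--type variational problem with constant density $H_{0}$, whose degree-$3/2$ homogeneous solutions in two dimensions admit an explicit classification.

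First, for an arbitrary vanishing sequence $r_{m}\to 0^{+}$ I would consider the blow-up $u_{m}(x) = r_{m}^{-3/2} u(x^{\circ} + r_{m} x)$. By \lemref{Lemma: blow-up limits} and the growth hypothesis, $u_{m}$ converges (along a subsequence) strongly in $W^{1,2}_{\mathrm{loc}}(\mathbb{R}^{2})$ to a $3/2$-homogeneous function $u_{0}$ with $u_{0}\geqslant 0$ and $u_{0}\equiv 0$ on $\{x_{2}\leqslant 0\}$. Rescaling the weighted $BV$ bound $r^{-3/2}\int_{B_{r}(x^{\circ})}\sqrt{x_{2}}\,|\nabla\chi_{\{u>0\}}|\leqslant\mathscr{C}_{0}$ yields a uniform estimate on $\int_{B_{R}}\sqrt{x_{2}}\,|\nabla\chi_{\{u_{m}>0\}}|$, and standard $BV$ compactness upgrades the weak $L^{2}$ convergence of $\chi_{m}:=\chi_{\{u_{m}>0\}}$ to strong $L^{1}_{\mathrm{loc}}$ convergence to a characteristic function $\chi_{0}$. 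By \lemref{Lemma: densities at the stagnation points}(3), $u_{0}$ is a V\v{a}rv\v{a}ruc\v{a}--Weiss variational solution with $\chi_{\{u_{0}>0\}}$ replaced by $\chi_{0}$, and in particular harmonic in $\{u_{0}>0\}$.

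Next, writing $u_{0}(R,\theta)=R^{3/2}f(\theta)$ in polar coordinates, harmonicity forces $f''+\tfrac{9}{4}f=0$ on each connected component of $\{f>0\}\subset(0,\pi)$, and positivity between consecutive zeros forces each component to be an arc of angular width $2\pi/3$. Thus $\{u_{0}>0\}$ is either empty or a single open sector $\alpha<\theta<\alpha+2\pi/3$ with $\alpha\in[0,\pi/3]$ and $f(\theta)=A\sin\!\left(\tfrac{3}{2}(\theta-\alpha)\right)$ there. In the nontrivial case, the strong $W^{1,2}$ convergence together with $u_{0}\in C^{0}\cap C^{2}(\{u_{0}>0\})$ identifies $\chi_{0}=\chi_{\{u_{0}>0\}}$, so the first domain variation formula~\eqref{pu0} delivers the boundary condition $|\nabla u_{0}|^{2}=x_{2}$ on both bounding rays; this forces $\sin\alpha=\sin(\alpha+2\pi/3)$, hence $\alpha=\pi/6$ and $A=\sqrt{2}/3$, which is exactly the Stokes profile in~(2). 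A direct computation then gives $\Phi(x^{\circ},u;0^{+})=H_{0}^{-1}\int_{B_{1}\cap\{u_{0}>0\}}x_{2}\,dx=\sqrt{3}/(3H_{0})$.

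In the case $u_{0}\equiv 0$, equation~\eqref{pu0} collapses to $\int(x_{2}\chi_{0}\operatorname{div}\phi+\chi_{0}\phi_{2})\,dx=0$ for every $\phi\in C_{0}^{1}(\mathbb{R}^{2};\mathbb{R}^{2})$; integrating by parts distributionally using $\chi_{0}\in BV_{\mathrm{loc}}$ reduces this to $x_{2}\nabla\chi_{0}=0$ as a Radon measure, so $\chi_{0}$ is constant on the connected set $\{x_{2}>0\}$. Being $\{0,1\}$-valued, $\chi_{0}\in\{0,\chi_{\{x_{2}>0\}}\}$, which by \lemref{Lemma: densities at the stagnation points}(1) gives $\Phi(x^{\circ},u;0^{+})\in\{0,2/(3H_{0})\}$. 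Since $\Phi(x^{\circ},u;0^{+})$ is independent of the subsequence and the $3/2$-homogeneous blow-up is uniquely determined by the value of $\Phi$ in the cases $\Phi=0$ and $\Phi=\sqrt{3}/(3H_{0})$, a standard subsequence argument promotes subsequential convergence to convergence of the full family $r\to 0^{+}$ in $W^{1,2}_{\mathrm{loc}}(\mathbb{R}^{2})$ and locally uniformly, yielding~(2) and~(3). The most delicate step of the plan is securing the strong $L^{1}_{\mathrm{loc}}$ convergence of $\chi_{m}$ (and not merely weak $L^{2}$): without it the weak limit need not be a characteristic function and the polar-coordinate classification collapses, so one must carefully track how the weight $\sqrt{x_{2}}$ interacts with the blow-up scaling in the rescaling of the weighted $BV$ bound.
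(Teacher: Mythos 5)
Your plan follows essentially the same route as the paper: blow up via \lemref{Lemma: blow-up limits}, invoke \lemref{Lemma: densities at the stagnation points}(3) to reduce to the constant--density (incompressible) variational problem~\eqref{pu0}, classify the $3/2$--homogeneous limits, and compute the densities from \lemref{Lemma: densities at the stagnation points}(1). The only real difference is that where the paper defers the classification to V\v{a}rv\v{a}ruc\v{a}--Weiss (Proposition 4.7, boundary case, of \cite{MR2810856}), you carry it out by hand via the ODE $f''+\tfrac94 f=0$; your rescaling of the weighted $BV$ bound and your treatment of the case $u_{0}\equiv 0$ (deriving $x_{2}\nabla\chi_{0}=0$ and hence $\chi_{0}\in\{0,\chi_{\{x_{2}>0\}}\}$) are correct and match what that citation supplies.

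One step of your hand-done classification is asserted rather than proved: in the case $u_{0}\not\equiv 0$ you claim that ``the strong $W^{1,2}$ convergence together with $u_{0}\in C^{0}\cap C^{2}$ identifies $\chi_{0}=\chi_{\{u_{0}>0\}}$.'' Strong convergence of $u_{m}$ gives $\chi_{0}=1$ a.e.\ on $\{u_{0}>0\}$, but it gives no information on the open air sectors $\{0<\theta<\alpha\}$ and $\{\alpha+\tfrac{2\pi}{3}<\theta<\pi\}$, where $\chi_{0}$ could a priori equal $1$ (this is exactly what happens at horizontal flat points, where $u_{0}\equiv 0$ yet $\chi_{0}=\chi_{\{x_{2}>0\}}$). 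To close this you must use~\eqref{pu0} twice more: first with $\phi$ supported in the interior of $\{u_{0}=0\}\cap\{x_{2}>0\}$ to conclude $\chi_{0}$ is constant on each air sector, and then with $\phi$ concentrated near a bounding ray $\theta=\alpha$ to see that if $\chi_{0}$ does not jump across that ray the boundary term reduces to $\int|\nabla u_{0}|^{2}\,\phi\cdot\nu=0$, forcing $\nabla u_{0}=0$ there and contradicting $f'(\alpha)\neq 0$. Only after excluding that scenario does the jump condition $|\nabla u_{0}|^{2}=x_{2}$ on both rays become available, yielding $\alpha=\pi/6$ and $A=\sqrt{2}/3$. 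With that repair the argument is complete.
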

\begin{proof}
    Let $u_{m}$ be the blow-up sequence defined in equation~\eqref{bls}, where $r_{m}\to 0^{+}$ as $m\to+\infty$, then $u_{m}$ converges to a blow-up $u_{0}$ strongly in $W_{\mathrm{loc}}^{1,2}(\mathbb{R} ^{n})$. Moreover,equation~\eqref{pu0} implies that $u_{0}$ is a homogeneous solution to
    \begin{equation*}
        0=\int_{\mathbb{R} ^{n}}\left( |\nabla u_{0}|^{2}\operatorname{div}\phi-2\nabla u_{0}D\phi\nabla u_{0} \right)\,dx+\int_{\mathbb{R} ^{2}}\left( x_{2}\chi_{0}+\chi_{0}\phi_{2}\right)\,dx,
    \end{equation*}
    for any $\phi\in C_{0}^{1}(\mathbb{R} ^{2})$. Here, $\chi_{0}$ is the strong $L^{1}$ limit of $\chi_{m}:=\chi_{\left\{ u_{m}>0 \right\} }$ along a subsequence. Then the argument reduces to~\cite[Proposition 4.7, Boundary case]{MR2810856}. To conclude the proof, we calculate explicitly the weighted density $\Phi(x^{\circ},u;0^{+})$ at $x^{\circ}$. In the case $u_{0}\neq 0$, we have that $\chi_{m}\to \chi_{0}=\chi_{\{x:\pi/6<\theta<5\pi/6\}}$, and therefore, we deduce from~\lemref{Lemma: densities at the stagnation points} (1) that
    \begin{align*}
        \Phi(x^{\circ},u;0^{+})&=\frac{1}{H_{0}}\int_{B_{1}}x_{2}^{+}\chi_{\{x\colon\pi/6<\theta<5\pi/6\}}\,dx\\
        &=\frac{1}{H_{0}}\int_{0}^{1}\rho^{2}\,d\rho\int_{\pi/6}^{5\pi/6}\sin\theta\, d\theta \\
        &=\frac{\sqrt{3}}{3H_{0}}.
    \end{align*}
    In the case when $u_{0}=0$, there are two possibilities, either $\chi_{m}\to \chi_{0}=0$ or $\chi_{m}\to \chi_{0}=\chi_{\{x:0<\theta<\pi\}}$. This gives that either $\Phi(x^{\circ},u;0^{+})=0$ or 
    \begin{align*}
        \Phi(x^{\circ},u;0^{+})&=\frac{1}{H_{0}}\int_{B_{1}}x_{2}^{+}\chi_{\{x\colon 0<\theta<\pi\}}\,dx\\
        &=\frac{1}{H_{0}}\int_{0}^{1}\rho^{2}\,d\rho\int_{0}^{\pi}\sin\theta\, d\theta\\
        &=\frac{2}{3H_{0}}.
    \end{align*}
\end{proof}
\begin{remark}
    Based on the results we have deduced in~\propref{Proposition: 2-dimensional case}, we classify each stagnation point into  two categories. We refer to the first category as~\emph{degenerate stagnation points}, while the second category consists of~\emph{non-degenerate stagnation points}. The precise definition will be given in~\defref{Definition: stgnation points}. Roughly speaking, a stagnation point is considered degenerate if the blow-up $u_{0}$ at this point is identically zero. Otherwise, we regard it as a non-degenerate stagnation point.
\end{remark}
\begin{remark}
    It should be noted that at the degenerate stagnation points $x^{\circ}$, the analysis above suggests that $u_{m}(x):=\frac{u(x^{\circ}+r_{m}x)}{r_{m}^{\alpha}}\to u_{0}\neq 0$ for some $\alpha>\frac{3}{2}$. However, it is a priori unknown whether $u$ decays like a polynomial at $x^{\circ}$ or not. Therefore, there is a possibility that $u_{m}\to 0$ for any $\alpha>0$. In the forthcoming section, we will give an explicit answer that even at the degeneracy stagnation points, the solution $u$ decays like a polynomial.
\end{remark}
\begin{remark}
    In the rest of the paper, we will refer to the value $\frac{\sqrt{3}}{3H_{0}}$ as the Stokes density, the value $0$ as the trivial density, and the value $\frac{2}{3H_{0}}$ as the horizontal flat density.
\end{remark}
\section{Non-degenerate stagnation points}\label{Sect: Non sta}
In this section, we will analyze the scenario when $u_{0}$ is not identically zero. We first collect all the stagnation points together and we define  
\begin{definition}[Stagnation points]\label{Definition: stgnation points}
	Let $u$ be a subsonic variational solution of \eqref{p2}. We call
	\begin{align*}
		S^{u}:=\{x=(x',x_{n})\in\Omega\cap\partial\{u>0\}\colon x_{n}=0\}
	\end{align*}
	the set of stagnation points.
\end{definition}
For each stagnation point, we define 
\begin{definition}[Non-degenerate and degenerate stagnation points]
	Let $u$ be a subsonic variational solution of~\eqref{p2}. We say that a stagnation point $x^{\circ}\in S^{u}$ is a~\emph{ non-degenerate stagnation point} if 
	\begin{align}\label{Formula: property N}
		\frac{u(x^{\circ}+rx)}{r^{3/2}}\to u_{0}\not\equiv 0\quad\text{ as }r\to 0^{+},
	\end{align}
	along a subsequence. Otherwise, we call $x^{\circ}$ a~\emph{degenerate stagnation point}.
\end{definition}
\begin{remark}
    By our definition, if $x^{\circ}$ is a non-degenerate stagnation point, then thanks to~\propref{Proposition: 2-dimensional case}, $\Phi(x^{\circ},u;0^{+})=\frac{\sqrt{3}}{3H_{0}}$. Otherwise, if $x^{\circ}$ is a degenerate stagnation point, then either $\Phi(x^{\circ},u;0^{+})=0$ or $\Phi(x^{\circ},u;0^{+})=\frac{2}{3H_{0}}$. But one should note that in both cases we have $u_{0}\equiv 0$, this is referred to as the ``degenerate'' case.
\end{remark}
\subsection{Structure of the non-degenerate stagnation points}
In this subsection, with the aid of~\propref{Proposition: 2-dimensional case}, we first study the non-degenerate stagnation points.
In this subsection, with the aid of~\propref{Proposition: 2-dimensional case}, we first study the non-degenerate stagnation points. Our first result is the following measure estimates at the non-degenerate stagnation points.
\begin{lemma}\label{Lemma:measure1}
    Let $n=2$ and let $u$ be a subsonic variational solution of~\eqref{p2}, and suppose that
    \begin{align*}
        |\nabla u|^{2}\leqslant\mathscr{C}x_{2}^{+}\quad\text{ locally in }\Omega.
    \end{align*}
    Let $x^{\circ}\in\Omega$ be such that $u(x^{\circ})=0$ and $x_{2}^{\circ}=0$, and suppose that
    \begin{align*}
        r^{-3/2}\int_{B_{r}(x^{\circ})}\sqrt{x_{2}}|\nabla\chi_{\{u>0\}}|dx \leqslant\mathscr{C}_{0},
    \end{align*}
    for all $r>0$ such that $B_{r}(x^{\circ})\subset\subset\Omega$. Let $x^{\circ}\in S^{u}$ be a non--degenerate stagnation point, then 
      
    (1). The ``weighted density'' $\Phi(x^{\circ},u;0^{+})$ has the Stokes density $\frac{\sqrt{3}}{3H_{0}}$ and
    \begin{align*}
        \frac{u(x^{\circ}+rx)}{r^{3/2}}\to\frac{\sqrt{2}}{3}R^{3/2}\cos\left(\frac{3}{2}\left(\min\left\lbrace\max\left\lbrace\theta,\frac{\pi}{6}\right\rbrace,\frac{5\pi}{6}\right\rbrace-\frac{\pi}{2}\right)\right),
    \end{align*}
    as $r\to 0^{+}$, strongly in $W_{\mathrm{loc}}^{1,2}(\mathbb{R} ^{2})$ and locally uniformly on $\mathbb{R}^{2}$, where $x=(R\cos\theta,R\sin\theta)$. 
      
    (2). There exists a $\mathscr{c}_{0}>0$ so that 
    \begin{align}\label{nden1}
        \frac{\int_{B_{r}(x^{\circ})}x_{2}^{+}\chi_{\left\{ u>0 \right\} }\,dx}{r\mathcal{L}^{2}(B_{r}(x^{\circ}))} \geqslant\mathscr{c}_{0}.
    \end{align} 
    Moreover, 
    \begin{align}\label{mes1}
        \mathcal{L}^{2}\left(B_{1}\cap\left(\{x\colon u(x^{\circ}+rx)>0\}\triangle\left\lbrace x\colon\frac{\pi}{6}<\theta<\frac{5\pi}{6}\right\rbrace\right)\right)\to0,
    \end{align}
    as $r\to 0^{+}$, and for each $\delta>0$,
    \begin{align}\label{mes2}
        r^{-3/2}Lu\left((x^{\circ}+B_{r})\setminus\left\lbrace x\colon\min\left\lbrace\left|\theta-\frac{\pi}{6}\right|,\left|\theta-\frac{5\pi}{6}\right|\right\rbrace<\delta\right\rbrace\right)\to0,
    \end{align}
    as $r\to 0^{+}$. Recall that $Lu$ is a nonnegative Radon measure.
\end{lemma}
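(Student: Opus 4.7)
The plan is to reduce Part (1) to a direct application of Proposition~\ref{Proposition: 2-dimensional case}. Since $x^{\circ}$ is a non--degenerate stagnation point, there exists a vanishing sequence $r_m\to 0^{+}$ along which the rescaled functions $u_{r_m}(x):=u(x^{\circ}+r_m x)/r_m^{3/2}$ converge strongly in $W^{1,2}_{\mathrm{loc}}(\mathbb{R} ^{2})$ (Lemma~\ref{Lemma: blow-up limits}(3)) to a non--trivial $3/2$--homogeneous blow--up $u_0$. The trichotomy in Proposition~\ref{Proposition: 2-dimensional case} restricts $\Phi(x^{\circ},u;0^{+})$ to $\{0,\sqrt{3}/(3H_{0}),2/(3H_{0})\}$, and the course of its proof matches the values $0$ and $2/(3H_{0})$ respectively with the blow--ups $u_{0}\equiv 0,\chi_{0}=0$ and $u_{0}\equiv 0,\chi_{0}=\chi_{\{0<\theta<\pi\}}$. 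Non--triviality of $u_{0}$ therefore forces $\Phi(x^{\circ},u;0^{+})=\sqrt{3}/(3H_{0})$, whereupon Proposition~\ref{Proposition: 2-dimensional case}(2) upgrades subsequential convergence to full--family convergence of $u(x^{\circ}+rx)/r^{3/2}$ to the Stokes corner profile, proving (1).

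For the lower density bound, Lemma~\ref{Lemma: densities at the stagnation points}(1) specialised to $n=2$ gives
\[
    \Phi(x^{\circ},u;0^{+})=\frac{1}{H_{0}}\lim_{r\to 0^{+}}r^{-3}\int_{B_{r}(x^{\circ})}x_{2}^{+}\chi_{\{u>0\}}\,dx,
\]
which combined with (1) yields $r^{-3}\int_{B_{r}(x^{\circ})}x_{2}^{+}\chi_{\{u>0\}}\,dx\to\sqrt{3}/3$. Dividing by $r\mathcal{L}^{2}(B_{r}(x^{\circ}))=\pi r^{3}$ produces a strictly positive limit and hence a uniform lower bound $\mathscr{c}_{0}=\sqrt{3}/(3\pi)$ for $r$ sufficiently small, which is~\eqref{nden1}. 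The symmetric--difference statement~\eqref{mes1} follows from the same picture: the family $\chi_{\{u(x^{\circ}+r\cdot)>0\}}$ is uniformly bounded in $L^{\infty}(B_{1})$, and the proof of Proposition~\ref{Proposition: 2-dimensional case} together with Lemma~\ref{Lemma: densities at the stagnation points}(3) identifies its weak$^{*}$ limit as $\chi_{\{\pi/6<\theta<5\pi/6\}}$. Because this limit is itself a characteristic function, the identity
\[
    \int_{B_{1}}|\chi_{r}-\chi_{0}|\,dx=\int_{B_{1}}\chi_{r}\,dx+\int_{B_{1}}\chi_{0}\,dx-2\int_{B_{1}}\chi_{r}\chi_{0}\,dx
\]
upgrades weak$^{*}$ convergence to $L^{1}(B_{1})$ convergence, which is precisely~\eqref{mes1}.

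The principal obstacle is the mass--concentration statement~\eqref{mes2} for the non--negative Radon measure $Lu$. The first step is to establish the rescaling identity
\[
    r^{-3/2}Lu\bigl(x^{\circ}+rE\bigr)=L_{H_{r}}u_{r}(E),\qquad E\Subset B_{1},
\]
where $H_{r}(t,s):=H(rt,rs)$, $L_{H_{r}}v:=\operatorname{div}(\nabla v/H_{r}(|\nabla v|^{2};x_{n}))$, and $u_{r}(x):=u(x^{\circ}+rx)/r^{3/2}$, via a direct change of variable in the distributional definition of $Lu$. Combining the strong $W^{1,2}_{\mathrm{loc}}(\mathbb{R} ^{2})$ convergence $u_{r}\to u_{0}^{\mathrm{Stokes}}$ from (1) with the uniform convergence $H_{r}\to H_{0}$ on compact sets yields strong $L^{2}_{\mathrm{loc}}$ convergence of the flux field $\nabla u_{r}/H_{r}(|\nabla u_{r}|^{2};x_{n})$ to $\nabla u_{0}^{\mathrm{Stokes}}/H_{0}$, and hence distributional convergence $L_{H_{r}}u_{r}\to H_{0}^{-1}\Delta u_{0}^{\mathrm{Stokes}}$. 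Since $u_{0}^{\mathrm{Stokes}}$ is harmonic in the open sector $\{\pi/6<\theta<5\pi/6\}$ and vanishes identically outside, the limit measure is supported precisely on the two rays $\{\theta=\pi/6\}\cup\{\theta=5\pi/6\}$. The delicate point is promoting distributional convergence of the non--negative measures $L_{H_{r}}u_{r}$ to \emph{vague} convergence, after which the fact that the complement of any $\delta$--neighbourhood of these rays is compact and disjoint from the support of the limit measure yields the vanishing of mass in~\eqref{mes2}. This step requires control on the total mass $L_{H_{r}}u_{r}(B_{1})$, which can be obtained from the weighted--BV bound $r^{-3/2}\int_{B_{r}(x^{\circ})}\sqrt{x_{2}}\,|\nabla\chi_{\{u>0\}}|\,dx\leqslant\mathscr{C}_{0}$ hypothesised in the lemma together with the free boundary condition $|\nabla u|^{2}=x_{n}$ on $\partial_{\mathrm{red}}\{u>0\}$, which relates $Lu$ on the regular part of the free boundary to the weighted surface measure $\sqrt{x_{n}}H_{0}^{-1}\mathcal{H}^{1}\mres\partial_{\mathrm{red}}\{u>0\}$.
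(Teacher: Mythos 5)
Your proposal is correct and follows essentially the same route as the paper: part (1) via the trichotomy of Proposition~\ref{Proposition: 2-dimensional case}, the density identity of Lemma~\ref{Lemma: densities at the stagnation points}(1) for \eqref{nden1} and \eqref{mes1}, and weak convergence of the non-negative measures $L_{H_r}u_r$ to $H_0^{-1}\Delta u_0$ (supported on the two rays) for \eqref{mes2}. Two small remarks. First, the ``control on the total mass'' you flag as the delicate step for \eqref{mes2} is automatic: testing the distributional convergence against a fixed non-negative cutoff $\eta\in C_c^\infty$ with $\eta\equiv 1$ on the compact set already yields $\sup_r L_{H_r}u_r(K)<\infty$, so you should not route this through the identity $Lu\geqslant\sqrt{x_n}\,\mathcal{H}^1\mres\partial_{\mathrm{red}}\{u>0\}$ — that identification uses the free boundary condition on $\partial_{\mathrm{red}}\{u>0\}$, which is a property of subsonic \emph{weak} solutions and is not among the hypotheses of this lemma (stated for variational solutions); the paper avoids it entirely. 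Second, your limit argument gives \eqref{nden1} only for $r$ small, whereas the paper's monotonicity of $\widetilde{\Phi}$ gives it on all of $(0,\delta_0)$; this is harmless since the bound for larger $r$ follows from the small-$r$ bound by monotonicity of $r\mapsto\int_{B_r}x_2^+\chi_{\{u>0\}}\,dx$, at the cost of a smaller constant.
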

\begin{proof}
    Assume that $x^{\circ}=0$, the value of $\Phi(u;0^{+})$ and the uniqueness of the blow-up follows directly from the second statement~\propref{Proposition: 2-dimensional case}, and (1) is proved. It follows from~\eqref{Phirv1} and~\eqref{Phirv10} in~\rmkref{Remark: perturbation} that
    \begin{align*}
        0<\frac{\sqrt{3}}{3H_{0}}&=\widetilde{\Phi}(u;0^{+})\\
            &\leqslant \widetilde{\Phi}(u;1)\\
            &=U(u;1)-\frac{3}{2}W(u;1)+\int_{0}^{1}\sum_{i=1}^{4}\Bigg|t^{-n-2}K_{i}(u;t)\Bigg|\,dt, 
    \end{align*}
    where we used~\eqref{Phir} in the last equality. Thanks to~\eqref{|K1r|},~\eqref{|K2r|},~\eqref{|K3r|} and~\eqref{|K4r|}, one has $\sum_{i=1}^{4}t^{-n-2}|K_{i}(u;t)| \leqslant \mathscr{C}$. Thus, by setting $\mathscr{c}_{0}:=\frac{\sqrt{3}}{3H_{0}}$, we deduce from $W(u;1) \geqslant 0$, the growth condition~\eqref{ga} that
    \begin{equation*}
        \mathscr{c}_{0} \leqslant\left( \frac{1}{H_{0}}+\mathscr{C} \right)\int_{B_{1}}x_{2}^{+}\chi_{\left\{ u>0 \right\} }\,dx.
    \end{equation*}
    Then~\eqref{nden1} follows by rescaling.

    For any arbitrary vanishing sequence $r_{m}\to 0^{+}$, let us consider $u_{m}$ given in~\eqref{bls}. Define
    \begin{equation*}
        u_{0}(R,\theta)=\frac{\sqrt{2}}{3}R^{3/2}\cos\left(\frac{3}{2}\left(\min\left\lbrace\max\left\lbrace\theta,\frac{\pi}{6}\right\rbrace,\frac{5\pi}{6}\right\rbrace-\frac{\pi}{2}\right)\right).
    \end{equation*}
    We may infer from the proof of~\propref{Proposition: 2-dimensional case} that $\chi_{m}:=\chi_{\left\{ u_{m}>0 \right\} }$ converges to $\chi_{\{u_{0}>0\}}$ strongly in $L^{1}(B_{1})$ along a subsequence. Since this is true for all sequences $r_{m}\to 0^{+}$, we have 
    \begin{align*}
        \chi_{\{u(x^{\circ}+rx)>0\}}\to\chi_{\{u_{0}>0\}}\quad\text{ strongly in }L^{1}(B_{1})\quad\text{ as }r\to 0^{+},
    \end{align*}
    which is exactly the measure estimate~\eqref{mes1}. The strong $L^{2}$ convergence of $H_{m}$ to $H_{0}$ and the weak $L^{2}$  convergence of $\nabla u_{m}$ to $\nabla u_{0}$ immediately imply the weak convergence of the sequence of non-negative Radon measures $Lu_{m}$ to $\frac{\Delta u_{0}}{H_{0}}$. In other words, for each $\eta\in C_{0}^{1}(B_{1})$,
    \begin{align*}
        \int_{B_{1}}\eta dLu_{m}&=\int_{B_{1}}\frac{\nabla u_{m}\nabla\eta}{H_{m}(|\nabla u|^{2};x_{n})} dx\to\int_{B_{1}}\frac{\nabla u_{0}\nabla\eta}{H_{0}}\,dx=\int_{B_{1}}\eta d(\tfrac{\Delta u_{0}}{H_{0}}),
    \end{align*}
    as $r\to 0^{+}$. Note that $u_{0}$ is harmonic in 
    \begin{align*}
        B_{1}\setminus \left\lbrace x\colon\min\left\lbrace\left|\theta-\frac{\pi}{6}\right|,\left|\theta-\frac{5\pi}{6}\right|\right\rbrace<\frac{\delta}{2}\right\rbrace,
    \end{align*}
    and this implies that
    \begin{align*}
        \Bigg(\frac{\Delta u_{0}}{H_{0}}\Bigg)\left(B_{1}\setminus\left\lbrace x\colon\min\left\lbrace\left|\theta-\frac{\pi}{6}\right|,\left|\theta-\frac{5\pi}{6}\right|\right\rbrace<\frac{\delta}{2}\right\rbrace \right)=0.
    \end{align*}
    Thus,
    \begin{align*}
        (Lu_{m})\left(B_{1}\setminus\left\lbrace x\colon\min\left\lbrace\left|\theta-\frac{\pi}{6}\right|,\left|\theta-\frac{5\pi}{6}\right|\right\rbrace<\frac{\delta}{2}\right\rbrace\right)\to0,
    \end{align*}
    as $m\to+\infty$. The second measure estimate~\eqref{mes2} follows.
\end{proof}
We now prove that all  non--degenerate stagnation points are isolated.
\begin{proposition}[Isolatedness of the non--degenerate stagnation points]\label{Proposition: iso-stp}
  Let $u$ be a subsonic variational solution of~\eqref{p2} for $n=2$ and suppose that
	\begin{align*}
		|\nabla u|^{2}\leqslant\mathscr{C}x_{2}^{+}\quad\text{ locally in }\Omega,
	\end{align*}
	and that
	\begin{align*}
		r^{-3/2}\int_{B_{r}(y)}\sqrt{x_{2}}|\nabla\chi_{\{u>0\}}|dx\leqslant\mathscr{C}_{0},
	\end{align*} 
	for all $B_{r}(y)\subset\subset\Omega$ with $y_{2}=0$. Suppose that $x^{\circ}\in S^{u}$ is a non--degenerate stagnation point. Then in some open neighborhood, $x^{\circ}$ is the \emph{only} non--degenerate stagnation point.
\end{proposition}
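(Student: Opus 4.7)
The strategy is to argue by contradiction. Suppose, to the contrary, that there exists a sequence of non-degenerate stagnation points $\{x_{k}\}_{k} \subset S^{u}$ with $x_{k} \neq x^{\circ}$ and $x_{k} \to x^{\circ}$. Put $r_{k} := |x_{k} - x^{\circ}| \to 0^{+}$. Since every stagnation point lies on $\{x_{2}=0\}$ by \defref{Definition: stgnation points}, the unit vectors $\xi_{k} := (x_{k} - x^{\circ})/r_{k}$ satisfy $\xi_{k,2}=0$ and $|\xi_{k}|=1$, so passing to a subsequence, and after an obvious $x_{1}$-reflection if necessary, we may assume $\xi_{k} \to (1,0)$.

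Consider next the rescaling $u_{k}(y) := u(x^{\circ}+r_{k}y)/r_{k}^{3/2}$. Since $x^{\circ}$ itself is a non-degenerate stagnation point, \lemref{Lemma:measure1}(1) ensures that $u_{k}$ converges to the Stokes corner asymptotic $u_{0}$ strongly in $W^{1,2}_{\mathrm{loc}}(\mathbb{R}^{2})$ and locally uniformly on $\mathbb{R}^{2}$, while the measure estimate \eqref{mes1} gives
\begin{align*}
\chi_{\{u_{k}>0\}} \longrightarrow \chi_{\{\pi/6 < \theta < 5\pi/6\}} \quad \text{in } L^{1}_{\mathrm{loc}}(\mathbb{R}^{2}).
\end{align*}
On the other hand, applying the non-degeneracy lower bound \eqref{nden1} at each $x_{k}$ (noting that the Stokes density $\frac{\sqrt{3}}{3H_{0}}$ is universal, so the resulting constant $\mathscr{c}_{0}$ is independent of $k$), we obtain, for all sufficiently small $r$ and all large $k$,
\begin{align*}
\int_{B_{r}(x_{k})} x_{2}^{+}\chi_{\{u>0\}}\,dx \;\geqslant\; \mathscr{c}_{0}\, r\, \mathcal{L}^{2}(B_{r}).
\end{align*}
Choosing $r = \alpha r_{k}$ for a fixed $\alpha \in (0,1/2)$ and changing variables $y = (x-x^{\circ})/r_{k}$ (recall $x_{k,2}^{\circ}=0$) transforms this into
\begin{align*}
\int_{B_{\alpha}(\xi_{k})} y_{2}^{+}\chi_{\{u_{k}>0\}}(y)\,dy \;\geqslant\; \mathscr{c}_{0}\,\alpha\,\mathcal{L}^{2}(B_{\alpha}),
\end{align*}
and passing to the limit $k \to \infty$ using $\xi_{k} \to (1,0)$ together with the $L^{1}_{\mathrm{loc}}$-convergence of $\chi_{\{u_{k}>0\}}$ yields
\begin{align*}
\int_{B_{\alpha}((1,0))} y_{2}^{+}\chi_{\{\pi/6<\theta<5\pi/6\}}(y)\,dy \;\geqslant\; \mathscr{c}_{0}\,\alpha\,\mathcal{L}^{2}(B_{\alpha}).
\end{align*}

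The contradiction will then come from an elementary geometric observation: the tangent lines from the origin to the disk $B_{\alpha}((1,0))$ make angles $\pm \arcsin\alpha$ with the positive $y_{1}$-axis, so $B_{\alpha}((1,0)) \subset \{|\theta| \leqslant \arcsin\alpha\}$. For $\alpha < \sin(\pi/6)=1/2$ this sector is disjoint from the Stokes cone $\{\pi/6 < \theta < 5\pi/6\}$, hence the integrand on the last left-hand side vanishes identically, contradicting the strict positivity of the right-hand side. The principal delicate point, rather than a true obstacle, is to confirm that the non-degeneracy constant in \eqref{nden1} is genuinely uniform in the stagnation point: this follows because the Stokes density $\Phi(x_{k},u;0^{+}) = \frac{\sqrt{3}}{3H_{0}}$ and the universal bounds \eqref{|K1r|}--\eqref{|K4r|} on the perturbation terms $K_{i}$ in \propref{prop:wsm} depend only on $n$, $g$, $H_{0}$ and the growth constant $\mathscr{C}$, and not on the base point.
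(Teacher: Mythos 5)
Your proof is correct, and it follows the same overall contradiction-plus-blow-up strategy as the paper: rescale at $x^{\circ}$ by $r_{k}=|x_{k}-x^{\circ}|$ so that the nearby non-degenerate point survives in the limit as $z=(\pm 1,0)$, a location where the Stokes corner profile is identically zero. The difference lies in how the contradiction is extracted. The paper observes that $z$ is a non-degenerate stagnation point of each $u_{k}$, hence $\Phi(z,u_{k};0^{+})=\tfrac{\sqrt{3}}{3H_{0}}$, and then invokes the upper semicontinuity of the weighted density along converging sequences of variational solutions (\lemref{Lemma: densities at the stagnation points}) to conclude $\Phi(z,u_{0};0^{+})\geqslant\tfrac{\sqrt{3}}{3H_{0}}$, contradicting $\Phi(z,u_{0};0^{+})=0$. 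You instead quantify non-degeneracy through the uniform measure bound \eqref{nden1}, rescale it to $\int_{B_{\alpha}(\xi_{k})}y_{2}^{+}\chi_{\{u_{k}>0\}}\,dy\geqslant\mathscr{c}_{0}\alpha\mathcal{L}^{2}(B_{\alpha})$, and pass to the limit via the $L^{1}_{\mathrm{loc}}$ convergence \eqref{mes1}, reaching a contradiction with the elementary fact that $B_{\alpha}((1,0))\subset\{|\theta|\leqslant\arcsin\alpha\}$ misses the Stokes cone for $\alpha<\tfrac12$. In view of \lemref{Lemma: densities at the stagnation points}\,(1), which identifies $\Phi(\cdot,u;0^{+})$ with exactly such a volume fraction, the two arguments are morally the same; but yours is more self-contained, since it bypasses the machinery needed to show that the blow-up limit is again a variational solution with upper semicontinuous density, relying only on \eqref{nden1}, \eqref{mes1}, and an explicit geometric computation. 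Your attention to the uniformity of $\mathscr{c}_{0}$ in the base point is well placed and correctly resolved: the Stokes density and the bounds \eqref{|K1r|}--\eqref{|K4r|} are universal, and you apply \eqref{nden1} only at the vanishing scales $\alpha r_{k}$, where the perturbation terms are negligible. The only cosmetic point is the appeal to an "$x_{1}$-reflection": this is legitimate because the operator depends only on $|\nabla u|^{2}$ and $x_{2}$, but it is just as quick to note that $B_{\alpha}((-1,0))\subset\{|\theta-\pi|\leqslant\arcsin\alpha\}$ is likewise disjoint from the cone, so both limit directions can be handled symmetrically without reflecting.
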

\begin{proof}
    Suppose towards a contradiction that there exists a sequence $x^{m}$ of non-degenerate points converging to $x^{\circ}$, with $x^{m}\neq x^{\circ}$ for all $m$. Let $r_{m}:=|x^{m}-x^{\circ}|$, and assume without loss of generality that $(x^{m}-x^{\circ})/r_{m}\to z\in\{(-1,0),(1,0)\}$. Let us consider the blow-up sequence $u_{m}$ defined in~\eqref{bls}. Since $x^{m}$ is a non-degenerate stagnation point for $u$, we have that the point $z$ is a non-degenerate stagnation point for $u_{m}$, and therefore~\lemref{Lemma:measure1} implies that
	\begin{align*}
		\Phi(z,u_{m};0^{+})=\frac{\sqrt{3}}{3H_{0}}.
	\end{align*}
    By~\lemref{Lemma: blow-up limits} (2) and~\propref{Proposition: 2-dimensional case} (2), the sequence $u_{m}$ converges strongly in $W_{\mathrm{loc}}^{1,2}(\mathbb{R}^{n})$ to the  homogeneous solution
	\begin{align*}
		u_{0}(R,\theta)=\frac{\sqrt{2}}{3}R^{3/2}\cos\left(\frac{3}{2}\left(\min\left\lbrace\max\left\lbrace\theta,\frac{\pi}{6}\right\rbrace,\frac{5\pi}{6}\right\rbrace-\frac{\pi}{2}\right)\right),
	\end{align*}
    while $\chi_{m}:=\chi_{\{u_{m}>0\}}$ converges to $\chi_{\{u_{0}>0\}}$ strongly in $L_{\mathrm{loc}}^{1}(\mathbb{R}^{n})$. It follows from~\lemref{Lemma: densities at the stagnation points} (2) that 
	\begin{align*}
		\Phi(z,u_{0};0^{+})\geqslant\limsup_{m\to+\infty}\Phi(z,u_{m};0^{+})=\frac{\sqrt{3}}{3H_{0}}.
	\end{align*}
	This contradicts the fact that $\Phi(z,u_{0};0^{+})=0$.
\end{proof}
\begin{remark}
	Let us remark that 
	\begin{enumerate}
		\item [(1)] In two dimensions the set of stagnation points $S^{u}$ can be decomposed into a countable set of “Stokes points” with asymptotics as in~\lemref{Lemma:measure1}, accumulating (if at all) only at “degenerate stagnation points”, and a set of “degenerate stagnation points” will be analyzed in the following section. 
		\item [(2)] In higher dimensions $n\geqslant 3$, one can follow a similar argument as in~\cite[Lemma 5.7]{MR2995099} to conclude that the Hausdorff dimension of the set of all non-degenerate stagnation points is less than or equal to $n-2$.
	\end{enumerate}
\end{remark}
\section{Degenerate stagnation points}\label{Sect: Deg poi}
In this section, we analyze the degenerate stagnation points, in which case the blow-up $u_{0}$ is identically zero in $\mathbb{R}^{2}$. In general, this implies that the decay rate of the solution is strictly higher than $|x-x^{\circ}|^{3/2}$ near each degenerate  stagnation points $x^{\circ}$ (cf.~\eqref{Formula: property N}). The analysis of the degenerate stagnation points in this section exclude the possibility of cusp-type and horizontal flat-type singularities, thus proving the Stokes conjecture.
\subsection{Cusp points}
Let $n=2$ and let $x^{\circ}\in S^{u}$ be a degenerate stagnation point. In this subsection, we focus on those stagnation points with the trivial density $\Phi(x^{\circ},u;0^{+})=0$. These points are termed as degenerate stagnation points with trivial density because both blow-up $u_{0}$ and density $\Phi(x^{\circ},u;0^{+})$ are zero.
\begin{definition}
	Let $u$ be a subsonic variational solution of~\eqref{p2}. We define the set of stagnation point with trivial density as  
	\begin{align*}
		C^{u}:=\left\{ x^{\circ}\in S^{u}\colon \Phi(x^{\circ},u;0^{+})=0 \right\}.
	\end{align*}
\end{definition}
It follows from~\propref{Proposition: 2-dimensional case} (3) that if $x^{\circ}\in C^{u}$, then along a subsequence, $u_{m}\to 0$ as $m\to+\infty$. In this specific case, we show that (in~\lemref{Lemma: cusp}) if the solution $u$ satisfies the growth condition $|\nabla u|^{2}\leqslant x_{2}^{+}$ near the stagnation points, then $C^{u}=\varnothing$.
\begin{lemma}\label{Lemma: cusp}
	Let $u$ be a subsonic weak solution of~\eqref{p2} for $n=2$ and suppose that
	\begin{align*}
		|\nabla u|^{2}\leqslant x_{2}^{+}\quad\text{ locally in }\Omega.
	\end{align*}
	Then $\Phi(x^{\circ},u;0^{+})=0$ implies that $u\equiv 0$ in some open two-dimensional ball containing $x^{\circ}$.
\end{lemma}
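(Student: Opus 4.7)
The plan is to combine the monotonicity of the adjusted functional $\widetilde\Phi$ from Remark \ref{Remark: perturbation}, the density identity of Lemma \ref{Lemma: densities at the stagnation points}(1), and the strong Bernstein growth $|\nabla u|^{2}\leq x_{2}^{+}$ (with the unit constant matching the Bernoulli condition $|\nabla u|^{2}=x_{2}$ on the free boundary), in order to first show that the two Weiss energies $U$ and $W$ both vanish as $r\to 0^{+}$, and then upgrade this decay to $u\equiv 0$ in an open ball around $x^{\circ}$.

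First, since $\nabla u=0$ almost everywhere on $\{u=0\}$, the sharp growth self-improves to $|\nabla u|^{2}\leq x_{2}^{+}\chi_{\{u>0\}}$ pointwise a.e. The uniform bounds on $H$ from Lemma \ref{lem1} then yield $F(|\nabla u|^{2};x_{2})\leq \mathscr{C}\,x_{2}^{+}\chi_{\{u>0\}}$ and $\lambda(x_{2})\chi_{\{u>0\}}\leq \mathscr{C}\,x_{2}^{+}\chi_{\{u>0\}}$ (using $\chi_{\{u>0\}}\equiv 0$ on $\{x_{2}\leq 0\}$), hence
\begin{equation*}
U(x^{\circ},u;r)\leq \mathscr{C}\,r^{-n-1}\int_{B_{r}(x^{\circ})} x_{2}^{+}\,\chi_{\{u>0\}}\,dx.
\end{equation*}
By Lemma \ref{Lemma: densities at the stagnation points}(1) the right-hand side tends to $\mathscr{C}\,H_{0}\,\Phi(x^{\circ},u;0^{+})=0$, so $U(x^{\circ},u;r)\to 0$. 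Combined with $\Phi=U-\tfrac{3}{2}W$ and the assumed vanishing of $\Phi(x^{\circ},u;r)$ in the limit, this forces $W(x^{\circ},u;r)\to 0$ as well, i.e.\ $r^{-n-2}\int_{\partial B_{r}(x^{\circ})}u^{2}\,d\mathcal{H}^{n-1}\to 0$.

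Next I would pass to the $3/2$-blow-up $\tilde u_{r}(x):=r^{-3/2}u(x^{\circ}+rx)$. The vanishing of $U$ gives $\nabla\tilde u_{r}\to 0$ strongly in $L^{2}(B_{1})$, the vanishing of $W$ gives $\tilde u_{r}\to 0$ in $L^{2}(\partial B_{1})$, and the scale-invariant bound $|\nabla\tilde u_{r}|^{2}\leq x_{2}^{+}$ together with the pointwise inequality $\tilde u_{r}\leq\tfrac{2}{3}(x_{2}^{+})^{3/2}$ (obtained by integrating $|\partial_{2}\tilde u_{r}|\leq\sqrt{x_{2}^{+}}$ up from $\{x_{2}=0\}$, where $\tilde u_{r}\equiv 0$) upgrades this to locally uniform convergence $\tilde u_{r}\to 0$ on $\mathbb{R}^{2}$. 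Arguing as in the proof of Proposition \ref{Proposition: 2-dimensional case}(3) one additionally obtains $\chi_{\{\tilde u_{r}>0\}}\to 0$ in $L^{1}_{\mathrm{loc}}(\mathbb{R}^{2})$, so the positive set has vanishing density at $x^{\circ}$ in the rescaled sense.

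The main obstacle is the final step of converting the uniform decay of every blow-up into $u\equiv 0$ on an actual ball around $x^{\circ}$. I would argue by contradiction: if no such ball exists, pick $y_{m}\in\{u>0\}$ with $y_{m}\to x^{\circ}$, set $r_{m}:=|y_{m}-x^{\circ}|$, $z_{m}:=(y_{m}-x^{\circ})/r_{m}$, and exploit the rigidity produced by the saturation of the sharp growth at the free boundary. At a closest free-boundary point $\xi_{m}$ to $y_{m}$ lying in $\partial_{\mathrm{red}}\{u>0\}$ (one can always arrange this using Definition \ref{Definition: subsonic weak solution}(2) and the vanishing $(n-1)$-Hausdorff dimension of the singular set), the $C^{1}$ regularity up to $\partial_{\mathrm{red}}\{u>0\}$ together with the Bernoulli condition give $|\nabla u(\xi_{m})|^{2}=(\xi_{m})_{2}$, and integrating along the inward normal yields the non-degeneracy bound $u(y_{m})\geq c\sqrt{(\xi_{m})_{2}}\,\operatorname{dist}(y_{m},\partial\{u>0\})$. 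Rescaling, this would force $\tilde u_{r_{m}}(z_{m})$ to stay bounded away from zero along a subsequence, contradicting the uniform convergence $\tilde u_{r_{m}}\to 0$ established above; the residual degenerate sub-cases (corresponding to $(\xi_{m})_{2}/r_{m}\to 0$ or to the free boundary accumulating tangentially flat at $x^{\circ}$) are ruled out by $u\equiv 0$ on $\{x_{2}\leq 0\}$ together with the continuity of $u$.
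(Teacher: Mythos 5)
Your first two steps are sound and consistent with the paper's machinery: the density identity of Lemma~\ref{Lemma: densities at the stagnation points}(1) does give $U(x^{\circ},u;r)\to 0$ and hence $W(x^{\circ},u;r)\to 0$, and the blow-ups $\tilde u_{r}$ do converge to $0$ locally uniformly with $\chi_{\{\tilde u_{r}>0\}}\to 0$ in $L^{1}_{\mathrm{loc}}$. The gap is in your final step. The non-degeneracy bound $u(y_{m})\geq c\sqrt{(\xi_{m})_{2}}\operatorname{dist}(y_{m},\partial\{u>0\})$ is not available for subsonic weak/variational solutions: the Bernoulli condition $|\nabla u(\xi_{m})|^{2}=(\xi_{m})_{2}$ at a reduced boundary point only controls the first-order expansion of $u$ at $\xi_{m}$, with an error $o(t)$ that is not uniform in $m$; integrating along the inward normal gives no lower bound at distance $d_{m}$ because $|\nabla u|$ is only bounded from above in the interior. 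Non-degeneracy of this type holds for minimizers or viscosity solutions, but the paper explicitly works with variational solutions precisely because such properties fail (see the discussion in the ``State of art'' subsection: ``common properties such as non-degeneracy are not expected at all''). Moreover, the ``residual degenerate sub-cases'' you set aside --- $(\xi_{m})_{2}/r_{m}\to 0$, i.e.\ the positivity set degenerating into a thin tongue hugging $\{x_{2}=0\}$ --- are exactly the hard case and cannot be dismissed by continuity of $u$; this is the cusp configuration the lemma is designed to exclude.

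The paper's proof replaces pointwise non-degeneracy by a measure-theoretic lower bound: since $u$ is a weak solution, $Lu_{m}\geq\sqrt{x_{2}}\,\mathcal{H}^{1}\mres\partial_{\mathrm{red}}\{u_{m}>0\}$ as Radon measures, while the blow-up $u_{0}\equiv 0$ forces $Lu_{m}(B_{2})\to 0$. A connected component $V_{m}$ of $\{u_{m}>0\}$ touching the origin must (by the maximum principle) reach $\partial A$ with $A=(-1,1)\times(0,1)$; if it does so at heights bounded away from zero this already contradicts the vanishing of the weighted perimeter, and in the thin-tongue case one integrates $Lu_{m}$ over $V_{m}\cap A$ and uses $H_{m}=H_{0}$ on $\partial_{\mathrm{red}}V_{m}$ together with $H_{m}\geq H_{0}$ on $V_{m}\cap\partial A$ (here the sharp constant $1$ in $|\nabla u|^{2}\leq x_{2}^{+}$ is essential) to contradict the minimality of $\int_{\partial D}\sqrt{x_{2}}\,dS$ among sets agreeing with $V_{m}$ on $\partial A$. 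You would need to supply an argument of this type (or an equivalent substitute) for your final step to close.
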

\begin{proof}
	The idea of the proof is borrowed from~\cite[Lemma 4.4]{MR2810856}, which mainly depends on the inequality $L u\geqslant\sqrt{x_{2}}\mathcal{H}^{1}\mres\partial_{\mathrm{red}}\{u>0\}$ and the fact that one can locate a non-empty portion of $\partial_{\mathrm{red}}\{u>0\}$. Suppose towards a contradiction that $x^{\circ}\in\partial\{u>0\}$, and let us take a blow-up sequence
	\begin{align*}
		u_{m}(x):=\frac{u(x^{\circ}+r_{m}x)}{r_{m}^{3/2}}
	\end{align*}
	converging weakly in $W_{\mathrm{loc}}^{1,2}(\mathbb{R}^{2})$ to a blow up limit $u_{0}$. Lemma~\ref{Lemma: blow-up limits} (2) gives that $u_{0}\equiv 0$ in $\mathbb{R}^{2}$. Consequently, the non-negative Radon measure satisfies
	\begin{align}\label{Formula: measure estimates-1}
		Lu_{m}(B_{2})\to \left(  \frac{\Delta u_{0}}{H_{0}}\right)(B_{2})=0,
	\end{align}
	and
	\begin{align}\label{Formula: measure estimates-2}
		Lu_{m}\geqslant\sqrt{x_{2}}\mathcal{H}^{1}\mres\partial_{\mathrm{red}}\{u_{m}>0\}=\int_{B_{2}\cap\partial_{\mathrm{red}}\{u_{m}>0\}}\sqrt{x_{2}}\,dS.
	\end{align}
	On the other hand, there is at least one connected component $V_{m}$ of $\{u_{m}>0\}$ touching the origin and containing, by the maximum principle, a point $x^{m}\in\partial A$, where $A=(-1,1)\times(0,1)$. If 
	\begin{align*}
		\max\{x_{2}:x\in V_{m}\cap\partial A\}\not\rightarrow 0\quad\text{ as }m\to+\infty,
	\end{align*}
	we immediately reach a contradiction to~\eqref{Formula: measure estimates-1} and~\eqref{Formula: measure estimates-2}. On the other hand, if 
	\begin{align*}
		\max\{x_{2}\colon x\in V_{m}\cap\partial A\}\rightarrow 0,
	\end{align*} 
	then 
	\begin{align*}
		0 &=Lu_{m}(V_{m}\cap A)\\
        &
		=\int_{\partial_{\mathrm{red}}V_{m}\cap A}\frac{\nabla u_{m}\cdot\nu}{H_{m}(|\nabla u_{m}|^{2};x_{2})}\, dS+\int_{V_{m}\cap\partial A}\frac{\nabla u_{m}\cdot\nu}{H_{m}(|\nabla u_{m}|^{2};x_{2})}\, dS.
	\end{align*}
	Here $H_{m}(|\nabla u_{m}|^{2};x_{2}):=H(r_{m}|\nabla u_{m}|^{2};r_{m}x_{2})$. Given that $u$ is a subsonic weak solution, we infer from~\eqref{bdyw} that $|\nabla u_{m}|^{2}=x_{2}$, and therefore 
    \[
        H_{m}(|\nabla u_{m}|^{2};x_{2})=H_{m}(x_{2};x_{2})=H_{0}\quad\text{ on }\quad \partial_{\mathrm{red}}V_{m}\cap A.
    \]
    It follows from the growth condition $|\nabla u_{m}|^{2}\leqslant x_{2}$ and $t\mapsto H(t;\cdot)$ is a strictly decreasing function that $H_{m}(|\nabla u_{m}|^{2};x_{2})\geqslant H_{m}(x_{2};x_{2})=H_{0}$ on $V_{m}\cap\partial A$. Consequently,
	\begin{align*}
		0 &\leqslant-\int_{\partial_{\mathrm{red}}V_{m}\cap A}\frac{|\nabla u_{m}|}{H_{m}(|\nabla u_{m}|^{2};x_{2})}\,dS+\int_{V_{m}\cap\partial A}\frac{|\nabla u_{m}|}{H_{m}(|\nabla u_{m}|^{2};x_{2})}\,dS\\
		&\leqslant-\int_{\partial_{\mathrm{red}}V_{m}\cap A}\sqrt{x_{2}}\,dS+\int_{V_{m}\cap\partial A}\sqrt{x_{2}}\,dS.
	\end{align*}
	However, this contradicts to the fact that $\int_{V_{m}\cap\partial A}\sqrt{x_{2}}dS$ is the unique minimizer of $\int_{\partial D}\sqrt{x_{2}}\,dS$ with respect to all open sets $D$ with $D=V_{m}$ on $\partial A$. Thus $V_{m}$ cannot touch the origin, a contradiction.
\end{proof}
\begin{remark}
	In general, we call the growth condition $|\nabla u|^{2}\leqslant x_{n}^{+}$ the~\emph{strong Bernstein estimates}. Compared to~\eqref{ga}, it should be noted that the constant $C=1$ helps us to obtain the sharp order $H_{m}(|\nabla u_{m}|^{2};x_{n})\geqslant H_{0}$.
\end{remark}
\subsection{Horizontal flatness points}
In this subsection, let us consider those stagnation points $x^{\circ}\in S^{u}$ whose density takes the horizontal flat density $\frac{2}{3H_{0}}$. Formally, we define
\begin{definition}
	Let $u$ be a subsonic variational solution of~\eqref{p2}. We define the set of stagnation point with horizontal flat density as  
	\begin{align*}
		\Sigma^{u}:=\left\lbrace x^{\circ}\in S^{u}\colon\Phi(x^{\circ},u;0^{+})=\frac{2}{3H_{0}}\right\rbrace.
	\end{align*}
\end{definition}
\begin{remark}
	It should be noted that as a consequence of the upper semi-continuity of the function $x\mapsto\Phi(x^{\circ},u;0^{+})$, the set $\Sigma^{u}$ is closed.
\end{remark}
\begin{lemma}\label{Lemma: mean frequency}
	Let $u$ be a subsonic variational solution of~\eqref{p2}, let $x^{\circ}\in\Sigma^{u}$, and let $\delta_{0}$ be defined in~\eqref{d0}. Assume that 
	\begin{align}\label{nblu1}
		|\nabla u|^{2}\leqslant x_{n}^{+}\quad\text{ locally in }\Omega.
	\end{align}
	\begin{enumerate}
		\item For each $r\in(0,\delta_{0})$, we have the inequality
		\begin{align}\label{Formula: mean frequency}
            \begin{split}
                \frac{r\displaystyle\int_{B_{r}(x^{\circ})}\frac{|\nabla u|^{2}}{H(|\nabla u|^{2};x_{n})}\,dx}{\displaystyle\int_{\partial B_{r}(x^{\circ})}\frac{u^{2}}{H_{0}}\,d\mathcal{H}^{n-1}}-\frac{3}{2}&\geqslant\frac{\displaystyle\frac{r}{H_{0}}\int_{B_{r}(x^{\circ})}x_{n}^{+}(1-\chi_{\{u>0\}})\,dx}{\displaystyle\int_{\partial B_{r}(x^{\circ})}\frac{u^{2}}{H_{0}}\,d\mathcal{H}^{n-1}}\\
                &+\frac{e(x^{\circ},u;r)}{\displaystyle\int_{\partial B_{r}(x^{\circ})}\frac{u^{2}}{H_{0}}\,d\mathcal{H}^{n-1}},
            \end{split}
		\end{align}
		where $e(x^{\circ},u;r)$ is defined by 
		\begin{align}\label{Formula: e(r)}
			e(x^{\circ},u;r)&=r^{n+2}\int_{0}^{r}t^{-n-2}\sum_{i=1}^{4}K_{i}(x^{\circ},u;t)\,dt+rK_{1}(x^{\circ},u;r),
		\end{align}
		and $K_{i}(x^{\circ},u;r)$ for $i=1$, $\dots$, $4$ are defined in~\eqref{K1r}, \eqref{K2r},~\eqref{K3r} and~\eqref{K4r}.
		\item There exists some $r_{0}\in(0,\delta_{0})$ sufficiently small so that 
		\begin{align*}
			\frac{r}{H_{0}}\int_{B_{r}(x^{\circ})}x_{n}^{+}(1-\chi_{\{u>0\}})\,dx+e(x^{\circ},u;r)\geqslant 0,
		\end{align*}
        for all $r\in(0,r_{0})$.
		\item The function
		\begin{align*}
			r\mapsto W(x^{\circ},u;r)\quad\text{ is non-decreasing in }(0,r_{0}),
		\end{align*}
		for some $r_{0}\in(0,\delta_{0})$ sufficiently small. Here, $W(x^{\circ},u;r)$ is defined in~\eqref{Wr}.
	\end{enumerate}
\end{lemma}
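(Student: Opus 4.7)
The plan is a three-step strategy: part (1) follows from the monotonicity of $\widetilde{\Phi}$ combined with algebraic bookkeeping; part (2) is obtained by a pointwise sign analysis of the quasilinear correction terms $K_i$ under the strong Bernstein assumption $|\nabla u|^2 \leqslant x_n^+$; part (3) follows by combining (1) and (2) with the explicit derivative formula \eqref{W1r} for $\pd{W}{r}$. For part (1), I use that $\widetilde{\Phi}(x^\circ, u; r)$ is non-decreasing by \rmkref{Remark: perturbation}, with $\widetilde{\Phi}(x^\circ, u; 0^+) = \Phi(x^\circ, u; 0^+) = \frac{2}{3H_0}$ because $x^\circ \in \Sigma^u$. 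Expanding $\widetilde{\Phi} = U - \tfrac{3}{2}W - \sum_{i=1}^{4} \int_0^r t^{-n-2}K_i\,dt$ and rewriting $U$ via the identity $J_F = J_H - K_1$ implicit in \eqref{K11} and \rmkref{rmk:tildeJ}, I use $\chi_{\{u>0\}} \equiv 0$ on $\{x_n \leqslant 0\}$ to decompose $\int_{B_r(x^\circ)} x_n \chi_{\{u>0\}}\,dx = \int_{B_r(x^\circ)} x_n^+\,dx - \int_{B_r(x^\circ)} x_n^+(1 - \chi_{\{u>0\}})\,dx$, together with the elementary computation $\int_{B_r(x^\circ)}x_n^+\,dx = \frac{2}{3}r^{n+1}$ (valid since $n = 2$ and $x_n^\circ = 0$) to cancel the $\frac{2}{3H_0}$-contributions on both sides of the monotonicity inequality. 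Multiplying by $r^{n+2}/\int_{\partial B_r(x^\circ)}u^2/H_0\,d\mathcal{H}^{n-1}$ and collecting the residual $K_i$ terms into $e(x^\circ, u; r)$ as in \eqref{Formula: e(r)} produces \eqref{Formula: mean frequency}.

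For part (2), I analyze the sign of each $K_i$ pointwise using $|\nabla u|^2 \leqslant x_n^+$. From \eqref{K11}, on $\{u > 0\}$ the integrand of $K_1$ equals $\int_0^{|\nabla u|^2}\tau\,\partial_\tau(1/H)\,d\tau - \int_0^{x_n}\tau\,\partial_\tau(1/H)\,d\tau \leqslant 0$, since $\partial_\tau(1/H)\geqslant 0$ (using $\partial_1 H < 0$ from \lemref{lem1}) and $|\nabla u|^2 \leqslant x_n$ there, while on $\{u = 0\}$ the integrand vanishes as $\nabla u = 0$ a.e. Hence $K_1 \leqslant 0$. For $K_2, K_3, K_4$, I use the expansion \eqref{HH0} combined with the identity $\partial_1 H(x_n; x_n) = -\partial_2 H(x_n; x_n)$ from \lemref{lem1}, which at the origin forces the leading-order part of $\frac{1}{H(|\nabla u|^2; x_n)} - \frac{1}{H_0}$ to be proportional to $|\nabla u|^2 - x_n \leqslant 0$. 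These pointwise sign estimates, together with the manifest non-negativity of $\frac{r}{H_0}\int_{B_r(x^\circ)} x_n^+(1 - \chi_{\{u>0\}})\,dx$, yield the non-negativity of the sum for all $r \in (0, r_0)$ provided $r_0 > 0$ is sufficiently small.

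Part (3) combines (1) and (2) to give $\frac{r\int_{B_r(x^\circ)}|\nabla u|^2/H\,dx}{\int_{\partial B_r(x^\circ)}u^2/H_0\,d\mathcal{H}^{n-1}} \geqslant \frac{3}{2}$ on $(0, r_0)$. By the energy identity \eqref{eid}, the numerator equals $r\int_{\partial B_r(x^\circ)}u\nabla u\cdot\nu/H\,d\mathcal{H}^{n-1}$; writing $1/H = 1/H_0 + (1/H - 1/H_0)$ produces an extra error of order $rK_3$ which, by the sign control from the previous paragraph (combined with the $K_3$ contribution already bookkept inside $e(x^\circ, u; r)$), can be absorbed to yield the classical Almgren-type inequality $\frac{r\int_{\partial B_r(x^\circ)}u\nabla u\cdot\nu\,d\mathcal{H}^{n-1}}{\int_{\partial B_r(x^\circ)} u^2\,d\mathcal{H}^{n-1}} \geqslant \frac{3}{2}$, which by \eqref{W1r} is exactly $\pd{W(x^\circ, u; r)}{r} \geqslant 0$. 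The hard part throughout is this sign analysis: the corrections $K_2, K_3, K_4$ have no a priori sign, and the argument succeeds only because the Bernoulli condition $H(x_n; x_n) = H_0$ on the free boundary, through the derived identity $\partial_1 H(0;0) = -\partial_2 H(0;0)$, makes the leading-order part of each correction proportional to $x_n - |\nabla u|^2 \geqslant 0$ and hence controllable by the strong Bernstein estimate.
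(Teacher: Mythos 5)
Your part (1) is correct and is exactly the paper's argument: monotonicity of $\widetilde{\Phi}$ from \rmkref{Remark: perturbation}, the identification $\widetilde{\Phi}(x^{\circ},u;0^{+})=\Phi(x^{\circ},u;0^{+})=\frac{2}{3H_{0}}=r^{-n-1}\frac{1}{H_{0}}\int_{B_{r}}x_{n}^{+}\,dx$, the decomposition $J_{F}=J_{H}-K_{1}$, and the splitting $x_{n}\chi_{\{u>0\}}=x_{n}^{+}-x_{n}^{+}(1-\chi_{\{u>0\}})$.

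Part (2) has a genuine gap. A pointwise sign analysis of the $K_{i}$ cannot close the argument, and in fact your own conclusion $K_{1}\leqslant 0$ works \emph{against} you: it makes the summand $rK_{1}(x^{\circ},u;r)$ in $e(x^{\circ},u;r)$ nonpositive, so nonnegativity of $\frac{r}{H_{0}}\int_{B_{r}}x_{n}^{+}(1-\chi_{\{u>0\}})\,dx+e(x^{\circ},u;r)$ can only come from the main term \emph{dominating} the bad ones. That is a relative, quantitative comparison, and ``take $r_{0}$ small'' does not produce it because both sides vanish as $r\to 0^{+}$; worse, at a point of $\Sigma^{u}$ one has $\chi_{\{u_{r}>0\}}\to\chi_{\{x_{n}>0\}}$, so $\int_{B_{r}}x_{n}^{+}(1-\chi_{\{u>0\}})\,dx=o(r^{n+1})$ and the crude bounds $|K_{i}(r)|\leqslant\mathscr{C}r^{n+2}$ of \lemref{Lemma: blow-up limits} are useless here. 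What is actually needed, and what the paper proves, is that every error term is $o$ of the main term: the refined estimate $|K_{1}(u;r)|\leqslant\mathscr{C}r\,r^{-n}\cdot r^{n}\int_{B_{r}}x_{n}^{+}(1-\chi_{\{u>0\}})\,dx$ obtained by exploiting that the two pieces of \eqref{K11} nearly cancel on $\{u>0\}$; a one-sided bound of the same type for $K_{2}$ after splitting its integrand into signed parts; a combined lower bound for $K_{3}+K_{4}$ on $\partial B_{r}\cap\{u_{\nu}\geqslant 0\}$ (separately these have no usable sign); and, crucially, the convexity of $\Pi(r)=\int_{0}^{r}\tau^{-n-1}\int_{B_{\tau}}x_{n}^{+}(1-\chi_{\{u>0\}})\,dx\,d\tau$, proved by comparing $u_{r}$ with its $3/2$-homogeneous replacement $z_{r}$, which gives $\Pi(r)\leqslant r\Pi'(r)$ and is the only way to control the time-integrated terms $\int_{0}^{r}t^{-n-2}K_{i}(t)\,dt$ by the pointwise-in-$r$ quantity $r^{-n}\int_{B_{r}}x_{n}^{+}(1-\chi_{\{u>0\}})\,dx$. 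None of these steps appears in your proposal, and the statement ``these pointwise sign estimates \dots yield the non-negativity of the sum'' is a non sequitur.

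Part (3) is the right outline (combine (1) and (2) with \eqref{W1r} and \eqref{eid}, then absorb the extra boundary term $\int_{\partial B_{r}}(\tfrac{1}{H_{0}}-\tfrac{1}{H})u\,\nabla u\cdot\nu\,d\mathcal{H}^{n-1}$), but it inherits the gap in (2), and the sign of that extra term --- equivalently of $K_{3}$ --- is not determined pointwise since $u\,\nabla u\cdot\nu$ changes sign on $\partial B_{r}$; the paper obtains $K_{3}\leqslant 0$ for small $r$ only indirectly, from $K_{4}\geqslant 0$ together with $K_{3}+K_{4}\to 0$. You would need to supply such an argument rather than appeal to ``sign control from the previous paragraph.''
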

\begin{proof}
    (1). Assume without loss of generality that $x^{\circ}=0\in \Sigma^{u}$. Let $\widetilde{\Phi}(u;r):=\widetilde{\Phi}(0,u;r)$ be given as in~\eqref{Phirv1}, then it follows from~\rmkref{Remark: perturbation} that $\widetilde{\Phi}(u;r)$ is a non-decreasing function for $r\in(0,\delta_{0})$. Therefore, 
	\begin{align*}
		\widetilde{\Phi}(u;r)&\geqslant\widetilde{\Phi}(u;0^{+})\\
        &=\lim_{r\to 0^{+}}\Bigg[ \Phi(u;r)-\int_{0}^{r}t^{-n-2}\sum_{i=1}^{4}K_{i}(u;t)\,dt \Bigg]\\
        &=r^{-n-1}\frac{1}{H_{0}}\int_{B_{r}}x_{n}^{+}\,dx,
	\end{align*}
    where we have used $x^{\circ}\in\Sigma^{u}$ in the last equality and $r\mapsto r^{-n-2}\sum_{i=1}^{4}K_{i}(u;r)$ is integrable on $(0,\delta_{0})$. This gives the inequality 
    \begin{align}\label{Formula: perturbation-1}
		\begin{split}
			&r\int_{B_{r}}\frac{|\nabla u|^{2}}{H(|\nabla u|^{2};x_{n})}\,dx-\frac{3}{2}\frac{1}{H_{0}}\int_{\partial B_{r}}u^{2}\,d\mathcal{H}^{n-1}\\
            &\geqslant \frac{r}{H_{0}}\int_{B_{r}}x_{n}^{+}(1-\chi_{\left\{ u>0 \right\} })\,dx+e(x^{\circ},u;r).
		\end{split}
	\end{align}
    Then~\eqref{Formula: mean frequency} follows by dividing the inequality~\eqref{Formula: perturbation-1} by the nonnegative term $\frac{1}{H_{0}}\int_{\partial B_{r}}u^{2}d\mathcal{H}^{n-1}$ on both sides of~\eqref{Formula: perturbation-1}.

    (2). Recalling~\eqref{K11}, we deduce from the nonnegativity of $\pd{!}{\tau}(\tfrac{1}{H(\tau;x_{n})})$ and the growth condition~\eqref{nblu1} that
    \begin{equation*}
        \int_{0}^{|\nabla u|^{2}}\pd{!}{\tau}\left( \frac{1}{H(\tau;x_{n})} \right)\tau\,d\tau \leqslant \int_{0}^{x_{n}}\pd{!}{\tau}\left( \frac{1}{H(\tau;x_{n})} \right)\tau\,d\tau .
    \end{equation*} 
    Therefore, we may deduce that 
	\begin{align}\label{Formula: K1}
		\begin{split}
			r^{-n-1}|K_{1}(u;r)|
			&\leqslant r^{-n-1}\int_{B_{r}}\left|\int_{0}^{x_{n}}\pd{!}{\tau}\left( \frac{1}{H(\tau;x_{n})} \right)\tau\,d\tau\right|(1-\chi_{\left\{ u>0 \right\} })\,dx\\
			&\leqslant\mathscr{C}r^{-n}\int_{B_{r}}x_{n}^{+}\left( 1-\chi_{\left\{ u>0 \right\} } \right)dx,
		\end{split}
	\end{align}
    where we used~\eqref{|H12|} in the second inequality. The estimates~\eqref{Formula: K1}  gives
	\begin{align}\label{1}
		\lim_{r\to 0^{+}}\frac{|r^{-n-1}K_{1}(u;r)|}{r^{-n-1}\int_{B_{r}}x_{n}^{+}(1-\chi_{\left\{ u>0 \right\} })\,dx}=0.
	\end{align}
    Moreover~\eqref{Formula: K1} implies that  
	\begin{align}\label{Formula: K1(1)}
        \begin{split}
            \int_{0}^{r}t^{-n-2}K_{1}(u;t)\,dt\  &\leqslant\int_{0}^{r}t^{-n-2}|K_{1}(u;t)|\,dt \\
            &\leqslant\mathscr{C}\int_{0}^{r}t^{-n-1}\int_{B_{t}}x_{n}^{+}\left( 1-\chi_{\left\{ u>0 \right\} } \right)\,dx dt.
        \end{split}
	\end{align}
    It follows from~\eqref{K22} that   
	\begin{align*}
		K_{2}(u;r):=\int_{B_{r}}(\mathcal{K}^{-}(x)+\mathcal{K}^{+}(x))\,dx,
	\end{align*}
    where 
	\begin{align*}
		\mathcal{K}^{-}(x):=\int_{0}^{|\nabla u|^{2}}\pd{!}{x_{n}}\left( \frac{1}{H(\tau;x_{n})} \right)x_{n}\,d\tau (1-\chi_{\left\{ u>0 \right\} })\leqslant 0,
	\end{align*}
	and 
	\begin{align*}
		\mathcal{K}^{+}(x)&:=\left(\int_{0}^{|\nabla u|^{2}}\pd{!}{x_{n}}\left( \frac{1}{H(\tau;x_{n})} \right)x_{n}\,d\tau\right.\\
        &\left.-\int_{0}^{x_{n}}\pd{!}{x_{n}}\left( \frac{1}{H(\tau;x_{n})} \right)x_{n}\,d\tau\right)\chi_{\left\{ u>0 \right\} }\\
        &\geqslant 0.
	\end{align*}
    We can then deduce that 
	\begin{align*}
		K_{2}(u;r)&=\int_{B_{r}\cap\{\mathcal{K}^{+}>-\mathcal{K}^{-}\}}(\mathcal{K}^{-}+\mathcal{K}^{+})\,dx+\int_{B_{r}\cap\{\mathcal{K}^{+}\leqslant-\mathcal{K}^{-}\}}(\mathcal{K}^{-}+\mathcal{K}^{+})\,dx\\
		&\geqslant\int_{B_{r}}\Bigg[\int_{0}^{|\nabla u|^{2}}\pd{!}{x_{n}}\left( \frac{1}{H(\tau;x_{n})} \right)x_{n}\,d\tau\Bigg] (1-\chi_{\left\{ u>0 \right\} })\,dx.
	\end{align*}
    It follows from a similar calculation as in~\eqref{|K2r|} that 
	\begin{align}\label{Formula: K2(1)}
		\int_{0}^{r}t^{-n-2}K_{2}(u;t)\,dt\geqslant-\mathscr{C}\int_{0}^{r}t^{-n-1}\int_{B_{t}}x_{n}^{+}\left( 1-\chi_{\left\{ u>0 \right\} } \right)\,dx dt.
	\end{align}
    Let us now define 
	\begin{align}\label{Pir}
		\Pi(r):=\int_{0}^{r}\tau^{-n-1}\int_{B_{\tau}}x_{n}^{+}(1-\chi_{\left\{ u>0 \right\} })\,dxd\tau.
	\end{align}
	It follows that $\Pi(r)\geqslant 0$ for all $r\in(0,\delta_{0})$, $\Pi(0)=0$ and a direct calculation that 
	\begin{align}\label{Pi11r}
        \begin{split}
            \Pi''(r)
            &=r^{-n-2}\left[ (n+1)\int_{B_{r}}x_{n}^{+}\chi_{\left\{ u>0 \right\} }\,dx - r\int_{\partial B_{r}}x_{n}^{+}\chi_{\left\{ u>0 \right\} }\,d\mathcal{H}^{n-1}\right]\\
            &=r^{-1}\left[ (n+1)\int_{B_{1}}x_{n}^{+}\chi_{\left\{ u_{r}>0 \right\} }\, dx-\int_{\partial B_{1}}x_{n}^{+}\chi_{\left\{ u_{r}>0 \right\} }\,d\mathcal{H}^{n-1}\right].
        \end{split}
	\end{align}
    Let us consider the $3/2$--homogeneous replacement of $u_{r}(x)$ defined by
	\begin{align*}
		z_{r}(x):=|x|^{3/2}u_{r}\left( \tfrac{x}{|x|} \right)=\frac{|x|^{3/2}}{r^{3/2}}u\left(r\frac{x}{|x|}\right),\qquad x\in B_{1}.
	\end{align*}
	It follows from a direct calculation that 
	\begin{align*}
		\int_{B_{1}\cap\{z_{r}>0\}}x_{n}^{+}\,dx&=\int_{0}^{1}\int_{\mathbb{S}^{n-1}\cap\{z_{r}>0\}}\rho^{n}\cos\theta_{n-1}\,d\rho d\theta\\
        &=\frac{1}{n+1}\int_{\partial B_{1}\cap\{u_{r}>0\}}x_{n}^{+}\,d\mathcal{H}^{n-1},
	\end{align*}
    where we used $z_{r}=u_{r}$ on $\partial B_{1}$. This, together with~\eqref{Pi11r}, implies that 
	\begin{align*}
		\Pi''(r)=(n+1)r^{-1}\left[\int_{B_{1}}x_{n}^{+}\chi_{\left\{ u_{r}>0 \right\} }dx-\int_{B_{1}}x_{n}^{+}\chi_{\left\{ z_{r}>0 \right\} }dx  \right].
	\end{align*}
    Note that if $x\in \{u_{r}<z_{r}\}\cap B_{1}$, then $\Pi''(r)<0$ and this implies that $\Pi(r)<r\Pi'(0)=0$ for $r\in(0,\delta_{0})$, where we used the fact that $\chi_{\{u>0\}}\to\chi_{0}=1$ since $0\in\Sigma_{u}$. However, this contradicts to the fact that $\Pi(r)\geqslant 0$. Thus, we have $u_{r}\geqslant z_{r}$ in $B_{1}$ and that $\Pi''(r)\geqslant 0$ for $r\in(0,\delta_{0})$ and this gives
	\begin{align}\label{Pir1}
        \begin{split}
            0\leqslant\Pi(r)&=\Pi(r)-\Pi(0)=\Pi'(\tilde{r})r\leqslant\Pi'(r)r\\
            &= r^{-n}\int_{B_{r}}x_{n}^{+}(1-\chi_{\left\{ u>0 \right\} })\,dx.
        \end{split}
	\end{align}
    This, together with~\eqref{Formula: K1(1)} and~\eqref{Formula: K2(1)}, gives that 
	\begin{align*}
		&\int_{0}^{r}t^{-n-2}(K_{1}(u;t)+K_{2}(u;t))\,dt\\
        &\geqslant-\left|\int_{0}^{r}t^{-n-2}K_{1}(u;t)dt\right|+\int_{0}^{r}t^{-n-2}K_{2}(u;t)dt\\
		&\geqslant-\mathscr{C}\int_{0}^{r}t^{-n-1}\int_{B_{t}}x_{n}^{+}(1-\chi_{\left\{ u>0 \right\} })\,dx dt\\
		&\geqslant-\mathscr{C}r^{-n}\int_{B_{r}}x_{n}^{+}(1-\chi_{\left\{ u>0 \right\} })\,dx.
	\end{align*}
    This implies 
	\begin{align}\label{2}
		\lim_{r\to 0^{+}}\frac{|\int_{0}^{r}t^{-n-2}(K_{1}(u;t)+K_{2}(u;t))\,dt|}{r^{-n-1}\int_{B_{r}}x_{n}^{+}(1-\chi_{\left\{ u>0 \right\} })\,dx}=0.
	\end{align}
    We now consider $\int_{0}^{r}t^{-n-2}(K_{3}(u;t)+K_{4}(u;t))\,dt$. Under the growth assumption $|\nabla u|^{2} \leqslant x_{2}^{+}$, we may deduce from $\partial_{1}H<0$ that $H(|\nabla u|^{2};x_{n}) \geqslant H(x_{n};x_{n})=\bar{\rho}_{0}$. This implies that 
    \begin{equation*}
        \frac{1}{H_{0}}-\frac{1}{H(|\nabla u|^{2};x_{n})} \geqslant 0\quad\text{ in }\bar{B}_{r}.
    \end{equation*}
    It follows from the definition of $K_{2}(u;r)$ and $K_{3}(u;r)$ that $K_{3}(u;r) \geqslant 0$ for all $r\in(0,\delta_{0})$ and $K_{3}(u;r) \geqslant 0$ in $\partial B_{r}\cap\{u_{\nu} \leqslant 0\}$ where $u_{\nu}:=\nabla u\cdot\nu$. Therefore it suffices to consider the case when $(K_{3}(u;r)+K_{4}(u;r))$ in $\partial B_{r}\cap\{u_{\nu} \geqslant 0\}$. Then a direct calculation gives that 
    \begin{align*}
		&K_{3}(u;r)+K_{4}(u;r)\\
        &\geqslant 3\int_{\partial B_{r}\cap\{u_{\nu} \geqslant 0\}}\left( \frac{1}{H(|\nabla u|^{2};x_{n})}-\frac{1}{H_{0}} \right)u\left( \nabla u\cdot\nu-\frac{3}{2}\frac{u}{r} \right)d\mathcal{H}^{n-1}\\
		&=\frac{9}{2}\int_{\partial B_{r}\cap\{u_{\nu} \geqslant 0\}}\left(\frac{1}{H_{0}}-\frac{1}{H(|\nabla u|^{2};x_{n})}\right)\frac{u^{2}}{r}\,d\mathcal{H}^{n-1}\\
		&-3\int_{\partial B_{r}\cap\{u_{\nu} \geqslant 0\}}\left(\frac{1}{H_{0}}-\frac{1}{H(|\nabla u|^{2};x_{n})}\right)u\nabla u\cdot\nu\, d\mathcal{H}^{n-1}\\
		&\geqslant\frac{9}{2}\int_{\partial B_{r}\cap\{u_{\nu} \geqslant 0\}}\left(\frac{1}{H_{0}}-\frac{1}{H(|\nabla u|^{2};x_{n})}\right)\frac{u^{2}}{r}\chi_{\left\{ u>0 \right\} }\,d\mathcal{H}^{n-1}\\
		&-3\int_{\partial B_{r}\cap\{u_{\nu} \geqslant 0\}}\left(\frac{1}{H_{0}}-\frac{1}{H(|\nabla u|^{2};x_{n})}\right)u\nabla u\cdot\nu\, d\mathcal{H}^{n-1}\\
		&=\frac{9}{2}\int_{\partial B_{r}\cap\{u_{\nu} \geqslant 0\}}\left(\frac{1}{H_{0}}-\frac{1}{H(|\nabla u|^{2};x_{n})}\right)\frac{u^{2}}{r}\left( \chi_{\left\{ u>0 \right\} } -1\right)\,d\mathcal{H}^{n-1}\\
		&+3\int_{\partial B_{r}\cap\{u_{\nu} \geqslant 0\}}\left(\frac{1}{H_{0}}-\frac{1}{H(|\nabla u|^{2};x_{n})}\right)u\left( \frac{3}{2}\frac{u}{r}-\nabla u\cdot\nu \right)\,d\mathcal{H}^{n-1}\\
		& \geqslant \frac{9}{2}\int_{\partial B_{r}\cap\{u_{\nu} \geqslant 0\}}\left(\frac{1}{H_{0}}-\frac{1}{H(|\nabla u|^{2};x_{n})}\right)\frac{u^{2}}{r}\left( \chi_{\left\{ u>0 \right\} } -1\right)\,d\mathcal{H}^{n-1}\\
		&+3\int_{\partial B_{r}\cap\{u_{\nu} \geqslant 0\}}\left(\frac{1}{H_{0}}-\frac{1}{H(|\nabla u|^{2};x_{n})}\right)u\nabla u\cdot\nu\left( \chi_{\left\{ u>0 \right\} }-1 \right)\,d\mathcal{H}^{n-1}.
	\end{align*}
    Here in the last inequality, we used the fact that $\frac{u}{r} \geqslant \frac{u}{r}\chi_{\left\{ u>0 \right\} }=\frac{2}{5}u_{\nu}\chi_{\left\{ u>0 \right\} }$ for any $r\in(0,r_{0})$ with $r_{0}$ sufficiently small, and this can be deduced from. Thanks to~\eqref{nblu1}, one has 
	\begin{align*}
		K_{3}(u;r)+K_{4}(u;r)\geqslant-r^{2}\mathscr{C}\int_{\partial B_{r}}x_{n}^{+}\left( 1-\chi_{\left\{ u>0 \right\} } \right)\,d\mathcal{H}^{n-1}.
	\end{align*}
    This implies that 
	\begin{align*}
		&\int_{0}^{r}t^{-n-2}(K_{3}(u;t)+K_{4}(u;t))\,dt\\
        &\geqslant-\mathscr{C}\int_{0}^{r}t^{-n}\int_{\partial B_{t}}x_{n}^{+}\left( 1-\chi_{\left\{ u>0 \right\} } \right)\,d\mathcal{H}^{n-1}dt.
	\end{align*}
    Moreover, a direct computation gives that 
	\begin{align*}
        &\int_{0}^{r}t^{-n}\int_{\partial B_{t}}x_{n}^{+}(1-\chi_{\left\{ u>0 \right\} })\,d\mathcal{H}^{n-1}dt\\
        &=\int_{0}^{r}t^{-n}\frac{d}{dt}\int_{B_{t}}x_{n}^{+}\left( 1-\chi_{\left\{ u>0 \right\} } \right)\,dx dt\\
        &=r^{-n}\int_{B_{r}}x_{n}^{+}\left( 1-\chi_{\left\{ u>0 \right\} } \right)\,dx\\
        &+n\int_{0}^{r}t^{-n-1}\int_{B_{t}}x_{n}^{+}\left( 1-\chi_{\left\{ u>0 \right\} } \right)\,dx dt.
	\end{align*}
    It follows from~\eqref{Pir} and~\eqref{Pir1} that 
	\begin{align}\label{3}
		\lim_{r\to 0^{+}}\frac{|\int_{0}^{r}t^{-n-2}(K_{3}(u;t)+K_{4}(u;t))\,dt|}{r^{-n-1}\int_{B_{r}}x_{n}^{+}(1-\chi_{\left\{ u>0 \right\} })\,dx}=0.
	\end{align} 
	It now follows from~\eqref{1},~\eqref{2} and~\eqref{3} that there exists $r_{0}\in(0,\delta_{0})$ sufficiently small so that 
	\begin{align*}
		|r^{-n-1}K_{1}(u;r)|\leqslant\mathscr{C}\varepsilon r^{-n-1}\int_{B_{r}}x_{n}^{+}\left( 1-\chi_{\left\{ u>0 \right\} } \right)dx,
	\end{align*}
	for some universal $\mathscr{C}$, and 
	\begin{align*}
		\left|\int_{0}^{r}t^{-n-2}\sum_{i=1}^{4}K_{i}(u;t)\,dt\right|\leqslant\mathscr{C}\varepsilon r^{-n-1}\int_{B_{r}}x_{n}^{+}\left( 1-\chi_{\left\{ u>0 \right\} } \right)\,dx
	\end{align*}
	for all $r\in(0,r_{0})$ and all $\varepsilon>0$ sufficiently small.  
	\begin{align*}
		&r^{-n-1}\frac{1}{H_{0}}\int_{B_{r}}x_{n}^{+}\left( 1-\chi_{\left\{ u>0 \right\} } \right)\,dx+r^{-n-2}e(x^{\circ},u;r)\\
		&\geqslant r^{-n-1}\frac{1}{H_{0}}\int_{B_{r}}x_{n}^{+}\left( 1-\chi_{\left\{ u>0 \right\} } \right)\,dx\\
        &-r^{-n-1}|K_{1}(r)|-\left|\int_{0}^{r}t^{-n-2}\sum_{i=1}^{4}K_{i}(t)dt\right|\\
		&\geqslant \mathscr{C}r^{-n-2}(1-\varepsilon)\int_{B_{r}}x_{n}^{+}\left( 1-\chi_{\left\{ u>0 \right\} } \right)\,dx\geqslant 0.
	\end{align*}

    (3). (3). Recalling~\eqref{W1r},
	we have that 
    \begin{align}\label{Formula: perturbation-2(1)}
		\begin{split}
			&\frac{\partial}{\partial r}W(u;r)\\
			&=2r^{-1}\Bigg(r^{-n-1}\int_{\partial B_{r}}\frac{u\nabla u\cdot\nu}{H(|\nabla u|^{2};x_{n})}\, d\mathcal{H}^{n-1}-\frac{3}{2}r^{-n-2}\int_{\partial B_{r}}\frac{u^{2}}{H_{0}}d\mathcal{H}^{n-1}\\
			&+r^{-n-1}\int_{\partial B_{r}}\left(\frac{1}{H_{0}}-\frac{1}{H(|\nabla u|^{2};x_{n})}\right)u\nabla u\cdot\nu\,d\mathcal{H}^{n-1}\Bigg),\\
			&=2r^{-1}\Bigg(r^{-n-1}\int_{B_{r}(x^{\circ})}\frac{|\nabla u|^{2}}{H(|\nabla u|^{2};x_{n})}\,dx-\frac{3}{2}r^{-n-2}\int_{\partial B_{r}}\frac{u^{2}}{H_{0}}\,d\mathcal{H}^{n-1}\\
			&+r^{-n-1}\int_{\partial B_{r}}\left( \frac{1}{H_{0}}-\frac{1}{H(|\nabla u|^{2};x_{n})} \right)u\nabla u\cdot\nu\, d\mathcal{H}^{n-1}\Bigg),
		\end{split}
	\end{align}
    we deduce from the definition of $K_{4}(u;r)$ that $K_{4}(u;r) \geqslant 0$ for all $r\in(0,\delta_{0})$. Since $\lim_{r\to 0^{+}}(K_{3}(u;r)+K_{4}(u;r))=0$. We obtain that $K_{3}(u;r) \leqslant 0$ for $r\in(0,r_{0})$ with $r_{0}>0$ sufficiently small. Since $\frac{1}{H}-\frac{1}{H_{0}} \leqslant 0$, we deduce from the definition of $K_{3}(u;r)$ that $u_{\nu} =\frac{5}{2}\frac{u}{r} \geqslant 0$ for any $r\in(0,r_{0})$. Thus,~\eqref{Formula: perturbation-2(1)} implies that $\frac{\partial}{\partial r}W(u;r) \geqslant 0$ for all $r\in(0,r_{0})$. The proof is concluded. 
\end{proof}
\subsection{A frequency formula for the horizontal flat points}
Recalling the inequality~\eqref{Formula: mean frequency}, which states that for every degenerate stagnation points $x^{\circ}$ and for each subsonic variational solution $u$ of~\eqref{p2},
\begin{align*}
	&\frac{r\displaystyle\int_{B_{r}(x^{\circ})}\frac{|\nabla u|^{2}}{H(|\nabla u|^{2};x_{n})}\,dx}{\displaystyle\int_{\partial B_{r}(x^{\circ})}\frac{u^{2}}{H_{0}}\,d\mathcal{H}^{n-1}}-\frac{3}{2} \\
    &\geqslant \frac{r\displaystyle\int_{B_{r}(x^{\circ})}\frac{1}{H_{0}}x_{n}^{+}(1-\chi_{\{u>0\}})\,dx+e(x^{\circ},u;r)}{\displaystyle\int_{\partial B_{r}(x^{\circ})}\frac{1}{H_{0}}u^{2}\,d\mathcal{H}^{n-1}}.
\end{align*}
In this subsection, we study the quantitative results for the left-hand side of the above inequality. For the sake of notations, we define
\begin{align}\label{Formula: D(r)}
	D(x^{\circ},u;r):=\frac{r\displaystyle\int_{B_{r}(x^{\circ})}\frac{|\nabla u|^{2}}{H(|\nabla u|^{2};x_{n})}\,dx}{\displaystyle\int_{\partial B_{r}(x^{\circ})}\frac{u^{2}}{H_{0}}\,d\mathcal{H}^{n-1}},
\end{align}
and
\begin{align}\label{Formula: V(r)}
	V(x^{\circ},u;r):=\frac{r\displaystyle\int_{B_{r}(x^{\circ})}\frac{1}{H_{0}}x_{n}^{+}(1-\chi_{\{u>0\}})\,dx+e(x^{\circ},u;r)}{\displaystyle\int_{\partial B_{r}(x^{\circ})}\frac{1}{H_{0}}u^{2}\,d\mathcal{H}^{n-1}},
\end{align}
where $e(x^{\circ},u;r)$ is defined by 
\begin{equation*}
	e(x^{o},u;r)=r^{n+2}\int_{0}^{r}t^{-n-2}\sum_{i=1}^{4}K_{i}(x^{\circ},u;t)\,dt+rK_{1}(x^{\circ},u;r).
\end{equation*}
Here $K_{i}(x^{\circ},u;r)$ is defined in~\eqref{K1r}, \eqref{K2r}, \eqref{K3r} and~\eqref{K4r}.
\begin{theorem}[Frequency formula]\label{Theorem: FF}
    Let $u$ be a subsonic variational solution of \eqref{p2} and let $x^{\circ}\in\Sigma^{u}$. Then for a.e. $r\in(0,\delta_{0})$ let us consider the functions $D(x^{\circ},u;r)$, $V(x^{\circ},u;r)$ and $e(x^{\circ},u;r)$ defined in~\eqref{Formula: D(r)},~\eqref{Formula: V(r)} and~\eqref{Formula: e(r)}, respectively. Then 
	\begin{align*}
		N(x^{\circ},u;r)&:=D(x^{\circ},u;r)-V(x^{\circ},u;r)\\
		&=\frac{\displaystyle r\int_{B_{r}(x^{\circ})}\frac{|\nabla u|^{2}}{H(|\nabla u|^{2};x_{n})}+\frac{x_{n}^{+}}{H_{0}}(\chi_{\{u>0\}}-1)\,dx-e(x^{\circ},u;r)}{\displaystyle\int_{\partial B_{r}(x^{\circ})}\frac{u^{2}}{H_{0}}\, d\mathcal{H}^{n-1}}
	\end{align*}
    satisfies for a.e. $r\in(0,\delta_{0})$ the identities
    \begin{align}\label{Formula: N'(r)(1)}
        \begin{split}
            \frac{\partial}{\partial r}N(x^{\circ},u;r)&=\frac{2}{r^{n+3}W(x^{\circ},u;r)}\int_{\partial B_{r}(x^{\circ})}\frac{\left[r(\nabla u\cdot\nu)-D(x^{\circ},u;r)u\right]^{2}}{H(|\nabla u|^{2};x_{n})}\,d\mathcal{H}^{n-1}\\
            &+\frac{2}{r}V^{2}(x^{\circ},u;r)+\frac{2}{r}V(x^{\circ},u;r)\left[N(x^{\circ},u;r)-\frac{3}{2}\right]\\
            &+\frac{\tfrac{2}{3}K_{3}(x^{\circ},u;r)}{\displaystyle\int_{\partial B_{r}(x^{\circ})}\frac{u^{2}}{H_{0}}\, d\mathcal{H}^{n-1}}\left[N(x^{\circ},u;r)-\frac{3}{2}\right]\\
            &-\frac{K_{4}(x^{\circ},u;r)}{\displaystyle\int_{\partial B_{r}(x^{\circ})}\frac{u^{2}}{H_{0}}\, d\mathcal{H}^{n-1}}.
        \end{split}
    \end{align}
    Moreover, 
    \begin{align}\label{Formula: N'(r)(2)}
        \begin{split}
            \pd{!}{r}N(x^{\circ},u;r)&=\frac{2}{r^{n+3}W(x^{\circ},u;r)} \int_{\partial B_{r}(x^{\circ})}\frac{\left[r(\nabla u\cdot\nu)-N(x^{\circ},u;r)u\right]^{2}}{H(|\nabla u|^{2};x_{n})}d\mathcal{H}^{n-1}\\
            &+\frac{2}{r}V(x^{\circ},u;r)\left[N(x^{\circ},u;r)-\frac{3}{2}\right]\\
            &+\frac{\tfrac{2}{3}K_{3}(x^{\circ},u;r)}{\displaystyle\int_{\partial B_{r}(x^{\circ})}\frac{u^{2}}{H_{0}}\, d\mathcal{H}^{n-1}}\left[N(x^{\circ},u;r)-\frac{3}{2}\right]-\frac{K_{4}(x^{\circ},u;r)}{\displaystyle\int_{\partial B_{r}(x^{\circ})}\frac{u^{2}}{H_{0}}\, d\mathcal{H}^{n-1}}\\
            &+\frac{2}{r}V^{2}(x^{\circ},u;r)\frac{\mathcal{J}(x^{\circ},u;r)}{\displaystyle\int_{\partial B_{r}(x^{\circ})}\frac{u^{2}}{H_{0}}\, d\mathcal{H}^{n-1}}\\
            &+\frac{4V(x^{\circ},u;r)}{r}\left[N(x^{\circ},u;r)-\frac{3}{2} \right]\frac{\mathcal{J}(x^{\circ},u;r)}{\displaystyle\int_{\partial B_{r}(x^{\circ})}\frac{u^{2}}{H_{0}}\, d\mathcal{H}^{n-1}}\\
            &+\frac{3V(r)}{r}\frac{\mathcal{J}(x^{\circ},u;r)}{\displaystyle\int_{\partial B_{r}(x^{\circ})}\frac{u^{2}}{H_{0}}\, d\mathcal{H}^{n-1}},
        \end{split}
    \end{align}
    where 
    \begin{equation*}
        \mathcal{J}(x^{\circ},u;r):=\int_{\partial B_{r}(x^{\circ})}\Bigg(\frac{1}{H(|\nabla u|^{2};x_{n})}-\frac{1}{H_{0}}\Bigg)u^{2}\,d\mathcal{H}^{n-1}.
    \end{equation*}
\end{theorem}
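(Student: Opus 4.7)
The plan is to establish both identities by direct differentiation of $N(x^{\circ},u;r)=D(x^{\circ},u;r)-V(x^{\circ},u;r)$, combined with systematic use of the Pohožaev-type identity \eqref{pid} and the energy identity \eqref{eid} of Lemma~\ref{Lemma: twoidentities}. Assume $x^{\circ}=0$ and abbreviate $M(r):=\int_{\partial B_r}u^2/H_0\,d\mathcal{H}^{n-1}=r^{n+2}W(0,u;r)$, $I(r):=\int_{B_r}|\nabla u|^2/H\,dx$, and $J(r):=\int_{B_r}x_n^+(1-\chi_{\{u>0\}})/H_0\,dx$, so that $D(r)=rI(r)/M(r)$ and $N(r)M(r)=rI(r)-rJ(r)-e(r)$.

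Differentiating the last relation, I obtain
\begin{equation*}
N'(r)M(r)=I(r)+rI'(r)-J(r)-rJ'(r)-e'(r)-N(r)M'(r),
\end{equation*}
where $I'$, $J'$ are direct boundary integrals, $M'(r)=(n-1)M(r)/r+2H_0^{-1}\int_{\partial B_r}uu_\nu\,d\mathcal{H}^{n-1}$ with $u_\nu:=\nabla u\cdot\nu$, and $e'(r)$ is obtained from \eqref{Formula: e(r)}. Two key substitutions are then applied: the energy identity yields $rI(r)=D(r)M(r)$, and the Pohožaev identity \eqref{keyid} rewrites $r\int_{\partial B_r}|\nabla u|^2/H$ in terms of $\int_{\partial B_r}uu_\nu/H$, $\int_{\partial B_r}u_\nu^2/H$, and the correction terms $K_1(r)+K_2(r)$. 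Once these are plugged in, the completion-of-square
\begin{equation*}
r^2\!\int_{\partial B_r}\!\frac{u_\nu^2}{H}-2rD\!\int_{\partial B_r}\!\frac{uu_\nu}{H}+D^2\!\int_{\partial B_r}\!\frac{u^2}{H}=\int_{\partial B_r}\!\frac{(ru_\nu-Du)^2}{H}\,d\mathcal{H}^{n-1}
\end{equation*}
emerges, with $D^2\int u^2/H=D^2M+D^2\mathcal{J}$ producing additional $\mathcal{J}$-contributions, and the discrepancy $2\int uu_\nu(1/H_0-1/H)$ inside $M'(r)$ producing the $K_3$ term. The definition \eqref{Formula: e(r)} of $e(r)$ is engineered precisely so that $e'(r)$ absorbs the leftover Pohožaev corrections $K_1+K_2$, leaving exactly the $V^2/r$, $V(N-3/2)/r$, and $(2/3)K_3(N-3/2)/M-K_4/M$ contributions of \eqref{Formula: N'(r)(1)}.

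To derive \eqref{Formula: N'(r)(2)}, I would substitute $D=N+V$ into the squared integrand of \eqref{Formula: N'(r)(1)} and expand
\begin{equation*}
(ru_\nu-Du)^2=(ru_\nu-Nu)^2-2Vu(ru_\nu-Nu)+V^2u^2,
\end{equation*}
then evaluate the cross and quadratic terms using $r\int uu_\nu/H=DM$ and $\int u^2/H=M+\mathcal{J}$. This gives $\int(ru_\nu-Du)^2/H=\int(ru_\nu-Nu)^2/H-V^2M+2VN\mathcal{J}+V^2\mathcal{J}$. Dividing by $rM$ and regrouping the $\mathcal{J}/M$-weighted terms through the decomposition $N=(N-3/2)+3/2$ produces the coefficients $2V^2/r$, $4V(N-3/2)/r$, and $3V/r$ of $\mathcal{J}/M$ that appear in \eqref{Formula: N'(r)(2)}.

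The hard part will be the combinatorial bookkeeping. Each occurrence of the weight $1/H$ must be decomposed as $1/H_0+(1/H-1/H_0)$ in precisely the right places so that the squared frequency integrand $(ru_\nu-Du)^2/H$ separates cleanly from the compressibility corrections $K_i$ and $\mathcal{J}$. The most delicate step is verifying that $e'(r)$, computed directly from \eqref{Formula: e(r)}, absorbs the residual $K_1+K_2$ terms descending from the Pohožaev identity — this is the reason for the specific form of $e(r)$ involving both the integrated history of $\sum_i K_i$ and the extra boundary term $rK_1(r)$. Once those cancellations are identified, the remaining algebra for both \eqref{Formula: N'(r)(1)} and \eqref{Formula: N'(r)(2)} becomes routine.
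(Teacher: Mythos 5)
Your proposal follows essentially the same route as the paper: differentiate $N$ via the product/quotient rule, feed in the Pohožaev and energy identities (the paper's \eqref{U1r} and \eqref{W1r}), complete the square in $ru_\nu-D u$ with the weight mismatch $1/H$ versus $1/H_0$ generating the $K_3$, $K_4$ corrections, and then obtain \eqref{Formula: N'(r)(2)} by substituting $D=N+V$ and expanding with $\int_{\partial B_r}u^2/H=M+\mathcal{J}$. The structural role you assign to $e(r)$ (absorbing the $K_1+K_2$ residue and the history integral) matches the paper's device of working with $\widetilde{U}(r)=U(r)-\int_0^r t^{-n-2}\sum_i K_i\,dt$, so the argument is correct as outlined.
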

\begin{proof}
    Assume without loss of generality that $x^{\circ}=0\in \Sigma^{u}$ and we write $D(r)$, $N(r)$ and $V(r)$ for the sake os notations. Let us define 
    \begin{equation*}
        \widetilde{U}(r):=U(r)-\int_{0}^{r}t^{-n-2}\sum_{i=1}^{4}K_{i}(t)\,dt,
    \end{equation*}
    then it follows from the definition of $N(u;r)$ that 
	\begin{align*}
		N(r)&=\frac{\widetilde{U}(r)-\frac{2}{3H_{0}}}{W(r)}\\
        &=\frac{\widetilde{U}(r)-r^{-n-1}\frac{1}{H_{0}}\int_{B_{r}}x_{n}^{+}\,dx}{W(r)}.
	\end{align*}
    We now proceed a direct calculation of $N'(r)$, which gives
	\begin{align*}
		N'(r)&=\frac{\widetilde{U}'(r)}{W(r)}-\frac{\widetilde{U}(r)-r^{-n-1}\int_{B_{r}}\frac{1}{H_{0}}x_{n}^{+}\,dx}{W(r)}\frac{W'(r)}{W(r)}.
	\end{align*}
    It then follows from~\eqref{U1r} and~\eqref{W1r} that
    \begin{align*}
        N'(r)&=\frac{\displaystyle\bigg( 2r\int_{\partial B_{r}}\frac{(\nabla u\cdot\nu)^{2}}{H(|\nabla u|^{2};x_{n})}\, d\mathcal{H}^{n-1}-3\int_{\partial B_{r}}\frac{u\nabla u\cdot\nu}{H(|\nabla u|^{2};x_{n})}\, d\mathcal{H}^{n-1} \Bigg)}{\displaystyle\int_{\partial B_{r}}\frac{u^{2}}{H_{0}}\, d\mathcal{H}^{n-1}}\\
        &-(D(r)-V(r))\frac{1}{r}\frac{\displaystyle\Bigg(2r\int_{\partial B_{r}}\frac{u\nabla u\cdot\nu}{H(|\nabla u|^{2};x_{n})} d\mathcal{H}^{n-1}-3\int_{\partial B_{r}}\frac{u^{2}}{H_{0}} d\mathcal{H}^{n-1}\Bigg)}{\displaystyle\int_{\partial B_{r}}H_{0}u^{2}\, d\mathcal{H}^{n-1}}\\
        &+\frac{\tfrac{2}{3}K_{3}(r)}{\displaystyle\int_{\partial B_{r}}\frac{u^{2}}{H_{0}}\, d\mathcal{H}^{n-1}}\Bigg[(D(r)-V(r))-\frac{3}{2}\Bigg]-\frac{K_{4}(r)}{\displaystyle\int_{\partial B_{r}}\frac{u^{2}}{H_{0}}\, d\mathcal{H}^{n-1}}.
    \end{align*}
    Therefore, 
    \begin{align*}
        N'(r)&=\frac{2}{r}\left( \frac{\displaystyle r^{2}\int_{\partial B_{r}}\frac{(\nabla u\cdot\nu)^{2}}{H(|\nabla u|^{2};x_{n})}\,d\mathcal{H}^{n-1}}{\displaystyle\int_{\partial B_{r}}\frac{u^{2}}{H_{0}}\, d\mathcal{H}^{n-1}}-\frac{3}{2}D(r)\right)\\
        &-\frac{2}{r}(D(r)-V(r))\left[D(r)-\frac{3}{2}\right]\\
        &+\frac{\tfrac{2}{3}K_{3}(r)}{\displaystyle\int_{\partial B_{r}}\frac{u^{2}}{H_{0}}\,d\mathcal{H}^{n-1}}\left(N(u;r)-\frac{3}{2}\right)-\frac{K_{4}(r)}{\displaystyle\int_{\partial B_{r}}\frac{u^{2}}{H_{0}}\,d\mathcal{H}^{n-1}}.
    \end{align*}
    It follows from the definition of $D(r)$ that 
    \begin{align}\label{Formula: D(r)-1}
		D(r)=\frac{r\displaystyle\int_{\partial B_{r}}\frac{u\nabla u\cdot\nu}{H(|\nabla u|^{2};x_{n})}\,  d\mathcal{H}^{n-1}}{\displaystyle\int_{\partial B_{r}}\frac{u^{2}}{H_{0}}\, d\mathcal{H}^{n-1}}.
	\end{align}
    This gives that 
    \begin{align*}
        \frac{\displaystyle r^{2}\int_{\partial B_{r}}\frac{(\nabla u\cdot\nu)^{2}}{H(|\nabla u|^{2};x_{n})}\,d\mathcal{H}^{n-1}}{\displaystyle\int_{\partial B_{r}}\frac{u^{2}}{H_{0}}\,d\mathcal{H}^{n-1}}-D^{2}(r)=\frac{\displaystyle\int_{\partial B_{r}}\frac{[r(\nabla u\cdot\nu)-D(r)u]^{2}}{H(|\nabla u|^{2};x_{n})}\,d\mathcal{H}^{n-1}}{\displaystyle\int_{\partial B_{r}}\frac{u^{2}}{H_{0}}\,d\mathcal{H}^{1}}.
    \end{align*}
    This proves~\eqref{Formula: N'(r)(1)}.

    Since $D(u;r)=N(u;r)+V(u;r)$, we obtain 
	\begin{align*}
		&\int_{\partial B_{r}}\frac{\left[r(\nabla u\cdot\nu)-D(u;r)u\right]^{2}}{H(|\nabla u|^{2};x_{n})}\, d\mathcal{H}^{n-1}\\
        &=\int_{\partial B_{r}}\frac{\left[r(\nabla u\cdot\nu)-N(u;r)u-V(u;r)u\right]^{2}}{H(|\nabla u|^{2};x_{n})}\,d\mathcal{H}^{n-1}\\
		&=\int_{\partial B_{r}}\frac{[r(\nabla u\cdot\nu)-N(u;r)u]^{2}}{H(|\nabla u|^{2};x_{n})}\, d\mathcal{H}^{n-1}\\
        &-2V(u;r)r\int_{\partial B_{r}}\frac{u\nabla u\cdot\nu}{H(|\nabla u|^{2};x_{n})}\, d\mathcal{H}^{n-1}\\
		&+2N(u;r)V(u;r)\int_{\partial B_{r}}\frac{u^{2}}{H(|\nabla u|^{2};x_{n})}\, d\mathcal{H}^{n-1}\\
        &+V^{2}(u;r)\int_{\partial B_{r}}\frac{u^{2}}{H(|\nabla u|^{2};x_{n})}\, d\mathcal{H}^{n-1}.
	\end{align*}
    With the aid of~\eqref{Formula: D(r)-1}, we have  
	\begin{align*}
		&\int_{\partial B_{r}}\frac{\left[r(\nabla u\cdot\nu)-D(u;r)u\right]^{2}}{H(|\nabla u|^{2};x_{n})}\, d\mathcal{H}^{n-1}\\
        &=\int_{\partial B_{r}}\frac{[r(\nabla u\cdot\nu)-N(u;r)u]^{2}}{H(|\nabla u|^{2};x_{n})}\, d\mathcal{H}^{n-1}\\
        &-2D(u;r)V(u;r)\int_{\partial B_{r}}\frac{u^{2}}{H_{0}}\,d\mathcal{H}^{n-1}\\
		&+V^{2}(u;r)\int_{\partial B_{r}}\frac{u^{2}}{H(|\nabla u|^{2};x_{n})}\,d\mathcal{H}^{n-1}\\
        &+2N(u;r)V(u;r)\int_{\partial B_{r}}\frac{u^{2}}{H(|\nabla u|^{2};x_{n})}\,d\mathcal{H}^{n-1}.
	\end{align*}
	Therefore, introducing $N(r)=D(r)-V(r)$ into the last integral yields
	\begin{align*}
		&\int_{\partial B_{r}}\frac{\left[r(\nabla u\cdot\nu)-D(u;r)u\right]^{2}}{H(|\nabla u|^{2};x_{n})}\, d\mathcal{H}^{n-1}\\
		&=\int_{\partial B_{r}}\frac{[r(\nabla u\cdot\nu)-N(u;r)u]^{2}}{H(|\nabla u|^{2};x_{n})}\, d\mathcal{H}^{n-1}-V^{2}(u;r)\int_{\partial B_{r}}\frac{u^{2}}{H_{0}}\, d\mathcal{H}^{n-1}\\
		&+2D(u;r)V(u;r)\int_{\partial B_{r}}\left(\frac{1}{H(|\nabla u|^{2};x_{n})}-\frac{1}{H_{0}}\right)u^{2}\, d\mathcal{H}^{n-1}\\
		&-V^{2}(u;r)\int_{\partial B_{r}}\left( \frac{1}{H(|\nabla u|^{2};x_{n})}-\frac{1}{H_{0}} \right)u^{2}\,d\mathcal{H}^{n-1}
	\end{align*}
	Introducing the above identity give~\eqref{Formula: N'(r)(2)}.
\end{proof}
Let us now collect some properties of $D(x^{\circ},u;r)$, $V(x^{\circ},u;r)$ and the frequency function $N(x^{\circ},u;r)$.
\begin{proposition}\label{Proposition: properties of the frequency function}
	Let $u$ be a variational solution of~\eqref{p2}, let $x^{\circ}\in\Sigma^{u}$ and suppose that
	\begin{align*}
		|\nabla u|^{2}\leqslant x_{n}^{+}\quad\text{ locally in }\Omega.
	\end{align*}
	Then there exists $r_{0}\in(0,\delta_{0})$ sufficiently small so that the following holds:
	\begin{enumerate}
		\item [(1)] $N(x^{\circ},u;r)\geqslant \frac{3}{2}$ for all $r\in(0,r_{0})$.
		\item [(2)] $V(x^{\circ},u;r)\geqslant 0$ in $(0,r_{0})$.
		\item [(3)] $r\mapsto\frac{1}{r}V^{2}(x^{\circ},u;r)\in L^{1}(0,r_{0})$.
		\item [(4)] The function $r\mapsto N(x^{\circ},u;r)$ has a right limit $N(x^{\circ},u;0^{+})$. Moreover, $N(x^{\circ},u;0^{+})\geqslant\frac{3}{2}$.
		\item [(5)] The function
		\begin{align*}
			&\pd{!}{r}N(x^{\circ},u;r)\\
            &-\frac{2H_{0}}{r^{n+3}W(x^{\circ},u;r)} \int_{\partial B_{r}(x^{\circ})}\frac{\left[ r(\nabla u\cdot\nu)-N(x^{\circ},u;r)u \right]^{2}}{H(|\nabla u|^{2};x_{n})}\,d\mathcal{H}^{n-1}
		\end{align*}
		is bounded below by a function in $L^{1}(0,r_{0})$.
	\end{enumerate}
\end{proposition}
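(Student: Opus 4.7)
The first two properties are essentially immediate bookkeeping. Part (1) is a direct rewriting of the mean frequency inequality \eqref{Formula: mean frequency}, which states $D(x^{\circ},u;r) - 3/2 \geqslant V(x^{\circ},u;r)$; thus $N = D - V \geqslant 3/2$ for all $r \in (0, r_0)$. Part (2) follows from Lemma \ref{Lemma: mean frequency} (2), since the numerator in the definition \eqref{Formula: V(r)} of $V$ is non-negative on $(0,r_0)$ while the denominator is plainly non-negative.

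Part (3) is the technical heart of the proposition and the main obstacle. The natural Cauchy--Schwarz attempt applied to the identity defining $D - 3/2$ combined with the monotonicity formula \eqref{Phir1} yields a bound of the shape
\begin{equation*}
(D(x^{\circ},u;r) - 3/2)^2 \leqslant \frac{C \, r \, \partial_{r} \widetilde{\Phi}(x^{\circ},u;r)}{W(x^{\circ},u;r)} + \mathcal{R}(r),
\end{equation*}
where $\widetilde{\Phi}$ is the non-decreasing quantity from Remark \ref{Remark: perturbation} and $\mathcal{R}(r)$ collects error terms from the discrepancy between $1/H$ and $1/H_0$. This bound is not directly usable because $W(x^{\circ}, u; r)$ degenerates to zero at horizontal flat points. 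The strategy I propose is therefore to avoid $1/W$ altogether and to bootstrap integrability of $V^2/r$ from the differential identity \eqref{Formula: N'(r)(1)}. Integrating \eqref{Formula: N'(r)(1)} between a small $\epsilon$ and $r_0$ and dropping the two non-negative summands---the Cauchy--Schwarz square and $\frac{2}{r}V(N-3/2)$ (the latter non-negative by (1) and (2))---leads to
\begin{equation*}
\int_{\epsilon}^{r_0} \frac{2 V^{2}}{r} \, dr \leqslant N(x^{\circ},u;r_0) - N(x^{\circ},u;\epsilon) + \int_{\epsilon}^{r_0} \frac{\tfrac{2}{3}|K_3(x^{\circ},u;r)|(N - \tfrac{3}{2}) + |K_4(x^{\circ},u;r)|}{\int_{\partial B_r(x^{\circ})} \frac{u^2}{H_0} \, d\mathcal{H}^{n-1}} \, dr.
\end{equation*}
The pointwise bound $|1/H(|\nabla u|^2;x_n) - 1/H_0| \leqslant \mathscr{C} r$ implied by \eqref{HH0} and $|\nabla u|^2 \leqslant x_n^{+}$, combined with Cauchy--Schwarz on the sphere and the pointwise estimate $|\nabla u|^2 \leqslant r$ on $\partial B_r(x^{\circ})$, gives $|K_3| + |K_4| \leqslant \mathscr{C} r \int_{\partial B_r(x^{\circ})} u^2 \, d\mathcal{H}^{n-1}$, which makes the last integrand uniformly bounded. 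Since $N \geqslant 3/2$ by (1), this provides a uniform upper bound on $\int_\epsilon^{r_0} V^2/r \, dr$ as $\epsilon \to 0^+$, proving (3).

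Once (3) is available, statements (4) and (5) follow. For (4), identity \eqref{Formula: N'(r)(1)} writes $\partial_r N$ as the sum of three non-negative terms and two quotients involving $K_3, K_4$; by the estimate above the latter are in $L^1(0,r_0)$. Therefore $r \mapsto N(x^{\circ},u;r)$ is the sum of a non-decreasing function and an absolutely continuous function with integrable derivative, so it admits a right limit $N(x^{\circ},u;0^+)$, and $N(x^{\circ},u;0^+) \geqslant 3/2$ by passing to the limit in (1). For (5), I would work from formula \eqref{Formula: N'(r)(2)}, isolate the Cauchy--Schwarz squared term, and verify that each remaining summand is bounded below by an $L^1$ function on $(0,r_0)$: the term $\frac{2V}{r}(N-3/2)$ is non-negative; the $K_3, K_4$ quotients lie in $L^1$ as above; and the three $\mathcal{J}$-dependent summands lie in $L^1$ because $|\mathcal{J}(x^{\circ},u;r)| \leqslant \mathscr{C} r \int_{\partial B_r(x^{\circ})} u^2 \, d\mathcal{H}^{n-1}$ (again from \eqref{HH0}), so, combined with (3), they contribute integrable quantities when divided by $\int_{\partial B_r(x^{\circ})} u^2/H_0 \, d\mathcal{H}^{n-1}$.
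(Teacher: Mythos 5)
Parts (1) and (2) are correct and coincide with the paper's argument, and your overall strategy for (3) — integrate the identity \eqref{Formula: N'(r)(1)} over $(\epsilon,r_{0})$, drop the two manifestly nonnegative summands, and use $N(\epsilon)\geqslant\frac{3}{2}$ to bound $N(x^{\circ},u;r_{0})-N(x^{\circ},u;\epsilon)$ — is the right skeleton. The gap is in the claim that the remaining error integrand is uniformly bounded because ``$|K_{3}|+|K_{4}|\leqslant \mathscr{C}\,r\int_{\partial B_{r}(x^{\circ})}u^{2}\,d\mathcal{H}^{n-1}$''. This estimate is false for $K_{3}$: its integrand is $u\,\nabla u\cdot\nu$, not $u^{2}$, and Cauchy--Schwarz only gives
\begin{align*}
|K_{3}(x^{\circ},u;r)|\leqslant \mathscr{C}\,r\left(\int_{\partial B_{r}(x^{\circ})}u^{2}\,d\mathcal{H}^{n-1}\right)^{1/2}\left(\int_{\partial B_{r}(x^{\circ})}|\nabla u|^{2}\,d\mathcal{H}^{n-1}\right)^{1/2},
\end{align*}
so after dividing by $\int_{\partial B_{r}}u^{2}$ you are left with the factor $\bigl(\int_{\partial B_{r}}|\nabla u|^{2}/\int_{\partial B_{r}}u^{2}\bigr)^{1/2}$. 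The growth condition only gives $\int_{\partial B_{r}}|\nabla u|^{2}\leqslant\mathscr{C}r^{n}$, whereas at a point of $\Sigma^{u}$ the denominator may decay like $r^{n-1+2N_{0}}$ with $N_{0}>\frac{3}{2}$; your claimed bound is then off by a factor of order $r^{3/2-N_{0}}\to\infty$. Controlling this ratio is a reverse-Poincar\'e/doubling statement on spheres that is not available at this stage. Moreover, even if one had $|K_{3}|/\int_{\partial B_{r}}u^{2}\leqslant\mathscr{C}$ (which is what $|K_{4}|$ genuinely satisfies, via \eqref{Formula: K4(r)(1)}), the corresponding summand in \eqref{Formula: N'(r)(1)} carries the extra factor $N(x^{\circ},u;r)-\frac{3}{2}$, and no upper bound on $N$ is known a priori, so ``uniformly bounded'' still does not follow.

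This is exactly the difficulty the paper's proof is built around: it splits into the regime $N\geqslant\frac{5}{2}$, where a Gronwall argument shows $r\mapsto e^{\beta r}(N(r)-\frac{3}{2})$ is nondecreasing and hence $N$ is bounded above, after which $K_{3}$ is controlled through $K_{4}$; and the regime $N\in[\frac{3}{2},\frac{5}{2})$, where $N$ is bounded by assumption and the combined contribution of $e(x^{\circ},u;r)$ and $K_{3}+K_{4}$ is shown (via \eqref{1}, \eqref{2}, \eqref{3} and the convexity argument for $\Pi(r)$) to be absorbed into the nonnegative volume term of $V$. Both cases then yield the differential inequality $\partial_{r}N\geqslant\frac{1}{r}V^{2}-\mathscr{C}$, which is what you integrate. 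That passage from the identity to the differential inequality is the actual content of (3) and is missing from your proposal. Parts (4) and (5) are structured correctly (and your bound $|\mathcal{J}|\leqslant\mathscr{C}r\int_{\partial B_{r}}u^{2}$ is fine), but they rest on the same unproved $L^{1}$ control of the $K_{3}$-quotient times $(N-\frac{3}{2})$, so they inherit the gap.
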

\begin{proof}
    The statements (1) and (2) follow from  the definition in~\eqref{Formula: D(r)},~\eqref{Formula: V(r)} and~\lemref{Lemma: mean frequency} (2) directly.

    \noindent (3). Assume that $x^{\circ}=0$ and let us consider the following two possibilities.

	\textbf{Case I.} If $N(u;r)\geqslant\frac{5}{2}$. Then $N(u;r)-\frac{5}{2} \geqslant 1$. It follows from $|\nabla u|^{2} \leqslant x_{n}^{+}$ that $K_{4}(u;r) \geqslant 0$, thus, 
    \begin{equation*}
        \frac{K_{4}(u;r)}{\displaystyle\int_{\partial B_{r}}\frac{u^{2}}{H_{0}^{2}}\,d\mathcal{H}^{n-1}} \leqslant \frac{K_{4}(u;r)}{\displaystyle\int_{\partial B_{r}}\frac{u^{2}}{H_{0}^{2}}\,d\mathcal{H}^{n-1}}\left[ N(u;r)-\frac{5}{2} \right].
    \end{equation*}
    The frequency formula in~\eqref{Formula: N'(r)(1)} gives 
    \begin{align*}
        \frac{\partial}{\partial r}\left( N(r)-\frac{3}{2} \right) \geqslant -\beta\left( N(r)-\frac{3}{2} \right),
    \end{align*}
    for some $\beta>0$. It follows that 
    \begin{equation*}
        r\mapsto e^{\beta r}\left( N(r)-\frac{3}{2} \right)\quad\text{ is nondecreasing on }(0,r_{0}).
    \end{equation*}
    Thus, if we set $g(r):=e^{\beta r}(N(r)-\frac{3}{2})$, we have that $g(r) \leqslant g(r_{0}):=c_{0}$. Therefore,
    \begin{align*}
		&\frac{|K_{3}(u;r)|}{\displaystyle\int_{\partial B_{r}}\frac{u^{2}}{H_{0}}\, d\mathcal{H}^{n-1}}\left(N(u;r)-\frac{3}{2}\right)\\
        &\geqslant-\frac{|K_{3}(u;r)|}{\displaystyle\int_{\partial B_{r}}\frac{u^{2}}{H_{0}}\,d\mathcal{H}^{n-1}}\left( N(u;r)-\frac{3}{2} \right)\\
        &=-\frac{|K_{3}(u;r)|}{\displaystyle\left(\int_{\partial B_{r}}\frac{u^{2}}{H_{0}}\,d\mathcal{H}^{n-1}\right)e^{\beta r}}e^{\beta r}\left( N(r)-\frac{3}{2} \right)\\ 
        &\geqslant-\frac{\mathscr{C}|K_{4}(u;r)|}{\displaystyle\left(\int_{\partial B_{r}}\frac{u^{2}}{H_{0}}\,d\mathcal{H}^{n-1}\right)e^{\beta r}}c_{0}\geqslant-\mathscr{C},
	\end{align*}
	where we used $\lim_{r\to 0^{+}}(K_{3}(u;r)+K_{4}(u;r))=0$ and  
	\begin{align}\label{Formula: K4(r)(1)}
        \begin{split}
            |K_{4}(u;r)|&=\left|\frac{9}{2r}\int_{\partial B_{r}}\left(\frac{1}{H_{0}}-\frac{1}{H(|\nabla u|^{2};x_{n})}\right)u^{2}\, d\mathcal{H}^{n-1}\right|\\
            &\leqslant \mathscr{C}\int_{\partial B_{r}}\frac{u^{2}}{H_{0}}\,d\mathcal{H}^{n-1}.
        \end{split}
	\end{align}
    Therefore, in this case we obtain that 
    \begin{equation*}
        \frac{\partial}{\partial r}N(u;r) \geqslant \frac{1}{r}V^{2}(r)-\mathscr{C}.
    \end{equation*}
    Since by the first part (1), $N(u;r)$ is bounded below as $r\to 0^{+}$, the integrability of $r\mapsto\frac{1}{r}V^{2}(u;r)$ follows immediately.

    \textbf{Case II.} If $N(u;r)\in\left[\frac{3}{2},\frac{5}{2}\right)$. Let us consider the function 
	\begin{align*}
		\varphi(r)&:=\frac{1}{r}V(u;r)\left(N(u;r)-\frac{3}{2}\right)\\
        &+\frac{\tfrac{2}{3}(K_{3}(u;r)+K_{4}(u;r))}{\int_{\partial B_{r}}\frac{u^{2}}{H_{0}}\,d\mathcal{H}^{n-1}}\left(N(u;r)-\frac{3}{2}\right).
	\end{align*}
	It follows from~\eqref{Formula: N'(r)(1)} that 
	\small{\begin{align}\label{Formula: N'(r)(1)(1)}
		\begin{split}
			\pd{!}{r}N(u;r)&=\frac{2}{r^{n+3}W(x^{\circ},u;r)}\int_{\partial B_{r}(x^{\circ})}\frac{\left[r(\nabla u\cdot\nu)-D(x^{\circ},u;r)u\right]^{2}}{H(|\nabla u|^{2};x_{n})}\,d\mathcal{H}^{n-1}\\
			&+\frac{2}{r}V^{2}(u;r)+\frac{1}{r}V(u;r)\left[N(u;r)-\frac{3}{2}\right]\\
            &+\varphi(r)-N(u;r){\frac{K_{4}(r)}{\int_{\partial B_{r}}\frac{u^{2}}{H_{0}}\, d\mathcal{H}^{n-1}}}.
		\end{split}
	\end{align}}\normalsize 
	It follows from~\eqref{Formula: K4(r)(1)} and $N(u;r)<\frac{5}{2}$ that  
	\begin{align*}
		\left|\frac{N(u;r)K_{4}(u;r)}{\int_{\partial B_{r}}\frac{u^{2}}{H_{0}}\, d\mathcal{H}^{n-1}}\right|\leqslant\mathscr{C}_{1}.
	\end{align*}
	Moreover, it follows from the definition of $\varphi(r)$ that 
	\begin{align*}
		\varphi(r)&=\left(N(u;r)-\frac{3}{2}\right)\left(\frac{1}{r}V(u;r)+\frac{\tfrac{2}{3}(K_{3}(u;r)+K_{4}(u;r))}{\int_{\partial B_{r}}\frac{u^{2}}{H_{0}}\,d\mathcal{H}^{n-1}}\right)\\
		&=\left( N(u;r)-\frac{3}{2} \right)\\
        &\times\left( \frac{\int_{B_{r}}\frac{x_{n}^{+}}{H_{0}}(1-\chi_{\left\{ u>0 \right\} })\,dx+r^{-1}e(u;r)+\frac{2}{3}(K_{3}(u;r)+K_{4}(u;r))}{\int_{\partial B_{r}}\frac{u^{2}}{H_{0}}\,d\mathcal{H}^{n-1}} \right)
	\end{align*}
	It follows from~\eqref{1},~\eqref{2} and a similar argument as in~\lemref{Lemma: mean frequency} (3) that  
	\begin{align*}
		\lim_{r\to 0^{+}}\frac{|r^{-1}e(u;r)+\frac{2}{3}(K_{3}(u;r)+K_{4}(u;r))|}{\int_{B_{r}}\frac{x_{n}^{+}}{H_{0}}(1-\chi_{\left\{ u>0 \right\} })\,dx}=0.
	\end{align*}
	Therefore we can deduce that there exists $r_{0}\in(0,\delta_{0})$ small so that 
    \begin{align*}
        &\int_{B_{r}}\frac{x_{n}^{+}}{H_{0}}(1-\chi_{\left\{ u>0 \right\} })\,dx+r^{-1}e(u;r)+\frac{2}{3}(K_{3}(u;r)+K_{4}(u;r))\\
        &\geqslant(1-\varepsilon)\int_{B_{r}}\frac{x_{n}^{+}}{H_{0}}\left( 1-\chi_{\left\{ u>0 \right\} } \right)\,dx\geqslant 0.
    \end{align*}
	Therefore, we can infer from~\eqref{Formula: N'(r)(1)(1)} that 
	\begin{align*}
		\pd{!}{r}N(u;r)\geqslant\frac{1}{r}V^{2}(u;r)-\mathscr{C},
	\end{align*}
	and the integrability of $r\mapsto\frac{1}{r}V^{2}(u;r)$ follows immediately.

    \noindent (4). By the first statement, $r\mapsto N(x^{\circ},u;r)$ is bounded below as $r\to 0^{+}$. We also deduce from (3) that $N(r)$ has a limit as $r\to 0^{+}$, and that $N(0^{+}) \geqslant \frac{3}{2}$, thus proving this statement.

    \noindent (5). Along with a similar argument as (3) and we omit it here.
\end{proof}
\begin{corollary}
	Let $u$ be a subsonic variational solution of~\eqref{p2} and suppose that $u$ satisfies
	\begin{align*}
		|\nabla u|^{2}\leqslant x_{n}^{+}\quad\text{ locally in }\Omega.
	\end{align*}
	Then the function 
	\begin{align*}
		x\mapsto N(x^{\circ},u;0^{+})
	\end{align*}
	is upper semi-continuous on the closed set $\Sigma^{u}$.
\end{corollary}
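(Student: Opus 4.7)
The plan is to adapt the upper semi-continuity argument already used for $x \mapsto \Phi(x^{\circ},u;0^{+})$ in Lemma \ref{Lemma: densities at the stagnation points}(2). That proof combined a \emph{monotone modification} of $r \mapsto \Phi(x,u;r)$ with \emph{continuity of the modified quantity in $x$ for each fixed $r$}, then sent $r \to 0^{+}$. The same two ingredients are available here, supplied by Proposition \ref{Proposition: properties of the frequency function}(5) and the regularity of $u$.

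First I would turn Proposition \ref{Proposition: properties of the frequency function}(5) into a genuine monotonicity statement. Since the nonnegative square term
\[
\frac{2H_{0}}{r^{n+3}W(x,u;r)}\int_{\partial B_{r}(x)}\frac{\bigl[r(\nabla u\cdot\nu)-N(x,u;r)u\bigr]^{2}}{H(|\nabla u|^{2};x_{n})}\,d\mathcal{H}^{n-1}
\]
can only help, and the remaining perturbation $\mathcal{E}(x,u;r)$ is bounded below by an $L^{1}(0,r_{0})$ function $g(r)$, we obtain a function $g\in L^{1}(0,r_{0})$ (independent of $x$ in a small neighborhood of $x^{\circ}$, which one checks by tracking the universal constants in the proof of parts (1)--(3) of that proposition) such that $\partial_{r}N(x,u;r)\ge -g(r)$ for every $x\in \Sigma^{u}$ close to $x^{\circ}$. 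Setting $\widetilde{N}(x,u;r):=N(x,u;r)+\int_{0}^{r}g(t)\,dt$ then gives $r\mapsto \widetilde{N}(x,u;r)$ non-decreasing on $(0,r_{0})$ with $\widetilde{N}(x,u;0^{+})=N(x,u;0^{+})$.

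Next I would verify that for each fixed $r\in(0,r_{0})$ the map $x\mapsto N(x,u;r)$ is continuous at $x^{\circ}$. The denominator $\int_{\partial B_{r}(x)}u^{2}/H_{0}\,d\mathcal{H}^{n-1}$ is strictly positive at $x^{\circ}$ for small $r>0$ (otherwise the horizontal-flat density $\tfrac{2}{3H_{0}}$ could not be attained), and is continuous in $x$ by the assumed continuity of $u$. Continuity of the bulk integrals $\int_{B_{r}(x)}|\nabla u|^{2}/H\,dx$ and $\int_{B_{r}(x)}x_{n}^{+}\chi_{\{u>0\}}\,dx$ follows from $L^{1}_{\mathrm{loc}}$ integrability together with translation continuity in $L^{1}$. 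Continuity in $x$ of the correction $e(x,u;r)$ reduces to continuity of $r^{-n-2}\sum_{i=1}^{4}K_{i}$, for which the uniform bound $|K_{i}(x,u;r)|\le \mathscr{C}r^{n+2}$ established in the proof of Lemma \ref{Lemma: blow-up limits}(1) provides an integrable majorant, so that dominated convergence gives both the well-definedness of $\int_{0}^{r}t^{-n-2}K_{i}\,dt$ and its continuity in $x$.

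Given these two pieces the conclusion is a standard three-$\varepsilon$ argument. Let $\{x_{m}\}\subset \Sigma^{u}$ with $x_{m}\to x^{\circ}$, and fix $\varepsilon>0$. Choose $r\in(0,r_{0})$ so small that $\int_{0}^{r}g(t)\,dt<\varepsilon/3$ and $\widetilde{N}(x^{\circ},u;r)<N(x^{\circ},u;0^{+})+\varepsilon/3$; by the continuity of $\widetilde{N}(\cdot,u;r)$ at $x^{\circ}$ we have $\widetilde{N}(x_{m},u;r)<\widetilde{N}(x^{\circ},u;r)+\varepsilon/3$ for large $m$, and by the monotonicity of $r\mapsto \widetilde{N}(x_{m},u;r)$ we conclude
\[
N(x_{m},u;0^{+})=\widetilde{N}(x_{m},u;0^{+})\le \widetilde{N}(x_{m},u;r)\le N(x^{\circ},u;0^{+})+\varepsilon.
\]
The hardest step, and the one where care is needed, is extracting a \emph{uniform} (in $x$) $L^{1}$ bound $g(r)$ from Proposition \ref{Proposition: properties of the frequency function}(5): the proof of that proposition split the analysis by the size of $N(x,u;r)$ and employed the first-order ODE bound $r\mapsto e^{\beta r}(N-\tfrac{3}{2})$, so one must revisit those estimates to verify that $\beta$ and the resulting dominating function $g$ depend only on universal data, not on the base point $x^{\circ}$ itself. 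Once this uniformity is in hand the argument closes as above.
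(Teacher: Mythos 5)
Your proposal is correct and follows essentially the same route as the paper: the paper defines $\widetilde{N}(x,u;r):=N(x,u;r)-\int_{0}^{r}\bigl(\tfrac{1}{t}V^{2}(x,u;t)-\mathscr{C}\bigr)\,dt$, which is non-decreasing in $r$ with $\widetilde{N}(x,u;0^{+})=N(x,u;0^{+})$, and then runs the same ``choose $r$ first, then $|x-x^{\circ}|$'' argument you describe. The uniformity issue you flag as the hardest step is handled more simply than you fear: since the $x$-dependent part $\tfrac{1}{r}V^{2}(x,u;r)$ of the lower bound for $\partial_{r}N$ is nonnegative it can just be dropped, leaving the universal constant $\mathscr{C}$ as the integrable majorant, so $\widetilde{N}(x,u;r)\leqslant N(x,u;r)+\mathscr{C}r$ with no further tracking of constants required.
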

\begin{proof}
	Let us define
	\begin{align*}
		\widetilde{N}(r)=\widetilde{N}(x,u;r):=N(x^{\circ},u;r)-\int_{0}^{r}\left(\frac{1}{t}V^{2}(x^{\circ},u;t)-\mathscr{C}\right)\,dt.
	\end{align*}
	Then it follows that $\pd{!}{r}\widetilde{N}(x^{\circ},u;r)\geqslant 0$. Moreover, since $r\mapsto\frac{1}{r}V^{2}(x^{\circ},u;r)$ is integrable on $(0,r_{0})$, we can deduce that $\widetilde{N}(x^{\circ},u;0^{+})=N(x,u;0^{+})$ for all $x$. Here the existence of the limit follows from~\propref{Proposition: properties of the frequency function} (3). Therefore for each $\delta>0$,
	\begin{align*}
		N(x,u;0^{+})&=\widetilde{N}(x,u;0^{+})\leqslant\widetilde{N}(x,u;r)\\
        &\leqslant N(x^{\circ},u;r)+Cr\leqslant N(x^{\circ},u;0^{+})+\frac{\delta}{2},
	\end{align*}
	if we choose for fixed $x^{\circ}\in\Sigma^{u}$ first $r>0$ and then $|x-x^{\circ}|$ small enough. 
\end{proof}
The next result improves Lemma~\ref{Lemma: mean frequency} at those points of $\Sigma^{u}$ at which the frequency function is greater than $\tfrac{3}{2}$.
\begin{lemma}\label{Lemma: frequency>3/2}
	Let $u$ be a subsonic variational solution of~\eqref{p2}, let $x^{\circ}\in\Sigma^{u}$ and suppose that
	\begin{align*}
		|\nabla u|^{2}\leqslant x_{n}^{+}\quad\text{ locally in }\Omega.
	\end{align*}
	Suppose that $N(x^{\circ},u;0^{+})>\tfrac{3}{2}$ and let us set $N_{0}:=N(x^{\circ},u;0^{+})$. Then there exists $r_{0}\in(0,\delta_{0})$ sufficiently small so that
	\begin{enumerate}
		\item There exists $r_{0}\in(0,\delta_{0})$ sufficiently small so that
		\begin{align*}
			&\frac{r\displaystyle\int_{B_{r}(x^{\circ})}\frac{|\nabla u|^{2}}{H(|\nabla u|^{2};x_{n})}\,dx}{\displaystyle\int_{\partial B_{r}(x^{\circ})}\frac{u^{2}}{H_{0}}\,d\mathcal{H}^{n-1}}-N_{0}\\
            &\geqslant\frac{r\displaystyle\int_{B_{r}(x^{\circ})}\frac{x_{n}^{+}}{H_{0}}(1-\chi_{\left\{ u>0 \right\} })\,dx+e(x^{\circ},u;r)}{\displaystyle\int_{\partial B_{r}(x^{\circ})}\frac{u^{2}}{H_{0}}\,d\mathcal{H}^{n-1}}\geqslant 0,
		\end{align*}
		where $e(x^{\circ},u;r)$ is defined in~\eqref{Formula: e(r)}.
		\item The function
		\begin{align*}
			r\mapsto \frac{r^{1-n-2N_{0}}}{H_{0}}\int_{\partial B_{r}(x^{\circ})}u^{2}\,d\mathcal{H}^{n-1}
		\end{align*}
		is non-decreasing on $(0,r_{0})$.
		\item For each $\alpha\in[0,N_{0})$,
		\begin{align*}
			\frac{u(x^{\circ}+rx)}{r^{\alpha}}\to0\quad\text{ strongly in }L_{\mathrm{loc}}^{2}(\mathbb{R}^{n})\quad\text{ as }r\to 0^{+}.
		\end{align*}
	\end{enumerate}
\end{lemma}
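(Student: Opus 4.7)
The inequality in \textbf{(1)} rearranges to $N(x^{\circ},u;r) \geq N_0$, since \eqref{Formula: D(r)}--\eqref{Formula: V(r)} give $N(r) = D(r) - V(r)$, and $V(r) \geq 0$ by Proposition~\ref{Proposition: properties of the frequency function}. The strategy is to upgrade the almost-monotonicity of $N$ to a strict lower bound $N(r) \geq N_0$. The spectral gap $N_0 - \tfrac{3}{2} > 0$ is essential: for $r$ small enough, $N(r) - \tfrac{3}{2} \geq (N_0 - \tfrac{3}{2})/2$, so the nonnegative term $\tfrac{2}{r} V(r) [N(r) - \tfrac{3}{2}]$ in \eqref{Formula: N'(r)(1)} is reinforced. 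The growth hypothesis $|\nabla u|^2 \leq x_n^+$ (with coefficient exactly $1$) forces $H(|\nabla u|^2; x_n) \geq H(x_n; x_n) = H_0$, so $K_4(r) \geq 0$; a parallel sign analysis of $K_3$ combined with the spectral gap absorbs the error $\tfrac{2}{3} K_3 [N-\tfrac{3}{2}]/\!\int u^2/H_0\, dS - K_4/\!\int u^2/H_0\, dS$ into the nonnegative part of $N'(r)$, producing a corrected $\widetilde N(r)$ that is monotone with $\widetilde N(0^+) = N_0$ and correction tending to $0$, hence $N(r) \geq N_0$.

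For \textbf{(2)}, differentiate $h(r) := r^{1-n-2N_0} \int_{\partial B_r(x^{\circ})} u^2/H_0\, d\mathcal{H}^{n-1}$ directly in polar coordinates:
\begin{equation*}
h'(r) = \frac{2 r^{-n-2N_0}}{H_0} \left[r \int_{\partial B_r(x^{\circ})} u\, u_\nu\, d\mathcal{H}^{n-1} - N_0 \int_{\partial B_r(x^{\circ})} u^2\, d\mathcal{H}^{n-1}\right].
\end{equation*}
The energy identity \eqref{eid} together with the definition \eqref{Formula: D(r)} yields $r \int_{\partial B_r(x^\circ)} u\, u_\nu / H\, d\mathcal{H}^{n-1} = D(r) \int_{\partial B_r(x^\circ)} u^2/H_0\, d\mathcal{H}^{n-1}$, and writing $H = H_0 + (H-H_0)$ gives
\begin{equation*}
r \int_{\partial B_r(x^{\circ})} u\, u_\nu\, d\mathcal{H}^{n-1} = D(r) \int_{\partial B_r(x^{\circ})} u^2\, d\mathcal{H}^{n-1} + r \int_{\partial B_r(x^{\circ})} (H-H_0)\, \frac{u\, u_\nu}{H}\, d\mathcal{H}^{n-1}.
\end{equation*}
By \textbf{(1)}, $D(r) - N_0 \geq V(r) \geq 0$. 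Under the growth condition, $0 \leq H-H_0 \leq \mathscr{C}r$ and $|u u_\nu| \leq \mathscr{C} r^2$ pointwise on $\partial B_r(x^{\circ})$, so the correction is $O(r^{n+3})$ while $\int u^2\, d\mathcal{H}^{n-1} = O(r^{n+2})$; the correction is then absorbed into $(D(r) - N_0)\int u^2\, d\mathcal{H}^{n-1}$ for $r < r_0$, after possibly shrinking $r_0$, yielding $h'(r) \geq 0$.

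For \textbf{(3)}, the monotonicity in \textbf{(2)} gives $h(r) \leq h(r_0) =: C$, hence $\int_{\partial B_r(x^{\circ})} u^2\, d\mathcal{H}^{n-1} \leq C r^{n-1+2N_0}$ for $r \in (0, r_0)$. Integrating in the radial variable yields $\int_{B_R(x^{\circ})} u^2\, dy \leq C' R^{n+2N_0}$ for $R \in (0, r_0)$. Rescaling, for any fixed ball $B_R \subset \mathbb{R}^n$ and $\alpha \in [0, N_0)$,
\begin{equation*}
\int_{B_R} \left|\frac{u(x^{\circ}+rx)}{r^\alpha}\right|^2 dx = r^{-n-2\alpha} \int_{B_{Rr}(x^{\circ})} u^2\, dy \leq C' R^{n+2N_0} r^{2(N_0 - \alpha)} \to 0
\end{equation*}
as $r \to 0^+$, establishing $L^2_{\mathrm{loc}}(\mathbb{R}^n)$ convergence to zero. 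The main obstacle throughout is the sign and absorption analysis of the quasilinear corrections $K_3, K_4$ in \textbf{(1)}: these do not carry definite signs in the incompressible sense, and the argument relies crucially on the strong Bernstein bound $|\nabla u|^2 \leq x_n^+$ (with unit coefficient) to force $H \geq H_0$, combined with the spectral gap $N_0 > \tfrac{3}{2}$ to provide the slack needed to absorb them.
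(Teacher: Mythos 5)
Parts (1) and (3) of your argument follow the paper's route: (1) amounts to $N(x^{\circ},u;r)\geqslant N_{0}$ together with $V\geqslant 0$, which the paper extracts from the almost--monotonicity of the frequency function in Proposition~\ref{Proposition: properties of the frequency function}, and your part (3) is exactly the paper's integration of the doubling bound from (2). (Minor caveat on (1): a correction that merely \emph{tends to zero} gives $N(r)\geqslant N_{0}-o(1)$, not $N(r)\geqslant N_{0}$; one needs the correction added to $N$ to have a sign, which is the role of the integrability and sign analysis of $\tfrac{1}{r}V^{2}$ and of $K_{3},K_{4}$.)

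The genuine gap is in part (2). Your decomposition
$r\int_{\partial B_{r}}u\,u_{\nu}\,d\mathcal{H}^{n-1}=D(r)\int_{\partial B_{r}}u^{2}\,d\mathcal{H}^{n-1}+r\int_{\partial B_{r}}(H-H_{0})\tfrac{u\,u_{\nu}}{H}\,d\mathcal{H}^{n-1}$
is correct, but the absorption step fails for two reasons. First, you compare an \emph{upper} bound $O(r^{n+3})$ on the correction with an \emph{upper} bound $O(r^{n+2})$ on $\int_{\partial B_{r}}u^{2}\,d\mathcal{H}^{n-1}$; this says nothing about their ratio. At a point of $\Sigma^{u}$ with $N_{0}\geqslant 2$ one actually has $\int_{\partial B_{r}}u^{2}\,d\mathcal{H}^{n-1}\sim r^{\,n-1+2N_{0}}\leqslant \mathscr{C}r^{n+3}$, so the correction is of the same order as, or larger than, the denominator. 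Second, even if the ratio were $o(1)$, absorbing the correction into $(D(r)-N_{0})\int_{\partial B_{r}}u^{2}\,d\mathcal{H}^{n-1}$ requires a quantitative positive lower bound on $D(r)-N_{0}$; but all that is available is $D(r)-N_{0}\geqslant V(r)\geqslant 0$, and $V(r)\to 0$ (Proposition~\ref{Proposition: frequency v0}), so this factor can be arbitrarily small relative to $r^{n+3}$. The paper's mechanism is not quantitative absorption but a \emph{sign} argument: the growth bound with unit constant forces $H(|\nabla u|^{2};x_{n})\geqslant H(x_{n};x_{n})=H_{0}$, and the extra boundary term equals $-\tfrac{1}{3}r^{-n-1}K_{3}(x^{\circ},u;r)$, which is shown to be nonnegative for small $r$ (via $K_{4}\geqslant 0$, $K_{3}+K_{4}\to 0$ and $u\,u_{\nu}\geqslant 0$ on $\partial B_{r}$ for $r$ small, as in Lemma~\ref{Lemma: mean frequency}~(3)). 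Without establishing the sign of $u\,u_{\nu}$ (equivalently of $K_{3}$), your proof of the monotonicity in (2) does not close, and (3) depends on (2).
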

\begin{proof}
    Assume $x^{\circ}=0$. (1). The first statement follows from the fact that $N(u;r)\geqslant N_{0}$ and $V(u;r)\geqslant 0$ for all $r\in(0,r_{0})$ (cf. Proposition~\ref{Proposition: properties of the frequency function} (1) and (2)). 

    \noindent (2). It follows from the previous statement $N(u;r)\geqslant N_{0}$ and the definition of $N(u;r)$ that 
	\begin{align*}
		\frac{r\displaystyle\int_{B_{r}}\frac{|\nabla u|^{2}}{H(|\nabla u|^{2};x_{n})}\,dx}{\displaystyle\int_{\partial B_{r}}\frac{u^{2}}{H_{0}}\,d\mathcal{H}^{n-1}}-N_{0}\geqslant\frac{r\displaystyle\int_{B_{r}}\frac{x_{n}^{+}}{H_{0}}(1-\chi_{ \left\{ u>0 \right\} })\,dx+e(r)}{\displaystyle\int_{\partial B_{r}}\frac{u^{2}}{H_{0}}\,d\mathcal{H}^{n-1}}.
	\end{align*}
	Thanks to the third statement of Proposition~\ref{Proposition: properties of the frequency function} we have
	\begin{align}\label{Formula: N0}
		\frac{r\displaystyle\int_{B_{r}}\frac{|\nabla u|^{2}}{H(|\nabla u|^{2};x_{n})}\,dx}{\displaystyle\int_{\partial B_{r}}\frac{u^{2}}{H_{0}}\,d\mathcal{H}^{n-1}}-N_{0}\geqslant 0,
	\end{align}
	for all $r\in(0,r_{0})$ and all $x^{\circ}\in\Sigma^{u}$. We compute directly to obtain 
	\begin{align*}
		&\pd{!}{r}\left( r^{1-n-2N_{0}}\int_{\partial B_{r}}\frac{u^{2}\,}{H_{0}}d\mathcal{H}^{n-1} \right)\\
        &=2r^{2-2N_{0}}\Bigg( r^{-n-1}\int_{\partial B_{r}}\frac{u\nabla u\cdot\nu}{H(|\nabla u|^{2};x_{n})}\, d\mathcal{H}^{n-1}- N_{0}r^{-n-2}\int_{\partial B_{r}}\frac{u^{2}}{H_{0}}\,d\mathcal{H}^{n-1}\\
        &\qquad\qquad\qquad+ r^{-n-1}\int_{\partial B_{r}}\left(\frac{1}{H_{0}}-\frac{1}{H(|\nabla u|^{2};x_{n})}\right)u\nabla u\cdot\nu\, d\mathcal{H}^{n-1}\Bigg)\\
        &=2r^{2-2N_{0}}\Bigg( r^{-n-1}\int_{B_{r}}\frac{|\nabla u|^{2}}{H(|\nabla u|^{2};x_{n})}\, dx- N_{0}r^{-n-2}\int_{\partial B_{r}}\frac{u^{2}}{H_{0}}\,d\mathcal{H}^{n-1}\\
        &\qquad\qquad\qquad+ r^{-n-1}\int_{\partial B_{r}}\left(\frac{1}{H_{0}}-\frac{1}{H(|\nabla u|^{2};x_{n})}\right)u\nabla u\cdot\nu\, d\mathcal{H}^{n-1}\Bigg)
	\end{align*}
	where we used~\eqref{Formula: N0} in the last inequality. The rest of the proof follows as in~\lemref{Lemma: mean frequency} (3).

    \noindent (3). It is a consequence of the part (2) that $r\mapsto r^{-n-2N_{0}}H_{0}\int_{B_{r}(x^{\circ})}u^{2}\,dx$ is non--decreasing on $(0,r_{0})$ and therefore, for each $\alpha \in[0,N_{0})$,
	\begin{align*}
		r^{-n-2\alpha}\int_{B_{r}(x^{\circ})}u^{2}\,dx\to0\quad\text{ as }r\to 0^{+}.
	\end{align*}
	This implies (3) of the lemma.
\end{proof}
\subsection{The frequency sequence}
The frequency formula allows us passing to blow-up limits. We consider the following blow-up sequence
\begin{align}\label{Formula: vm}
	v_{m}(x):=\frac{u(x^{\circ}+r_{m}x)}{\sqrt{r_{m}^{1-n}\int_{\partial B_{r_{m}}(x^{\circ})}\frac{u^{2}}{H_{0}}\, d\mathcal{H}^{n-1}}}.
\end{align}
Our first observation related to $v_{m}$ is concluded in the following lemma.
\begin{lemma}\label{Lemma: identity satisfied by vm}
	Let $u$ be a variational solution of~\eqref{p2}, let $x^{\circ}\in\Sigma^{u}$, and assume that
	\begin{align*}
		|\nabla u|^{2}\leqslant x_{n}^{+}\quad\text{ locally in }\Omega.
	\end{align*}
	Then for any sequence $r_{m}\to 0^{+}$, the sequence defined in~\eqref{Formula: vm} satisfies, for every $0<\sigma_{1}<\sigma_{2}<1$,
	\begin{align}\label{Formula: frequency formula}
		\int_{B_{\sigma_{2}}\setminus B_{\sigma_{1}}}\frac{\left[\nabla v_{m}\cdot x-N(0^{+})v_{m}\right]^{2}}{H_{m}(|\nabla u_{m}|^{2};x_{n})}\,dx\to0\quad\text{ as }m\to+\infty,
	\end{align}
    where $H_{m}(|\nabla u|^{2};x_{n}):=H(r_{m}|\nabla u_{m}|^{2};r_{m}x_{n})$.
\end{lemma}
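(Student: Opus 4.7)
The plan is to integrate the frequency formula (Formula: N'(r)(2)) of \thmref{Theorem: FF} over $r\in(r_m\sigma_1, r_m\sigma_2)$, isolate the nonnegative squared term, and then rescale to the $v_m$ variable. Since
\[
\frac{2}{r^{n+3}W(x^\circ,u;r)}\int_{\partial B_r(x^\circ)}\frac{[r(\nabla u\cdot\nu) - N(x^\circ,u;r)u]^2}{H(|\nabla u|^2;x_n)}\,d\mathcal{H}^{n-1}
\]
equals $\partial_r N(x^\circ,u;r)$ minus the remaining terms (those involving $V$, $K_3$, $K_4$, $\mathcal{J}$), integrating from $r_m\sigma_1$ to $r_m\sigma_2$ gives an identity whose left side tends to $0$: the total-variation contribution $N(x^\circ,u;r_m\sigma_2) - N(x^\circ,u;r_m\sigma_1) \to 0$ by \propref{Proposition: properties of the frequency function}~(4), while the remaining error terms vanish by combining the integrability $V^2/r \in L^1(0,r_0)$ with Cauchy--Schwarz, the uniform bound $|K_4|/\int u^2/H_0\,dS \leq \mathscr{C}$, the relation $K_3 + K_4 \to 0$, and the fact that $r_m\sigma_2 \to 0$.

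Next, I perform the change of variables $r = r_m\rho$ and $y = (x-x^\circ)/r_m$. Setting $a_m^2 := r_m^{1-n}\int_{\partial B_{r_m}(x^\circ)}u^2/H_0\,d\mathcal{H}^{n-1}$, direct computation gives on $\partial B_r(x^\circ)$ the identities $u = a_m v_m$, $r(\nabla u\cdot\nu)(x) = a_m\,\nabla v_m(\xi)\cdot\xi$ with $\xi := y \in \partial B_\rho$, as well as $W(x^\circ,u;r_m\rho) = r_m^{-3}a_m^2\rho^{-n-2}\,W_\rho$ with $W_\rho := \int_{\partial B_\rho}v_m^2/H_0\,dS$. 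After substitution, the integrated squared term equals
\[
\int_{\sigma_1}^{\sigma_2}\frac{2}{\rho\, W_\rho}\int_{\partial B_\rho}\frac{[\nabla v_m\cdot\xi - N(x^\circ,u;r_m\rho)\,v_m]^2}{H_m(|\nabla u_m|^2;x_n)}\,d\mathcal{H}^{n-1}\,d\rho,
\]
which therefore vanishes as $m\to\infty$. Since $W(x^\circ,u;\cdot)$ is non-decreasing on $(0,r_0)$ by \lemref{Lemma: mean frequency}~(3) and $\sigma_2 < 1$, I have $W_\rho = \rho^{n+2}\,W(x^\circ,u;r_m\rho)/W(x^\circ,u;r_m) \leq \sigma_2^{n+2}$, so $1/(\rho\, W_\rho) \geq \sigma_2^{-(n+3)}$; this removes the weight and yields vanishing of the unweighted double integral.

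Finally, to replace $N(x^\circ,u;r_m\rho)$ by $N(x^\circ,u;0^+)$, I use $\sup_{\rho\in[\sigma_1,\sigma_2]}|N(x^\circ,u;r_m\rho) - N(x^\circ,u;0^+)| \to 0$ (since $r_m\sigma_2 \to 0^+$), combined with the uniform $L^2$-bound on $v_m$ over $B_{\sigma_2}\setminus B_{\sigma_1}$ coming from the normalization together with \propref{Proposition: properties of the frequency function}. Converting the resulting double integral to solid-ball form via polar coordinates and noting that on $\partial B_\rho$ the point $\xi$ coincides with $x$, so $\nabla v_m\cdot\xi = \nabla v_m\cdot x$, delivers exactly \eqref{Formula: frequency formula}. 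The main obstacle is the uniform control of the error terms in (Formula: N'(r)(2)) as $r_m\to 0^+$: the mixed contributions $VK_3$ and $V\mathcal{J}$ are not a priori sign-definite, and their vanishing after integration relies on a delicate combination of the $L^1(0,r_0)$-integrability of $V^2/r$ with the refined $r$-dependent size estimates on $|\mathcal{J}|$, $|K_3|$, and $|K_4|$ already encoded in \lemref{Lemma: mean frequency} and in the proof of \propref{Proposition: properties of the frequency function}.
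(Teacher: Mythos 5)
Your proposal is correct and follows essentially the same route as the paper: integrate the frequency formula over $(r_m\sigma_1,r_m\sigma_2)$, control the error terms via Proposition~\ref{Proposition: properties of the frequency function} (3)--(5), rescale, and use the monotonicity of $W$ from Lemma~\ref{Lemma: mean frequency}~(3) to bound the weight $\int_{\partial B_\rho}v_m^2/H_0\,d\mathcal{H}^{n-1}\leqslant\rho^{n+2}$ away from degeneracy on the annulus. Your explicit final step replacing $N(x^{\circ},u;r_m\rho)$ by $N(x^{\circ},u;0^{+})$ via uniform convergence and the $L^2$-bound on $v_m$ is a detail the paper's proof leaves implicit, and is a welcome addition.
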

\begin{proof}
	Assume without loss of generality that $0\in \Sigma^{u}$. It follows by rescaling that for every $0<\sigma_{1}<\sigma_{2}<\infty$ and for all $r_{m}<\delta_{0}$,
	\begin{align}\label{Formula: frequency formula-2}
		\begin{split}
			&\int_{\sigma_{1}}^{\sigma_{2}}\frac{2}{r}\frac{H_{0}\int_{\partial B_{r}}[\nabla v_{m}\cdot x-N(rr_{m})v_{m}]^{2}\, d\mathcal{H}^{n-1}}{H(|\nabla u(r_{m}x)|^{2};r_{m}x_{n})\int_{\partial B_{r}}v_{m}^{2}\,d\mathcal{H}^{n-1}}dr\\
			&\leqslant N(u;r_{m}\sigma_{2})-N(u;r_{m}\sigma_{1})\\
            &+\int_{r_{m}\sigma_{1}}^{r_{m}\sigma_{2}}\left(\mathscr{C}_{1}+\mathscr{C}_{2}\frac{1}{r}V^{2}(u;r)\right)dr\to 0,
		\end{split}
	\end{align}
	as $m\to+\infty$, as a consequence of Proposition~\ref{Proposition: properties of the frequency function} (3) and (4). Observe now that for all $r\in(\sigma_{1},\sigma_{2})\subset(0,1)$ and all $m$ as above, we can deduce from Lemma~\ref{Lemma: mean frequency} (3) that 
	\begin{align}\label{Formula: frequency formula-3}
		\begin{split}
			\int_{\partial B_{r}}\frac{v_{m}^{2}}{H_{0}}\, d\mathcal{H}^{n-1}&=\frac{\int_{\partial B_{rr_{m}}}\frac{u^{2}}{H_{0}}\,d\mathcal{H}^{n-1}}{\int_{\partial B_{r_{m}}(x^{\circ})}\frac{u^{2}}{H_{0}}\,d\mathcal{H}^{n-1}}=\frac{(rr_{m})^{n+2}W(u;rr_{m})}{r_{m}^{n+2}W(u;r_{m})}\leqslant r^{n+2},
		\end{split}
	\end{align}
	as $m\to+\infty$. Therefore, we deduce from~\eqref{Formula: frequency formula-2} and~\eqref{Formula: frequency formula-3} that
	\begin{align*}
		\int_{B_{\sigma_{2}}\setminus B_{\sigma_{1}}}|x|^{-n-3}\frac{\left[\nabla v_{m}\cdot x-N(rr_{m})v_{m}\right]^{2}}{H_{m}(|\nabla u_{m}|^{2};x_{n})}\,dx\to0,
	\end{align*} 
	as $m\to+\infty$. Since $|x|\in(\sigma_{1},\sigma_{2})$, we  obtain~\eqref{Formula: frequency formula}.
\end{proof}
As a direct application of Lemma \ref{Lemma: identity satisfied by vm}, we have
\begin{proposition}\label{Proposition: frequency v0}
	Let $u$ be a subsonic variational solution of~\eqref{p2}, let $x^{\circ}\in\Sigma^{u}$, and assume that $u$ satisfies the growth condition
	\begin{align*}
		|\nabla u|^{2}\leqslant x_{n}^{+}\quad\text{ locally in }\Omega.
	\end{align*}
	Then the following holds:
	\begin{enumerate}
		\item There exist 
        \[
            \lim_{r\to 0^{+}}V(x^{\circ},u;r)=0,
        \]
        and 
        \[
            \lim_{r\to 0^{+}}D(x^{\circ},u;r)=N(x^{\circ},u;0^{+}).
        \]
		\item Let $v_{m}$ be defined in~\eqref{Formula: vm} for any $r_{m}\to 0^{+}$ as $m\to+\infty$, then the sequence is bounded in $W^{1,2}(B_{1})$.
		\item For any sequence $r_{m}\to 0^{+}$ as $m\to+\infty$ such that the sequence $v_{m}$ converges in $W^{1,2}(B_{1})$ to a blow-up $v_{0}$, then $v_{0}$ is a  homogeneous function of degree $N(x^{\circ},u;0^{+})$ in $B_{1}$, and satisfies
		\begin{align*}
			v_{0}\geqslant0\quad\text{ in }\quad B_{1},\quad v_{0}\equiv0\quad\text{ in }\quad B_{1}\cap\{x_{n}\leqslant0\},\quad \int_{\partial B_{1}}\frac{v_{0}^{2}}{H_{0}}\,d\mathcal{H}^{n-1}=1.
		\end{align*}
	\end{enumerate}
\end{proposition}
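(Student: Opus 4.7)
The plan is to establish the three claims in order, leveraging the frequency identities from \thmref{Theorem: FF} and the properties collected in \propref{Proposition: properties of the frequency function}.

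For part (1), since $D = N + V$ and $N(x^{\circ}, u; 0^{+})$ exists by \propref{Proposition: properties of the frequency function}(4), it suffices to prove that $V(x^{\circ}, u; r) \to 0$ as $r \to 0^{+}$. From the argument behind \propref{Proposition: properties of the frequency function}(3) one extracts the differential inequality $\partial_{r} N \geq (2/r) V^{2} - \mathscr{C}$ on $(0, r_{0})$; integrating against the finite limit of $N$ gives $V^{2}(x^{\circ}, u; \cdot)/r \in L^{1}(0, r_{0})$, which by itself would produce only a subsequential vanishing of $V$. To upgrade to the full limit, I rewrite
\begin{equation*}
V(x^{\circ}, u; r) = \frac{r^{-n-1} \int_{B_{r}(x^{\circ})} x_{n}^{+} (1 - \chi_{\{u > 0\}})/H_{0}\,dx + r^{-n-2} e(x^{\circ}, u; r)}{W(x^{\circ}, u; r)},
\end{equation*}
and use that, since $x^{\circ} \in \Sigma^{u}$, \lemref{Lemma: densities at the stagnation points}(1) yields $\lim_{r \to 0^{+}} r^{-n-1} \int_{B_{r}(x^{\circ})} x_{n}^{+} \chi_{\{u > 0\}}/H_{0}\,dx = 2/(3 H_{0})$, which matches the full geometric density $r^{-n-1} \int_{B_{r}} x_{n}^{+}/H_{0}\,dx$; hence the main numerator vanishes, while the error term $r^{-n-2} e(x^{\circ},u;r)$ is controlled by the estimates \eqref{1}--\eqref{3}. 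This geometric matching—essentially the statement that horizontal flat points behave as fully filled in the limit—is the main obstacle, since without it $L^{1}$-integrability alone would be insufficient to pin the pointwise limit.

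For part (2), set $A_{m}^{2} := r_{m}^{1-n} \int_{\partial B_{r_{m}}(x^{\circ})} u^{2}/H_{0}\,d\mathcal{H}^{n-1} = r_{m}^{3} W(x^{\circ}, u; r_{m})$. Changing variables in the definition of $v_{m}$, direct computation gives for every $s \in (0, 1]$ that
\begin{equation*}
\int_{\partial B_{s}} \frac{v_{m}^{2}}{H_{0}}\,d\mathcal{H}^{n-1} = s^{n+2}\, \frac{W(x^{\circ}, u; sr_{m})}{W(x^{\circ}, u; r_{m})} \leq s^{n+2},
\end{equation*}
where the inequality uses the monotonicity of $W$ from \lemref{Lemma: mean frequency}(3); slicing in $s$ yields a uniform $L^{2}(B_{1})$ bound on $v_{m}$. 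The analogous rescaling for the Dirichlet energy produces
\begin{equation*}
\int_{B_{s}} \frac{|\nabla v_{m}|^{2}}{H_{m}}\,dx = s^{n+1} D(x^{\circ}, u; sr_{m}) \cdot \frac{W(x^{\circ}, u; sr_{m})}{W(x^{\circ}, u; r_{m})},
\end{equation*}
where $H_{m}$ denotes the appropriately rescaled density, which converges uniformly to $H_{0} > 0$ on compact sets. At $s = 1$ the right-hand side equals $D(x^{\circ}, u; r_{m}) \to N(x^{\circ}, u; 0^{+})$ by part (1), which combined with the uniform positivity of $H_{m}$ gives the desired uniform $W^{1,2}(B_{1})$ bound.

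For part (3), along any subsequence with $v_{m} \rightharpoonup v_{0}$ weakly in $W^{1,2}(B_{1})$, the properties $v_{0} \geq 0$ and $v_{0} \equiv 0$ on $B_{1} \cap \{x_{n} \leq 0\}$ pass through the weak limit from the corresponding pointwise inequalities on $v_{m}$. The normalization $\int_{\partial B_{1}} v_{0}^{2}/H_{0}\,d\mathcal{H}^{n-1} = 1$ follows from the identity $\int_{\partial B_{1}} v_{m}^{2}/H_{0}\,d\mathcal{H}^{n-1} = 1$ (a direct computation from the definition of $v_{m}$) together with compactness of the trace operator $W^{1,2}(B_{1}) \hookrightarrow L^{2}(\partial B_{1})$. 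For homogeneity, set $N_{0} := N(x^{\circ}, u; 0^{+})$ and apply \lemref{Lemma: identity satisfied by vm} for arbitrary $0 < \sigma_{1} < \sigma_{2} < 1$; the uniform positivity of $H_{m}$ converts its conclusion into
\begin{equation*}
\int_{B_{\sigma_{2}} \setminus B_{\sigma_{1}}} [\nabla v_{m} \cdot x - N_{0} v_{m}]^{2}\,dx \to 0.
\end{equation*}
Lower semi-continuity of this convex quadratic functional under weak $W^{1,2}$ convergence then forces $\nabla v_{0} \cdot x = N_{0} v_{0}$ a.e.\ in $B_{1} \setminus \{0\}$, which is precisely Euler's identity for homogeneity of degree $N_{0}$ throughout $B_{1}$.
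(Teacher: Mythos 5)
Parts (2) and (3) of your argument are correct and follow the paper's route: the identity $\int_{B_{1}}|\nabla v_{m}|^{2}/H_{m}\,dx=D(x^{\circ},u;r_{m})$ together with part (1) gives the $W^{1,2}$ bound, and the homogeneity of $v_{0}$ follows from \lemref{Lemma: identity satisfied by vm} by convexity and weak lower semicontinuity exactly as in the paper.

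Part (1), however, has a genuine gap. You write $V$ as a quotient whose numerator is $r^{-n-1}\int_{B_{r}(x^{\circ})}x_{n}^{+}(1-\chi_{\{u>0\}})\,dx/H_{0}$ plus an error, and you argue that since $x^{\circ}\in\Sigma^{u}$ the weighted density equals the full density $\tfrac{2}{3H_{0}}$, so the numerator tends to zero. That is true but does not yield $V(x^{\circ},u;r)\to 0$, because the denominator $W(x^{\circ},u;r)=r^{-n-2}\int_{\partial B_{r}(x^{\circ})}u^{2}/H_{0}\,d\mathcal{H}^{n-1}$ also tends to zero at a degenerate point: since $u(x^{\circ}+rx)/r^{3/2}\to 0$ locally uniformly (indeed $u(x^{\circ}+rx)/r^{\alpha}\to 0$ for all $\alpha<N_{0}$ by \lemref{Lemma: frequency>3/2}), one has $W(x^{\circ},u;r)=o(1)$. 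So $V$ is a ratio of two vanishing quantities, and comparing their rates is precisely the content of the claim; soft density information cannot resolve it. The paper closes this $0/0$ problem by a contradiction argument: assuming $\widetilde{V}(s_{m})\not\to 0$, the integrability of $r\mapsto\tfrac{1}{r}\widetilde{V}^{2}(r)$ produces intermediate radii $t_{m}\in[s_{m},s_{2m}]$ with $\widetilde{V}(t_{m})\to 0$; the normalized blow-ups $v_{m}$ at scale $r_{m}=t_{m}$ then have a nontrivial homogeneous limit $v_{0}$ with $\int_{\partial B_{1/2}}v_{0}^{2}/H_{0}\,d\mathcal{H}^{n-1}>0$, and the doubling inequality
\begin{equation*}
\widetilde{V}(s_{m})\leqslant\frac{1}{2}\,\frac{\widetilde{V}(u;r_{m})}{\int_{\partial B_{1/2}}v_{m}^{2}/H_{0}\,d\mathcal{H}^{n-1}},
\end{equation*}
which rests on the monotonicity of $W$ from \lemref{Lemma: mean frequency}(3), forces $\widetilde{V}(s_{m})\to 0$, a contradiction. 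Without some version of this compactness-and-doubling step (or another quantitative comparison of the two vanishing rates), your proof of part (1) is incomplete, and since parts (2) and (3) rely on $D(r_{m})\to N(0^{+})$ from part (1) for arbitrary sequences $r_{m}$, the gap propagates.
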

\begin{proof}
    \noindent(1). Let $x^{\circ}=0$ and let us define 
	\begin{align*}
		\widetilde{V}(u;r)=\frac{r\int_{B_{r}}\frac{x_{n}^{+}}{H_{0}}(1-\chi_{\{u>0\}})\,dx}{\int_{\partial B_{r}}\frac{u^{2}}{H_{0}}\,d\mathcal{H}^{n-1}},
	\end{align*}
    and 
    \begin{equation*}
        \mathcal{V}(u;r):=\frac{e(u;r)}{\int_{\partial B_{r}}\frac{u^{2}}{H_{0}}\, d\mathcal{H}^{n-1}},
    \end{equation*}
	where $e(u;r)$ is given in~\eqref{Formula: e(r)}. Then it follows the definition of $V(u;r)$~\eqref{Formula: V(r)} that $V(u;r)=\widetilde{V}(u;r)+\mathcal{V}(u;r)$. Moreover, we infer from the proof of Lemma~\ref{Lemma: mean frequency} (2) that $\lim_{r\to 0^{+}}\frac{\mathcal{V}(u;r)}{\widetilde{V}(u;r)}=0$ (cf.~\eqref{1},~\eqref{2} and~\eqref{3}). Since 
	\begin{align*}
		\frac{1}{r}V^{2}(u;r)&=\frac{1}{r}\widetilde{V}^{2}(u;r)+\frac{1}{r}\mathcal{V}^{2}(u;r)+\frac{2}{r}\widetilde{V}(r)\mathcal{V}(u;r)\\
        &\geqslant\frac{1}{r}\widetilde{V}^{2}(u;r)+\frac{2}{r}\widetilde{V}(u;r)\mathcal{V}(u;r),
	\end{align*}
	and it follows from $\lim_{r\to 0^{+}}\frac{\widetilde{V}(u;r)\mathcal{V}(u;r)}{\widetilde{V}^{2}(u;r)}=0$ that there exists $r_{0}\in(0,\delta_{0})$ sufficiently small so that 
	\begin{align*}
		\frac{1}{r}V^{2}(u;r)\geqslant\frac{1}{r}\widetilde{V}^{2}(u;r)+\frac{2}{r}\widetilde{V}(u;r)\mathcal{V}(r)\geqslant\frac{c}{r}\widetilde{V}^{2}(u;r).
	\end{align*}
	Then the integrability of $r\mapsto\tfrac{1}{r}V^{2}(u;r)\in L^{1}(0,r_{0})$ implies the integrability of $r\mapsto\tfrac{1}{r}\widetilde{V}^{2}(u;r)\in L^{1}(0,r_{0})$. 
	
	Suppose now a contradiction that (1) is not true. Let $s_{m}\to 0^{+}$ be a sequence such that $\widetilde{V}(s_{m})$ is bounded away from $0$. Due to the integrability of $r\mapsto\frac{2}{r}\widetilde{V}^{2}(r)$ we have that
	\begin{align*}
		\min_{r\in[s_{m},s_{2m}]}\widetilde{V}(r)\to0\quad\text{ as }m\to+\infty.
	\end{align*}
	Let $t_{m}\in[s_{m},s_{2m}]$ be such that $\widetilde{V}(t_{m})\to0$ as $m\to+\infty$. For the choice $r_{m}:=t_{m}$ for each $m$, the sequence $v_{m}$ defined in \eqref{Formula: vm} satisfies \eqref{Formula: frequency formula}. The fact that $\widetilde{V}(r_{m})\to0$ implies $\mathcal{V}(r_{m})\to0$ and thus $V(r_{m})\to0$, this further gives that $D(r_{m})$ is bounded since $N(0^{+})$ has a right limit. Observe now that
	\begin{align*}
		\int_{B_{r_{m}}}\frac{|\nabla u|^{2}}{H(|\nabla u|^{2};x_{n})}\,dx&=r_{m}^{n}\int_{B_{1}}\frac{|\nabla u(r_{m}x)|^{2}}{H(|\nabla u(r_{m}x)|^{2};r_{m}x_{n})}\,dx\\
		&=r_{m}^{-1}\int_{\partial B_{r_{m}}}\frac{u^{2}}{H_{0}}\,d\mathcal{H}^{n-1}\int_{B_{1}}\frac{|\nabla v_{m}|^{2}}{H(r_{m}|\nabla u_{m}|^{2};r_{m}x_{n})}\,dx\\
		&=r_{m}^{-1}\int_{\partial B_{r_{m}}}\frac{u^{2}}{H_{0}}\,d\mathcal{H}^{n-1}\int_{B_{1}}\frac{|\nabla v_{m}|^{2}}{H_{m}(|\nabla u_{m}|^{2};x_{n})}\,dx,
	\end{align*}
	it follows from~\eqref{Formula: D(r)} and~\eqref{Formula: vm} that
	\begin{align}\label{Formula: D(rm)}
		D(u;r_{m})=\frac{r_{m}\int_{B_{r_{m}}}\frac{|\nabla u|^{2}dx}{H(|\nabla u|^{2};x_{n})}}{\int_{\partial B_{r_{m}}}\frac{u^{2}}{H_{0}}\,d\mathcal{H}^{n-1}}=\int_{B_{1}}\frac{|\nabla v_{m}|^{2}}{H_{m}(|\nabla u_{m}|^{2};x_{n})}\,dx.
	\end{align}
	This combined with the boundedness of $D(u;r_{m})$ and $H_{m}\geqslant H_{0} $ for all $m$ that $v_{m}$ is bounded in $W^{1,2}(B_{1})$. Let $v_{0}$ be any weak limit of $v_{m}$ along a subsequence. Note that by the compact embedding $W^{1,2}(B_{1})\hookrightarrow L^{2}(\partial B_{1})$, $\|v_{0}\|_{L^{2}(\partial B_{1})}=1$, since this is true for $v_{m}$ for all $m$. It then follows from~\eqref{Formula: frequency formula} that $v_{0}$ is a homogeneous function of degree $N(u;0^{+})$. Note that, by using Lemma~\ref{Lemma: mean frequency} (3),
	\begin{align}\label{Formula: Vm}
		\begin{split}
			\widetilde{V}(s_{m})&=\frac{s_{m}^{-n-1}\int_{B_{s_{m}}}\frac{x_{n}^{+}}{H_{0}}(1-\chi_{\{u>0\}})\,dx}{s_{m}^{-n-2}\int_{\partial B_{s_{m}}}\frac{u^{2}}{H_{0}}\,d\mathcal{H}^{n-1}}\\
			&\leqslant\frac{s_{m}^{-n-1}\int_{B_{s_{m}}}\frac{x_{n}^{+}}{H_{0}}(1-\chi_{\{u>0\}})\,dx}{(r_{m}/2)^{-n-2}\int_{\partial B_{r_{m}/2}}\frac{u^{2}}{H_{0}}\,d\mathcal{H}^{n-1}}\\
			&\leqslant\frac{1}{2}\frac{\int_{\partial B_{r_{m}}}\frac{u^{2}}{H_{0}}\,d\mathcal{H}^{n-1}}{\int_{\partial B_{r_{m}/2}}\frac{u^{2}}{H_{0}}\,d\mathcal{H}^{n-1}}\widetilde{V}(u;r_{m})\\
			&=\frac{1}{2}\frac{\widetilde{V}(u;r_{m})}{\int_{\partial B_{1/2}}\frac{v_{m}^{2}}{H_{0}}\,d\mathcal{H}^{n-1}}.
		\end{split}
	\end{align}
	Since, at least along a subsequence,
	\begin{align*}
		\int_{\partial B_{1/2}}\frac{v_{m}^{2}}{H_{0}}\, d\mathcal{H}^{n-1}\to\int_{\partial B_{1/2}}\frac{v_{0}^{2}}{H_{0}}\, d\mathcal{H}^{n-1}>0,
	\end{align*}
	we deduce that~\eqref{Formula: Vm} leads to a contradiction to our choice of $s_{m}$. It follows indeed that $\widetilde{V}(u;r)\to0$ as $r\to 0^{+}$, it then follows from $\mathcal{V}(u;r)=o(\widetilde{V}(u;r))$ that $\lim_{r\to 0^{+}}\mathcal{V}(r)=0$. This gives $\lim_{r\to 0^{+}}V(r)=0$ and $D(r)\to N(0^{+})$.

	(2). Let $r_{m}$ be any sequence be such that $r_{m}\to 0^{+}$. In view of~\eqref{Formula: D(rm)}, we know that the boundedness of $v_{m}$ is equivalent to the boundedness of $D(u;r_{m})$.

	(3). Let $r_{m}\to  0^{+}$ be such that $v_{m}$ converges weakly to $v_{0}$ along a subsequence. Let us now recall~\eqref{Formula: frequency formula}. It follows from the fact that 
	\[
		H(|\nabla u(r_{m}x)|^{2};r_{m}x_{n})\leqslant \mathscr{C}
	\]
	for all $m$ and we have
	\begin{align*}
		&\int_{B_{\sigma_{2}}\setminus B_{\sigma_{1}}}\frac{[\nabla v_{m}\cdot x-N(u;rr_{m})v_{m}]^{2}}{H(|\nabla u(r_{m}x)|^{2};r_{m}x_{n})}\,dx\\
        &\geqslant\mathscr{c}\int_{B_{\sigma_{2}}\setminus B_{\sigma_{1}}}[\nabla v_{m}\cdot x-N(u;rr_{m})v_{m}]^{2}dx.
	\end{align*} 
	This implies that
	\begin{align}\label{Formula: frequency formula-4}
		\int_{B_{\sigma_{2}}\setminus B_{\sigma_{1}}}[x\cdot\nabla v_{m}-N(u;rr_{m})v_{m}]^{2}dx\to0\quad\text{ as }m\to+\infty.
	\end{align}
	Since the functional $w\mapsto\int_{B_{\sigma_{2}}\setminus B_{\sigma_{1}}}|\nabla w\cdot x-N(0^{+})w|^{2}dx$ is convex, and thus lower semi-continuity under weak convergence in $W^{1,2}$ gives
	\begin{align}\label{Formula: frequency formula-5}
		\begin{split}
			&\int_{B_{\sigma_{2}}\setminus B_{\sigma_{1}}}[\nabla v_{0}\cdot x-N(u;0^{+})v_{0}]^{2}\,dx\\
			&\leqslant\lim_{m\to+\infty}\int_{B_{\sigma_{2}}\setminus B_{\sigma_{1}}}[\nabla v_{m}\cdot x-N(u;rr_{m})v_{m}]^{2}\,dx=0,
		\end{split}
	\end{align}
	The homogeneity of $v_{0}$ follows. This concludes the proof since $\int_{\partial B_{1}}\frac{v_{0}^{2}}{H_{0}}=1$ is a direct consequence of $\int_{\partial B_{1}}\frac{v_{m}^{2}}{H_{0}}\,d\mathcal{H}^{n-1}=1$.
\end{proof}
\subsection{Compensated compactness}
In this subsection we will apply a compensated compactness result which allows us to preserve subsonic variational solutions in the blow--up limit at degenerate stagnation points and exclude the concentration. To this end, we combine the compensated compactness result in~\cite{MR3437861,MR2291790} with information we gained by our frequency formula. More precisely, we use the compensated compactness to prove the weak convergence of the nonlinear, quadratic terms $\rho_{m}u_{1,m}\cdot\nabla u_{1,m}$ appeared in Euler system. Additionally, we obtain strong convergence of blow--up sequence which is necessary to prove our main result.
\begin{proposition}\label{Theorem: ccmethod}
	Let $u$ be a subsonic variational solution of~\eqref{p2} for $n=2$, let $x^{\circ}\in\Sigma^{u}$ and suppose that $u$ satisfies the growth condition
	\begin{align*}
		|\nabla u|^{2}\leqslant x_{2}^{+}\quad\text{ locally in }\Omega.
	\end{align*}
	Let $r_{m}\to 0^{+}$ be such that the sequence $v_{m}$ defined in~\eqref{Formula: vm} converges weakly to $v_{0}$ in $W^{1,2}(B_{1})$. Then $v_{m}$ converges to $v_{0}$ strongly in $W_{\mathrm{loc}}^{1,2}(B_{1}\setminus\{0\})$, and $v_{0}$ satisfies $v_{0}\frac{\Delta v_{0}}{H_{0}}=0$ in the sense of Radon measures on $B_{1}$.
\end{proposition}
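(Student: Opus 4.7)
The plan is to take the weak $W^{1,2}(B_1)$ convergence $v_m \rightharpoonup v_0$ provided by \propref{Proposition: frequency v0} and upgrade it using compensated compactness to strong convergence in $W_{\mathrm{loc}}^{1,2}(B_1 \setminus \{0\})$, then deduce $v_0 \Delta v_0/H_0 = 0$ by passing to the limit in the identity $\operatorname{div}(v_m \nabla v_m/H_m) = |\nabla v_m|^2/H_m$ that holds because $L v_m$ is a measure supported on $\{v_m = 0\}$. A direct rescaling of~\eqref{p2} shows that $v_m$ solves $\operatorname{div}(\nabla v_m/H_m(|\nabla v_m|^2; x_n)) = 0$ in $\{v_m > 0\}$, where $H_m(t;s) := H((\alpha_m/r_m)^2 t; r_m s)$ and $\alpha_m$ is the normalization factor in~\eqref{Formula: vm}. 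The growth condition $|\nabla u|^2 \leqslant x_2^+$ forces $(\alpha_m/r_m)^2 |\nabla v_m|^2 = |\nabla u(x^{\circ} + r_m \cdot)|^2 \leqslant r_m x_n$ on $B_1$, and since $r_m x_n \to 0$ uniformly, the continuity of $H$ yields $H_m \to H_0$ uniformly on $B_1$.

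Next, set $\mathbf{w}_m := \nabla v_m / H_m(|\nabla v_m|^2; x_n)$, which is bounded in $L^2(B_1)$ and converges weakly to $\nabla v_0/H_0$. I would verify two structural facts: first, $\nabla v_m$ is curl-free; second, $\operatorname{div}(\mathbf{w}_m) = L v_m$ is a nonnegative Radon measure supported on $\partial\{v_m > 0\}$, whose total mass on each compact subset of $B_1$ is controlled by \lemref{Lemma: weak v.s variational} applied to the rescaled solutions together with the uniform bound $|\nabla v_m|^2 \leqslant r_m x_n^+ / (\alpha_m/r_m)^2$ (after suitable rescaling of $\psi$). By Murat's lemma, a uniform bound in $\mathcal{M}_{\mathrm{loc}}(B_1) \cap W_{\mathrm{loc}}^{-1,\infty}(B_1)$ places $\operatorname{div}(\mathbf{w}_m)$ in a compact subset of $W_{\mathrm{loc}}^{-1,p}$ for every $p \in (1,2)$, so the div--curl lemma of Murat--Tartar applies.

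Invoking the div--curl lemma on $(\mathbf{w}_m, \nabla v_m)$ gives
\begin{equation*}
\mathbf{w}_m \cdot \nabla v_m = \frac{|\nabla v_m|^2}{H_m(|\nabla v_m|^2;x_n)} \longrightarrow \frac{|\nabla v_0|^2}{H_0} \quad \text{in } \mathcal{D}'(B_1).
\end{equation*}
Because $H_m \to H_0$ uniformly on $B_1$, this passes to $|\nabla v_m|^2 \to |\nabla v_0|^2$ in $\mathcal{D}'(B_1)$. Combining this with the weak $L^2$ convergence of $\nabla v_m$ upgrades the convergence to strong convergence of $|\nabla v_m|^2 \to |\nabla v_0|^2$ in $L_{\mathrm{loc}}^1(B_1 \setminus \{0\})$, which together with $\nabla v_m \rightharpoonup \nabla v_0$ in $L^2$ yields $\nabla v_m \to \nabla v_0$ strongly in $L_{\mathrm{loc}}^2(B_1 \setminus \{0\})$ by the standard norm-plus-weak implies strong argument. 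The origin must be excluded because concentration of $|\nabla v_m|^2$ at the stagnation point is not precluded by the frequency bound.

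For the distributional identity, use $\operatorname{div}(v_m \mathbf{w}_m) = v_m L v_m + \mathbf{w}_m \cdot \nabla v_m$. Since $v_m$ is continuous and vanishes on $\operatorname{supp}(L v_m)$, the first term on the right vanishes identically, leaving $\operatorname{div}(v_m \mathbf{w}_m) = |\nabla v_m|^2/H_m$. The left-hand side passes to $\operatorname{div}(v_0 \nabla v_0/H_0)$ in $\mathcal{D}'(B_1)$ by strong $L^2$ convergence of $v_m$ (via Rellich) and weak $L^2$ convergence of $\mathbf{w}_m$, and the right-hand side passes to $|\nabla v_0|^2/H_0$ by the step above. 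Therefore
\begin{equation*}
\frac{v_0 \Delta v_0}{H_0} = \operatorname{div}\!\left(\frac{v_0 \nabla v_0}{H_0}\right) - \frac{|\nabla v_0|^2}{H_0} = 0 \quad \text{in } \mathcal{D}'(B_1),
\end{equation*}
which is the claimed Radon measure identity. The main obstacle is verifying the $W_{\mathrm{loc}}^{-1,p}$-compactness hypothesis for $\operatorname{div}(\mathbf{w}_m)$; this requires quantitative control of the free-boundary measure of the rescaled solutions that is uniform in $m$, for which the growth assumption $|\nabla u|^2 \leqslant x_2^+$ and the Bernoulli condition~\eqref{bdyw} combine to give the sharp bound needed to invoke Murat's lemma.
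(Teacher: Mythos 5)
Your overall architecture (rescale, show $H_m\to H_0$, use a div--curl mechanism to pass the quadratic term to the limit, then polarize to get strong convergence) matches the spirit of the paper, but the single application of the Murat--Tartar div--curl lemma to the pair $(\mathbf{w}_m,\nabla v_m)$ does not go through, and this is not a technicality. The div--curl lemma requires $\{\operatorname{div}\mathbf{w}_m\}$ to be \emph{precompact in $H^{-1}_{\mathrm{loc}}$}. What you actually have is that $\operatorname{div}\mathbf{w}_m=Lv_m$ is a nonnegative Radon measure with locally bounded mass. In two dimensions a bounded sequence of nonnegative measures is compact in $W^{-1,q}_{\mathrm{loc}}$ only for $q<2$, and Murat's interpolation lemma upgrades this to $H^{-1}_{\mathrm{loc}}$--compactness only if the sequence is in addition \emph{bounded in $W^{-1,r}_{\mathrm{loc}}$ for some $r>2$}, i.e.\ only if $\nabla v_m$ is bounded in $L^{r}_{\mathrm{loc}}$ for some $r>2$. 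No such higher integrability is available: the normalization in~\eqref{Formula: vm} only gives the pointwise bound $|\nabla v_m|^{2}\leqslant W(x^{\circ},u;r_m)^{-1}$, which blows up at points of $\Sigma^{u}$, and only the $W^{1,2}(B_{1})$ bound of~\propref{Proposition: frequency v0} survives. Indeed the conclusion you draw — $|\nabla v_m|^{2}\to|\nabla v_0|^{2}$ in $\mathcal{D}'(B_{1})$ — cannot be expected: $|\nabla v_m|^{2}$ is precisely the non--null quadratic form that may concentrate, and your own caveat about excluding the origin contradicts a distributional convergence valid on all of $B_{1}$ (if the latter held, your polarization argument would give strong convergence up to the origin as well).

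The paper circumvents this with two ingredients that are both absent from your proposal. First, it mollifies the renormalized stream functions and feeds them into the compensated compactness framework for the full steady compressible Euler system~\cite{MR2291790,MR3437861}, which yields convergence only of the off--diagonal null form $\tfrac{1}{H_m}\partial_{1}v_{m}\partial_{2}v_{m}\to\tfrac{1}{H_0}\partial_{1}v_{0}\partial_{2}v_{0}$ in the sense of distributions (the compressible analogue of the Evans--M\"uller result used in~\cite{MR2810856}), and emphatically not of $|\nabla v_m|^{2}$. Second — and this is the decisive extra step — it invokes the frequency formula (\lemref{Lemma: identity satisfied by vm}, \eqref{Formula: frequency formula-4}) to obtain $\nabla v_{m}\cdot x-N(0^{+})v_{m}\to0$ strongly in $L^{2}(B_{\sigma}\setminus B_{\varrho})$; multiplying by $\partial_{1}v_{m}$ and subtracting the off--diagonal term isolates $(\partial_{1}v_{m})^{2}x_{1}$, whose convergence against sign--definite test functions supported away from $\{x_{1}=0\}$ and the origin (and similarly for $\partial_{2}v_{m}$) is what delivers the strong convergence in $W^{1,2}_{\mathrm{loc}}(B_{1}\setminus\{0\})$. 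You would need to supply both of these steps, or a genuine substitute for the missing $H^{-1}_{\mathrm{loc}}$--compactness; the concluding derivation of $v_{0}\Delta v_{0}/H_{0}=0$ is fine once strong convergence away from the origin is in hand.
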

\begin{proof}
	Notice first that
	\begin{align}\label{Formula: vm is a multiple of um}
		\begin{split}
			v_{m}(x)=\frac{u(x^{\circ}+r_{m}x)}{\sqrt{r_{m}^{-1}\int_{\partial B_{r_{m}}(x^{\circ})}\frac{u^{2}}{H_{0}}\,dS}}=\frac{r_{m}^{3/2}u_{m}(x)}{\sqrt{\int_{\partial B_{1}}\frac{u^{2}(x^{\circ}+r_{m}x)}{H_{0}}\,dS}}=\frac{u_{m}(x)}{\sqrt{\int_{\partial B_{1}}\frac{u_{m}^{2}}{H_{0}}\, dS}}.
		\end{split}
	\end{align}
	Given that $u_{m}$ satisfies $\operatorname{div}\left(\frac{\nabla u_{m}}{H_{m}}\right)=0$ in $\{u_{m}>0\}$ where 
	\[
		H_{m}:=H(r_{m}|\nabla u_{m}|^{2};r_{m}x_{n}),
	\]
	we can deduce that 
	\begin{align*}
		\operatorname{div}\left(\frac{\nabla v_{m}}{H_{m}}\right)=0\quad\text{ in }\{v_{m}>0\}.
	\end{align*}
	On the other hand, thanks to the fact that $\operatorname{div}\left(\frac{\nabla u}{H(|\nabla u|^{2};x_{n})}\right)$ is a non-negative Radon measure on $B_{1}$, we have that for arbitrary $0<\varrho<\sigma<1$
	\begin{align*}
		\left(\operatorname{div}\left(\frac{\nabla v_{m}}{H_{m}}\right)\right)\left(B_{(\sigma+1)/2}\right)\leqslant C_{1}\quad\text{ for all }m,
	\end{align*}
	and that
	\begin{align*}
		\operatorname{div}\left(\frac{\nabla v_{m}}{H_{m}}\right)\geqslant0\quad\text{ on }B_{1}.
	\end{align*}
	That is, take any non-negative $\eta\in C_{0}^{\infty}(B_{1})$, we have
	\begin{align*}
		\int_{B_{1}}\frac{\nabla v_{m}\nabla\eta}{H_{m}}\, dx\leqslant 0,\quad\text{ for all }m.
	\end{align*}
	It is easy to verify that $H_{m}\to H_{0}$ strongly in $L^{2}(B_{1})$. Thus, since $\nabla v_{m}\to \nabla v_{0}$ weakly in $L^{2}(B_{1})$, we have
	\begin{align*}
		\int_{B_{1}}\frac{\nabla v_{0}\nabla\eta}{H_{0}}\, dx=\lim_{m\to+\infty}\int_{B_{1}}\frac{\nabla v_{m}\nabla\eta}{H_{m}}\, dx\leqslant 0.
	\end{align*}
	This implies that $\frac{\Delta v_{0}}{H_{0}}$ is a non-negative Radon measure supported on $B_{1}$, since $v_{0}$ is continuous in $B_{1}$, we deduce that $v_{0}\frac{\Delta v_{0}}{H_{0}}$ is well defined as a non-negative Radon measure on $B_{1}$. Note that it is at present not clear that $v_{m}\to v_{0}$ strongly in $W^{1,2}(B_{\sigma_{2}}\setminus B_{\sigma_{1}})$. In order to improve that weak convergence to the strong convergence, we actually need to pass any PDE for $v_{m}$ so that we can “shield” the PDE from the failure of strong convergence. Here we will employ the method of “compensated compactness framework”. We now state them for the sake of completeness. Let now $(u_{1,m},u_{2,m},\rho_{m})$ be a sequence of approximate solutions to the Euler equations
	\begin{align}\label{Formula: ccmethod}
		\begin{cases}
			\partial_{x_{1}}(\rho_{m}u_{1,m})+\partial_{x_{2}}(\rho_{m}u_{2,m})=0,\\
			\partial_{x_{1}}(\rho_{m}u_{1,m}^{2})+\partial_{x_{2}}(\rho_{m}u_{1,m}u_{2,m})+\partial_{x_{1}}p_{m}=0,\\
			\partial_{x_{1}}(\rho_{m}u_{1,m}u_{2,m})+\partial_{x_{2}}(\rho_{m}u_{2,m}^{2})+\partial_{x_{2}}p_{m}+\rho_{m}g=0,\\
			\partial_{x_{2}}u_{1,m}-\partial_{x_{1}}u_{2,m}=0.
		\end{cases}
	\end{align}
	It was proved in~\cite[Theorem 2.1]{MR3437861} (also see the~\cite[Section 9]{MR3914482} and~\cite{MR3638912} for the case when there is an external force) that if $(\rho_{m},u_{1,m},u_{2,m})$ satisfies
	\begin{enumerate}
		\item {} $u_{1,m}^{2}+u_{2,m}^{2}\leqslant\sqrt{p'(\rho_{m})}$ a.e. in $\Omega$,
		\item {} $\frac{u_{1,m}^{2}+u_{2,m}^{2}}{2}+h(\rho_{m})-\frac{p'(\bar{\rho}_{0})}{2}+gx_{2}$ are uniformly bounded.
		\item {} $\operatorname*{curl}(u_{1,m},u_{2,m})=(u_{1,m})_{x_{2}}-(u_{2,m})_{x_{1}}$ is a bounded measure.
	\end{enumerate}
	Then there exists a subsequence (still labeled) $(\rho_{m},u_{1,m},u_{2,m})$ that converges a.e. as $m\to+\infty$ a weak solution $(\bar{\rho},\bar{u}_{1},\bar{u}_{2})$ to the Euler equations of~\eqref{Formula: ccmethod}, which satisfies $\bar{u}_{1}^{2}+\bar{u}_{2}^{2}\leqslant\sqrt{p'(\bar{\rho})}$. Notice that a weak solution $(\rho,u_{1},u_{2})$ of~\eqref{Formula: ccmethod} is defined by (cf.~\cite[Definition 2.1]{MR3914482}) 
	\begin{align*}
		\begin{cases}
			\int_{\Omega}(\rho u_{1}\eta_{x_{1}}+\rho u_{2}\eta_{x_{2}})\,dx=0,\\
			\int_{\Omega}((\rho u_{1}^{2}+p)\eta_{x_{1}}+\rho u_{1}u_{2}\eta_{x_{2}})\,dx=0,\\
			\int_{\Omega}((\rho u_{1}u_{2})\eta_{x_{1}}+(\rho u_{2}^{2}+p)\eta_{x_{2}}+\rho g\eta)\,dx=0,
		\end{cases}
	\end{align*}
	for any $\eta\in C_{0}^{1}(\Omega)$. We can deduce from the compensated compactness framework the weak convergence of the nonlinear, quadratic terms $\rho_{m}u_{1,m}\cdot\nabla u_{1,m}$. More precisely, 
	\begin{align}\label{Formula: ccmethod-1}
		\rho_{m}u_{1,m}u_{2,m}\to\bar{\rho}\bar{u}_{1}\bar{u}_{2}\quad\text{ in the sense of distributions}.
	\end{align}
	We now apply such convergence to our models, we first modify each $v_{m}$ and $H_{m}$ to
	\begin{align*}
		\tilde{v}_{m}:=v_{m}*\phi_{m}\in C^{\infty}(B_{1}),\qquad \tilde{H}_{m}:=H_{m}*\phi_{m}\in C^{\infty}(B_{1}),
	\end{align*}
	where $\phi_{m}$ is a standard mollifier such that $\operatorname{div}\left(\frac{\nabla\tilde{v}_{m}}{\tilde{H}_{m}}\right)\geqslant 0$ and
	\begin{align}\label{Formula: tildevm-1}
		\left(\operatorname{div}\left(\frac{\nabla\tilde{v}_{m}}{\tilde{H}_{m}}\right)\right)(B_{\sigma})\leqslant\mathscr{C}_{2}\quad\text{ for all }m,
	\end{align}
	and
	\begin{align}\label{Formula: tildevm-2}
		\|v_{m}-\tilde{v}_{m}\|_{W^{1,2}(B_{\sigma})}\to0\qquad\|\tilde{H}_{m}-H_{m}\|_{W^{1,2}(B_{\sigma})}\to0\quad\text{ as }m\to+\infty.
	\end{align}
	Let us define $\rho_{m}:=H_{m}$ and consider the associated velocity fields defined by $(u_{1,m},u_{2,m}):=(\frac{\partial_{2}\tilde{v}_{m}}{\rho_{m}},-\frac{\partial_{1}\tilde{v}_{m}}{\rho_{m}})$ and $(\bar{u}_{1},\bar{u}_{2}):=(\frac{\partial_{2}v_{0}}{\bar{\rho}_{0}},-\frac{\partial_{1}v_{0}}{\bar{\rho}_{0}})$. Since the flow is uniformly subsonic near each stagnation points (see Property~\ref{Property: uniform subsonic near stagnation point}), and that the Bernoulli function are uniformly bounded in $B_{1}$ for all $m$. Moreover, $\operatorname{div}(\rho_{m}u_{m})=0$ and it follows from~\eqref{Formula: tildevm-1} that $\operatorname{curl}\tilde{\mathbf{u}}_{m}=\operatorname{div}\left(\frac{\nabla\tilde{v}_{m}}{\rho_{m}}\right)$ are bounded measures in $B_{1}$. Thus we infer from~\eqref{Formula: ccmethod-1} and~\eqref{Formula: tildevm-2} that
	\begin{align}\label{Formula: strong convergence vm-1}
		\frac{1}{H_{m}}\partial_{1}v_{m}\partial_{2}v_{m}\to \frac{1}{H_{0}}\partial_{1}v_{0}\partial_{2}v_{0}\quad\text{ in the sense of distributions},
	\end{align}
	on $B_{\sigma}$ as $m\to+\infty$. Let us remark that in contrast to the two--dimensional incompressible problem~\cite{MR2810856},  this alone would not allow us to pass to the limit in the domain variation formula for $v_{m}$. 

	Observe now that~\eqref{Formula: frequency formula},~\eqref{Formula: frequency formula-4} and~\eqref{Formula: frequency formula-5} show that
	\begin{align*}
		\nabla v_{m}\cdot x-N(0^{+})v_{m}\to0\quad\text{ strongly in }L^{2}(B_{\sigma}\setminus B_{\varrho}).
	\end{align*}
	It then follows from the $L^{2}$ strong convergence of $H_{m}\to H_{0}$ in $B_{\sigma}$ that
	\begin{align*}
		\frac{1}{H_{m}}(\partial_{1}v_{m}x_{1}+\partial_{2}v_{m}x_{2})\to \frac{1}{H_{0}}(\partial_{1}v_{0}x_{1}+\partial_{2}v_{0}x_{2}),
	\end{align*}
	in the sense of distributions, where we have used the fact that $v_{0}$ is a homogeneous function of degree $N(0^{+})$. But then
	\begin{align*}
		\int_{B_{\sigma_{2}}\setminus B_{\sigma_{1}}}\frac{1}{H_{m}}\left(\partial_{1}v_{m}\partial_{1}v_{m}x_{1}+\partial_{1}v_{m}\partial_{2}v_{m}x_{2}\right)\eta\, dx\\
		\to\int_{B_{\sigma_{2}}\setminus B_{\sigma_{1}}}\frac{1}{H_{0}}\left(\partial_{1}v_{0}\partial_{1}v_{0}x_{1}+\partial_{1}v_{0}\partial_{2}v_{0}x_{2}\right)\eta\, dx,
	\end{align*}
	for each $\eta\in C_{0}^{0}(B_{\sigma_{2}}\setminus\bar{B}_{\sigma_{1}})$. Using~\eqref{Formula: strong convergence vm-1}, we obtain that
	\begin{align*}
		\int_{B_{\sigma_{2}}\setminus B_{\sigma_{1}}}\frac{1}{H_{m}}(\partial_{1}v_{m})^{2}x_{1}\eta\, dx\to\int_{B_{\sigma_{2}}\setminus B_{\sigma_{1}}}\frac{1}{H_{0}}(\partial_{1}v_{0})^{2}x_{1}\eta\, dx,
	\end{align*}
	for each non--negative $\eta\in C_{0}^{0}((B_{\sigma_{2}}\setminus\bar{ B}_{\sigma_{1}})\cap\{x_{1}>0\})$ and each non--positive  $\eta\in C_{0}^{0}((B_{\sigma_{2}}\setminus\bar{B}_{\sigma_{1}})\cap\{x_{1}<0\})$ as $m\to+\infty$. A similar argument as in the proof of Lemma~\ref{Lemma: blow-up limits} Step 2 shows that $|H_{m}-H_{0}|\leqslant Cr_{m}$ for some $C$ independent of $m$. It follows that
	\begin{align*}
		&\int_{B_{\sigma_{2}}\setminus B_{\sigma_{1}}}\frac{1}{H_{m}}(\partial_{1}v_{m}^{2})x_{1}\eta-H_{0}(\partial_{1}v_{0})^{2}x_{1}\eta\, dx\\
		&=\int_{B_{\sigma_{2}}\setminus B_{\sigma_{1}}}\frac{1}{(H_{m}-H_{0})}(\partial_{1}v_{m})^{2}x_{1}\eta\, dx\\
		&+\int_{B_{\sigma_{2}}\setminus B_{\sigma_{1}}}\frac{1}{H_{0}}[(\partial_{1}v_{m})^{2}-(\partial_{1}v_{0})^{2}]x_{1}\eta\, dx.
	\end{align*}
	Passing to the limit as $m\to+\infty$ and we obtain
	\begin{align*}
		\int_{B_{\sigma_{2}}\setminus B_{\sigma_{1}}}(\partial_{1}v_{m})^{2}x_{1}\eta\, dx\to\int_{B_{\sigma_{2}}\setminus B_{\sigma_{1}}}(\partial_{1}v_{0})^{2}x_{1}\eta\, dx.
	\end{align*}
	Repeated the previous arguments gives the strong convergence of $\nabla v_{m}\to\nabla v_{0}$ in $L_{\mathrm{loc}}^{2}(B_{1}\setminus\{0\})$. As a consequence of the strong convergence of $v_{m}$ to $v_{0}$, we have
	\begin{align*}
		\int_{B_{1}}\nabla(\eta v_{0})\cdot \frac{\nabla v_{0}}{H_{0}}\,dx=0\quad\text{ for all }\eta\in C_{0}^{1}(B_{1}\setminus\{0\}).
	\end{align*}
	Combined with the fact that $v_{0}=0$ in $B_{1}\cap\{x_{2}\leqslant0\}$, this yields that $v_{0}\frac{\Delta v_{0}}{H_{0}}$ in the sense of Radon measures on $B_{1}$.
\end{proof}
With the help of the frequency formula and the compensated compactness argument, we obtain the strong convergence of the frequency functions $v_{m}$ defined in~\eqref{Formula: vm}. In what follows, we will study the limit function $v_{0}$ of $v_{m}$ as $m\to+\infty$ and will prove qualitative properties for degenerate stagnation points with horizontal flat density.
\subsection{Horizontal flat points in two dimensions}
\begin{theorem}\label{Theorem: degenerate points}
	Let $n=2$, let $u$ be a subsonic variational solution of~\eqref{p2}, and suppose that $u$ satisfies
	\begin{align*}
		|\nabla u|^{2}\leqslant x_{2}^{+}\quad\text{ locally in }\Omega.
	\end{align*}
	Then at each degenerate point $x^{\circ}$ of the set $\Sigma^{u}$ there exists an integer $N(x^{\circ})\geqslant 2$ such that
	\begin{align*}
		\frac{u(x^{\circ}+rx)}{\sqrt{r^{-1}\int_{\partial B_{r}(x^{\circ})}\frac{u^{2}}{H_{0}}\,dS}}\to u_{0}(R,\theta)\equiv\frac{R^{N(x^{\circ})}|\sin(N(x^{\circ})\min\{\max\{\theta,0\},\pi\})|}{\sqrt{\int_{0}^{\pi}\frac{1}{H_{0}}\sin^{2}(N(x^{\circ})\theta)d\theta}},
	\end{align*}
	as $r\to 0^{+}$, strongly in $W_{\mathrm{loc}}^{1,2}(B_{1}\setminus\{0\})$ and weakly in $W^{1,2}(B_{1})$ where $x=(R\cos\theta,R\sin\theta)$.
\end{theorem}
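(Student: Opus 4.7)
The plan is to extract a blow-up limit from the frequency sequence~\eqref{Formula: vm} and classify it explicitly using the ODE it satisfies in polar coordinates. First, I would apply \propref{Proposition: frequency v0} to obtain, along some subsequence $r_m \to 0^+$, a weak $W^{1,2}(B_1)$ limit $v_0$ of $v_m$ that is homogeneous of degree $N_0 := N(x^{\circ}, u; 0^+)$, nonnegative, vanishing on $\{x_n \leq 0\}$, and normalized by $\int_{\partial B_1} v_0^2/H_0 \, d\mathcal{H}^{n-1} = 1$. Since $x^{\circ}$ is a degenerate stagnation point, the $3/2$-homogeneous rescaling of $u$ vanishes, which combined with \lemref{Lemma: frequency>3/2}(3) forces the strict inequality $N_0 > 3/2$. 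I would then invoke \propref{Theorem: ccmethod} to upgrade the weak convergence to strong convergence in $W_{\mathrm{loc}}^{1,2}(B_1 \setminus \{0\})$ and to obtain $v_0 \Delta v_0 = 0$ as a Radon measure on $B_1$.

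Next, in polar coordinates $x = (R\cos\theta, R\sin\theta)$, I would write $v_0(R, \theta) = R^{N_0} f(\theta)$ with $f \geq 0$ on $[-\pi, \pi]$ and $f \equiv 0$ on $[-\pi, 0]$. The identity $v_0 \Delta v_0 = 0$, together with the nonnegativity of $\Delta v_0$ (inherited from that of $\Delta v_m$ via the strong convergence), yields that $v_0$ is harmonic on $\{v_0 > 0\}$. Separation of variables then gives the equation $f'' + N_0^2 f = 0$ on $\{f > 0\} \cap (0, \pi)$. On each maximal positivity interval $(a_k, b_k)$, positivity of the first Dirichlet eigenfunction forces $b_k - a_k = \pi/N_0$ and $f(\theta) = C_k \sin(N_0(\theta - a_k))$ for some $C_k > 0$.

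The main obstacle lies in showing that the positivity intervals must tile $(0, \pi)$ contiguously without gaps, forcing $N_0 \in \mathbb{Z}_{\geq 2}$ (using $N_0 > 3/2$) and identifying $v_0$ with $R^{N_0}|\sin(N_0\theta)|$ up to the normalization constant. My plan is to pass to the limit in the first domain variation formula \eqref{fdv} for $u$ after rescaling to $v_m$, exploiting the asymptotics $\lambda(r_m x_n) \sim r_m x_n/H_0$, $\partial_1 F \to 1/H_0$, and the growth $r_m^3/C_m^2 \to \infty$ in a careful expansion. The leading-order-in-$r_m$ identity is automatically satisfied because $\chi_{\{v_m > 0\}} \to \chi_{\{x_n > 0\}}$ and $\operatorname{div}(x_n \phi)$ integrates to zero on $\{x_n > 0\}$; the next-order identity couples the quadratic form of $v_0$ to the limit of the Bernoulli condition on the (collapsing) free boundary. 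Using tangential test vector fields concentrated near a candidate interior nodal ray $\theta = c \in (0,\pi)$, this identity should impose that $|\partial_\nu v_0|$ matches from both sides of $\theta = c$, which both equates the constants $C_k$ across adjacent intervals and rules out positive-measure gaps between them. Boundary matching at $\theta = 0$ and $\theta = \pi$ against $v_0 \equiv 0$ on $\{x_n \leq 0\}$ then forces $a_1 = 0$ and $b_K = \pi$, whence $K \cdot (\pi/N_0) = \pi$ and $N_0 = K \in \mathbb{Z}_{\geq 2}$. The normalization $\int_{\partial B_1} v_0^2/H_0 = 1$ uniquely determines the common amplitude.

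Finally, since the limit $v_0$ depends only on $N_0$ and the normalization, every subsequence of $v_m$ admits a further subsequence converging to the same profile, so the full family $r \to 0^+$ converges: strongly in $W_{\mathrm{loc}}^{1,2}(B_1 \setminus \{0\})$ by \propref{Theorem: ccmethod} applied to each subsequence, and weakly in $W^{1,2}(B_1)$ by uniqueness of the weak limit. I expect the hardest step to be the rigorous implementation of the matching argument in the third paragraph: on each side of a putative interior nodal ray, the free boundary of $v_m$ accumulates onto the ray while the Bernoulli gradient $|\nabla v_m|^2 \sim r_m^3 x_n/C_m^2$ blows up on thin strips. Extracting a usable matching condition in the limit demands a fine control of the geometry and decay of these strips and constitutes the compressible analog of the concentration--compactness analysis carried out in the incompressible case in~\cite{MR2810856}.
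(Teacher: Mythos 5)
Your proposal follows essentially the same route as the paper's proof: blow-up via the frequency sequence (\propref{Proposition: frequency v0}), strong convergence and $v_0\Delta v_0=0$ via compensated compactness (\propref{Theorem: ccmethod}), passage to the limit in the rescaled domain variation formula to obtain the reflection condition at the nodal rays, and the ODE on $\partial B_1$ to force integrality of $N_0$ and the explicit profile. Two small remarks. First, your derivation of the strict inequality $N_0>3/2$ from degeneracy via \lemref{Lemma: frequency>3/2}(3) is circular---that lemma takes $N_0>3/2$ as a hypothesis---but strictness is never needed: $N_0\geqslant 3/2$ from \propref{Proposition: properties of the frequency function}(4) together with the integrality forced by the tiling already yields $N_0\geqslant 2$. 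Second, the ``hardest step'' you anticipate is already disarmed by \propref{Proposition: frequency v0}(1): since $V(x^{\circ},u;r)\to 0$, the characteristic-function (Bernoulli) contributions in the rescaled first variation vanish relative to the normalization, so no fine control of the collapsing free-boundary strips is required, and the limiting identity is exactly $0=\int_{B_{1}}\left(|\nabla v_{0}|^{2}\operatorname{div}\phi-2\nabla v_{0}D\phi\nabla v_{0}\right)dx$ for all $\phi\in C_{0}^{1}(B_{1}\cap\{x_{2}>0\};\mathbb{R}^{2})$, from which the matching of $|\partial_{\theta}v_{0}|$ across interior nodal rays and the exclusion of gaps follow as you describe.
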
 
\begin{proof}
	Let $r_{m}\to 0^{+}$ be an arbitrary sequence such that the sequence $v_{m}$ given by~\eqref{Formula: vm} converges weakly in $W^{1,2}(B_{1})$ to a blow--up $v_{0}$. By~\propref{Proposition: frequency v0} (3) and~\propref{Theorem: ccmethod}, $v_{0}\not\equiv 0$, $v_{0}$ is a homogeneous function of degree $N(x^{\circ},u;0^{+})\geqslant\tfrac{3}{2}$, $v_{0}$ is continuous, $v_{0}\equiv 0$ in $\{x_{2}\leqslant 0\}$, $v_{0}\Delta v_{0}=0$ in $B_{1}$ as a Radon measure, and the convergence of $v_{m}$ to $v_{0}$ is strong in $W_{\mathrm{loc}}^{1,2}(B_{1}\setminus\{0\})$. The strong convergence of $v_{m}$ and the fact that $V(u;r_{m})\to0$ as $m\to+\infty$ imply that
	\begin{align*}
		0=\int_{B_{1}}(|\nabla v_{0}|^{2}\operatorname{div}\phi-2\nabla v_{0}D\phi\nabla v_{0})dx,
	\end{align*}
	for all $\phi\in C_{0}^{1}(B_{1}\cap\{x_{2}>0\};\mathbb{R}^{2})$. It follows that in terms of polar coordinates that at each $(1,\theta)\in\partial B_{1}\cap\partial\{v_{0}>0\}$,
	\begin{align*}
		\lim_{\tau\to\theta^{+}}\partial_{\theta}v_{0}(1,\tau)=-\lim_{\tau\to\theta^{-}}\partial_{\theta}v_{0}(1,\theta).
	\end{align*}
	Computing the solution of the ordinary differential equation on $\partial B_{1}$, using the homogeneity degree of $N(x^{\circ},u;0^{+})$ of $v_{0}$ and the fact that $\int_{\partial B_{1}}\frac{v_{0}^{2}}{H_{0}}\,dS=1$, yields that $N(x^{\circ})$ must be an integer $N(x^{\circ})\geqslant 2$ and that 
	\begin{align}\label{Formula: v0-1}
		v_{0}(R,\theta)=\frac{R^{N(x^{\circ})}|\sin(N(x^{\circ})\min\{\max\{\theta,0\},\pi\})|}{\sqrt{\int_{0}^{\pi}\frac{1}{H_{0}}\sin^{2}(N(x^{\circ})\theta)\,d\theta}}.
	\end{align}
	The desired conclusion follows from Proposition~\ref{Proposition: frequency v0} (2).
\end{proof}
\begin{theorem}\label{Theorem: degenerate points-1}
	Let $n=2$ and let $u$ be a subsonic variational solution of~\eqref{p2}. Then the set $\Sigma^{u}$ is locally a finite set in $\Omega$.
\end{theorem}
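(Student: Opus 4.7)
\emph{The plan is} to argue by contradiction using the frequency formula together with the explicit blow--up profile from \thmref{Theorem: degenerate points}. Suppose $\Sigma^u$ is not locally finite, so there exist $x^\circ \in \Sigma^u$ and $x^m \in \Sigma^u \setminus \{x^\circ\}$ with $d_m := |x^m - x^\circ| \to 0$. Consider the frequency--normalized rescaling
\begin{align*}
v_m(x) := \frac{u(x^\circ + d_m x)}{\sqrt{d_m^{-1} \int_{\partial B_{d_m}(x^\circ)} u^2/H_0\,dS}},\qquad y^m := \frac{x^m - x^\circ}{d_m}.
\end{align*}
Since $x^\circ_2 = x^m_2 = 0$, one has $y^m \in \partial B_1 \cap \{x_2 = 0\}$, and passing to a subsequence $y^m \to z \in \partial B_1 \cap \{x_2 = 0\}$. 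By \thmref{Theorem: degenerate points} together with \propref{Theorem: ccmethod}, along a further subsequence $v_m \to v_0$ strongly in $W^{1,2}_{\mathrm{loc}}(\mathbb{R}^2 \setminus \{0\})$, where
\begin{align*}
v_0(R,\theta) = \frac{R^{N}|\sin(N\min\{\max\{\theta, 0\}, \pi\})|}{\sqrt{\int_0^\pi H_0^{-1}\sin^2(N\theta)\,d\theta}},\qquad N \geq 2\ \text{integer}.
\end{align*}
In particular, $v_0$ is smooth and \emph{non--degenerate} near the boundary point $z$: the explicit formula yields the expansion $v_0(z+h) = c_0 N\, h_2^+ + O(|h|^2)$ for some constant $c_0 > 0$, so $|\nabla v_0(z)|^2 = (c_0 N)^2 > 0$.

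\emph{Next}, I would transport the horizontal--flat frequency from $x^m$ down to the rescaled scale. The quantity $N(\cdot,\cdot;0^+)$ defined in \thmref{Theorem: FF} is dimensionless: both the principal terms $D$ and $V$ from \eqref{Formula: D(r)}, \eqref{Formula: V(r)} and the compressible correction $e$ from \eqref{Formula: e(r)} are ratios whose numerators and denominators scale identically under the map $u \mapsto v_m$, so
\begin{align*}
N(y^m, v_m; 0^+) = N(x^m, u; 0^+) \geq 2,
\end{align*}
where the inequality is the second conclusion of \thmref{Theorem: degenerate points}. Combining this with the quasi-monotonicity of $r \mapsto N(y^m, v_m; r)$ (the rescaled version of \propref{Proposition: properties of the frequency function}(5), whose monotonicity defect is controlled in $L^1$ by $\tfrac{1}{r}V^2$) yields, for every fixed small $r > 0$,
\begin{align*}
N(y^m, v_m; r) \geq 2 - \varepsilon_m(r),\qquad \varepsilon_m(r) \to 0\ \text{as}\ m \to \infty.
\end{align*}

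\emph{The contradiction follows} by computing $N(z, v_0; 0^+)$ in two ways. On the one hand, $v_0$ is harmonic in $\{v_0 > 0\}$ with constant $H \equiv H_0$, so all four correction terms $K_1,\ldots,K_4$ from \propref{prop:wsm} vanish identically on $v_0$; a direct computation on the half--ball $B_r(z) \cap \{x_2 > 0\}$ using the expansion of $v_0$ gives $D(z, v_0; r) \to 1$ and $V(z, v_0; r) \to 0$ as $r \to 0^+$, hence $N(z, v_0; 0^+) = 1$. On the other hand, passing to the limit $m \to \infty$ for fixed small $r > 0$ in the frequency formula at $y^m$---using the strong $W^{1,2}_{\mathrm{loc}}$ convergence of $v_m$ on a neighborhood of $z$ (which is bounded away from the origin), the uniform convergence $H_m \to H_0$, and the bounds $|K_i(y^m, v_m; r)| \lesssim r^{n+2}$ inherited from \lemref{Lemma: blow-up limits}---gives $N(z, v_0; r) \geq 2$, hence $N(z, v_0; 0^+) \geq 2$. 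These two values of $N(z, v_0; 0^+)$ are incompatible, finishing the proof.

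\emph{The main obstacle} is the passage to the limit in the frequency formula at the \emph{moving} base points $y^m \to z$. While strong $W^{1,2}_{\mathrm{loc}}$ convergence readily controls the leading terms $D$ and $V$ on a neighborhood of $z$, the compressible corrections $K_1,\ldots,K_4$ in $e(y^m, v_m; r)$ couple $H_m$ nonlinearly with $\chi_{\{v_m > 0\}}$, and one must verify that they vanish uniformly in the limit. This will require combining the estimates of \lemref{Lemma: blow-up limits} applied to the rescaled problem with the integrability of $r \mapsto r^{-1}V^2(x^\circ, u; r)$ from \propref{Proposition: properties of the frequency function}(3).
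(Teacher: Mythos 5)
Your overall strategy is the paper's: blow up along the accumulating points of $\Sigma^{u}$, observe that the limit profile $v_{0}=cR^{N}|\sin(N\theta)|$ vanishes only linearly (like $c_{0}h_{2}^{+}$) at the nearby rescaled point $z$ on $\{x_{2}=0\}$, so its mean frequency there is $1$, and contradict the lower bound forced by $x^{m}$ being a horizontal--flat point. Two things go wrong in the execution. First, with $d_{m}=|x^{m}-x^{\circ}|$ you get $y^{m}\to z\in\partial B_{1}$, but \propref{Theorem: ccmethod} only gives strong convergence in $W_{\mathrm{loc}}^{1,2}(B_{1}\setminus\{0\})$ (not in $\mathbb{R}^{2}\setminus\{0\}$ as you assert), so you have no control of $v_{m}$ on any neighborhood of $z$. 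The paper rescales by $2|x^{m}-x^{\circ}|$ precisely so that $z=(\pm\tfrac12,0)$ lies well inside $B_{1}$; this is an easy fix but as written the limit passage near $z$ is vacuous.

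Second, and more substantively, your closing step transports the full frequency $N(x^{m},u;0^{+})\geqslant 2$ through the blow-up at moving base points and then passes $N(y^{m},v_{m};r)\to N(z,v_{0};r)$. This requires exact scale-covariance of $V$ and of the correction $e=e(y^{m},v_{m};r)$ (the $K_{i}$ are \emph{not} invariant under the multiplicative normalization in~\eqref{Formula: vm}, and $V$ scales like the inverse square of the amplitude), plus uniform-in-$m$ control of these corrections at the shifted centers $y^{m}$ — exactly the step you flag as "the main obstacle" and do not carry out. None of this is needed: since $x^{m}\in\Sigma^{u}$ implies $z\in\Sigma^{u_{m}}$ for the $3/2$-rescalings $u_{m}$ (which are again subsonic variational solutions), \lemref{Lemma: mean frequency}(1)--(2) applied to $u_{m}$ at $z$ gives directly $D(z,u_{m};r)\geqslant\tfrac{3}{2}$ for all small $r$ and every $m$; combined with $H(|\nabla u_{m}|^{2};x_{2})\geqslant H_{0}$ this yields $r\int_{B_{r}(z)}|\nabla v_{m}|^{2}\,dx\geqslant\tfrac{3}{2}\int_{\partial B_{r}(z)}v_{m}^{2}\,dS$, an inequality that survives the strong $W^{1,2}$ convergence near $z$ with no reference to $e$ or the $K_{i}$, and already contradicts the computed limit value $1<\tfrac32$. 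In short: replace the " $\geqslant 2$ versus $=1$ " comparison by " $\geqslant\tfrac32$ versus $=1$ ", obtained at fixed $m$ before passing to the limit, and the gap disappears.
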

\begin{proof}
	Suppose towards a contradiction that there is a sequence of points $x^{m}\in\Sigma^{u}$ converging to $x^{\circ}$ with $x^{m}\neq x^{\circ}$ for all $m$. The upper semi-continuity then implies that $x^{\circ}\in\Sigma^{u}$. Choosing $r_{m}:=2|x^{m}-x^{\circ}|$, there is no loss of generality in assuming that the sequence $(x^{m}-x^{\circ})/r_{m}$ is constant, with values $z\in\{(-\tfrac{1}{2},0),(\tfrac{1}{2},0)\}$. Consider the blow-up sequence $v_{m}$ given in \eqref{Formula: vm}, and also the sequence
	\begin{align*}
		u_{m}(x)=\frac{u(x^{\circ}+r_{m}x)}{r_{m}^{3/2}}.
	\end{align*}
	Note that each $u_{m}$ is a subsonic variational solution of \eqref{p2}, and \eqref{Formula: vm is a multiple of um} shows that $v_{m}$ is a scalar multiple of $u_{m}$. Since $x^{m}\in\Sigma^{u}$, we have that $z\in\Sigma^{u_{m}}$. It follows from Lemma~\ref{Lemma: mean frequency} (1) and (2) that for each $m$, 
	\begin{align*}
		D(z,u_{m};r)\geqslant\frac{3}{2}\quad\text{ for all }r\in(0,r_{0}).
	\end{align*}
	Thus, we obtain
	\begin{align*}
		r\int_{B_{r}(z)}\frac{|\nabla u_{m}|^{2}}{H(|\nabla u_{m}|^{2};x_{2})}\,dx\geqslant\frac{3}{2}\int_{\partial B_{r}(z)}\frac{u_{m}^{2}}{H_{0}}\,dS\quad\text{ for all }r\in(0,r_{0}).
	\end{align*}
	Thanks to the growth condition on $u$, we can deduce that $H(|\nabla u_{m}|^{2};x_{2})\geqslant H(x_{2};x_{2})= H_{0}$, since $H(t;s)$ is a non-decreasing function with respect to the first component. Since $v_{m}$ is a scalar multiple of $u_{m}$, we have
	\begin{align*}
		r\int_{B_{r}(z)}|\nabla v_{m}|^{2}dx\geqslant\frac{3}{2}\int_{\partial B_{r}(z)}v_{m}^{2}dS\quad\text{ for all }r\in(0,r_{0}).
	\end{align*}
	Proposition~\ref{Theorem: ccmethod} implies that the sequence $v_{m}$ converges strongly  to $v_{0}$ in $W^{1,2}(B_{r_{0}/4}(z))$, and hence
	\begin{align*}
		r\int_{B_{r}(z)}|\nabla v_{0}|^{2}\,dx\geqslant\frac{3}{2}\int_{\partial B_{r}(z)}v_{0}^{2}\,dS\quad\text{ for all }r\in(0,\tfrac{r_{0}}{4}).
	\end{align*}
	But recalling~\eqref{Formula: v0-1}, we have by direct calculation that
	\begin{align*}
		\lim_{r\to 0^{+}}\frac{r\int_{B_{r}(z)}|\nabla v_{0}|^{2}\,dx}{\int_{\partial B_{r}(z)}v_{0}^{2}\,dS}=1.
	\end{align*}
	This yields a contradiction.
\end{proof}
\section{Conclusions}
According to our analysis on the non--degenerate stagnation points (cf. \secref{Sect: Non sta}) and the degenerate stagnation points (in~\secref{Sect: Deg poi}). We are able to prove our main results.
\begin{theorem}\label{Theorem: first}
	Let $n=2$, let $u$ be a subsonic weak solution of~\eqref{p2}, and suppose that 
	\begin{align*}
		|\nabla u|^{2}\leqslant x_{2}^{+}\quad\text{ in }\Omega.
	\end{align*}
	Then the set $S^{u}$ of stagnation points is a finite or countable set. Each accumulation point of $S^{u}$ is a point of the locally finite set $\Sigma^{u}$. At each point $x^{\circ}$ of $S^{u}\setminus\Sigma^{u}$,
	\begin{align*}
		\frac{u(x^{\circ}+rx)}{r^{3/2}}&\to u_{0}(R,\theta)\\
        &\equiv\frac{\sqrt{2}}{3}R^{3/2}\cos\left(\frac{3}{2}\left(\min\left\lbrace\max\left\lbrace\theta,\frac{\pi}{6}\right\rbrace,\frac{5\pi}{6}\right\rbrace-\frac{\pi}{2}\right)\right),
	\end{align*}
	as $r\to 0^{+}$, strongly in $W_{\mathrm{loc}}^{1,2}(\mathbb{R}^{2})$ and locally uniformly on $\mathbb{R}^{2}$, where $x=(R\cos\theta,R\sin\theta)$. Moreover,
	\begin{align*}
		\mathcal{L}^{2}\left(B_{1}\cap\left(\{x\colon u(x^{\circ}+rx)>0\}\triangle\left\lbrace x\colon\frac{\pi}{6}<\theta<\frac{5\pi}{6}\right\rbrace\right)\right)\to 0,
	\end{align*}
	as $r\to 0^{+}$, and, for each $\delta>0$,
	\begin{align*}
		r^{-3/2}Lu\left((x^{\circ}+B_{r})\setminus\left\lbrace x\colon\min\left\lbrace\left|\theta-\frac{\pi}{6}\right|,\left|\theta-\frac{5\pi}{6}\right|\right\rbrace<\delta\right\rbrace\right)\to0,
	\end{align*}
	as $r\to 0^{+}$. At each point $x^{\circ}$ of $\Sigma^{u}$ there exists an integer $N(x^{\circ})\geqslant 2$ such that
	\begin{align*}
		\frac{u(x^{\circ}+rx)}{r^{\alpha}}\to0\quad\text{ as }r\to 0^{+},
	\end{align*}
	strongly in $L_{\mathrm{loc}}^{2}(\mathbb{R}^{2})$ for each $\alpha\in[0,N(x^{\circ}))$, and
	\begin{align*}
		\frac{u(x^{\circ}+rx)}{\sqrt{r^{-1}\int_{\partial B_{r}(x^{\circ})}\frac{u^{2}}{H_{0}}\,dS}}\to\frac{R^{N(x^{\circ})}|\sin(N(x^{\circ})\min\{\max\{\theta,0\},\pi\})|}{\sqrt{\int_{0}^{\pi}\frac{1}{H_{0}}\sin^{2}(N(x^{\circ})\theta)d\theta}},
	\end{align*}
	as $r\to 0^{+}$, strongly in $W_{\mathrm{loc}}^{1,2}(B_{1}\setminus\{0\})$ and weakly in $W^{1,2}(B_{1})$ where $x=(R\cos\theta,R\sin\theta)$.
\end{theorem}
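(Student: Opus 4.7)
The plan is to assemble Theorem~\ref{Theorem: first} from the three classes of stagnation points already analysed (Stokes, cusp, horizontal flat) and from the isolation/finiteness statements proved in the preceding sections. First, because $u$ is a subsonic weak solution satisfying the strong Bernstein estimate $|\nabla u|^2\leqslant x_2^+$, Lemma~\ref{Lemma: cusp} forces $C^u=\varnothing$: otherwise, a blow-up at a cusp point would lead to a non-empty reduced free boundary in $B_2$ together with $Lu_m(B_2)\to 0$, contradicting $Lu_m\geqslant\sqrt{x_2}\,\mathcal{H}^1\mres\partial_{\mathrm{red}}\{u_m>0\}$. Combined with Proposition~\ref{Proposition: 2-dimensional case}, this leaves only the dichotomy $S^u=(S^u\setminus\Sigma^u)\sqcup\Sigma^u$, where the first set consists of Stokes (non-degenerate) points with density $\frac{\sqrt{3}}{3H_0}$ and the second is the set of horizontal flat points with density $\frac{2}{3H_0}$.

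Next I would record the structural pieces. Theorem~\ref{Theorem: degenerate points-1} asserts that $\Sigma^u$ is locally finite in $\Omega$. Proposition~\ref{Proposition: iso-stp} shows that every non-degenerate stagnation point is isolated in $S^u\setminus\Sigma^u$. An accumulation point $x^\circ$ of $S^u$ therefore cannot lie in $S^u\setminus\Sigma^u$ (otherwise it would be isolated), so it must lie in $\Sigma^u$ (by upper semi-continuity of $x\mapsto\Phi(x,u;0^+)$, cf. Lemma~\ref{Lemma: densities at the stagnation points}(2)), proving that all accumulation points belong to the locally finite set $\Sigma^u$. Local finiteness of $\Sigma^u$ together with isolation of Stokes points then forces $S^u$ to be at most countable.

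For the asymptotic expansions, at $x^\circ\in S^u\setminus\Sigma^u$ I would simply invoke Lemma~\ref{Lemma:measure1}: part~(1) yields the strong $W^{1,2}_{\mathrm{loc}}$ and locally uniform convergence of $r^{-3/2}u(x^\circ+r\cdot)$ to the Stokes corner profile, while~\eqref{mes1} and~\eqref{mes2} deliver the measure statements on the positivity set and on $Lu$. At $x^\circ\in\Sigma^u$, Lemma~\ref{Lemma: frequency>3/2}(3) gives the vanishing of $u(x^\circ+rx)/r^{\alpha}$ in $L^2_{\mathrm{loc}}$ for every $\alpha\in[0,N(x^\circ))$ with $N(x^\circ):=N(x^\circ,u;0^+)\geqslant\tfrac{3}{2}$, while Theorem~\ref{Theorem: degenerate points} identifies the limit of the frequency-rescaled sequence $v_r:=u(x^\circ+r\cdot)/\sqrt{r^{-1}\!\int_{\partial B_r(x^\circ)}u^2/H_0\,dS}$ with the explicit harmonic profile $R^{N(x^\circ)}|\sin(N(x^\circ)\min\{\max\{\theta,0\},\pi\})|/\sqrt{\int_0^\pi H_0^{-1}\sin^2(N(x^\circ)\theta)\,d\theta}$, with $N(x^\circ)\in\mathbb{Z}_{\geqslant 2}$. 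The modes of convergence are precisely those furnished by Proposition~\ref{Proposition: frequency v0}(2) and Proposition~\ref{Theorem: ccmethod}.

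The only subtlety — and the step I expect to require the most care — is verifying that the asymptotics furnished by the various blow-up arguments transfer back to the original unscaled picture uniformly enough to yield the claimed convergences, in particular the measure statement~\eqref{mes2} for $Lu$ and the compatibility between the $3/2$-homogeneous blow-up and the frequency-normalised blow-up. Here the compensated compactness from Proposition~\ref{Theorem: ccmethod}, together with the integrability of $r\mapsto r^{-1}V^2(x^\circ,u;r)$ from Proposition~\ref{Proposition: properties of the frequency function}(3), provides the required strong $W^{1,2}_{\mathrm{loc}}(B_1\setminus\{0\})$ convergence; the remaining work amounts to checking that these hold along every sequence $r\to 0^+$ (not just along subsequences), which follows because the limit profile is in each case uniquely determined by $\Phi(x^\circ,u;0^+)$ or $N(x^\circ)$, and upper semi-continuity of these quantities pins down the limit.
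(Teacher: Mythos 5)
Your proposal is correct and follows essentially the same route as the paper, which likewise assembles the theorem from Lemma~\ref{Lemma: cusp}, Proposition~\ref{Proposition: 2-dimensional case}, Lemma~\ref{Lemma:measure1}, Lemma~\ref{Lemma: mean frequency}, Lemma~\ref{Lemma: frequency>3/2}, Theorem~\ref{Theorem: degenerate points} and Theorem~\ref{Theorem: degenerate points-1}. The only step you leave implicit is the preliminary appeal to Lemma~\ref{Lemma: weak v.s variational}, which upgrades the subsonic weak solution to a subsonic variational solution satisfying the perimeter bound $r^{-3/2}\int_{B_{r}(y)}\sqrt{x_{2}}\,|\nabla\chi_{\{u>0\}}|\,dx\leqslant\mathscr{C}_{0}$ that is a hypothesis of several of the cited lemmas.
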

\begin{proof}
	By~\lemref{Lemma: weak v.s variational}, $u$ is a variational solution of~\eqref{p2} and satisfies
	\begin{align*}
		r^{-3/2}\int_{B_{r}(y)}\sqrt{x_{2}}|\nabla\chi_{\{u>0\}}|dx\leqslant C_{0}
	\end{align*}
	for all $B_{r}(y)\subset\subset\Omega$ such that $y_{2}=0$.
    
    It follows from  \lemref{Lemma: cusp},  \propref{Proposition: 2-dimensional case}, \lemref{Lemma:measure1}, \lemref{Lemma: mean frequency}, \lemref{Lemma: frequency>3/2}, \thmref{Theorem: degenerate points} and \thmref{Theorem: degenerate points-1} that the set of stagnation points $S^{u}$ is a finite set or countable set with asymptotics as in the statement, and that the only possible accumulation points are elements of $\Sigma^{u}$.
\end{proof}
\thmref{Theorem: first} implies Theorem B immediately. When $\{u=0\}$ has locally finitely many connected components, we conclude the following result, which directly implies Theorem A.
\begin{theorem}\label{Theorem: second}
	Let $n=2$, let $u$ be a subsonic weak solution of~\eqref{p2} and suppose that
	\begin{align*}
		|\nabla u|^{2}\leqslant x_{2}^{+}\quad\text{ in }\Omega.
	\end{align*}
	Suppose moreover that $\{u=0\}$ has locally only finitely many connected components. Then the set $S^{u}$ of stagnation points is locally in $\Omega$ a finite set. At each stagnation point $x^{\circ}$,
	\begin{align*}
		\frac{u(x^{\circ}+rx)}{r^{3/2}}\to& u_{0}(R,\theta)\\
        &\equiv\frac{\sqrt{2}}{3}R^{3/2}\cos\left(\frac{3}{2}\left(\min\left\lbrace\max\left\lbrace\theta,\frac{\pi}{6}\right\rbrace,\frac{5\pi}{6}\right\rbrace-\frac{\pi}{2}\right)\right),
	\end{align*}
	as $r\to 0^{+}$, strongly in $W_{\mathrm{loc}}^{1,2}(\mathbb{R}^{2})$ and locally uniformly on $\mathbb{R}^{2}$, where $x=(R\cos\theta,R\sin\theta)$, and in an open neighborhood of $x^{\circ}$ the topological free boundary $\partial\{u>0\}$ is the union of two $C^{1}$-graphs with right and left tangents at $x^{\circ}$.
\end{theorem}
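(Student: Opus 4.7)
The plan is to deduce Theorem~\ref{Theorem: second} from Theorem~\ref{Theorem: first} by using the finite-components hypothesis on $\{u=0\}$ to rule out the set $\Sigma^u$ of horizontal flat singularities, after which the local finiteness of $S^u$ and the claimed Stokes structure follow directly.

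First I would apply Theorem~\ref{Theorem: first}, which yields the decomposition $S^u = (S^u \setminus \Sigma^u) \cup \Sigma^u$, the Stokes corner asymptotics at every point of $S^u \setminus \Sigma^u$, and (via Theorem~\ref{Theorem: degenerate points-1}) the local finiteness of $\Sigma^u$. The heart of the proof is to show that the finite-components hypothesis forces $\Sigma^u = \varnothing$. Assume for contradiction that $x^\circ \in \Sigma^u$ with blow-up degree $N = N(x^\circ) \geq 2$ given by Theorem~\ref{Theorem: degenerate points}. The unique blow-up $v_0(R,\theta) = C R^N|\sin(N\theta)|$ has $\{v_0 > 0\} \cap B_1$ consisting of exactly $N$ congruent angular lobes in the upper half separated by the $N-1$ interior rays $\theta_k = k\pi/N$. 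Since $v_m$ is a positive scalar multiple of the rescaling $u_m(x) = u(x^\circ+r_m x)/r_m^{3/2}$ and converges strongly in $W_{\mathrm{loc}}^{1,2}(B_1 \setminus \{0\})$ to $v_0$, for every sufficiently small $r$ the positive set $\{u>0\} \cap (B_{2r}(x^\circ) \setminus B_r(x^\circ))$ decomposes into $N$ mutually separated ``lobes'' pairwise divided by $N-1$ open ``air wedges'' $W_{r,k}$ of positive Lebesgue measure.

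The crucial geometric observation is that each such wedge is a connected component of $\mathrm{int}\{u=0\}$ distinct from the main air region $\{x_n<0\}$: stagnation points are confined to $\{x_n=0\}$ (the boundary condition $|\nabla u|^2 = x_n$ being non-degenerate for $x_n>0$), and on $\partial_{\mathrm{red}}\{u>0\}$ the free boundary is $C^{2,\alpha}$ elsewhere, so the two lateral walls of each $W_{r,k}$ can only meet $\{x_n\leq 0\}$ at isolated stagnation points, which are never interior to $\{u=0\}$. Using the scale invariance of the blow-up profile across dyadic scales $r_j = 2^{-j}r_0$, I would then inductively extract a countably infinite family of mutually disjoint components nested around $x^\circ$, contradicting the finite-components hypothesis. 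The principal obstacle is ensuring topological separation of the wedges across different dyadic scales: since the wedges at consecutive scales point in the same angular directions, the scale-$r_j$ wedge could, \emph{a priori}, merge with its scale-$r_{j+1}$ counterpart; ruling this out requires the uniqueness of the tangent cone from Theorem~\ref{Theorem: degenerate points} to guarantee that the thin separating lobes of $\{u>0\}$ remain open at every sufficiently small scale.

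Once $\Sigma^u = \varnothing$, Theorem~\ref{Theorem: first} yields that $S^u$ has no accumulation points in $\Omega$, hence is locally finite, and supplies the Stokes corner asymptotics at every $x^\circ \in S^u$. The final assertion that $\partial\{u>0\}$ is, in a neighborhood of $x^\circ$, the union of two $C^1$-graphs with right and left tangents at $x^\circ$ pointing in the Stokes directions follows from choosing $\rho>0$ so small that $x^\circ$ is the only stagnation point in $\overline{B_\rho(x^\circ)}$, applying the implicit function theorem to the non-degenerate boundary condition $|\nabla u|^2=x_n>0$ on $\partial\{u>0\}\cap(B_\rho(x^\circ)\setminus\{x^\circ\})$, and then using the locally uniform convergence of $u(x^\circ+r\cdot)/r^{3/2}$ to the Stokes profile together with the measure estimates~\eqref{nden1} and~\eqref{mes1} to identify exactly two such arcs with the prescribed tangents at $x^\circ$.
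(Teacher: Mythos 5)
Your overall architecture (invoke Theorem~\ref{Theorem: first}, kill $\Sigma^{u}$ using the finite--components hypothesis, then read off local finiteness and the Stokes asymptotics) matches the paper's. But the step where you exclude $\Sigma^{u}$ has a genuine gap, and it is precisely at the point where the paper's argument does the real work.

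Your plan is to produce infinitely many connected components of $\{u=0\}$ by collecting the ``air wedges'' between the $N$ lobes of $\{v_{0}>0\}$ over dyadic scales $r_{j}=2^{-j}r_{0}$. The problem is that the wedges you must separate from one another sit in the \emph{same} angular sector (around a fixed ray $\theta=k\pi/N$) at \emph{consecutive radial scales}; the lobes of $\{u>0\}$ only separate wedges angularly (between different $k$), not radially. Nothing in the uniqueness of the tangent cone prevents the wedge near the ray $\theta=k\pi/N$ from being a single connected subset of $\{u=0\}$ stretching from scale $r_{0}$ all the way down to $x^{\circ}$; in that case you obtain only $N-1$ additional components, which is perfectly compatible with the hypothesis, and your contradiction evaporates. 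Worse, that ``connected'' scenario is exactly the one the paper exploits: it shows that finiteness of components forces a closed connected subset of $\{u=0\}$ (equivalently, a curve in $\partial_{\mathrm{red}}\{u>0\}$) to reach from $x^{\circ}$ out to a definite distance inside $\{x_{2}>0\}$, whence $\mathcal{H}^{1}\bigl(\{x_{2}>\tfrac12\}\cap\partial_{\mathrm{red}}\{x\colon u(x^{\circ}+rx)>0\}\bigr)\geqslant c_{1}>0$ at every small scale; this contradicts
\begin{align*}
0\gets L\tfrac{u(x^{\circ}+r\,\cdot)}{r^{3/2}}(B_{1})\geqslant\int_{B_{1}\cap\partial_{\mathrm{red}}\{x\colon u(x^{\circ}+rx)>0\}}\tfrac{\sqrt{x_{2}}}{H_{0}}\,dS,
\end{align*}
which holds because the $3/2$--homogeneous blow-up at a degenerate point is identically zero. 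This measure-theoretic mechanism (the vanishing of $Lu_{r}(B_{1})$ against a lower perimeter bound) is entirely absent from your proposal, and without it you have no way to rule out the case where the wedges connect radially. A secondary issue: strong $W^{1,2}_{\mathrm{loc}}(B_{1}\setminus\{0\})$ convergence of $v_{m}$ to $v_{0}$ does not give you the topological decomposition of $\{u>0\}$ into $N$ separated lobes in each annulus (indeed at horizontal flat points $\chi_{\{u_{m}>0\}}\to\chi_{\{x_{2}>0\}}$, so the wedges have vanishing relative measure and could a priori be slits with empty interior), so even the starting configuration of your counting argument is not justified. Finally, for the last assertion, the implicit function theorem away from $x^{\circ}$ does not by itself yield two $C^{1}$-graphs with one-sided tangents \emph{at} $x^{\circ}$; the paper obtains this from the flatness-implies-$C^{1,\alpha}$ regularity of~\cite[Theorem 6.1]{MR752578} combined with the locally uniform convergence to the Stokes profile.
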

\begin{proof}
	We first show that the set $\Sigma^{u}$ is empty. Suppose towards a contradiction that there exists $x^{\circ}\in\Sigma^{u}$. From Theorem~\ref{Theorem: degenerate points} we infer that there exists an integer $N(x^{\circ})\geqslant 2$ such that 
	\begin{align*}
		\frac{u(x^{\circ}+rx)}{\sqrt{r^{-1}\int_{\partial B_{r}(x^{\circ})}\frac{u^{2}}{H_{0}}\,dS}}\to\frac{R^{N(x^{\circ})}|\sin(N(x^{\circ})\min\{\max\{\theta,0\},\pi\})|}{\sqrt{\int_{0}^{\pi}\frac{1}{H_{0}}\sin^{2}(N(x^{\circ})\theta)d\theta}},
	\end{align*}
	as $r\to 0^{+}$, strongly in $W_{\mathrm{loc}}^{1,2}(B_{1}\setminus\{0\})$ and weakly in $W^{1,2}(B_{1})$ where $x=(R\cos\theta,R\sin\theta)$. But then the assumption on $\{u=0\}$ implies that $\partial_{\mathrm{red}}\{u>0\}$ contains the image of a continuous curve converging, as $r\to 0^{+}$, locally in $\{x_{2}>0\}$ to a half--line $\{\alpha z\colon\alpha>0\}$ where $z_{2}>0$. It follows that
	\begin{align*}
		\mathcal{H}^{1}\left(\{x_{2}>\tfrac{1}{2}\}\cap\partial_{\mathrm{red}}\{x\colon u(x^{\circ}+rx)>0\}\right)\geqslant c_{1}>0,
	\end{align*}
	where $\mathcal{H}^{1}$ denotes the $1$--dimensional Hausdorff measure. This however contradicts to
	\begin{align*}
		0\gets L\frac{u(x^{\circ}+rx)}{r^{3/2}}(B_{1})=\int_{B_{1}\cap\partial_{\mathrm{red}}\{x\colon u(x^{\circ}+rx)>0\}}\sqrt{x_{2}}\,dS.
	\end{align*}
	Hence $\Sigma^{u}$ is indeed empty.

	Let $x^{\circ}\in S^{u}$, then~\thmref{Theorem: first} shows that 
	\begin{align*}
		\frac{u(x^{\circ}+rx)}{r^{3/2}}\to\frac{\sqrt{2}}{3}R^{3/2}\cos\left(\frac{3}{2}\left(\min\left\lbrace\max\left\lbrace\theta,\frac{\pi}{6}\right\rbrace,\frac{5\pi}{6}\right\rbrace-\frac{\pi}{2}\right)\right),
	\end{align*}
	as $r\to 0^{+}$, strongly in $W_{\mathrm{loc}}^{1,2}(\mathbb{R}^{2})$ and locally uniformly on $\mathbb{R}^{2}$, where $x=(R\cos\theta,R\sin\theta)$. The last statement follows easily from the flatness--implies--$C^{1,\alpha}$--regularity results in~\cite[Theorem 6.1]{MR752578}.
\end{proof}
\begin{remark}
	The proof of Theorem C follows along a similar argument as in~\cite[Theorem 4.6]{MR2995099}, the only changes in the proof is that the convergence of measures (cf.~\lemref{Lemma:measure1}) in our compressible case changes into 
	\begin{align*}
		Lu_{m}\to \frac{\Delta u_{0}}{H_{0}}\quad\text{ as }m\to+\infty.
	\end{align*}
	We also note that the assumption (assuming that the free surface is a continuous injective curve) of Theorem C is stronger than both Theorem A and Theorem B. The advantage and the principal motivation for carrying it out in this way is that the singular asymptotics near the stagnation points become explicit under the injective curve assumption.
\end{remark}

\appendix
\section{Proof of~\lemref{Property: uniform subsonic near stagnation point}}\label{Appendix: property}
It follows from the definition of $\rho_{\mathrm{cr},x_{2}}$ that it is a continuous function with respect to $x_{2}$ up to $x_{2}=x_{2}^{\mathrm{st}}$, then  
\begin{align*}
	\lim_{\substack{x\to x^{\mathrm{o}}\\ x\in\{\psi>0\}}}\rho_{\mathrm{cr},x_{2}}=\rho_{\mathrm{cr},x_{2}^{\circ}}:=a,
\end{align*}
On the other hand, since 
\begin{align*}
	\lim_{\substack{x\to x^{\mathrm{o}}\\ x\in\{\psi>0\}}}\rho (|\nabla\psi(x)|^{2};x_{2})=\bar{\rho}_{0}:=b,
\end{align*}
since $\bar{\rho}_{0}>\rho_{\mathrm{cr},x_{2}}$ for all $0<x_{2} \leqslant x_{2}^{\mathrm{st}}$, we have that $b>a$. Let us define $\varepsilon_{0}:=\frac{1}{2}(b-a)>0$. Then the above two limits give that $|\rho(|\nabla\psi|^{2};x_{2})-b|<\varepsilon_{0}$ and $|\rho_{\mathrm{cr},x_{2}}-a|<\varepsilon_{0}$ in $B_{r}(x^{\circ})\cap\{\psi>0\}$ for some $r$, depending only on $x^{\circ}$. Thus, 
\begin{align*}
	\rho(|\nabla\psi|^{2};x_{2})>\frac{a+b}{2}\qquad\text{ in }B_{r}(x^{\circ})\cap\{\psi>0\},
\end{align*}
and 
\begin{align*}
  \rho_{\mathrm{cr},x_{2}}<\frac{a+b}{2}\qquad\text{ in }B_{r}(x^{\circ})\cap\{\psi>0\}.
\end{align*}
Therefore 
\begin{align*}
	\rho(|\nabla\psi|^{2};x_{2})>\frac{a+b}{2}>\rho_{\mathrm{cr},x_{2}}\qquad\text{ in }B_{r}(x^{\circ})\cap\{\psi>0\},
\end{align*}
as desired.
\section{Proof of the first domain variation formula}~\label{Proof of FDV}
The idea of the proof is inspired by~\cite[Sect.4]{MR3678490}. Given a vector field $\phi\in C_{0}^{1}(\Omega)$, for small $\varepsilon>0$ we consider the ODE flow $y=y(\varepsilon;x)$ defined by the Cauchy problem 
\begin{align*}
  \left\{
      \begin{alignedat}{2}
          y(0;x)&=x,\quad x\in\Omega\\
          \partial_{\varepsilon}y(\varepsilon;x)&=\phi(y(\varepsilon;x)).
      \end{alignedat}
  \right.
\end{align*}
We remark that, for small $\varepsilon\in \mathbb{R} $,
\begin{align}\label{Formula: appdvf-1}
    y(\varepsilon;x)=x+\varepsilon\phi(y(\varepsilon;x))+o(\varepsilon)=x+\varepsilon\phi(x)+o(\varepsilon).
\end{align}
Accordingly,
\begin{align}\label{Formula: appdvf-2}
    D_{x}y(\varepsilon;x)=I+\varepsilon D\phi(y(\varepsilon;x))+o(\varepsilon)=I+\varepsilon D\phi(x)+o(\varepsilon),
\end{align}
where $I$ denotes the $n$--dimensional identity matrix. Also, the map $\mathbb{R}^{n}\ni x\mapsto y(\varepsilon;x)$ is invertible for small $\varepsilon$. In other words, we can consider the inverse diffeomorphism $x(\varepsilon;y)$. In this way, we see that  
\begin{align*}
    x(\varepsilon;y(\varepsilon;x))=x\quad\text{ and }\quad y(\varepsilon;x(\varepsilon;y))=y.
\end{align*}
We know from~\eqref{Formula: appdvf-1} that 
\begin{align*}
    x(\varepsilon;y)=y(\varepsilon;x(\varepsilon;y))-\varepsilon\phi(y(\varepsilon;x(\varepsilon;y)))+o(\varepsilon)=y-\varepsilon\phi(y)+o(\varepsilon),
\end{align*}
and therefore 
\begin{equation*}
    D_{y}x(\varepsilon;y)=I-\varepsilon D\phi(y)+o(\varepsilon),
\end{equation*}
and in particular, 
\begin{equation*}
    \det D_{y}x(\varepsilon;y)=1-\varepsilon\operatorname*{div}\phi(y)+o(\varepsilon).
\end{equation*}
ow given $u$ we define $u_{\varepsilon}(x):=u(y(\varepsilon;x))$, and we infer from~\eqref{Formula: appdvf-2} that 
\begin{align}\label{Formula: appdvf-3}
    \nabla u_{\varepsilon}(x)=\nabla u(y(\varepsilon;x))+\varepsilon D\phi(y(\varepsilon;x))\cdot\nabla u(y(\varepsilon;x))+o(\varepsilon).
\end{align}
Define $\Omega_{\varepsilon}:=y(\varepsilon;\Omega)$. Since $y(\varepsilon;x)=x$ for any $x\in \Omega^{c}$, we have that $y(\varepsilon;\Omega)=\Omega$. Moreover,~\eqref{Formula: appdvf-3} implies that
\begin{align*}
    |\nabla u_{\varepsilon}(x)|^{2}&=|\nabla u(y(\varepsilon;x))|^{2}\\
    &+2\varepsilon\nabla u(y(\varepsilon;x))D\phi(y(\varepsilon;x))\cdot\nabla u(y(\varepsilon;x))+o(\varepsilon).
\end{align*} 
Changing the variable $y=y(\varepsilon;x)$, we see 
\begin{align*}
    \begin{split}
        &\int_{\Omega}F(|\nabla u_{\varepsilon}(x)|^{2};x_{n})\,dx\\
        &=\int_{\Omega}F\Big(|\nabla u|^{2}+2\varepsilon\nabla uD\phi\nabla u+o(\varepsilon);\\
        &\qquad\qquad\qquad y_{n}-\varepsilon\phi_{n}+o(\varepsilon)\Big)|\det D_{y}x(\varepsilon;y)|\,dy\\
        &=\int_{\Omega}F\Big(|\nabla u|^{2}+2\varepsilon\nabla uD\phi\nabla u+o(\varepsilon);\\
        &\qquad\qquad\qquad y_{n}-\varepsilon\phi_{n}+o(\varepsilon)\Big)(1-\varepsilon \operatorname{div}\phi+o(\varepsilon))\,dy
    \end{split}
\end{align*}
A direct calculation gives that 
\begin{align*}
    &F\Big(|\nabla u|^{2}+2\varepsilon\nabla uD\phi\nabla u(y);y_{n}-\varepsilon\phi_{n}\Big)\\
    &=F(|\nabla u|^{2};y_{n})\\
    &+2\varepsilon\partial_{1}F\Big(|\nabla u(y)|^{2};y_{n}-\varepsilon\phi_{n}(y)\Big)\nabla u(y)D\phi(y)\nabla u(y)\\
    &-\varepsilon\partial_{2}F\Big(|\nabla u(y)|^{2}+2\varepsilon\nabla u(y)D\phi(y)\nabla u(y)\Big)\phi_{n}.
\end{align*}
Thus, we have   
\begin{align}\label{Formula: appdvf-4}
    \begin{split}
        &\int_{\Omega}F(|\nabla u_{\varepsilon}|^{2};x_{n})\,dx-\int_{\Omega}F(|\nabla u|^{2};x_{n})\,dx\\
        &=2\varepsilon\int_{\Omega}\partial_{1}F\Big(|\nabla u|^{2};x_{n}-\varepsilon\phi_{n}+o(\varepsilon)\Big)\nabla uD\phi\nabla u\,dx\\
        &-\varepsilon\int_{\Omega}\partial_{2}F\Big( |\nabla u|^{2}+2\varepsilon\nabla uD\phi\nabla u+o(\varepsilon); x_{n}\Big)\phi_{n}\,dx\\
        &-\varepsilon\int_{\Omega}F(|\nabla u|^{2};x_{n})\operatorname*{div}\phi_{n}\,dx.
    \end{split}
\end{align}
Similarly, one can show that 
\begin{align}\label{Formula: appdvf-5}
    \begin{split}
        &\int_{\Omega}\lambda(x_{n})\chi_{\left\{ u_{\varepsilon}>0 \right\} }(x)\,dx-\int_{\Omega}\lambda(x_{n})\chi_{\left\{ u>0 \right\} }(x)\,dx\\
        &=-\varepsilon\int_{\Omega}\lambda'(x_{n})\phi_{n}(x)\chi_{\left\{ u>0 \right\} }(x)\,dx\\
        &-\varepsilon\int_{\Omega}\lambda(x_{n})\operatorname*{div}\phi(x)\chi_{\left\{ u>0 \right\} }(x)\,dx.
    \end{split}
\end{align}
Due to the definition of first variation, we can deduce from~\eqref{Formula: appdvf-4} and~\eqref{Formula: appdvf-5} that 
\begin{align*}
    0&=-\lim_{\varepsilon\to 0}\frac{J_{F}(u_{\varepsilon}(x);\Omega)-J_{F}(u(x);\Omega)}{\varepsilon}\\
    &=\int_{\Omega}\Big(  F(|\nabla u|^{2};x_{n})+\lambda(x_{n})\chi_{\left\{ u>0 \right\} }\Big)\operatorname*{div}\phi_{n}\,dx\\
    &-2\int_{\Omega}\partial_{1}F(|\nabla u|^{2};x_{n})\nabla uD\phi\nabla u\,dx\\
    &+\int_{\Omega}\Big( \lambda'(x_{n})\chi_{\left\{ u>0 \right\} }+ \partial_{2}F(|\nabla u|^{2};x_{n})\Big)\phi_{n}\,dx.
\end{align*}
Moreover, a direct calculation gives that 
\begin{align*}
    &\operatorname*{div}(F(|\nabla u|^{2};x_{n})\phi)\\
    &=F(|\nabla u|^{2};x_{n})\operatorname*{div}\phi+2\frac{\phi\cdot(D^{2}u\nabla u)}{H(|\nabla u|^{2};x_{n})}+\partial_{2}F(|\nabla u|^{2};x_{n})\phi_{n},
\end{align*}
where we used the fact $\partial_{1}F=H$. We also have 
\begin{align*}
    &\operatorname*{div}\Big(\partial_{1}F(|\nabla u|^{2};x_{n})(\phi\cdot\nabla u)\nabla u\Big)\\
    &=\operatorname*{div}\Big(\tfrac{\nabla u}{H(|\nabla u|^{2};x_{n})}\Big)(\phi\cdot\nabla u)+\frac{\nabla uD\phi\nabla u}{H(|\nabla u|^{2};x_{n})}+\frac{\phi\cdot(D^{2}u\nabla u)}{H(|\nabla u|^{2};x_{n})},
\end{align*}
and 
\begin{equation*}
    \operatorname*{div}(\lambda(x_{n})\phi)=\lambda(x_{n})\operatorname*{div}\phi+\lambda'(x_{n})\phi_{n}.
\end{equation*}
Thus, we can show that 
Therefore,
\begin{align*}
    &\operatorname*{div}\Big( (F(|\nabla u|^{2};x_{n})+\lambda(x_{n}))\phi \Big)-2\operatorname*{div}\Big(\partial_{1}F(|\nabla u|^{2};x_{n})(\phi\cdot\nabla u)\nabla u\Big)\\
    &=(F(|\nabla u|^{2};x_{n})+\lambda(x_{n}))\operatorname*{div}\phi-2\frac{\nabla uD\phi\nabla u}{H(|\nabla u|^{2};x_{n})}\\
    &+\operatorname*{div}\Big(\tfrac{\nabla u}{H(|\nabla u|^{2};x_{n})}\Big)(\phi\cdot\nabla u)+\partial_{2}F(|\nabla u|^{2};x_{n})\phi_{n}+\lambda'(x_{n})\phi_{n}.
\end{align*} 
Integrating this equality in $\Omega\cap\{u>0\}$, and~\eqref{v1} follows immediately from the definition of subsonic variational solution and the fact that $\nu=-\frac{\nabla u}{|\nabla u|}$ on $\Omega\cap\partial\{u>0\}$.
\section{Proof of Lemma~\ref{Lemma: weak v.s variational}}\label{Appendix: variational solution v.s. weak solution}
The proof follows a similar argument as in \cite[Theorem 5.1]{MR1620644} and \cite[Lemma 3.4]{MR2810856}. For any $\phi\in C_{0}^{1}(\Omega\cap\{x_{n}>\tau\};\mathbb{R}^{n})$ and a small positive $\delta$ we find a covering
\begin{align*}
	\bigcup_{i=1}^{\infty}B_{r_{i}}(x^{i})\supset\mathop{supp}\phi\cap(\partial\{u>0\}\setminus\partial_{\mathrm{red}}\{u>0\})
\end{align*}
satisfying $\sum_{i=1}^{\infty}r_{i}^{n-1}\leqslant\delta$. Given that $\mathop{supp}\phi\cap(\partial\{u>0\}\setminus\partial_{\mathrm{red}}\{u>0\})$ is a compact set, one may reduce the covering to a finite sub-covering
\begin{align*}
	\bigcup_{i=1}^{N_{\delta}}B_{r_{i}}(x^{i})\supset\mathop{supp}\phi\cap(\partial\{u>0\}\setminus\partial_{\mathrm{red}}\{u>0\})
\end{align*}
satisfying $\sum_{i=1}^{N_{\delta}}r_{i}^{n-1}\leqslant\delta$. Since $u$ is a subsonic weak solution, we know that 
\begin{align*}
	u\in C^{1}(\overline{\{u>0\}}\cap(\mathop{supp}\phi\setminus\cup_{i=1}^{N_{\delta}}B_{r_{i}}(x^{i}))),
\end{align*}
and $u$ satisfies the transmission condition 
\begin{align*}
	|\nabla u|^{2}=x_{n}\quad\text{ on }\partial_{\mathrm{red}}\{u>0\}\cap(\mathop{supp}\phi\setminus\cup_{i=1}^{N_{\delta}}B_{r_{i}}(x^{i})).
\end{align*}
Denoting $F(|\nabla u|^{2};x_{n})$ by $F$, $\partial_{1}F(|\nabla u|^{2};x_{n})$ by $\partial_{1}F$, and $H(|\nabla u|^{2};x_{n})$ by $H$ for the sake of notations. Integrating by parts in $\{u>0\}\setminus\cup_{i=1}^{N_{\delta}}B_{r_{i}}(x^{i})$ we obtain
\begin{align*}
	&\Bigg|\int_{\Omega}(F\operatorname{div}\phi-2\partial_{1}F\nabla uD\phi\nabla u+\lambda(x_{n})\chi_{\{u>0\}}\operatorname{div}\phi\\
    &\qquad\qquad\qquad+\int_{\Omega}\lambda'(x_{n})\chi_{\{u>0\}}\phi_{n}+\partial_{2}F\phi_{n})\,dx\Bigg|\\
	&\leqslant\Bigg|\int_{\cup_{i=1}^{N_{\delta}}B_{r_{i}}(x^{i})}(F\operatorname{div}\phi-2\partial_{1}F\nabla uD\phi\nabla u+\lambda(x_{n})\chi_{\{u>0\}}\operatorname{div}\phi\,dx\\
    &\qquad\qquad\qquad+\int_{\cup_{i=1}^{N_{\delta}}B_{r_{i}}(x^{i})}\lambda'(x_{n})\chi_{\{u>0\}}\phi_{n})\,dx\Bigg|\\
	&+\left|\int_{\{u>0\}\cap\partial(\cup_{i=1}^{N_{\delta}}B_{r_{i}}(x^{i}))}(F\phi\cdot\nu-2\partial_{1}F\nabla u\cdot\nu\nabla u\cdot\phi+\lambda(x_{n})\phi\cdot\nu)\,d\mathcal{H}^{n-1}\right|\\
	&+\left|\int_{\partial\{u>0\}\setminus\cup_{i=1}^{N_{\delta}}B_{r_{i}}(x^{i})}(\lambda(x_{n})-\varLambda(|\nabla u|^{2};x_{n}))\phi\cdot\nu d\mathcal{H}^{n-1}\right|\\
	&\leqslant \mathscr{C}_{1}\sum_{i=1}^{N_{\delta}}r_{i}^{n}+\mathscr{C}_{2}\sum_{i=1}^{N_{\delta}}r_{i}^{n-1},
\end{align*}
and passing to the limit as $\delta\to0$, we obtain that $u$ is a variational solution of~\eqref{p1} in the set $\Omega\cap\{x_{n}>\tau\}$. Let us now take $\phi\in C_{0}^{1}(\Omega;\mathbb{R}^{n})$ and $\eta:=\min\{1,x_{n}/\tau\}$, plug in the product $\eta\phi$ into the already observed result, and use the growth assumption $|\nabla u|^{2}\leqslant Cx_{n}^{+}$, we obtain
\begin{align*}
	0&=\int_{\Omega}\eta\Bigg(F\operatorname{div}\phi-2\partial_{1}F\nabla uD\phi\nabla u+\lambda(x_{n})\chi_{\{u>0\}}\operatorname{div}\phi\\
    &\qquad\qquad\qquad+\int_{\Omega}\lambda'(x_{n})\chi_{\{u>0\}}\phi_{n}+\partial_{2}F\phi_{n}\Bigg)\,dx\\
	&+\frac{1}{\tau}\int_{\Omega\cap\{0<x_{n}<\tau\}}\phi\cdot(Fe_{n}-2\partial_{1}F\nabla u\cdot e_{n}\nabla u+\lambda(x_{n})\chi_{\{u>0\}}e_{n}+\partial_{2}Fe_{n})\,dx\\
	&=o(1)+\int_{\Omega}\Bigg(F\operatorname{div}\phi-2H\nabla uD\phi\nabla u+\lambda(x_{n})\chi_{\{u>0\}}\operatorname{div}\phi\\
    &\qquad\qquad\qquad+\int_{\Omega}\lambda'(x_{n})\chi_{\{u>0\}}\phi_{n}+\partial_{2}F\phi_{n}\Bigg)\,dx,
\end{align*}
as $\tau\to0$. Last, let us prove $r^{1/2-n}\int_{B_{r}(y)}\sqrt{x_{n}}|\nabla\chi_{\{u>0\}}|\,dx\leqslant C_{0}$ for all $B_{r}(y)\subset\subset\Omega$ such that $y_{n}=0$. Let us consider for such $y$ the rescaled function $u_{r}(x)=\frac{u(y+rx)}{r^{3/2}}$. 
Using the assumption $|\nabla u|^{2}\leqslant Cx_{n}^{+}$ locally in $\Omega$ and the weak solution property that the topological free boundary $\partial\{u>0\}\cap\Omega\cap\{x_{n}>\tau\}$ is locally a $C^{2,\alpha}$ surface, we obtain
\begin{align*}
	0&=\int_{\partial B_{1}\cap\{u_{r}>0\}}\frac{\nabla u_{r}\cdot x}{H(r|\nabla u_{r}|^{2};ry_{n})}\,d\mathcal{H}^{n-1}\\
    &-\int_{B_{1}\cap\partial_{\mathrm{red}}\{u>0\}}\frac{\nabla u_{r}\cdot x}{H(r|\nabla u_{r}|^{2};ry_{n})}\,d\mathcal{H}^{n-1}\\
	&\leqslant\mathscr{C}\int_{\partial B_{1}\cap\{u_{r}>0\}}|\nabla u_{r}|\,d\mathcal{H}^{n-1}-\mathscr{c}\int_{B_{1}\cap\partial_{\mathrm{red}}\{u_{r}>0\}}|\nabla u_{r}|\,d\mathcal{H}^{n-1}\\
	&\leqslant\mathscr{C}-\mathscr{c}r^{1/2-n}\int_{B_{r}(y)\cap\partial_{\mathrm{red}}\{u>0\}}\sqrt{x_{n}}\,d\mathcal{H}^{n-1},
\end{align*}
as required. Here we used the fact $\mathscr{c}\leqslant H(r|\nabla u_{r}|^{2};ry_{n})\leqslant\mathscr{C}$ in the third inequality and $|\nabla u_{r}|\leqslant C$ in the last inequality.
\subsubsection*{Acknowledgement}
This work is supported by National Nature Science Foundation (NSFC) of China under Grant 12125102, Nature Science Foundation of Guangdong Province under Grant 2024A1515012794 and Shenzhen Science and Technology Program \,(JCYJ20241202124209011).
\subsubsection*{Conflict of interests}
The authors declare no conflicts of interests.
\subsubsection*{Data availability} No data was used in this research.


\begin{thebibliography}{99}
    \bibitem{MR1777737}F.~J. Almgren Jr., {\it Almgren's big regularity paper}, World Scientific Monograph Series in Mathematics, 1, World Sci. Publ., River Edge, NJ, 2000. 

    \bibitem{MR618549}H.~W. Alt and L.~A. Caffarelli, Existence and regularity for a minimum problem with free boundary, {\it J. Reine Angew. Math.} {\bf 325} (1981), 105--144. 
    
    \bibitem{MR752578}H.~W. Alt, L.~A. Caffarelli and A. Friedman, A free boundary problem for quasilinear elliptic equations, {\it Ann. Scuola Norm. Sup. Pisa Cl. Sci.} (4) {\bf 11} (1984), no.~1, 1--44. 

    \bibitem{MR732100}H.~W. Alt, L.~A. Caffarelli and A. Friedman, Variational problems with two phases and their free boundaries, {\it Trans. Amer. Math. Soc.} {\bf 282} (1984), no.~2, 431--461. 

    \bibitem{MR772122}H.~W. Alt, L.~A. Caffarelli and A. Friedman, Compressible flows of jets and cavities, {\it J. Differential Equations} {\bf 56} (1985), no.~1, 82--141. 

    \bibitem{MR666110}C.~J. Amick, L.~E. Fraenkel and J.~F. Toland, On the Stokes conjecture for the wave of extreme form, {\it Acta Math.} {\bf 148} (1982), 193--214. 

    \bibitem{MR2915865}D. Arama and G. Leoni, On a variational approach for water waves, {\it Comm. Partial Differential Equations} {\bf 37} (2012), no.~5, 833--874. 

    \bibitem{MR96477}L. Bers, {\it Mathematical aspects of subsonic and transonic gas dynamics}, Surveys in Applied Mathematics, Vol. 3, Wiley, New York, 1958.

    \bibitem{MR990856}L.~A. Caffarelli, A Harnack inequality approach to the regularity of free boundaries. I. Lipschitz free boundaries are $C^{1,\alpha}$, {\it Rev. Mat. Iberoamericana} {\bf 3} (1987), no.~2, 139--162.
    
    \bibitem{MR1029856}L.~A. Caffarelli, A Harnack inequality approach to the regularity of free boundaries. III.\ Existence theory, compactness, and dependence on $X$, {\it Ann. Scuola Norm. Sup. Pisa Cl. Sci.} (4) {\bf 15} (1989), no.~4, 583--602.
    
    \bibitem{MR973745}L.~A. Caffarelli, A Harnack inequality approach to the regularity of free boundaries. II. Flat free boundaries are Lipschitz, {\it Comm. Pure Appl. Math.} {\bf 42} (1989), no.~1, 55--78. 

    \bibitem{MR2082392}L.~A. Caffarelli, D.~S. Jerison and C.~E. Kenig, Global energy minimizers for free boundary problems and full regularity in three dimensions, in {\it Noncompact problems at the intersection of geometry, analysis, and topology}, 83--97, Contemp. Math., 350, Amer. Math. Soc., Providence, RI.
    
    \bibitem{MR3437861}G. Chen, F. Huang and T. Wang, Subsonic-sonic limit of approximate solutions to multidimensional steady Euler equations, {\it Arch. Ration. Mech. Anal.} {\bf 219} (2016), no.~2, 719--740. 

    \bibitem{MR3914482}~G. Chen, F. Huang, T. Wang, and W.~Xiang, Steady Euler flows with large vorticity and characteristic discontinuities in arbitrary infinitely long nozzles, {\it Adv. Math.} {\bf 346} (2019), 946--1008.  

    \bibitem{MR2291790}G.~Chen, C.~M.~Dafermos, M.~Slemrod and D.~Wang, On two-dimensional sonic-subsonic flow, {\it Comm. Math. Phys.} {\bf 271} (2007), no.~3, 635--647. 

    \bibitem{MR1911248}S. Chen, Z. Xin and H. Yin, Global shock waves for the supersonic flow past a perturbed cone, {\it Comm. Math. Phys.} {\bf 228} (2002), no.~1, 47--84. 

    \bibitem{MR3842050}J. Cheng and L.~Du, Compressible subsonic impinging flows, {\it Arch. Ration. Mech. Anal.} {\bf 230} (2018), no.~2, 427--458. 

    \bibitem{MR3814594}J. Cheng, L.~Du and Y.~Wang, The existence of steady compressible subsonic impinging jet flows, {\it Arch. Ration. Mech. Anal.} {\bf 229} (2018), no.~3, 953--1014.
    
    \bibitem{MR3139610}D. Coutand, J. Hole and S. Shkoller, Well-posedness of the free-boundary compressible 3-D Euler equations with surface tension and the zero surface tension limit, {\it SIAM J. Math. Anal.} {\bf 45} (2013), no.~6, 3690--3767.

    \bibitem{MR2608125}D. Coutand, H. Lindblad and S. Shkoller, A priori estimates for the free-boundary 3D compressible Euler equations in physical vacuum, {\it Comm. Math.} Phys. {\bf 296} (2010), no.~2, 559--587. 

    \bibitem{MR2980528}D. Coutand and S. Shkoller, Well-posedness in smooth function spaces for the moving-boundary three-dimensional compressible Euler equations in physical vacuum, {\it Arch. Ration. Mech. Anal.} {\bf 206} (2012), no.~2, 515--616. 

    \bibitem{MR2133664}D. Danielli and A. Petrosyan, A minimum problem with free boundary for a degenerate quasilinear operator, {\it Calc. Var. Partial Differential Equations} {\bf 23} (2005), no.~1, 97--124. 

    \bibitem{MR2813524}D. De~Silva, Free boundary regularity for a problem with right hand side, {\it Interfaces Free Bound.} {\bf 13} (2011), no.~2, 223--238. 

    \bibitem{MR3678490}S. Dipierro, A.~L. Karakhanyan and E. Valdinoci, A class of unstable free boundary problems, {\it Anal. PDE} {\bf 10} (2017), no.~6, 1317--1359. 

    \bibitem{MR4097326}M.~M. Disconzi and C. Luo, On the incompressible limit for the compressible free-boundary Euler equations with surface tension in the case of a liquid, {\it Arch. Ration. Mech. Anal.} {\bf 237} (2020), no.~2, 829--897.

    \bibitem{MR1134129}G.~C. Dong, {\it Nonlinear partial differential equations of second order}, translated from the Chinese by Kai Seng Chou, 
    Translations of Mathematical Monographs, 95, Amer. Math. Soc., Providence, RI, 1991. 

    \bibitem{MR4595616}L.~ Du, J. Huang and Y. Pu, The free boundary of steady axisymmetric inviscid flow with vorticity $I$: near the degenerate point, {\it Comm. Math. Phys.} {\bf 400} (2023), no.~3, 2137--2179.
    
    \bibitem{MR3178073}L.~Du, S. Weng and Z. Xin, Subsonic irrotational flows in a finitely long nozzle with variable end pressure, {\it Comm. Partial Differential Equations} {\bf 39} (2014), no.~4, 666--695.

    \bibitem{MR3196988}L.~Du, C. Xie and Z. Xin, Steady subsonic ideal flows through an infinitely long nozzle with large vorticity, {\it Comm. Math. Phys.} {\bf 328} (2014), no.~1, 327--354. 
    
    \bibitem{MR2824469}L. Du, Z. Xin and W. Yan, Subsonic flows in a multi-dimensional nozzle, {\it Arch. Ration. Mech. Anal.} {\bf 201} (2011), no.~3, 965--1012.
    
    \bibitem{MR4739787}L.~ Du and C. Yang, The free boundary for a semilinear non-homogeneous Bernoulli problem, {\it J. Differential Equations} {\bf 401} (2024), 183--230. 

    \bibitem{MR4808256}L.~Du and C. Yang, The free boundary of steady axisymmetric inviscid flow with vorticity $II$: near the non-degenerate points, {\it Comm. Math. Phys.} {\bf 405} (2024), no.~11, Paper No. 262, 58 pp.

    \bibitem{MR3409135}L.~C. Evans and R.~F. Gariepy, {\it Measure theory and fine properties of functions}, revised edition, 
    Textbooks in Mathematics, 2015. 

    \bibitem{MR1220787}L.~Evans and S. M\"{u}ller, Hardy spaces and the two-dimensional Euler equations with nonnegative vorticity, {\it J. Amer. Math. Soc.} (JAMS) {\bf 7} (1994), no.~1, 199--219. 

    \bibitem{MR86556}R.~S. Finn and D. Gilbarg, Asymptotic behavior and uniqueness of plane subsonic flows, {\it Comm. Pure Appl. Math.} {\bf 10} (1957), 23--63. 

    \bibitem{MR92912}R.~S. Finn and D. Gilbarg, Three-dimensional subsonic flows, and asymptotic estimates for elliptic partial differential equations, {\it Acta Math.} {\bf 98} (1957), 265--296. 

    \bibitem{MR1368401}M. Giaquinta and S. Hildebrandt, {\it Calculus of variations. I}, Grundlehren der mathematischen Wissenschaften, 310, Springer, Berlin, 1996.

    \bibitem{MR4072680}D. Ginsberg, H. Lindblad and C. Luo, Local well-posedness for the motion of a compressible, self-gravitating liquid with free surface boundary, {\it Arch. Ration. Mech. Anal.} {\bf 236} (2020), no.~2, 603--733. 

    \bibitem{MR4385587}G. Gravina and G. Leoni, On the existence of non-flat profiles for a Bernoulli free boundary problem, {\it Adv. Calc. Var.} {\bf 15} (2022), no.~1, 33--58. 

    \bibitem{MR3638912}X. Gu and T. Wang, On subsonic and subsonic-sonic flows in the infinity long nozzle with general conservatives force, {\it Acta Math. Sci. Ser. B (English. Ed.)} {\bf 37} (2017), no.~3, 752--767. 

    \bibitem{MR2547977}J. Jang and N. Masmoudi, Well-posedness for compressible Euler equations with physical vacuum singularity, {\it Comm. Pure Appl. Math.} {\bf 62} (2009), no.~10, 1327--1385.
    
    \bibitem{MR3280249}J. Jang and N. Masmoudi, Well-posedness of compressible Euler equations in a physical vacuum, {\it Comm. Pure Appl. Math.} {\bf 68} (2015), no.~1, 61--111. 

    \bibitem{MR138284}J.~P. Krasovski\u{i}, On the theory of steady-state waves of finite amplitude, {\it\v {Z}. Vy\v{c}isl. Mat i Mat. Fiz.} {\bf 1} (1961), 836--855. 

    \bibitem{MR1981993}H. Lindblad, Well-posedness for the linearized motion of a compressible liquid with free surface boundary, {\it Comm. Math. Phys.} {\bf 236} (2003), no.~2, 281--310. 
    
    \bibitem{MR3887218}C. Luo, On the motion of a compressible gravity water wave with vorticity, {\it Ann. PDE} {\bf 4} (2018), no.~2, Paper No. 20, 71 pp.

    \bibitem{MR3812074}H. Lindblad and C. Luo, A priori estimates for the compressible Euler equations for a liquid with free surface boundary and the incompressible limit, {\it Comm. Pure Appl. Math.} {\bf 71} (2018), no.~7, 1273--1333. 

    \bibitem{MR4439376}C. Luo and J. Zhang, Local well-posedness for the motion of a compressible gravity water wave with vorticity, {\it J. Differential Equations} {\bf 332} (2022), 333--403.

    \bibitem{MR3218831}T. Luo, Z. Xin and H. Zeng, Well-posedness for the motion of physical vacuum of the three-dimensional compressible Euler equations with or without self-gravitation, {\it Arch. Ration. Mech. Anal.} {\bf 213} (2014), no.~3, 763--831. 

    \bibitem{MR2431665}S.~Mart\'{i}nez and N.~I. Wolanski, A minimum problem with free boundary in Orlicz spaces, {\it Adv. Math.} {\bf 218} (2008), no.~6, 1914--1971.

    \bibitem{MR1446239}J.~B. Mcleod, The Stokes and Krasovskii conjectures for the wave of greatest height, {\it Stud. Appl. Math.} {\bf 98} (1997), no.~4, 311--333. 
    
    \bibitem{MR506997}F. Murat, Compacit\'{e} par compensation, {\it Ann. Scuola Norm. Sup. Pisa Cl. Sci.} (4) {\bf 5} (1978), no.~3, 489--507. 

    \bibitem{MR752600}P.~I. Plotnikov, Justification of the Stokes conjecture in the theory of surface waves, {\it Dinamika Sploshn. Sredy} No. 57 (1982), 41--76.
    \bibitem{MR1883094}P.~I. Plotnikov, Proof of the Stokes conjecture in the theory of surface waves, {\it Stud. Appl. Math.} {\bf 108} (2002), no.~2, 217--244. 


    \bibitem{MR4556790}O.~Savin and H. Yu, Regularity of the singular set in the fully nonlinear obstacle problem, {\it J. Eur. Math. Soc.} (JEMS) {\bf 25} (2023), no.~2, 571--610.  

    \bibitem{MR2038344}P.~I. Plotnikov and J.~F. Toland, Convexity of Stokes waves of extreme form, {\it Arch. Ration. Mech. Anal.} {\bf 171} (2004), no.~3, 349--416. 

    \bibitem{MR2858161} G.~G. Stokes, {\it Mathematical and physical papers. Volume 1}, reprint of the 1880 original, 
    Cambridge Library Collection, Cambridge Univ. Press, Cambridge, 2009. 

    \bibitem{MR584398}L. Tartar, Compensated compactness and applications to partial differential equations, in {\it Nonlinear analysis and mechanics: Heriot-Watt Symposium, Vol. IV}, pp. 136--212, Res. Notes in Math.

    \bibitem{MR513927}J.~F. Toland, On the existence of a wave of greatest height and Stokes's conjecture, {\it Proc. Roy. Soc. London Ser. A} {\bf 363} (1978), no.~1715, 469--485. 

    \bibitem{MR2560044}Y.~L. Trakhinin, Local existence for the free boundary problem for nonrelativistic and relativistic compressible Euler equations with a vacuum boundary condition, {\it Comm. Pure Appl. Math.} {\bf 62} (2009), no.~11, 1551--1594. 

    \bibitem{MR2810856}E. V\v{a}rv\v{a}ruc\v{a} and G.~S. Weiss, A geometric approach to generalized Stokes conjectures, {\it Acta Math.} {\bf 206} (2011), no.~2, 363--403.

    \bibitem{MR2995099}E. V\v{a}rv\v{a}ruc\v{a} and G.~S. Weiss, The Stokes conjecture for waves with vorticity, {\it Ann. Inst. H. Poincar\'{e} C Anal. Non Lin\'{e}aire} {\bf 29} (2012), no.~6, 861--885. 

    \bibitem{MR3225630}E. V\v{a}rv\v{a}ruc\v{a} and G.~S. Weiss, Singularities of steady axisymmetric free surface flows with gravity, {\it Comm. Pure Appl. Math.} {\bf 67} (2014), no.~8, 1263--1306. 

    \bibitem{MR3048597}C. Wang and Z. Xin, On a degenerate free boundary problem and continuous subsonic-sonic flows in a convergent nozzle, {\it Arch. Ration. Mech. Anal.} {\bf 208} (2013), no.~3, 911--975. 

    \bibitem{MR1620644}G.~S. Weiss, Partial regularity for weak solutions of an elliptic free boundary problem, {\it Comm. Partial Differential Equations} {\bf 23} (1998), no.~3-4, 439--455.
    
    \bibitem{MR1759450}G.~S. Weiss, Partial regularity for a minimum problem with free boundary, {\it J. Geom. Anal.} {\bf 9} (1999), no.~2, 317--326.

    \bibitem{MR4238496}G.~S. Weiss, Bernoulli type free boundary problems and water waves, in {\it Geometric measure theory and free boundary problems}, 89--136, Lecture Notes in Math., 2284, Springer, Cham, 2021.

    \bibitem{MR2748622}G.~S. Weiss and G. Zhang, Existence of a degenerate singularity in the high activation energy limit of a reaction-diffusion equation, {\it Comm. Partial Differential Equations} {\bf 35} (2010), no.~1, 185--199. 

    \bibitem{MR2507638}S. Wu, Almost global wellposedness of the 2-D full water wave problem, {\it Invent. Math.} {\bf 177} (2009), no.~1, 45--135.
    
    \bibitem{MR2782254}S. Wu, Global wellposedness of the 3-D full water wave problem, {\it Invent. Math.} {\bf 184} (2011), no.~1, 125--220.

    \bibitem{MR2375709}C. Xie and Z. Xin, Global subsonic and subsonic-sonic flows through infinitely long nozzles, {\it Indiana Univ. Math. J.} {\bf 56} (2007), no.~6, 2991--3023. 

    \bibitem{MR2607929}C. Xie and Z. Xin, Existence of global steady subsonic Euler flows through infinitely long nozzles, {\it SIAM J. Math. Anal.} {\bf 42} (2010), no.~2, 751--784. 

    \bibitem{MR2644144}C. Xie and Z. Xin, Global subsonic and subsonic-sonic flows through infinitely long axially symmetric nozzles, {\it J. Differential Equations} {\bf 248} (2010), no.~11, 2657--2683. 

    \bibitem{MR2533922}Z. Xin, W. Yan and H. Yin, Transonic shock problem for the Euler system in a nozzle, {\it Arch. Ration. Mech. Anal.} {\bf 194} (2009), no.~1, 1--47. 

    \bibitem{MR2216450}Z. Xin and H. Yin, Global multidimensional shock wave for the steady supersonic flow past a three-dimensional curved cone, {\it Anal. Appl. (Singap.)} {\bf 4} (2006), no.~2, 101--132. 

\end{thebibliography}
\end{document}